\newtheorem{theorem}{Theorem}[section]
\newtheorem{corollary}[theorem]{Corollary}
\newtheorem{proposition}[theorem]{Proposition}
\theoremstyle{definition}
\newtheorem{definition}[theorem]{Definition}
\newtheorem{terminology}[theorem]{Terminology}
\newtheorem{remark}[theorem]{Remark}
\newtheorem*{remark*}{Remark}
\newtheorem{example}[theorem]{Example}
\newtheorem{algorithm}[theorem]{Algorithm}
\numberwithin{equation}{section}
\numberwithin{figure}{section}
\numberwithin{table}{section}
\def\Xint#1{\mathchoice
{\XXint\displaystyle\textstyle{#1}}
{\XXint\textstyle\scriptstyle{#1}}
{\XXint\scriptstyle\scriptscriptstyle{#1}}
{\XXint\scriptscriptstyle\scriptscriptstyle{#1}}
\!\int}
\def\XXint#1#2#3{{\setbox0=\hbox{$#1{#2#3}{\int}$ }
\vcenter{\hbox{$#2#3$ }}\kern-.57\wd0}}
\def\intavg{\Xint-}
\DeclareMathOperator*{\esssup}{ess\ sup}
\DeclareMathOperator{\supp}{supp}
\DeclareMathOperator{\TV}{TV}
\newcommand{\wto}{\rightharpoonup}
\newcommand{\wsto}{\overset{*}{\wto}}
\renewcommand{\geq}{\geqslant} 
\renewcommand{\leq}{\leqslant}  
\newcommand{\eps} {\varepsilon}
\renewcommand{\epsilon}{\varepsilon}
\renewcommand{\phi}{\varphi}
\renewcommand{\i}{\ifmmode\mathit{\mathchar"7010 }\else\char"10 \fi}
\renewcommand{\j}{\ifmmode\mathit{\mathchar"7011 }\else\char"11 \fi}
\newcommand{\Soln}{\EuScript{S}}
\newcommand{\mathbbx}{\mathbb}
\newcommand{\E}{\mathbbx{E}}
\newcommand{\R}{\mathbbx{R}}
\newcommand{\N}{\mathbbx{N}}
\newcommand{\D}{\EuScript{D}}
\newcommand{\Z}{\mathbbx{Z}}
\newcommand{\M}{\EuScript{M}}
\newcommand{\Young}{\mathbf{Y}}
\newcommand{\Sigmaalg}{\EuScript{F}}		
\newcommand{\Borel}{\EuScript{B}}
\newcommand{\Prob}{\EuScript{P}}
\newcommand{\Emv}{\EuScript{E}}
\newcommand{\imv}{\sigma}		
\newcommand{\rand}{X}	
\newcommand{\holder}{\gamma}	
\newcommand{\tve}{r}		
\newcommand{\ballfunc}{\kappa}	
\newcommand{\vel}{w}
\newcommand{\velx}{\vel^1}		
\newcommand{\vely}{\vel^2}		
\newcommand{\ev}{{v}}
\newcommand{\amp}{\eps}		
\DeclareMathOperator{\id}{id}
\newcommand{\ind}{\mathbbm{1}}
\newcommand{\cell}{\EuScript{C}}
\newcommand{\xt}{z}	
\newcommand{\Dx}{{\Delta x}}
\newcommand{\Dy}{\Delta y}
\newcommand{\Dl}{{\Delta x}}
\newcommand{\ip}[2]{\langle #1, #2\rangle}
\newcommand{\avg}[1]{\overline{#1}}
\newcommand{\sgn}{{\rm sgn}}
\newcommand{\hf}{{\unitfrac{1}{2}}}
\newcommand{\iphf}{{i+\hf}}
\newcommand{\imhf}{{i-\hf}}
\newcommand{\jphf}{{j+\hf}}
\newcommand{\jmhf}{{j-\hf}}
\newcommand{\nablax}{{\nabla_x}}
\begin{document}

\title{
Construction of approximate entropy measure valued solutions for hyperbolic systems of conservation laws}
\date{June 23, 2015} 
\author{Ulrik S. Fjordholm\thanks{Department of Mathematical Sciences, Norwegian University of Science and Technology, Trondheim, N-7491, Norway}, \,  Roger K\"appeli\thanks{Seminar for Applied Mathematics, ETH Z\"urich, R\"amistrasse 101, Z\"urich, Switzerland}, \, Siddhartha Mishra\footnotemark[2], \, Eitan Tadmor\thanks{Department of Mathematics, Center of Scientific Computation and Mathematical Modeling (CSCAMM), Institute for Physical sciences and Technology (IPST), University of Maryland MD 20742-4015, USA}\, {}\thanks{S.M. and R.K were supported in part by ERC STG. N 306279, SPARCCLE. E.T. was supported in part by NSF grants DMS10-08397, RNMS11-07444 (KI-Net) and ONR grant N00014-1210318. Many of the computations were performed at CSCS Lugano through Project s345. SM thanks Prof. Christoph Schwab (ETH Zurich) for several helpful comments and suggestions.}}

\maketitle

\begin{verbatim}
    "There is no theory for the initial value problem for compressible 
    flows in two space dimensions once shocks show up, much less in three 
    space dimensions. This is a scientific scandal and a challenge."
\end{verbatim}

\hspace{7.3cm}{P. D. Lax, 2007 Gibbs Lecture \cite{Lax07}}

\begin{abstract}
Entropy solutions have been widely accepted as the suitable solution framework for systems of conservation laws in several space dimensions. However, recent results in \cite{CDL1,CDL2} have demonstrated that entropy solutions may not be unique. In this paper, we present numerical evidence that demonstrates that state of the art numerical schemes \emph{may not} necessarily converge to an entropy solution of systems of conservation laws as the mesh is refined. Combining these two facts, we argue that entropy solutions may not be suitable as a solution framework for systems of conservation laws, particularly in several space dimensions.

Furthermore, we propose a more general notion, that of \emph{entropy measure valued solutions}, as an appropriate solution paradigm for systems of conservation laws. To this end, we present a detailed numerical procedure, which constructs stable approximations to entropy measure valued solutions and provide sufficient conditions that guarantee that these approximations converge to an entropy measure valued solution as the mesh is refined, thus providing a viable numerical framework for systems of conservation laws in several space dimensions. A large number of numerical experiments that illustrate the proposed schemes are presented and are utilized to examine several interesting properties of the computed entropy measure valued solutions.

\end{abstract}

\noindent
{\sl 1991 Mathematics Subject Classification}. 65M06, 35L65, 35R06.

\noindent
{\bf Keywords}. {Hyperbolic conservation laws, uniqueness, stability, entropy condition, measure-valued solutions,
atomic initial data, random field, weak BV estimate, weak* convergence.}

\newpage
\setcounter{tocdepth}{2}
\tableofcontents
\newpage 

\section{Introduction}
A large number of problems in physics and engineering are modeled by \emph{systems of conservation laws}
\begin{subequations}\label{eq:cauchy}
\begin{align}
\partial_t u + \nablax\cdot f(u) &= 0  \label{eq:cl} \\
u(x,0) &= u_0(x).
\end{align}
\end{subequations}
Here, the unknown $u = u(x,t) : \R^d\times\R_+ \to \R^N$ is the vector of \emph{conserved variables} and $f = (f^1, \dots, f^d) : \R^N \to \R^{N\times d}$ is the \emph{flux function}. We denote $\R_+ := [0,\infty)$.

The system \eqref{eq:cl} is \emph{hyperbolic} if the flux Jacobian $\partial_u(f \cdot n)$ has real eigenvalues for all $n \in \R^d$ with $|n| = 1$. Examples for hyperbolic systems of conservation laws include the shallow water equations of oceanography, the Euler equations of gas dynamics, the magnetohydrodynamics (MHD) equations of plasma physics, the equations of nonlinear elastodynamics and the Einstein equations of general relativity. We refer to \cite{DAF1,GR1} for more theory on hyperbolic conservation laws.

\subsection{Mathematical framework}
It is well known that solutions of the Cauchy problem \eqref{eq:cauchy} can develop discontinuities such as shock waves in finite time, even when the initial data is smooth. Hence, solutions of hyperbolic systems of conservation laws \eqref{eq:cauchy} are sought in the weak (distributional) sense.
\begin{definition}
A function $u\in L^\infty(\R^d\times\R_+,\R^N)$ is a \emph{weak solution} of \eqref{eq:cauchy} if it satisfies \eqref{eq:cauchy} in the sense of distributions:
\begin{equation}\label{eq:wsoln}
\int_{\R_+}\int_{\R^d} \partial_t \phi(x,t) u(x,t) + \nablax\phi(x,t) \cdot f(u(x,t))\ dxdt + \int_{\R^d}\phi(x,0) u_0(x) \ dx = 0
\end{equation}
for all test functions $\phi\in C^1_c(\R^d\times\R_+)$.
\end{definition}

Weak solutions are in general not unique: infinitely many weak solutions may exist after the formation of discontinuities. Thus, to obtain uniqueness, additional admissibility criteria have to be imposed. These admissibility criteria take the form of entropy conditions, which are formulated in terms of entropy pairs.

\begin{definition}
A pair of functions $(\eta,q)$ with $\eta:\R^N\to\R$, $q:\R^N\to\R^d$ is called an \emph{entropy pair} if $\eta$ is convex and $q$ satisfies the compatibility condition $q' = \eta' \cdot f'$.
\end{definition}

\begin{definition}
A weak solution $u$ of \eqref{eq:cauchy} is an \emph{entropy solution} if the entropy inequality
\[
\partial_t\eta(u) + \nablax\cdot q(u) \leq 0 \qquad \text{in } \D'(\R^d\times\R_+)
\]
is satisfied for all entropy pairs $(\eta,q)$, that is, if
\begin{equation}\label{eq:entrcond}
\int_{\R_+}\int_{\R^d} \partial_t \phi(x,t) \eta(u(x,t)) + \nablax\phi(x,t) \cdot q(u(x,t))\ dxdt + \int_{\R^d}\phi(x,0) \eta(u_0(x)) \ dx \geq 0
\end{equation}
for all nonnegative test functions $0\leq\phi\in C^1_c(\R^d\times\R_+)$.
\end{definition}

For the special case of scalar conservation laws ($N=1$), every convex function $\eta$ gives rise to an entropy pair by letting $q(u) := \int^u \eta'(\xi) f'(\xi) d\xi$. This rich family of entropy pairs was used by Kruzkhov \cite{Kruz1} to obtain existence, uniqueness and stability of solutions for scalar conservation laws.

Corresponding (global) well-posedness results for systems of conservation laws are much harder to obtain. Lax \cite{Lax} showed existence and stability of entropy solutions for one-dimensional systems of conservation laws for the special case of Riemann initial data. Existence results for the Cauchy problem for one-dimensional systems were obtained by Glimm in \cite{Glimm1} using the random choice method and by Bianchini and Bressan \cite{BB1} with the vanishing viscosity method. Uniqueness and stability results for one-dimensional systems were shown by Bressan and co-workers \cite{Bress1}. All of these results rely on an assumption that the initial data is ``sufficiently small'', i.e., lies sufficiently close to some constant.

On the other hand, \emph{no global existence and uniqueness (stability) results are currently available} for a generic system of conservation laws in several space dimensions. In fact, recent results (see \cite{CDL1,CDL2,CDL3} and references therein) provide counterexamples which illustrate that entropy solutions for multi-dimensional systems of conservation laws \emph{are not necessarily unique}. These results raise serious questions about the appropriateness of entropy solutions as the standard solution framework for systems of conservation laws. It can be argued that one needs to impose even further admissibility criteria, in addition to the entropy inequality \eqref{eq:entrcond}, to single out a solution among the infinitely many solutions constructed in \cite{CDL1,CDL2,CDL3}. One possible approach in determining these selection criteria is to employ suitable numerical schemes and observe which, if any, of the entropy solutions are approximated by these schemes.

\subsection{Numerical schemes}
Numerical schemes have played a leading role in the study of systems of conservation laws, and a wide variety of numerical methods for approximating \eqref{eq:cauchy} are currently available. These include the very popular and highly successful numerical framework of finite volume and finite difference schemes,  based on approximate Riemann solvers or on Riemann-solver-free centered differencing (see \cite{GR1,CJST98,LEV1,centpack}) which utilize TVD \cite{Har83}, ENO \cite{HEOC1} or WENO \cite{JS96} non-oscillatory reconstruction techniques and strong stability preserving (SSP) Runge-Kutta time integrators \cite{GST1}. Another popular alternative is the Discontinuous Galerkin finite element method \cite{CS1}.

The primary goal in the analysis of numerical schemes approximating \eqref{eq:cauchy} is proving convergence to an entropy solution as the mesh is refined. This issue has been addressed in the special case of (first-order) monotone schemes for \emph{scalar} conservation laws (see \cite{CM1} for the one-dimensional case and \cite{KRO} for multiple dimensions) using the TVD property. Corresponding convergence results for (formally) arbitrarily high-order accurate finite difference schemes for scalar conservation laws were obtained recently in \cite{FJO1}, see also \cite{FTSID2}. Convergence results for (arbitrarily high order) space time DG discretization for scalar conservation laws were obtained in \cite{JJS1} and for the spectral viscosity method in \cite{Tad5}.

The question of convergence of numerical schemes for systems of conservation laws is significantly harder. Currently, there are \emph{no} rigorous proofs of convergence for any kind of finite volume (difference) and finite element methods to the entropy solutions of a generic system of conservation laws, even in one space dimension. Convergence aside, even the stability of numerical approximations to systems of conservation laws is mostly open. The only notion of numerical stability for systems of conservation laws that has been analyzed rigorously so far is that of entropy stability -- the design of numerical approximations that satisfy a discrete version of the entropy inequality. Such schemes have been devised in \cite{TAD1,TAD2, FTSID2, HSID1}. However, entropy stability may not suffice to ensure the convergence of approximate solutions.

\subsection{Two numerical experiments}
Given the lack of rigorous stability and convergence results for systems of conservation laws, it has become customary in the field to rely on numerical benchmark tests to demonstrate the convergence of the scheme empirically. One such benchmark test case is the radial Sod shock tube \cite{LEV1}.

\subsubsection{Sod shock tube}
In this test, we consider the compressible Euler equations of gas dynamics in two space dimensions (see Section \ref{sec:6}) as a prototypical hyperbolic system of conservation laws. The initial data for the two-dimensional version of the well-known Sod shock tube problem is given by
\begin{equation}
\label{eq:sodinit}
u_0(x) = \begin{cases}
u_L & \text{if } |x| \leq r_0 \\
u_R & \text{if } |x| > r_0,
\end{cases}
\end{equation}
with $\rho_L = p_L = 3$, $\rho_R=p_R=1$, $\velx = \vely = 0$. The computational domain is $[-0.5,0.5]^2$, with $r_0 = 0.15$, and we use periodic boundary conditions.

To begin with, we consider a perturbed version of the Sod shock tube by setting the initial data
\begin{equation}\label{eq:sodpinit}
u^{\amp}_0(x) = u_0(x) + \amp X(x),
\end{equation}
where $\amp>0$ is a small amplitude of the  perturbation $X(\cdot)$ associated with the following state variables ---
$\rho,p$ and $\vel=(\velx,\vely)^\top$,
\begin{equation}
\label{eq:sodper}
X_\rho = X_p = 0, \quad X_{\velx}(x) = \sin(2\pi x_1), \quad X_{\vely}(x) = \sin(2\pi x_2).
\end{equation}

\begin{figure}[ht]
\includegraphics[width=0.3\linewidth]{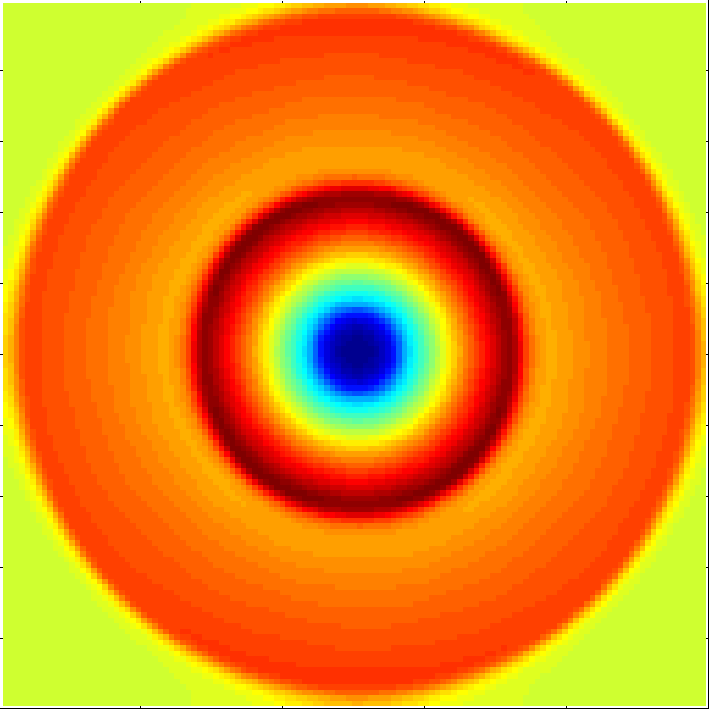}\hfill
\includegraphics[width=0.3\linewidth]{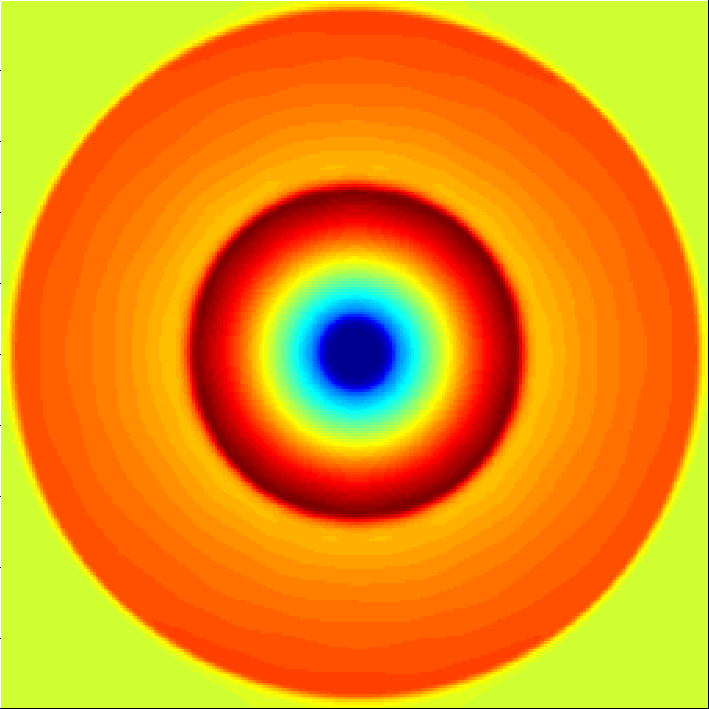}\hfill
\includegraphics[width=0.3\linewidth]{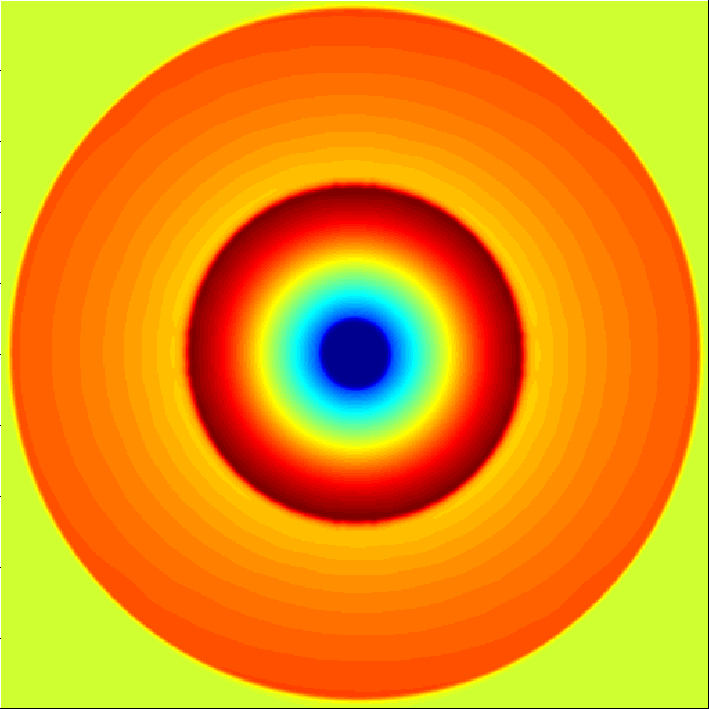}
\caption{Density for the Sod shock tube problem, computed with TECNO2 finite difference scheme of \cite{FTSID2}, with initial data \eqref{eq:sodpinit} at time $t=0.24$. Left to right: $\Dx = 1/128, 1/256, 1/512$.}
\label{fig:sod1}
\end{figure}
First we set $\amp = 0.01$ and compute the approximate solutions of the two-dimensional Euler equations \eqref{eq:2deuler} with the second-order TeCNO2 finite difference scheme of \cite{FTSID2}. In Figure \ref{fig:sod1}, we present the computed densities at time $t=0.24$ for three different mesh resolutions. The figure clearly indicates convergence as the mesh is refined. To further quantify this convergence, we compute the difference in the approximate solutions on two successive mesh resolutions:
\begin{equation}
\label{eq:chyrates}
E^{\Dx} = \|u^{\Dx} - u^{\Dx/2}\|_{L^1([-0.5,0.5]^2)},
\end{equation}
and plot the results for density in Figure \ref{fig:sod23}(a). The results clearly indicate that the numerical approximations form a Cauchy sequence in $L^1$, and hence converge. The same numerical experiment was performed with a different scheme: a second-order high-resolution scheme based on an HLLC solver using the MC limiter, implemented in the FISH code \cite{Kap1}. Similar convergence results were obtained (omitted here for brevity).
\begin{figure}[t]
\centering
\subfigure[$L^1$ Cauchy rates \eqref{eq:chyrates} ($y$-axis) in the density at time $t=0.24$ vs.\ number of gridpoints ($x$-axis)]{\includegraphics[width=6cm]{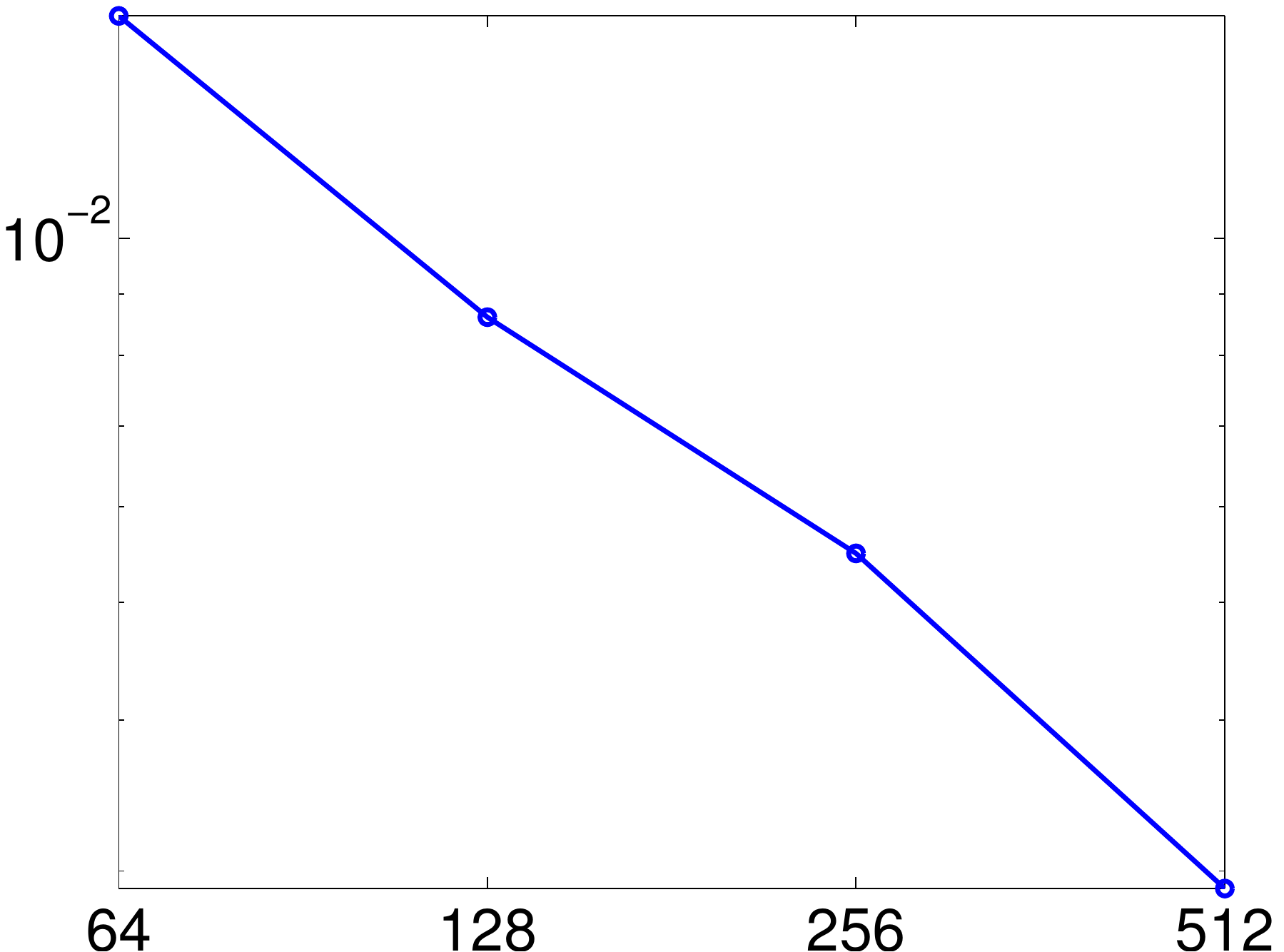}}
\hspace{1cm}
\subfigure[$L^1$ error with respect to the unperturbed solution \eqref{eq:sodinit} ($y$-axis) vs.\ the perturbation parameter $\amp$ ($x$-axis)]{\includegraphics[width=6cm]{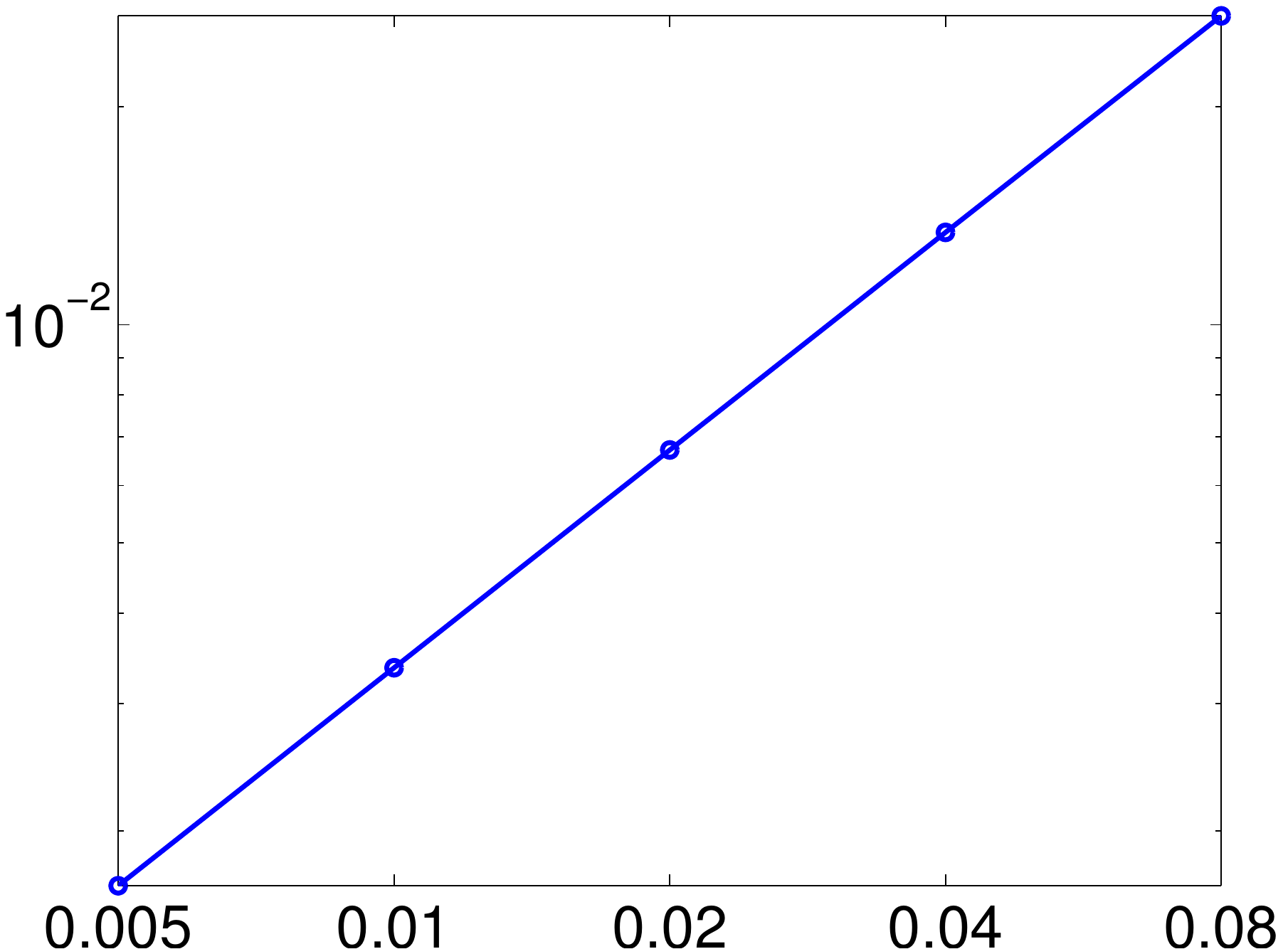}}
\caption{$L^1$ differences in density $\rho$ at time $t=0.24$ for the Sod shock tube problem with initial data \eqref{eq:sodpinit}.}
\label{fig:sod23}
\end{figure}

Next, we investigate numerically the issue of stability of this system with respect to perturbations in the initial data. To this end, we use exactly the same set up as the previous numerical experiment but let the perturbation amplitude $\amp \rightarrow 0$ in \eqref{eq:sodpinit} and plot in Figure \ref{fig:sod23}(b) the error in computed density (at a fixed mesh resolution of $1024^2$ points) for successively lower values of $\amp$. The reference solution is computed with the finest mesh resolution of $1024^2$ using the unperturbed initial data \eqref{eq:sodinit}. The results clearly show convergence to the unperturbed solution in the $\amp\to0$ limit.

The above numerical example suggests convergence of the approximate numerical solutions to an entropy solution, at least for some benchmark test cases. The computed solutions were observed to be stable with respect to perturbations of the initial data. In the literature it is not uncommon to extrapolate from benchmark test cases like the Sod shock tube and expect that the numerical approximations converge as the mesh is refined for all possible sets of flow configurations.

\subsubsection{Kelvin-Helmholtz problem}
We question the universality of the above observed empirical convergence and stability results by considering the following set of initial data for the two-dimensional Euler equations (see Section \ref{sec:6}):
\begin{equation}
\label{eq:khi}
u_0(x) = \begin{cases}
u_L & \text{if } 0.25 < x_2 < 0.75 \\
u_R & \text{if } x_2 \leq 0.25 \text{ or } x_2 \geq 0.75,
\end{cases}
\end{equation}
with $\rho_L = 2$, $\rho_R = 1$, $\velx_L = -0.5$, $\velx_R = 0.5$, $\vely_L=\vely_R=0$ and $p_L=p_R=2.5$. It is readily seen that this is a steady state, i.e., that $u(x,t) \equiv u_0(x)$ is an entropy solution.

Next, we  add the same perturbation \eqref{eq:sodpinit} to the initial data \eqref{eq:khi} and compute approximate solutions in the computational domain $[0,1]^2$ with periodic boundary conditions, for different $\Dx>0$. A series of approximate solutions using the TeCNO2 scheme of \cite{FTSID2} and perturbation amplitude $\amp=0.01$ are shown in Figure \ref{fig:kh-density1}. The results show that there is no sign of any convergence as the mesh is refined. As a matter of fact, structures at smaller and smaller scales are formed with mesh refinement. This lack of convergence is quantified by plotting the differences between successive mesh levels \eqref{eq:chyrates} for the density in Figure \ref{fig:kh-errors}(a). The results show that as the mesh is refined, the approximate solutions do \emph{not} seem to form a Cauchy sequence in $L^1$ (at least for the mesh resolutions that have been tested), and hence may not converge. The results presented in Figures \ref{fig:kh-density1} and \eqref{fig:kh-errors}(a) are computed with the TeCNO scheme of \cite{FTSID2}. Very similar results were also obtained with the FISH code \cite{Kap1} and the ALSVID finite volume code \cite{fmwrsid2}. Furthermore, convergence in even weaker $W^{-1,p}$, $1<p\leq \infty$, norms was also not observed. Thus, one cannot deduce convergence of even bulk properties of the flow, such as the average domain temperature, in this particular case.

Finally, we check stability of the numerical solutions as the perturbation parameter $\amp \rightarrow 0$. We compute numerical approximations at a fixed fine grid resolution of $1024^2$ points with successively lower values of $\amp$. These results are compared with the steady state solution \eqref{eq:khi} and are presented in Figure \ref{fig:kh-errors}(b). The $L^1$ difference results clearly show that there is no convergence to the steady state solution \eqref{eq:khi} as $\amp \rightarrow 0$.

\begin{figure}[h]
\centering
\includegraphics[width=0.3\linewidth]{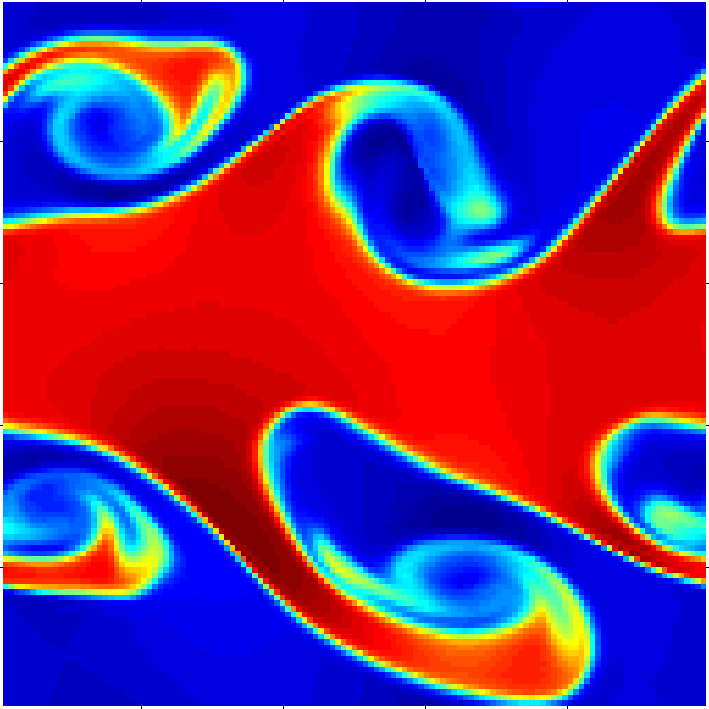}\hfill
\includegraphics[width=0.3\linewidth]{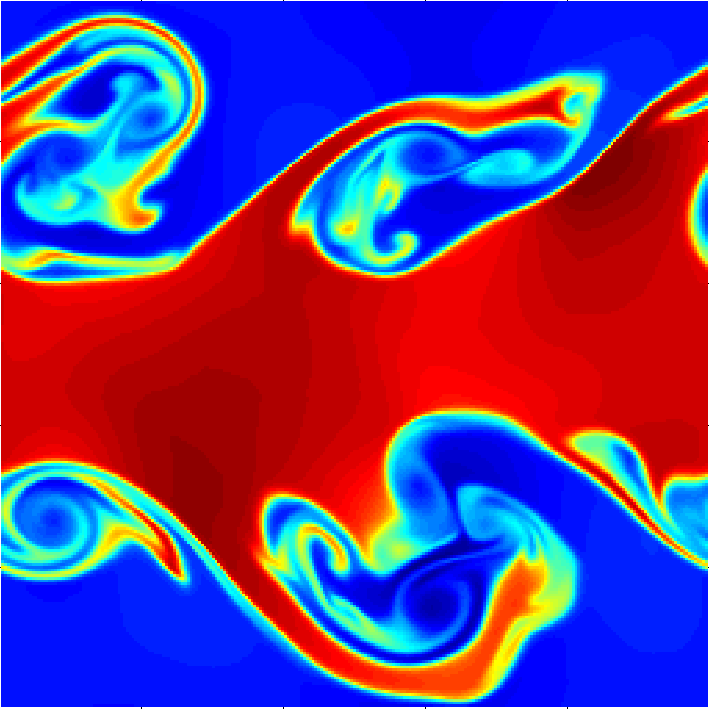}\hfill
\includegraphics[width=0.3\linewidth]{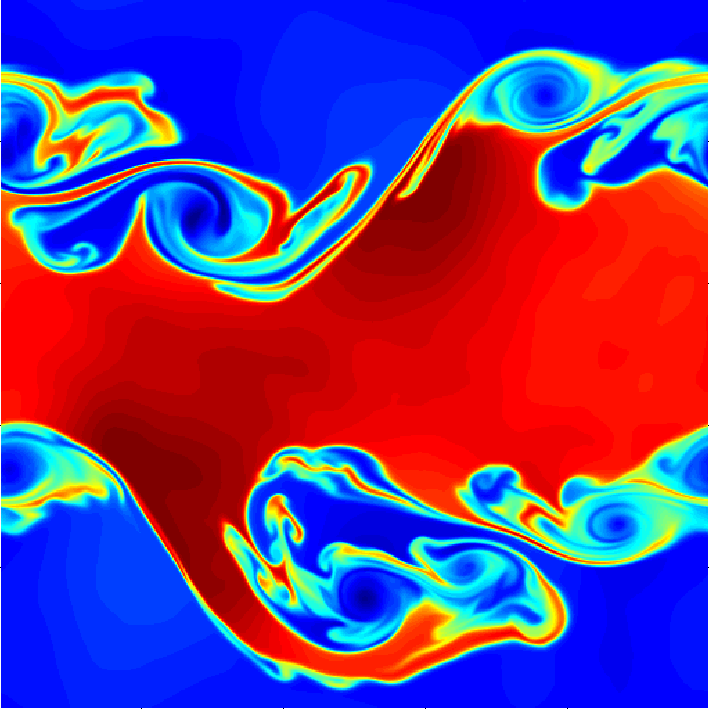}
\caption{Density for the Kelvin-Helmholtz problem \eqref{eq:khi} with perturbation \eqref{eq:sodpinit} and perturbation parameter $\amp=0.01$. Left to right: $\Dx = 1/128, 1/256, 1/512$, at time $t=1$}
\label{fig:kh-density1}
\end{figure}

\begin{figure}
\centering
\subfigure[$L^1$ Cauchy rates \eqref{eq:chyrates} ($y$-axis) vs.\ number of gridpoints ($x$-axis) for the perturbed  problem \eqref{eq:sodpinit}, \eqref{eq:khi} with $\amp = 0.01$.]{\includegraphics[width=6cm]{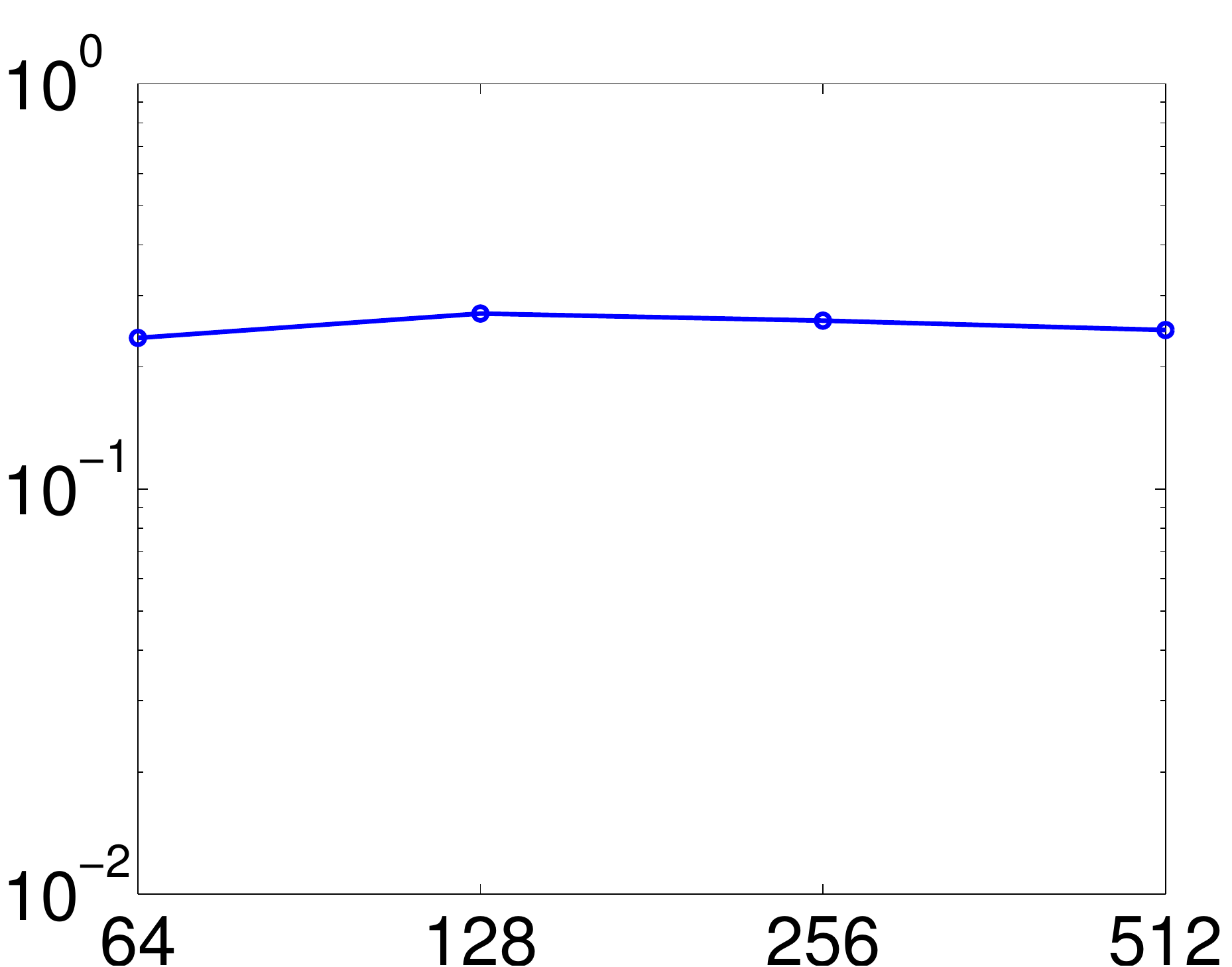}}
\hspace{1cm}
\subfigure[$L^1$ error with respect to the steady state solution \eqref{eq:khi} of the unperturbed Kelvin-Helmholtz problem ($y$-axis) vs.\ perturbation parameter $\amp$, at a fixed mesh with $1024^2$ points.]{\includegraphics[width=6cm]{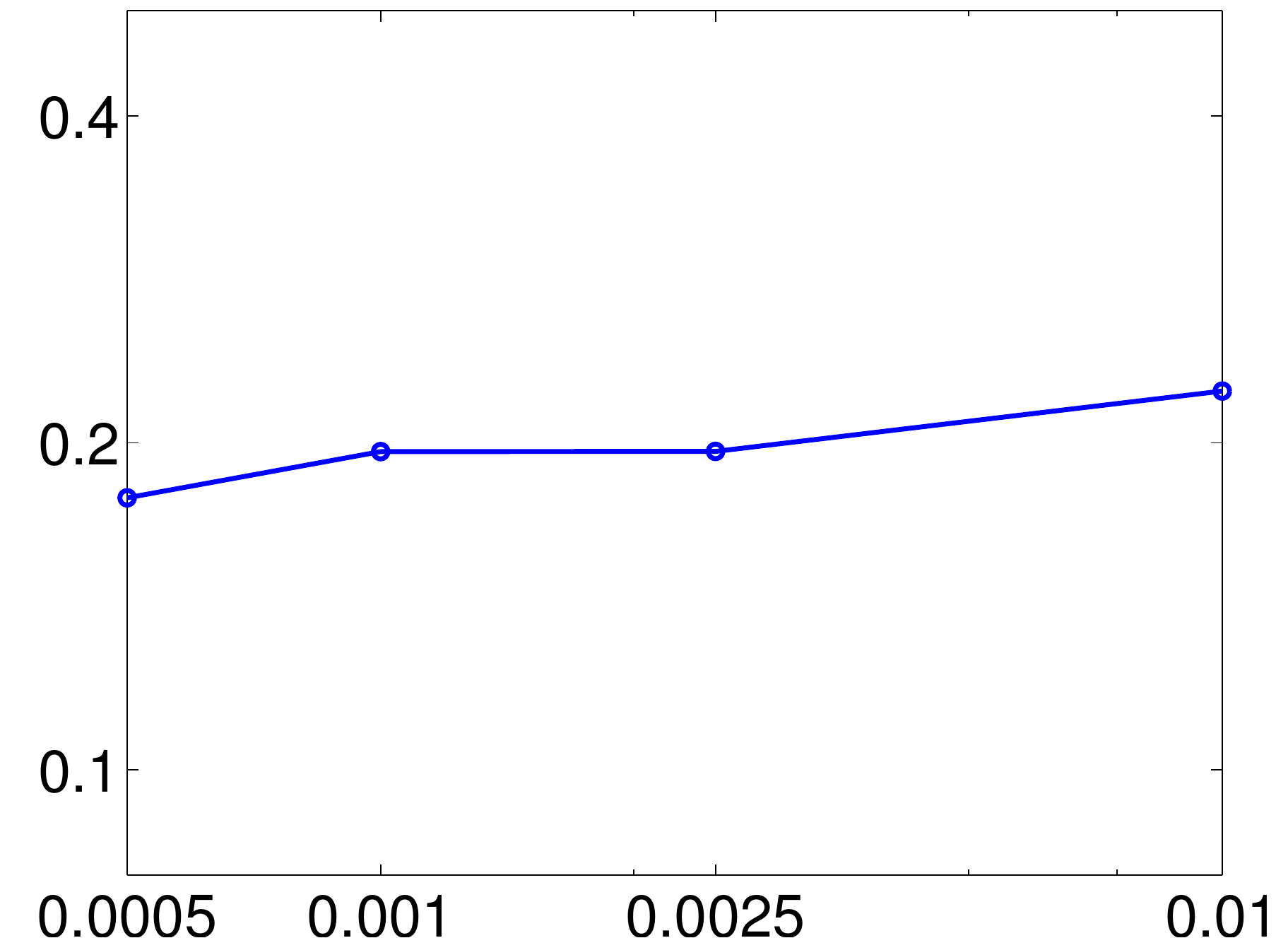}}
\caption{$L^1$ differences in density $\rho$ at time $t=2$ for the Kelvin-Helmholtz problem \eqref{eq:khi}.}
\label{fig:kh-errors}
\end{figure}

\subsection{A different notion of solutions}
The above experiment clearly demonstrates that in general, a whole host\footnote{We have tested at least three types of schemes, TeCNO scheme of \cite{FTSID2}, the high-resolution HLLC scheme of \cite{Kap1} and the finite volume scheme of \cite{fmwrsid2}, and obtained similar non-convergence and instability results as presented above. We strongly suspect that any numerical method will not converge or be stable with respect to perturbations in the initial data for this particular example.} of state of the art numerical schemes do not seem to converge (even for very fine mesh resolutions) to an entropy solution for multi-dimensional systems of conservation laws. In fact, structures at smaller and smaller scales are formed as the mesh is refined. This fact does not imply that the numerical approximations are at fault (given that all the tested schemes, based on different design philosophies, behaved in the same manner), but rather the notion of entropy solutions does not adequately describe the complex flow phenomena that are modeled by systems of conservation laws such as the compressible Euler equations. 

When combined with the recent results on the non-uniqueness of entropy solutions of systems of conservation laws \cite{CDL1,CDL2} and references therein, our numerical evidence strongly suggests that entropy solutions may not be an appropriate solution framework for systems of conservation laws, in many respects. In particular, entropy solutions may not suffice to characterize the limits of numerical approximations to conservation laws in a stable manner. 

Based on the fact that oscillations persist on finer and finer scales (see Figure \ref{fig:kh-density1}) for numerical approximations of \eqref{eq:cauchy}, we focus on the concept of \emph{entropy measure valued solutions}, introduced by DiPerna in \cite{DIP1}, see also \cite{DM1}. In this framework, solutions of the system of conservation laws \eqref{eq:cauchy} are no longer integrable functions, but parameterized probability measures, or \emph{Young measures}, which are able to represent the limit behavior of sequences of oscillatory functions. This solution concept was further based on the work of Tartar \cite{T1} on characterizing the weak limits of bounded sequences of functions. More recently, Glimm and co-workers (\cite{CG1,GL1} and references therein) have also hypothesized  that entropy measure valued solutions are the appropriate notion of solutions for hyperbolic conservation laws, particularly in several space dimensions.

\subsection{Aims and scope of the current paper}
In the current paper:
\begin{itemize}
\item  We consider entropy measure valued solutions for the Cauchy problem \eqref{eq:cauchy}, in the sense of DiPerna \cite{DIP1}. We study the existence and stability of the entropy measure valued solutions.

\item The main aim of the current paper is to approximate entropy measure valued solutions numerically. To this end, we propose an algorithm based on the realization of Young measures as the law of random fields and approximate the solution random fields with suitable finite difference (volume) numerical schemes. We propose a set of sufficient conditions that a scheme has to satisfy in order to converge to an entropy measure valued solution as the mesh is refined. Examples of such convergent schemes are also provided. This provides a viable and rigorous numerical framework for multi-dimensional systems of conservation laws, within the framework of entropy measure valued solutions. 

\item We present a large number of numerical experiments to validate the proposed theory. The numerical approximations are also employed to study the stability as well as other interesting properties of entropy measure valued solutions.
\end{itemize}

The rest of this paper is organized as follows: in Section \ref{sec:2}, we provide a short but self-contained description of Young measures (see also Appendix \ref{app:young}) and then define entropy measure valued solutions for a generalized Cauchy problem, corresponding to the system of conservation law \eqref{eq:cauchy}. The well-posedness of the entropy measure valued solutions is discussed in Section \ref{sec:3}. In Section \ref{sec:4}, we discuss finite difference schemes approximating \eqref{eq:cauchy} and propose abstract criteria that these schemes have to satisfy in order to converge to entropy measure valued solutions. Two schemes satisfying the abstract convergence framework are presented in Section \ref{sec:5}. In Section \ref{sec:6}, we present numerical experiments that illustrate the convergence properties of the schemes and discuss the stability and related properties of entropy measure valued solutions.

\section{Young measures and entropy measure valued solutions}\label{sec:2}
A \emph{Young measure} on a set $D\subset\R^k$ (in our setting, $D=\R^d\times\R_+$ will represent space-time) is a function $\nu$ which  assigns to every point $y\in D$  a probability measure $\nu_y \in \Prob(\R^N)$ on the phase space $\R^N$. The set of all Young measures from $D$ to $\R^N$ is denoted by $\Young(D,\R^N)$. We can compose a Young measure with a continuous function $g$ by defining $\ip{\nu_y}{g} := \int_{\R^N}g(\xi)d\nu_y(\xi)$, the expectation of $g$ with respect to the probability measure $\nu_y$. Note that this defines a real-valued function of $y\in D$.

Every measurable function $u : D \to \R^N$ gives rise to a Young measure by letting
\[
\nu_y := \delta_{u(y)},
\]
where $\delta_\xi$ is the Dirac measure centered at $\xi\in\R^N$. Such Young measures are called \emph{atomic}.

If $\nu^1,\nu^2,\dots$ is a sequence of Young measures then there are two notions of convergence. We say that $\nu^n$ converge \emph{weak*} to a Young measure $\nu$ (written $\nu^n \wto \nu$) if $\ip{\nu^n}{g} \wsto \ip{\nu}{g}$ in $L^\infty(D)$ for all $g\in C_0(\R^N)$, that is, if
\begin{equation}
\label{eq:weak-conv}
\int_D \phi(z) \ip{\nu^n_z}{g}\ dz \to \int_D \phi(z) \ip{\nu_z}{g}\ dz \qquad \forall\ \phi\in L^1(D).
\end{equation}
By the \emph{fundamental theorem of Young measures} (see Theorem \ref{thm:young}), \emph{any} suitably bounded sequence of Young measures has a weak* convergent subsequence.

We say that the sequence $\{\nu^n\}$ converges \emph{strongly} to $\nu$ (written $\nu^n \to \nu$) if
\begin{equation}\label{eq:sconv}
\bigl\|W_p(\nu^n,\nu)\bigr\|_{L^p(D)} \to 0
\end{equation}
for some $p\in[1,\infty)$, where $W_p$ is the $p$-\emph{Wasserstein distance}
\[
W_p(\mu,\rho) := \inf\left\{\int_{\R^N\times\R^N}|\xi-\zeta|^p\ d\pi(\xi,\zeta)\ :\ \pi\in\Pi(\mu,\rho)\right\}^{1/p}
\]
which metrises the topology of weak convergence on the set $\Prob^p(\R^N):=\left\{\mu\in \Prob(\R^N) \ :\ \langle \mu, |\xi|^p\rangle <\infty\right\}$. Here, $\Pi(\mu,\rho)$ is the set of probability measures on $\R^N\times\R^N$ with marginals $\mu,\rho \in \Prob^p(\R^N)$ (see also Appendix \ref{app:wasserstein}).

We refer to Appendix \ref{app:young} for a more rigorous and detailed introduction to Young measures.

\subsection{The measure valued (MV) Cauchy problem}
As mentioned in the introduction, we will seek a more general (weaker) notion of solutions to the Cauchy problem for a system of conservation laws \eqref{eq:cauchy} by requiring that the solutions be Young measures, instead of integrable functions. Equipped with the notation of the previous section, we propose the following generalized Cauchy problem (corresponding to the system \eqref{eq:cauchy}): find a $\nu\in\Young(\R^d\times\R_+,\R^N)$ such that
\begin{equation}\label{eq:mvcauchy}
\begin{split}
\partial_t \ip{\nu}{\id} + \nablax\cdot \ip{\nu}{f} &= 0 \\
\nu_{(x,0)} &= \imv_x,
\end{split}
\end{equation}
where $\imv \in \Young(\R^d,\R^N)$ is the \emph{initial measure-valued data} and $\id(\xi)=\xi$ is the identity function on $\R^N$. The above MV Cauchy problem is interpreted as follows.

\begin{definition}[DiPerna \cite{DIP1}]
A Young measure $\nu\in\Young(\R^d\times\R_+,\R^N)$ is a \emph{measure-valued (MV) solution} of \eqref{eq:mvcauchy} if \eqref{eq:mvcauchy} holds in the sense of distributions, i.e.,
\begin{equation}\label{eq:mvsoln}
\int_{\R_+}\int_{\R^d} \partial_t \phi(x,t) \ip{\nu_{(x,t)}}{\id} + \nablax\phi(x,t) \cdot \ip{\nu_{(x,t)}}{f}\ dxdt + \int_{\R^d}\phi(x,0) \ip{\imv_x}{\id} \ dx = 0
\end{equation}
for all test functions $\phi\in C^1_c(\R^d\times\R_+)$.
\end{definition}

\begin{definition}[DiPerna \cite{DIP1}]
A Young measure $\nu\in\Young(\R^d\times\R_+,\R^N)$ is an \emph{entropy measure-valued (EMV) solution} of \eqref{eq:mvcauchy} if in addition to being a measure valued solution (satisfying \eqref{eq:mvsoln}), it also satisfies
\begin{equation}\label{eq:mventrineq}
\partial_t \ip{\nu}{\eta} + \nablax\cdot \ip{\nu}{q} \leq 0 \qquad \text{in } \D'(\R^d\times\R_+)
\end{equation}
for every entropy pair $(\eta,q)$, that is, if
\begin{equation}\label{eq:emvsoln}
\int_{\R_+}\int_{\R^d} \partial_t \phi(x,t) \ip{\nu_{(x,t)}}{\eta} + \nablax\phi(x,t) \cdot \ip{\nu_{(x,t)}}{q}\ dxdt + \int_{\R^d}\phi(x,0) \ip{\imv_x}{\eta} \ dx \geq 0
\end{equation}
for all nonnegative test functions $0\leq\phi\in C^1_c(\R^d\times\R_+)$.
\end{definition}

\begin{remark}\label{rem:initialdata}
The formulations \eqref{eq:mvsoln} and \eqref{eq:emvsoln} impose the initial data $\sigma$ in a \emph{very weak} manner. The weak formulation \eqref{eq:mvsoln} requires, roughly speaking, that $\lim_{t\to0} \ip{\nu_{(x,t)}}{\id} = \ip{\sigma_x}{\id}$, i.e.\ that the \emph{barycenters} (or mean) of $\nu_{(x,0)}$ and $\sigma_x$ should coincide. The inequality \eqref{eq:emvsoln} implies that $\limsup_{t\to0} \ip{\nu_{(x,t)}}{\eta} \leq \ip{\sigma_x}{\eta}$ (in Theorem \ref{thm:scalarstable2} we require a slightly stronger form of this inequality). The requirement that barycenters of two measures should coincide will imply that the measures themselves coincide \emph{only} if one of the two is a Dirac mass. Correspondingly, our condition for initial data implies equality at $t=0$ \emph{only} when the initial data is atomic. This is precisely the setting which we choose to focus on in the present paper. In a forthcoming paper \cite{FLM15} we consider the question of uniqueness when the initial data is non-atomic, and how to interpret the initial condition in this more complex setting.
\end{remark}

We denote by $\Emv(\imv)$ the set of all entropy MV solutions of the MV Cauchy problem \eqref{eq:mvcauchy} with initial MV data $\imv$. It is readily seen that every entropy solution $u$ of \eqref{eq:cauchy} gives rise to an EMV solution of \eqref{eq:mvcauchy} with $\imv = \delta_{u_0}$, by defining $\nu_{(x,t)} := \delta_{u(x,t)}$, the atomic Young measure concentrated at $u$. Thus, the set $\Emv(\imv)$ is at least as large as the set of entropy solutions of \eqref{eq:cauchy} whenever $\imv$ is atomic, $\imv = \delta_{u_0}$.

\begin{remark}
Although our focus in the current paper will be on the specific case of atomic initial data, we still consider the more general setting of the MV Cauchy problem \eqref{eq:mvcauchy} as it enables us to formulate numerical approximations in a unified manner.

In practice, the initial data $u_0$ in \eqref{eq:cl} is obtained from a measurement or observation process. Since measurements (observations) are intrinsically uncertain, it is customary to model this initial uncertainty statistically by considering the initial data $u_0$ as a random field. Given the fact that the law of a random field is a Young measure, we can also model this initial uncertainty with non-atomic initial measures in the measure valued (MV) Cauchy problem \eqref{eq:mvcauchy}. Thus, our formulation suffices to include various formalisms for uncertainty quantification of conservation laws, i.e., the determination of solution uncertainty given uncertain initial data. See \cite{schwabsid1,sssid1,sssid3} and references therein for an extensive discussion on uncertainty quantification for conservation laws.
\end{remark}

\section{Well-posedness of EMV solutions}
\label{sec:3}
The questions of existence, uniqueness and stability of EMV solutions of \eqref{eq:mvcauchy} are of fundamental significance. We start with a discussion of the scalar case.

\subsection{Scalar conservation laws}
The question of existence of EMV solutions for scalar conservation laws was considered by DiPerna in \cite{DIP1}. We slightly generalize his result for a non-atomic initial data as follows.

\begin{theorem}\label{thm:scalarstable}
Consider the MV Cauchy problem \eqref{eq:mvcauchy} for a scalar conservation law. If the initial data $\sigma$ is uniformly bounded (see Appendix \ref{app:youngunifbounded}), then there exists an EMV solution of \eqref{eq:mvcauchy}.
\end{theorem}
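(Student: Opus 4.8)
The plan is to obtain the EMV solution as a weak\textasteriskcentered{} limit of approximate solutions generated by a vanishing-viscosity regularization, following DiPerna's original argument but tracking the initial data carefully since $\sigma$ is now allowed to be non-atomic. First I would realize the initial Young measure $\sigma$ as the law of a random field: using that $\sigma$ is uniformly bounded, there is a probability space $(\Omega,\Sigmaalg,\Prob)$ and a measurable map $\bar u_0 : \R^d\times\Omega\to\R^N$ with $\bar u_0(\cdot,\omega)\in L^\infty$ uniformly in $\omega$ and $\Law(\bar u_0(x,\cdot))=\sigma_x$ for a.e.\ $x$. For each fixed $\omega$, solve the parabolic problem $\partial_t u^\eps + \nablax\cdot f(u^\eps) = \eps\Delta u^\eps$ with data $\bar u_0(\cdot,\omega)$; by the classical scalar theory (Kru\v{z}kov, via the maximum principle) there is a unique solution $u^\eps(\cdot,\cdot,\omega)$ satisfying the uniform bound $\|u^\eps(\cdot,t,\omega)\|_{L^\infty}\leq \|\bar u_0(\cdot,\omega)\|_{L^\infty}\leq C$ and the entropy inequalities $\partial_t\eta(u^\eps)+\nablax\cdot q(u^\eps)\leq \eps\Delta\eta(u^\eps)$ for every convex entropy pair. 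Then define, for each $\eps$, the atomic Young measure $\nu^\eps_{(x,t)} := \E_\omega\bigl[\delta_{u^\eps(x,t,\omega)}\bigr]$, i.e.\ $\ip{\nu^\eps_{(x,t)}}{g} = \int_\Omega g(u^\eps(x,t,\omega))\,d\Prob(\omega)$ for $g\in C_0(\R^N)$.

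Next I would pass to the limit. The uniform $L^\infty$ bound confines all the $u^\eps(\cdot,\cdot,\omega)$ to a fixed compact set $K\subset\R^N$, so the family $\{\nu^\eps\}$ is uniformly bounded, and by the fundamental theorem of Young measures (Theorem \ref{thm:young}) there is a subsequence with $\nu^\eps \wto \nu$ for some $\nu\in\Young(\R^d\times\R_+,\R^N)$. For the conservation law itself, test \eqref{eq:mvsoln}: $\int\!\!\int \partial_t\phi\,\ip{\nu^\eps}{\id} + \nablax\phi\cdot\ip{\nu^\eps}{f}\,dxdt + \int\phi(x,0)\ip{\sigma_x}{\id}\,dx = \eps\int\!\!\int \Delta\phi\,\ip{\nu^\eps}{\id}\,dxdt$, using $\E_\omega$ of the weak formulation of the viscous equation; since $\id$ and $f$ are continuous and bounded on $K$, each term converges under $\nu^\eps\wto\nu$ (extending the test functions $g$ from $C_0$ to $C(K)$, valid because of the uniform compact support in phase space), and the right side is $O(\eps)$. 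The same computation with $(\eta,q)$ in place of $(\id,f)$ and a nonnegative $\phi$, together with $\eps\int\!\!\int\Delta\phi\,\eta(u^\eps)\to 0$ and the sign of the viscous term, gives \eqref{eq:emvsoln}. The initial-data terms require no limiting at all since $\nu^\eps$ is built from a fixed realization of $\sigma$ — this is exactly where the uniform boundedness of $\sigma$ (rather than merely $\sigma_x\in\Prob^1$) is used, to guarantee the realizing random field is uniformly $L^\infty$ and hence the viscous solutions are globally bounded.

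The main obstacle is the usual one in Young-measure limits: test functions $g\in C_0(\R^N)$ are too restrictive to conclude directly for $g=\id,f,\eta,q$, which are unbounded on $\R^N$. The resolution is the uniform a priori $L^\infty$ bound, which pins all realizations inside a compact $K$; then one either multiplies $\id,f,\eta,q$ by a cutoff equal to $1$ on $K$ (producing genuine $C_0$ functions with the same values where it matters), or invokes the tightness that the fundamental theorem provides so that weak\textasteriskcentered{} convergence upgrades to testing against all of $C(K)$. A secondary technical point is measurability of $\omega\mapsto u^\eps(\cdot,\cdot,\omega)$ and of the resulting $\nu^\eps$ as a Young measure, which follows from the well-posedness (continuous dependence on data) of the viscous problem together with measurability of $\omega\mapsto\bar u_0(\cdot,\omega)$; and one should check that $\nu$ inherits uniform boundedness, which is immediate from the uniform bound passing through weak\textasteriskcentered{} limits. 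None of these is serious, so the proof is essentially DiPerna's vanishing-viscosity construction carried out fibrewise over the realization space of $\sigma$.
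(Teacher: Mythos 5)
Your proposal is correct, but it takes a genuinely different route from the paper. The paper's proof is more economical: it realizes $\sigma$ as the law of a uniformly bounded random field $u_0(\omega)$ (Proposition \ref{prop:lawexists}), invokes Kru\v{z}kov's well-posedness theory to obtain, for each fixed $\omega$, the \emph{exact} entropy solution $u(\omega)$, defines $\nu$ directly as the law of the solution random field $u$, and then verifies \eqref{eq:mvsoln} and \eqref{eq:emvsoln} by a single application of Fubini's theorem together with the entropy inequality satisfied by each realization --- no approximation parameter, no compactness argument, and no weak* limit at the level of Young measures. You instead re-run the vanishing-viscosity construction fibrewise over $\Omega$, form the laws $\nu^{\eps}$ of the viscous solutions, and extract a weak* convergent subsequence via the fundamental theorem of Young measures, handling the $C_0$-versus-$C(K)$ issue with the uniform $L^\infty$ bound. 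Both arguments are sound; the extra steps in yours (joint measurability of $\omega\mapsto u^{\eps}(\cdot,\cdot,\omega)$, the $O(\eps)$ viscous remainder, the cutoff to admissible test functions in phase space) are exactly the ones you identify, and none is a gap. What your approach buys is independence from the availability of an exact solution operator: it is structurally the same argument as Theorems \ref{thm:convmv}--\ref{thm:convmv2}, where only a convergent approximation scheme is at hand, so it would survive in settings where Kru\v{z}kov's theorem does not apply. What the paper's approach buys is brevity, and the additional (implicit) information that the limit measure is precisely the law of the entropy-solution random field --- information that a bare weak* limit of $\nu^{\eps}$ does not immediately record, though for this existence statement neither proof needs it.
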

\begin{proof}
By Proposition \ref{prop:lawexists}, there exists a probability space $(\Omega,\Sigmaalg,P)$ and a random field $u_0:\Omega\times\R^d\to\R$ with law $\sigma$. By the uniform boundedness of $\sigma$, we have $\|u_0\|_{L^\infty(\Omega\times\R^d)} < \infty$.

For each $\omega\in\Omega$, let $u(\omega;x,t)$ be the entropy solution of \eqref{eq:cauchy} with initial data $u_0(\omega)$, and define $\nu$ as the law of $u$. Then for every entropy pair $(\eta,q)$ and every test function $0\leq\phi\in C^1_c(\R^d\times\R_+)$, we have
\begin{align*}
\int_{\R_+}\int_{\R^d}& \partial_t \phi(x,t) \ip{\nu_{(x,t)}}{\eta} + \nablax\phi(x,t) \cdot \ip{\nu_{(x,t)}}{q}\ dxdt \\
&= \int_{\R_+}\int_{\R^d}\partial_t \phi(x,t) \int_{\Omega} \eta(u(\omega;x,t))\ dP(\omega) + \nablax\phi(x,t) \cdot \int_{\Omega}q(u(\omega;x,t))\ dP(\omega)dxdt \\
&= \int_{\Omega}\int_{\R_+}\int_{\R^d}\partial_t \phi(x,t)\eta(u(\omega;x,t)) + \nablax\phi(x,t) \cdot q(u(\omega;x,t))\ dxdtdP(\omega) \\
&\geq -\int_{\Omega}\int_{\R^d}\phi(x,0)\eta(u_0(\omega;x))\ dxdP(\omega) \\
&= -\int_{\R^d}\phi(x,0) \ip{\imv_x}{\eta} \ dx,
\end{align*}
by Fubini's theorem and the entropy stability of $u(\omega)$ for each $\omega$. This proves the entropy inequality \eqref{eq:emvsoln}.
\end{proof}

Although EMV solutions exist for scalar conservation laws with non-atomic measure valued initial data, they may not be unique. Here is a simple counter-example (see also Schochet \cite{Sch89}).
\begin{example}\label{ex:nonunique}
Consider Burgers' equation
\[
\partial_t u + \partial_x \left(\frac{u^2}{2}\right) = 0.
\]
Denote by $\lambda$ the Lebesgue measure on $\R$, and by $\lambda_A$ is the restriction of $\lambda$ to a subset $A\subset\R$, i.e.\ $\lambda_A(B) = \lambda(A\cap B)$. We define $\Omega=[0,1]$, $\Sigmaalg=\Borel([0,1])$ (the Borel $\sigma$-algebra on $[0,1]$) and $P=\lambda_{[0,1]}$. Let $u_0$ and $\tilde{u}_0$ be the random fields
\[
u_0(\omega;x) := \begin{cases}
1 + \omega & \text{for } x < 0 \\
\omega & \text{for } x > 0,
\end{cases}
\qquad
\tilde{u}_0(\omega;x) := \begin{cases}
1 + \omega & \text{for } x < 0 \\
1-\omega & \text{for } x > 0,
\end{cases}
\qquad
\omega\in[0,1],\ x\in\R.
\]
It is readily checked that the law of both $u_0$ and $\tilde{u}_0$ in $(\Omega,\Sigmaalg,P)$ equals
\[
\imv_x = \begin{cases}
\lambda_{[1,2]} & \text{for } x < 0 \\
\lambda_{[0,1]} & \text{for } x > 0.
\end{cases}
\]

Note that both of the random fields $u_0$ and $\tilde{u}_0$ model the Burgers' equation with \emph{uncertainty in initial shock location} which is widely considered in the UQ literature, see \cite{schwabsid1} and references therein. Although their laws are the same --- i.e., that the initial Young measure is the same in both cases --- the resulting two-point correlations (particularly for points left and right of the origin) differ.   

The entropy solutions $u(\omega)$ and $\tilde{u}(\omega)$ of the Riemann problems with initial data $u_0(\omega)$ and $\tilde{u}_0(\omega)$ are given by
\[
u(\omega;x,t) = \begin{cases}
1+\omega & \text{if } x/t < \hf+\omega \\
\omega & \text{if } x/t > \hf+\omega,
\end{cases}
\qquad
\tilde{u}(\omega;x,t) = \begin{cases}
1+\omega & \text{if } x/t < 1 \\
1-\omega & \text{if } x/t > 1.
\end{cases}
\]
To compute the law $\nu$ of $u$ we rewrite $u$ as
\[
u(\omega;x,t) = \begin{cases}
1+\omega & \text{if } x/t-\hf < \omega \\
\omega & \text{if } x/t-\hf > \omega.
\end{cases}
\]
Hence, if $x/t-\hf < 0$ then $\nu_{(x,t)} = \lambda_{[1,2]}$, whereas if $x/t-\hf > 1$ then $\nu_{(x,t)} = \lambda_{[0,1]}$. When $0\leq x/t-\hf \leq 1$ we have for every $g\in C_0(\R^N)$
\begin{align*}
\ip{\nu_{(x,t)}}{g} &= \int_0^1 g(u(\omega;x,t))\ d\omega = \int_{x/t-\hf}^1 g(1+\omega)\ d\omega + \int_0^{x/t-\hf} g(\omega)\ d\omega \\
&= \int_{x/t+\hf}^2 g(\omega)\ d\omega + \int_0^{x/t-\hf} g(\omega)\ d\omega \\
&= \int_\R g(\omega)\ d\lambda_{[\nicefrac{x}{t}+\nicefrac{1}{2},2]}(\omega) + \int_\R g(\omega)\ d\lambda_{[0,\nicefrac{x}{t}-\hf]}(\omega).
\end{align*}
After a similar calculation for $\tilde{\nu}$ we find that
\[
\nu_{(x,t)} = \begin{cases}
\lambda_{[1,2]} & \text{if } x/t < \hf \\
\lambda_{[\nicefrac{x}{t}+\nicefrac{1}{2},2]} + \lambda_{[0,\nicefrac{x}{t}-\hf]} & \text{if } \hf < x/t < \unitfrac{3}{2} \\
\lambda_{[0,1]} & \text{if } \unitfrac{3}{2} < x/t,
\end{cases}
\qquad
\tilde{\nu}_{(x,t)} = \begin{cases}
\lambda_{[1,2]} & \text{if } x/t < 1 \\
\lambda_{[0,1]} & \text{if } x/t > 1.
\end{cases}
\]
Note that in fact $\nu_{(x,t)}$ and $\tilde{\nu}_{(x,t)}$ converges to $\sigma_x$ \emph{strongly} as $t\to0$ for all $x\neq0$. Thus, $\nu$ and $\tilde{\nu}$ are EMV solutions with the same initial MV data $\imv$, but do not coincide. \qedhere
\end{example}

The above example clearly illustrates that the MV Cauchy problem \eqref{eq:mvcauchy} may not have unique solutions, even for the scalar case, when the initial data is a non-atomic Young measure. Hence, it raises serious questions whether the notion of an entropy measure-valued solution is useful. However, the following  result shows that when restricting attention to the relevant class of \emph{atomic initial data}, then EMV solutions of the \emph{scalar} MV Cauchy problem \eqref{eq:mvcauchy} are stable.

\begin{theorem}\label{thm:scalarstable2}
Consider the scalar case $N=1$. Let $u_0\in L^1\cap L^\infty(\R^d)$ and let $\sigma\in\Young(\R^d)$ be uniformly bounded. Let $u\in L^1\cap L^\infty(\R^d\times\R_+)$ be the entropy solution of the scalar conservation law \eqref{eq:cauchy} with initial data $u_0$. Let $\nu$ be \textbf{any} EMV solution of \eqref{eq:mvcauchy} which satisfies
\begin{equation}\label{eq:initialdata}
\limsup_{T\to 0} \frac{1}{T}\int_0^T \int_{\R^d} \ip{\nu_{(x,t)}}{|u(x,t)-\xi|}\ dxdt \leq \int_{\R^d} \ip{\sigma_x}{|u_0(x)-\xi|}\ dx.
\end{equation}
Then for all $t>0$,
\[
\int_{\R^d} \ip{\nu_{(x,t)}}{|u(x,t) - \xi|}\ dx \leq \int_{\R^d} \ip{\sigma_x}{|u_0(x)-\xi|}\ dx,
\]
or equivalently,
\[
\Bigl\|W_1\bigl(\nu_{(\cdot,t)}, \delta_{u(\cdot,t)}\bigr)\Bigr\|_{L^1(\R^d)} \leq \Bigl\|W_1\bigl(\sigma, \delta_{u_0}\bigr)\Bigr\|_{L^1(\R^d)}.
\]
In particular, if $\sigma = \delta_{u_0}$ then $\nu = \delta_{u}$.
\end{theorem}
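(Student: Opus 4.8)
The plan is to run a Kruzhkov doubling-of-variables argument across the pair consisting of the genuine entropy solution $u$ and the EMV solution $\nu$, to produce a ``bulk'' contraction inequality on $\R^d\times(0,\infty)$, and then to use the weak control \eqref{eq:initialdata} to propagate this inequality down to $t=0$. Throughout one works with the Kruzhkov family of entropy pairs $\eta_\xi(v)=|v-\xi|$, $q_\xi(v)=\sgn(v-\xi)\bigl(f(v)-f(\xi)\bigr)$, $\xi\in\R$: each is a convex (Lipschitz) pair, a limit of the smooth entropies $\sqrt{(v-\xi)^2+\delta^2}-\delta$ as $\delta\to0$, so the EMV entropy inequality \eqref{eq:emvsoln} holds for every $(\eta_\xi,q_\xi)$ by approximation, and similarly for the one-sided entropies $(v\mp M)_\pm$. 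A first use of the latter with $M=\|\sigma\|_{L^\infty}$, together with a spatial cut-off, gives $\supp\nu_{(x,t)}\subseteq[-M,M]$ for a.e.\ $(x,t)$; thus $\nu$ is uniformly bounded, $f$ may be taken Lipschitz on $[-M,M]$, and (as for entropy solutions) finite speed of propagation holds for $\nu$. Writing $H(x,t):=\ip{\nu_{(x,t)}}{|u(x,t)-\xi|}$ and $Q(x,t):=\ip{\nu_{(x,t)}}{\sgn(u(x,t)-\xi)(f(u(x,t))-f(\xi))}$ (with $\xi$ the integration variable of $\nu_{(x,t)}$), both are bounded, $H(\cdot,t)\in L^1(\R^d)$, and $|Q|\le\Lip(f|_{[-M,M]})\,H$; set $g(t):=\int_{\R^d}H(x,t)\,dx$.

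For the doubling, fix a nonnegative $\psi\in C^1_c(\R^d\times(0,\infty))$, let $\varrho_\eps,\omega_\eps$ be standard mollifiers on $\R^d$ and $\R$, and put $\Phi_\eps(x,t,y,s)=\psi\bigl(\tfrac{x+y}{2},\tfrac{t+s}{2}\bigr)\varrho_\eps\bigl(\tfrac{x-y}{2}\bigr)\omega_\eps\bigl(\tfrac{t-s}{2}\bigr)$, which for small $\eps$ is supported in $\{t,s>0\}$. I would (i) apply the Kruzhkov entropy inequality for $u=u(y,s)$ with constant $k=\xi$ and test function $(y,s)\mapsto\Phi_\eps(x,t,y,s)$, and integrate the result against $d\nu_{(x,t)}(\xi)\,dx\,dt$; and (ii) apply \eqref{eq:emvsoln} for $\nu$ with the pair $(\eta_{u(y,s)},q_{u(y,s)})$ and test function $(x,t)\mapsto\Phi_\eps(x,t,y,s)$, and integrate over $(y,s)$. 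No initial terms arise. Adding (i) and (ii), the symmetry $\sgn(a-\xi)(f(a)-f(\xi))=\sgn(\xi-a)(f(\xi)-f(a))$ makes the two flux contributions coincide, while $(\partial_t+\partial_s)\Phi_\eps$ and $(\nabla_x+\nabla_y)\Phi_\eps$ leave only derivatives falling on $\psi$. Letting $\eps\to0$ collapses $(x,t)$ and $(y,s)$ onto the diagonal and yields
\[
\int_{\R_+}\!\int_{\R^d}\partial_t\psi\,H(x,t)+\nablax\psi\cdot Q(x,t)\,dx\,dt\ \ge\ 0
\]
for every such $\psi$, i.e.\ $\partial_tH+\nablax\!\cdot Q\le0$ in $\D'(\R^d\times(0,\infty))$.

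To conclude, insert $\psi(x,t)=\chi_R(x)\zeta(t)$ with $\zeta\ge0$ supported in $(0,\infty)$ and $\chi_R$ a cut-off equal to $1$ on $B_R$; using $|Q|\le\Lip(f|_{[-M,M]})H$, $H(\cdot,t)\in L^1$, and finite speed of propagation, the flux term involving $\nabla\chi_R$ vanishes as $R\to\infty$, leaving $\partial_tg\le0$ in $\D'((0,\infty))$. Hence $g$ has a non-increasing representative on $(0,\infty)$, so $g(t)\le g(\tau)$ for $0<\tau\le t$, whence $g(t)\le\tfrac1T\int_0^Tg(\tau)\,d\tau$ for every $T\in(0,t]$; letting $T\to0$ and invoking \eqref{eq:initialdata} gives $g(t)\le\int_{\R^d}\ip{\sigma_x}{|u_0(x)-\xi|}\,dx$ for all $t>0$, the asserted bound. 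The equivalent $W_1$ statement is immediate from $W_1(\mu,\delta_a)=\ip{\mu}{|\cdot-a|}$; and if $\sigma=\delta_{u_0}$ the right-hand side is $0$, forcing $\ip{\nu_{(x,t)}}{|u(x,t)-\xi|}=0$, i.e.\ $\nu_{(x,t)}=\delta_{u(x,t)}$, for a.e.\ $(x,t)$.

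The step I expect to be the main obstacle is the rigorous execution of the doubling: one must verify the joint measurability of $(x,t,y,s)\mapsto\ip{\nu_{(x,t)}}{|u(y,s)-\xi|}$ and of the corresponding flux (this is where the Young-measure structure of $\nu$ is used), justify Fubini for the interchange of the $(x,t)$- and $(y,s)$-integrations, and justify the passage $\eps\to0$ by dominated convergence, using the uniform bound on $\nu$ and the $L^1_{\rm loc}$-continuity of translations of $u$. A secondary subtlety, already accommodated in the strategy above, is that the argument must never refer to a time trace $\nu_{(\cdot,0)}$, which need not exist; this is precisely why the bulk inequality is derived only for $t>0$ and the initial condition enters solely through the Cesàro-type bound \eqref{eq:initialdata}.
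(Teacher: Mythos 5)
Your proposal is correct and follows essentially the same route as the paper: a Kruzkov-type contraction inequality $\partial_t\ip{\nu_{(x,t)}}{|u(x,t)-\xi|}+\nablax\cdot\ip{\nu_{(x,t)}}{q(\xi,u(x,t))}\le 0$ in $\D'(\R^d\times(0,\infty))$, reduced to a purely temporal inequality for $V(t)=\int_{\R^d}\ip{\nu_{(x,t)}}{|u(x,t)-\xi|}\,dx$ and closed via the Ces\`aro initial-data condition \eqref{eq:initialdata}. The only difference is that the paper black-boxes the bulk inequality by citing DiPerna's Theorem 4.1 while you reprove it by doubling of variables, and your endgame with the spatial cutoff $\chi_R$ is a slightly more careful version of the paper's direct use of the space-independent test function $\phi(x,t)=\theta(t)$.
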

\begin{proof}
We follow DiPerna \cite{DIP1} who proved the uniqueness of scalar MV solutions subject to atomic initial data.
Here, we quantify stability in terms of the $W_1$-metric, which is related to the $L^1(x,\ev)$-stability of \emph{kinetic} solutions associated with \eqref{eq:cauchy}; see \cite{PT91}.

For $\xi\in\R$, let $(\eta(\xi,u), q(\xi,u))$ be the Kruzkov entropy pair, defined as
\[
\eta(\xi,u) := |\xi-u|, \qquad q(\xi,u) := \sgn(\xi-u)(f(\xi)-f(u)), \qquad u,\xi\in\R.
\]
By \cite[Theorem 4.1]{DIP1} we know that for any entropy solution $u$ of \eqref{eq:cauchy} and any entropy MV solution $\nu$ of \eqref{eq:mvcauchy}, we have
\[
\partial_t \ip{\nu_{(x,t)}}{\eta\bigl(\xi,u(x,t)\bigr)} + \nablax\cdot\ip{\nu_{(x,t)}}{q\bigl(\xi,u(x,t)\bigr)} \leq 0 \qquad \text{in } \D'(\R^d\times(0,\infty)),
\]
that is,
\[
\int_{\R_+}\int_{\R^d}\left( \partial_t \phi(x,t) \int_{\R^N}\eta\bigl(\xi,u(x,t)\bigr)\ d\nu_{(x,t)}(\xi) + \nablax\phi(x,t) \cdot \int_{\R^N}q\bigl(\xi,u(x,t)\bigr)\ d\nu_{(x,t)}(\xi)\right)dxdt \geq 0
\]
for all test functions $0\leq\phi\in C^1_c\bigl(\R^d\times(0,\infty)\bigr)$. Setting $\phi(x,t) = \theta(t)$ for a $\theta\in C_c^\infty\bigl((0,\infty)\bigr)$, we get
\[
\int_{\R_+} \theta'(t) V(t)\ dt \geq 0, \qquad V(t) := \int_{\R^d}\ip{\nu_{(x,t)}}{|\xi - u(x,t)|}\ dx.
\]
Letting $\theta$ be a smooth approximation of the indicator function on an interval $[0,t_0]$, we find in light of \eqref{eq:initialdata} that $V(t_0) \leq \int_{\R^d} \ip{\sigma_x}{|u_0(x)-\xi|}\ dx$ for almost every $t_0>0$.
\end{proof}

\subsection{Systems of conservation laws}
It is clear from the above discussion that non-atomic initial data might lead to multiple EMV solutions, see also the discussions in remark $2.3$. However, the scalar results also suggest some possible stability with respect to perturbations of \emph{atomic} initial data. Based on these considerations, we propose the following (weaker) notion of stability.

\begin{terminology}\label{def:mvstab}
The MV Cauchy problem \eqref{eq:mvcauchy} is \emph{MV stable} if the following property holds.
\begin{quote}
For every $u_0 \in L^\infty(\R^d,\R^N)$ and $\sigma \in \Young(\R^d,\R^N)$ such that
$$
{\mathcal D}\left(\delta_{u_0}, \sigma\right) \ll 1,
$$
there exists an EMV solution $\nu \in \Emv(\delta_{u_0})$ such that
$$
{\mathcal D}\left(\nu, \nu^{\sigma}\right) \ll 1
$$
for every EMV solution $\nu^{\sigma} \in \Emv(\imv)$ (or a subset thereof).
\end{quote}
\end{terminology}

(Recall that $\Emv(\imv)$ denotes the set of all entropy MV solutions to the MV Cauchy problem \eqref{eq:mvcauchy}.) We have intentionally left out several details in the above definition: the admissible set of initial data; the subset of $\Emv(\cdot)$ for which the MV Cauchy problem is stable; and the distance ${\mathcal D}$ on the set of Young measures. Still, the concept of MV stability carries one of the main messages in this paper: despite the well-documented instability of entropic weak solutions, as shown for example in the introduction and in Section \ref{sec:6}, one could still hope for a stable solution of systems of conservation laws, when it is interpreted as a measure-valued solution, subject to  atomic initial data.

Carrying out the full scope of this paradigm for general systems of conservation laws is currently beyond reach. Instead, we examine the question of whether EMV solutions of selected systems of conservation laws are  stable or not with the aid of numerical experiments reported in Section \ref{sec:6}. As for the analytical aspects, we recall that in the scalar case, measure-valued perturbations of atomic initial data are stable (Theorem \ref{thm:scalarstable2}).
In the following theorem we prove the MV stability in the case of systems, provided we further limit ourselves to   MV perturbations of  \emph{classical solutions} of \eqref{eq:mvcauchy}. The proof,  along the lines of \cite[Theorem 2.2]{DMT12},  implies weak-strong uniqueness, as in \cite{BDS1}. In particular, the theorem provides consistency of EMV solutions with classical solutions of \eqref{eq:cauchy}, as long as the latter exists.

\begin{theorem}\label{thm:classicalsoln}
Assume that there exists a classical solution $u\in W^{1,\infty}(\R^d\times\R_+,\R^N)$ of \eqref{eq:cauchy} with initial data $u_0$, both taking values in a compact set $K\subset\R^N$. Let $\nu$ be an EMV solution of \eqref{eq:mvcauchy} such that the support of both $\nu$ and its initial MV data $\sigma$ are contained in $K$. Assume that $\eta$ is uniformly convex on $K$. Then for all $t>0$, there exists a constant $C$ depending on $u$, such that 
\[
\int_{\R^d} \ip{\nu_{(x,t)}}{|u(x,t) - \xi|^2}\ dx \leq e^{Ct}\int_{\R^d} \ip{\sigma_x}{|u_0(x)-\xi|^2}\ dx,
\]
or equivalently,
\[
\Bigl\|W_2\bigl(\nu_{(\cdot,t)}, \delta_{u(\cdot,t)}\bigr)\Bigr\|_{L^2(\R^d)} \leq e^{Ct} \Bigl\|W_2\bigl(\sigma, \delta_{u_0}\bigr)\Bigr\|_{L^2(\R^d)}.
\]
In particular, if $\sigma = \delta_{u_0}$ then $\nu = \delta_{u}$, and so any (classical, weak or measure-valued) solution must coincide with $u$.
\end{theorem}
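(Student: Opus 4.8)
The plan is to run a measure-valued version of the classical relative-entropy (Dafermos–DiPerna) argument. Since $u$ is a classical solution it satisfies, pointwise, both the conservation law and the entropy identity $\partial_t\eta(u)+\nabla_x\cdot q(u)=0$ for every entropy pair. The key object is the \emph{relative entropy}
\[
\eta(\xi\mid u):=\eta(\xi)-\eta(u)-\eta'(u)\cdot(\xi-u),
\]
together with the relative flux $q(\xi\mid u):=q(\xi)-q(u)-\eta'(u)\cdot\bigl(f(\xi)-f(u)\bigr)$. Uniform convexity of $\eta$ on the compact set $K$ gives two-sided bounds $c|\xi-u|^2\le\eta(\xi\mid u)\le C|\xi-u|^2$ for all $\xi,u\in K$, so it suffices to prove a Gronwall estimate for $Y(t):=\int_{\R^d}\ip{\nu_{(x,t)}}{\eta(\xi\mid u(x,t))}\,dx$. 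The final $W_2$-reformulation is then immediate from the definition of the $2$-Wasserstein distance (the transport plan $\pi=(\mathrm{id}\times u(x,t))_\#\nu_{(x,t)}$ is admissible between $\nu_{(x,t)}$ and $\delta_{u(x,t)}$, and conversely it is the only one).

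First I would test the EMV entropy inequality \eqref{eq:emvsoln} for the pair $(\eta,q)$ against test functions of the form $\phi(x,t)=\theta(t)$ (localised in time, constant in $x$ thanks to the compact support of $\nu$, which allows an approximation of the indicator of $[0,t_0]$ as in the proof of Theorem~\ref{thm:scalarstable2}); this yields
\[
\int_{\R^d}\ip{\nu_{(x,t_0)}}{\eta}\,dx\le\int_{\R^d}\ip{\sigma_x}{\eta}\,dx
\]
for a.e.\ $t_0>0$. Next I would use $u$ as a test field: multiply the classical entropy identity for $u$ by appropriate cutoffs and, crucially, use the MV weak formulation \eqref{eq:mvsoln} with the (time-dependent, $x$-dependent) test function $\eta'(u(x,t))$ — legitimate after mollification since $u\in W^{1,\infty}$ — to obtain an identity for $\int_{\R^d}\ip{\nu}{\eta'(u)\cdot(\xi-u)}\,dx$. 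Assembling these three ingredients gives
\[
Y(t_0)\le Y(0)+\int_0^{t_0}\!\!\int_{\R^d}\Bigl(\nabla_x u:\ip{\nu}{q(\xi\mid u)-\text{(flux remainder)}}\Bigr)dx\,dt,
\]
i.e.\ the time-derivative of $Y$ is controlled by a quadratic-in-$(\xi-u)$ integrand weighted by $\|\nabla_x u\|_{L^\infty}$ and $\|\partial_t\eta'(u)\|_{L^\infty}$. Bounding that integrand by $C(u)\,\eta(\xi\mid u)$ using the relative-flux estimate on $K$ (here $C$ depends on $u$ through $\|u\|_{W^{1,\infty}}$ and the moduli of continuity of $f',\eta',\eta''$ on $K$) and invoking Gronwall yields $Y(t)\le e^{Ct}Y(0)$, which is the claim after using the convexity bounds on $\eta(\xi\mid u)$.

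The main obstacle is the bookkeeping in the middle step: the MV formulations \eqref{eq:mvsoln}–\eqref{eq:emvsoln} only allow $C^1_c$ test functions that are \emph{scalar}- or \emph{vector}-valued and do not a priori permit composing with $u$, so one must mollify $u$, justify passing derivatives onto the smooth approximation, and control the commutator errors as the mollification parameter tends to zero — this is where $u\in W^{1,\infty}$ and $\mathrm{supp}\,\nu\subset K$ are used. A secondary technical point is the very weak sense in which the initial data is attained (Remark~\ref{rem:initialdata}): one does not have $\nu_{(\cdot,0)}=\sigma$ directly, only equality of barycenters plus $\limsup_{t\to0}\ip{\nu}{\eta}\le\ip{\sigma}{\eta}$; but since $\eta(\xi\mid u_0)$ is itself (up to an affine-in-$\xi$ term, whose $\nu$-average is pinned down by the barycenter condition \eqref{eq:mvsoln}) a convex entropy evaluated against $\nu$, this is exactly enough to get $\limsup_{t\to0}Y(t)\le Y(0)=\int_{\R^d}\ip{\sigma_x}{\eta(\xi\mid u_0(x))}\,dx$, closing the argument. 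The concluding assertion $\sigma=\delta_{u_0}\Rightarrow\nu=\delta_u$ follows because then $Y(0)=0$, hence $Y(t)=0$, hence $\nu_{(x,t)}$ is supported at $u(x,t)$ for a.e.\ $(x,t)$; and since a classical solution is in particular a weak and a measure-valued solution, weak–strong uniqueness in the stated generality follows.
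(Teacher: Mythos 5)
Your proposal is correct and follows essentially the same route as the paper's proof: the relative-entropy (Dafermos--DiPerna) argument, obtained by subtracting the MV weak formulation from the classical one with the test field $\eta'(u)\theta(t)$, combining with the tested entropy inequalities, bounding the quadratic remainder $Z^i\cdot\partial_i\eta'(u)$ by $C\ip{\nu}{|u-\xi|^2}$ via uniform convexity, and closing with Gr\"onwall. The technical points you flag (mollifying $\eta'(u)$ to make it an admissible $C^1_c$ test function, and the weak attainment of the initial data) are indeed the only places where the paper's exposition is terser than yours.
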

\begin{proof}
\newcommand{\au}{\overline{u}}
Denote $\au := \ip{\nu}{\id}$ and $\au_0 := \ip{\sigma}{\id}$. Define the \emph{entropy variables} $\ev = \ev(x,t) := \eta'(u(x,t))$ and denote $\ev_0 := \ev(x,0) = \eta'(u_0)$. It is readily verified that $\ev_t = -(f^i)'(u) \partial_{i}\ev$ (where $\partial_i = \frac{\partial}{\partial_{x_i}}$). Here and in the remainder of the proof we use the Einstein summation convention.

Subtracting \eqref{eq:mvsoln} from \eqref{eq:wsoln} and putting $\phi(x,t) = \ev(x,t)\theta(t)$ for some $\theta \in C_c^1(\R_+)$ gives
\begin{align*}
0 &= \int_{\R_+}\int_{\R^d} (\au-u)\cdot\bigl(\ev_t\theta + \ev\theta'\bigr) + \bigl(\ip{\nu}{f^i} - f^i(u)\bigr)\cdot \partial_i\ev\theta\ dxdt + \int_{\R^d} (\au_0-u_0)\cdot \ev_0\theta(0)\ dx \\*
&= \int_{\R_+}\int_{\R^d} (\au-u)\cdot \ev\theta' + \bigl(\underbrace{\ip{\nu}{f^i} - f^i(u) - (f^i)'(u)(\au-u)}_{=: Z^i}\bigr)\cdot \partial_i\ev\theta\ dxdt + \int_{\R^d} (\au_0-u_0)\cdot \ev_0\theta(0)\ dx
\end{align*}
Next, note that since $u$ is a classical solution, the entropy inequality \eqref{eq:entrcond} is in fact an equality. Hence, subtracting \eqref{eq:emvsoln} from \eqref{eq:entrcond} and putting $\phi(x,t) = \theta(t)$ gives
\begin{align*}
0 &\leq \int_{\R_+}\int_{\R^d} \bigl(\ip{\nu}{\eta}-\eta(u)\bigr)\theta'\ dxdt + \int_{\R^d} \bigl(\ip{\sigma}{\eta}-\eta(u_0)\bigr)\theta(0)\ dx.
\end{align*}
Subtracting these two expressions thus gives
\begin{equation}\label{eq:relentrineq}
0 \leq \int_{\R_+}\int_{\R^d} \hat{\eta}\theta' - Z^i\cdot \partial_i\ev\theta\ dxdt + \int_{\R^d} \hat{\eta}_0\theta(0)\ dx.
\end{equation}
where
\[
\hat{\eta} := \ip{\nu}{\eta}-\eta(u) -(\au-u)\cdot \ev, \qquad \hat{\eta}_0 := \ip{\sigma}{\eta}-\eta(u_0) - (\au_0-u_0)\cdot \ev_0.
\]

Let $\delta>0$, and let $t>0$ be a Lebesgue point for the function $s \mapsto \int_\R \hat{\eta}(x,s)\ dx$. We define
\[
\theta(s) := \begin{cases}
1 & s<t \\
1 - \frac{s-t}{\delta} & t \leq s < t+\delta \\
0 & t+\delta\leq s.
\end{cases}
\]
Taking the limit $\delta \to 0$ in \eqref{eq:relentrineq} then gives
\[
\int_{\R^d} \hat{\eta}(t,x)\ dx \leq - \int_0^{t}\int_{\R^d} Z^i\cdot \partial_i\ev\ dxds + \int_{\R^d} \hat{\eta}_0\ dx.
\]
Since $\nu_{(x,s)}$ is a probability distribution, it follows from the uniform convexity of $\eta$ that
\[
\hat{\eta} = \int_K \eta(\xi) - \eta(u) - \eta'(u)\cdot (\xi-u)\ d\nu_{(x,s)} \geq c\int_K |u-\xi|^2\ d\nu_{(x,s)} = c\ip{\nu_{(x,s)}}{|u-\xi|^2}.
\]
Similarly, by the $L^\infty$ bound on both $u$ and $\partial_i\ev$, we have
\[
\hat{\eta}_0 \leq C\ip{\sigma}{|u_0-\xi|^2} \qquad \text{and} \qquad |Z^i\cdot \partial_i\ev| \leq C\ip{\nu}{|u-\xi|^2}.
\]
Hence,
\[
\int_{\R^d}\ip{\nu_{(x,t)}}{|u-\xi|^2}\ dx \leq C\int_0^{t}\int_\R \ip{\nu_{(x,s)}}{|u-\xi|^2}\ dxds + C\int_{\R^d} \ip{\sigma_x}{|u_0-\xi|^2}\ dx.
\]
By the integral form of Gr\"onwall's lemma, we obtain the desired result.
\end{proof}
\begin{remark}
In addition to proving consistency of entropy measure valued solutions with classical solutions (when they exist), the above theorem also provides local (in time) uniqueness of MV solutions in the following sense. Let $u_0 \in W^{1,\infty}(\R^d,\R^N)$ be the initial data in \eqref{eq:cauchy}, then by standard results \cite{DAF1}, we have local (in time) existence of a unique classical solution $u \in W^{1,\infty}(\R^d \times \R_+,\R^N)$. By the above theorem, $\delta_u$ is also the unique EMV solution of the MV Cauchy problem \eqref{eq:mvcauchy} with initial data $\delta_{u_0}$. However, uniqueness can break down once this MV solution develops singularities.
\end{remark}

\section{Construction of approximate EMV solutions}\label{sec:4}
Although existence results for specific systems of conservation laws such as polyconvex elastodynamics \cite{DMT12}, two-phase flows \cite{Frid1,Frid2} and transport equations \cite{CG1} are available, there exists no  global existence result for a generic system of conservation laws. We pursue a different approach by constructing approximate EMV solutions and proving their convergence. A procedure for constructing approximate EMVs is outlined in the present section. It provides a constructive proof of existence of EMV solutions for a generic system of conservation laws,
and it is implemented in the numerical simulations reported in Section \ref{sec:6}.

\subsection{Numerical approximation of EMV solutions}
The construction of approximate EMV solutions consists of several ingredients.  It begins with a proper choice of a numerical scheme for approximating the system of conservation laws \eqref{eq:cauchy}.
\subsubsection{Numerical schemes for one- and multi-dimensional conservation laws}
For simplicity, we begin with the description of a numerical scheme for a one-dimensional  system of conservation laws,  \eqref{eq:cauchy} with $d=1$.
We  discretize our computational domain into cells $\cell_i := [x_\imhf,x_\iphf)$ with mesh size $\Dx = x_\iphf - x_\imhf$ and midpoints
$$
x_i := \frac{x_\imhf + x_\iphf}{2}.
$$
Note that we consider a uniform mesh size $\Dx$ only for the sake of simplicity of the exposition.
Next, we discretize the one-dimensional system, $\partial_t u+\partial_x f(u)=0$, with the following semi-discrete finite difference scheme for $u^\Dx_i(t)\equiv u^\Dx(x_i,t)$ (cf.\ \cite{GR1,LEV1}):
\begin{subequations}\label{eqs:fvm}
\begin{equation}\label{eq:fvm}
\begin{split}
\frac{d}{dt}u^\Dx_i(t) + \frac{1}{\Dx}\left(F^\Dx_\iphf(t) - F^\Dx_\imhf(t)\right) &= 0 \qquad t>0,\ i\in\Z \\
u^\Dx_i(0) &= u^\Dx_0(x_i) \qquad i\in\Z.
\end{split}
\end{equation}
Here, $u_0^{\Dx}$ is an approximation to the initial data $u_0$. Henceforth, the dependence of $u$ and $F$ on $\Dx$ will be suppressed for notational convenience. The \emph{numerical flux function} $F_\iphf(t)$ is a function depending on $u(x_j,t)$ for $j=i-p+1,\dots,i+p$ for some $p\in\N$. It is assumed to be consistent with $f$ and locally Lipschitz continuous, i.e., for every compact $K\subset\R^N$ there is a $C>0$ such that
\[
|F_\iphf(t) - f(u_i(t))| \leq C\sum_{j=i-p+1}^{i+p} |u_j-u_i|
\]
whenever $u(x_j,t)\in K$ for $j=i-p+1,\dots,i+p$.

The semi-discrete scheme \eqref{eq:fvm} needs to be integrated in time to define a fully discrete numerical approximation. Again for simplicity, we will use an exact time integration, resulting in
\begin{equation}\label{eq:exactint}
u^\Dx_i(t+\Delta t) = u^\Dx_i(t) - \frac{1}{\Dx}\int_t^{t+\Delta t}\left(F_\iphf(\tau) - F_\imhf(\tau)\right)\ d\tau.
\end{equation}
\end{subequations}
The function $t \mapsto u(x_i,t)$ is then Lipschitz, that is,
\[
|u^\Dx(x_i,t) - u^\Dx(x_i,s)| \leq \frac{C}{\Dx}|t-s| \qquad \forall\ i\in\Z,\ t,s\in[0,T].
\]
In particular, for all $\Dx>0$ and $i\in\N$, the function $t \mapsto u(x_i,t)$ is differentiable almost everywhere.
We denote the  evolution operator associated with the one-dimensional scheme \eqref{eqs:fvm} with mesh size $\Dx$ by $\Soln^\Dl$, so that $u^\Dx = \Soln^\Dl u_0$.

A similar framework applies to systems of conservation laws in several space dimensions.
To simplify the notation we restrict ourselves to  the two-dimensional case (with the usual relabeling $(x_1,x_2) \mapsto (x,y)$), $\partial_t u+\partial_x f^x(u)+\partial_y f^y(u)=0$. 
\par We discretize our two-dimensional computational domain with into cells with mesh size $\Delta:=(\Dx_1,\Dx_2)$: with the usual relabeling $(\Dx_1,\Dx_2) \mapsto (\Dx,\Dy)$), these two-dimensional cells, $\cell_{i,j} := [x_\imhf,x_\iphf) \times [y_{j-1/2},y_{j+1/2})$ are assumed to a have a fixed mesh ratio,  $\Dx = x_\iphf - x_\imhf$ and $\Dy = y_\jphf - y_\jmhf$ such  that $\Dy = c \Dx$ for some constant $c$. Let
$$
\left(x_i,y_j\right) = \left(\frac{x_\imhf + x_\iphf}{2},\frac{y_\jmhf + y_\jphf}{2} \right)
$$
denote the mid-cells. We end up with the following semi-discrete finite difference scheme for $u^{\Delta}_{ij}= u^{\Delta}(x_i,y_j,t)$  (cf.\ \cite{LEV1,GR1}):
\begin{subequations}\label{eqs:fvm2}
\begin{equation}\label{eq:fvm2}
\begin{split}
\frac{d}{dt}u^{\Delta}_{ij}(t) + \frac{1}{\Dx}\left(F^{x,\Dx}_{\iphf,j}(t) - F^{x,\Dx}_{\imhf.j}(t)\right) &+ \frac{1}{\Dy}\left(F^{y,\Dy}_{i,\jphf}(t) - F^{y,\Dy}_{i,\jmhf}(t)\right) = 0, \qquad t>0,  \\
u^{\Delta}_{ij}(0) &= u^{\Delta}_0(x_i,y_j) \qquad i\in\Z.
\end{split}
\end{equation}
Here, $u_0^{\Delta}\approx u_0$ is the approximate initial data and  $F^{x,\Dx}_{\iphf,j}, F^{y,\Dy}_{i,\jphf}$ are the locally Lipschitz {numerical flux functions} which are  assumed to be  consistent with the flux function $f = \left(f^x,f^y\right)$.
We integrate the semi-discrete scheme \eqref{eq:fvm2} exactly in time to obtain
\begin{equation}\label{eq:exactint2D}
\begin{split}
u^{\Delta}_{ij}(t+\Delta t) =   u^{\Delta}_{ij}(t) & -  \frac{1}{\Dx}\int_t^{t+\Delta t}\left(F^{x,\Dx}_{\iphf,j}(\tau) - F^{x,\Dx}_{\imhf,j}(\tau)\right)\ d\tau   \\
 & -    \frac{1}{\Dy}\int_t^{t+\Delta t}\left(F^{y,\Dy}_{i,\jphf}(\tau) - F^{y,\Dy}_{i,\jmhf}(\tau)\right)\ d\tau.
\end{split}
\end{equation}
\end{subequations}
We denote the evolution operator corresponding to  \eqref{eqs:fvm2} and associated with the two dimensional  mesh size $\Delta:=(\Dx,\Dy)$ by $\Soln^\Delta$.

\subsubsection{Weak-$\ast$ convergent schemes}
The next ingredient in the construction of approximate EMV solutions  for \eqref{eq:mvcauchy} is to  employ the above numerical schemes in the following three step algorithm.

\begin{algorithm}\label{alg:approxmv}~
\begin{description}
\item[\textbf{Step 1:}] Let $u_0: \Omega \mapsto L^{\infty} (\R^d)$ be a random field on a probability space $(\Omega,\Sigmaalg,P)$ such that the initial Young measure $\sigma$ in \eqref{eq:mvcauchy} is the law of the random field $u_0$ (see Proposition \ref{prop:lawexists}).
\item[\textbf{Step 2:}] We evolve the initial random field by applying the numerical scheme \eqref{eq:fvm} for every $\omega \in \Omega$ to obtain an approximation $u^\Dl(\omega) := \Soln^{\Dl}u_0(\omega)$ to the solution random field $u(\omega)$, corresponding to the initial random field $u_0(\omega)$.
\item[\textbf{Step 3:}] Define the approximate measure-valued solution $\nu^\Dl$ as the law of $u^\Dl$ with respect to $P$ (see Appendix \ref{app:younglaw}).
\end{description}
\end{algorithm}
By Proposition \ref{prop:lawwelldef} (Appendix \ref{app:younglaw}), $\nu^{\Dl}$ is a Young measure. This sequence of Young measures $\nu^{\Dl}$ serve as approximations to the EMV solutions of \eqref{eq:mvcauchy}.

Next, we show that if the numerical scheme \eqref{eq:fvm} satisfies a set of criteria, then the approximate Young measures $\nu^{\Dl}$ generated by Algorithm 4.1 will converge weak* to an EMV solution of \eqref{eq:mvcauchy}. Specific examples for such weak* convergent schemes is provided in Section \ref{sec:5}.  To simplify the presentation, we restrict attention to the one-dimensional case; the argument is readily extended to the general multi-dimensional case, and the details can be found in
\cite{FJO1} (see section $3.2$, in particular Lemmas $3.4$ and $3.5$).

\begin{theorem}
\label{thm:convmv}
Assume that the approximate solutions $u^\Dl$ generated by the one-dimensional numerical scheme \eqref{eqs:fvm} satisfy the following:
\begin{subequations}\label{eqs:EMVcond}
\begin{itemize}
\item {\bf Uniform boundedness:}
\begin{equation}
\label{eq:linf}
\|u^{\Dx}(\omega)\|_{L^{\infty}(\R \times \R_+)} \leq C, \quad \forall\ \omega \in \Omega,\ \Dx > 0.
\end{equation}
\item {\bf Weak BV:}  There exists $1\leq \tve < \infty$ such that
\begin{equation}\label{eq:tvbound}
\lim_{\Dx\to 0}\int_0^T \sum_i \left|u^\Dx_{i+1}(\omega,t) - u_i^\Dx(\omega,t)\right|^\tve\Dx dt = 0 \qquad \forall\ \omega\in\Omega
\end{equation}
\item {\bf Entropy consistency:} The numerical scheme \eqref{eq:fvm} is entropy stable with respect to an entropy pair $(\eta,q)$ i.e, there exists a numerical entropy flux $Q = Q_\iphf(t)$, consistent with the entropy flux $q$ and locally Lipschitz, such that computed solutions satisfy the discrete entropy inequality
\begin{equation}\label{eq:dentrineq}
\frac{d}{dt}\eta(u^\Dx) + \frac{1}{\Dx}\left(Q^\Dx_\iphf - Q^\Dx_\imhf \right) \leq 0 \qquad \forall\ t>0,\ i\in\Z,\ \omega \in \Omega.
\end{equation}
\item {\bf Consistency with initial data:} If $\sigma^\Dx$ is the law of $u_0^\Dx$, then
\begin{equation}\label{eq:initialcond}
\lim_{\Dx\to 0}\int_{\R}\psi(x)\ip{\sigma^\Dx_x}{\id}\ dx = \int_{\R}\psi(x)\ip{\sigma_x}{\id}\ dx \qquad \forall\ \psi\in C_c^1(\R).
\end{equation}
and
\begin{equation}\label{eq:entrinitialcond}
\limsup_{\Dx\to 0}\int_{\R}\psi(x) \ip{\imv^\Dx_x}{\eta} \ dx \leq \int_{\R}\psi(x) \ip{\imv_x}{\eta} \ dx \qquad \forall\ 0\leq \psi\in C_c^1(\R)
\end{equation}
\end{itemize}
\end{subequations}
Then the approximate Young measures $\nu^{\Dx}$  converge weak* (up to a subsequence) as $\Delta x \rightarrow 0$, to an EMV solution $\nu \in \Young(\R\times \R_+,\R^N)$ of \eqref{eq:mvcauchy}.
\end{theorem}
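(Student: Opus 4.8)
The plan is a Lax--Wendroff-type consistency argument, carried out for each fixed $\omega\in\Omega$ --- where $u^\Dx(\omega)$ is a genuine function solving the discrete scheme --- and then transferred to the Young-measure picture by integrating over $\Omega$ and using that $\nu^\Dx$ is the law of $u^\Dx$. First I would extract a limit: by the uniform bound \eqref{eq:linf}, every $u^\Dx(\omega)$, and hence each $\nu^\Dx$, is supported in a fixed compact set $K\subset\R^N$, so $\{\nu^\Dx\}$ is uniformly bounded and the fundamental theorem of Young measures (Theorem \ref{thm:young}) provides a subsequence, not relabeled, with $\nu^\Dx\wto\nu$ for some $\nu\in\Young(\R\times\R_+,\R^N)$. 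Since everything takes values in $K$, we may replace $\id,f,\eta,q$ by compactly supported continuous functions agreeing with them on $K$, so that $\ip{\nu^\Dx}{g}\wsto\ip{\nu}{g}$ in $L^\infty(\R\times\R_+)$ for each $g\in\{\id,f,\eta,q\}$. It then remains to verify that $\nu$ satisfies \eqref{eq:mvsoln} and \eqref{eq:emvsoln}.

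For the first of these, fix $\omega$ and $\phi\in C^2_c(\R\times\R_+)$ (enough, by density in $C^1_c$), and abbreviate $u_i=u^\Dx_i(\omega,t)$. Since $t\mapsto u_i$ is Lipschitz, multiplying \eqref{eq:fvm} by $\phi(x_i,t)\Dx$, summing over the finitely many relevant $i$, integrating in $t$, and then summing by parts in $i$ and integrating by parts in $t$ (no boundary term at $t=\infty$) gives the discrete weak form
\[
\int_{\R_+}\sum_i \partial_t\phi(x_i,t)\,u_i\,\Dx\,dt + \int_{\R_+}\sum_i D^+\phi(x_i,t)\,F_{\iphf}\,\Dx\,dt + \sum_i\phi(x_i,0)\,u_i(0)\,\Dx = 0,
\]
where $D^+\phi(x_i,t):=(\phi(x_{i+1},t)-\phi(x_i,t))/\Dx$. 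Integrating this identity over $\Omega$ against $P$ (Fubini, using the uniform bounds) and recalling $\int_\Omega g(u_i)\,dP=\ip{\nu^\Dx_{(x_i,t)}}{g}$ and $\int_\Omega g(u_i(0))\,dP=\ip{\sigma^\Dx_{x_i}}{g}$, one may then replace $F_{\iphf}$ by $f(u_i)$; by consistency and local Lipschitz continuity of the numerical flux the error is at most $C\|\partial_x\phi\|_{L^\infty}\int_0^T\sum_i|u_{i+1}-u_i|\,\Dx\,dt$ uniformly in $\omega$, and H\"older in $i$ (only $O(1/\Dx)$ indices meet $\supp\phi$) and in $t$ bounds this by $CT^{1-1/\tve}\bigl(\int_0^T\sum_i|u_{i+1}-u_i|^\tve\,\Dx\,dt\bigr)^{1/\tve}$, which tends to $0$ by the weak BV hypothesis \eqref{eq:tvbound}. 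Turning the $\ell^\tve$ weak BV bound into this vanishing $\ell^1$ bound is the one genuinely quantitative step, and is exactly why a weak BV estimate rather than full BV suffices. Letting $\Dx\to0$, using $D^+\phi\to\partial_x\phi$ uniformly, the weak\,$*$ convergences $\ip{\nu^\Dx}{\id}\wsto\ip{\nu}{\id}$ and $\ip{\nu^\Dx}{f}\wsto\ip{\nu}{f}$ tested against $\partial_t\phi,\partial_x\phi\in L^1$, and \eqref{eq:initialcond} with $\psi=\phi(\cdot,0)$ for the initial term, recovers \eqref{eq:mvsoln}. (Evaluating $\phi$ at the midpoints rather than averaging over cells contributes only $O(\Dx)$ errors.)

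The entropy inequality is obtained by running the same argument with the discrete entropy inequality \eqref{eq:dentrineq} in place of the scheme: multiplying by $\phi(x_i,t)\Dx$ with $0\leq\phi\in C^2_c$ preserves the inequality, and after the identical summation/integration by parts, integration over $\Omega$, and replacement of the numerical entropy flux $Q_{\iphf}$ by $q(u_i)$ (whose error is again controlled by \eqref{eq:tvbound} together with the local Lipschitz consistency of $Q$), passing to the limit yields \eqref{eq:emvsoln}. The only difference is that the initial term is now controlled only from one side: by \eqref{eq:entrinitialcond}, $\limsup_{\Dx\to0}\sum_i\phi(x_i,0)\ip{\sigma^\Dx_{x_i}}{\eta}\,\Dx\leq\int_\R\phi(x,0)\ip{\sigma_x}{\eta}\,dx$, and since the time- and flux-terms converge, the inequality $\geq0$ still passes to the limit. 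Hence $\nu\in\Emv(\sigma)$. (If the scheme is entropy stable for a whole family of entropy pairs, the argument applies verbatim to each of them.)

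The step I expect to be the main obstacle is precisely the flux-consistency error estimate: converting the $\ell^\tve$-type weak BV control \eqref{eq:tvbound} into a vanishing $\ell^1$-type bound via H\"older, uniformly in $\omega$, with the correct bookkeeping of how many cells meet $\supp\phi$ (and, for the multi-dimensional extension, the analogous anisotropic summation). Everything else --- the weak\,$*$ extraction, the summation/integration by parts, Fubini over $\Omega$, and the identification $\int_\Omega g(u^\Dx)\,dP=\ip{\nu^\Dx}{g}$ --- is routine once Young measures are set up as laws of random fields (Appendix \ref{app:young}), and follows the scalar argument of \cite{FJO1}.
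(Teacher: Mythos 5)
Your proposal is correct and follows essentially the same route as the paper's proof: extract a weak* limit via the fundamental theorem of Young measures using the uniform $L^\infty$ bound, run a Lax--Wendroff/summation-by-parts argument pathwise in $\omega$ and integrate over $\Omega$, split the numerical (entropy) flux into its consistent part plus an error controlled by H\"older together with the weak BV bound \eqref{eq:tvbound}, and pass to the limit, handling the initial terms via \eqref{eq:initialcond} and \eqref{eq:entrinitialcond}. The only cosmetic differences are that the paper establishes the entropy inequality first and obtains \eqref{eq:mvsoln} as the special case $\eta=\pm\id$, $q=\pm f$, and that it tests against cell averages of the test function rather than midpoint values.
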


\begin{proof}
From the assumption \eqref{eq:linf} that $u^\Dx$ is $L^\infty$-bounded, it follows that $\nu^\Dx$ is compactly supported, in the sense that its support $\supp\nu^\Dx_{(x,t)}$ lies in a fixed compact subset of $\R^N$ for every $(x,t)$; see Appendix \ref{app:youngunifbounded}. The fundamental theorem of Young measures (see Appendix \ref{app:younglpbound}) gives the existence of a $\nu\in\Young(\R^d\times\R_+,\R^N)$ and a subsequence of $\nu^\Dx$ such that $\nu^\Dx \wto \nu$ weak*

First, we show that the limit Young measure $\nu$ satisfies the entropy inequality \eqref{eq:emvsoln}. To this end,  let $\phi \in C_c^1(\R\times[0,T))$. Then
\begin{eqnarray*}
\begin{split}
\lefteqn{\int_0^T\int_{\R^d} \ip{\nu_{(x,t)}}{\eta}\partial_t\phi(x,t) + \ip{\nu_{(x,t)}}{q}\partial_x\phi(x,t)\ dxdt}\\
& &  \qquad = \lim_{\Dx\to 0} \int_0^T\int_{\R^d} \ip{\nu^\Dx_{(x,t)}}{\eta}\partial_t\phi(x,t) + \ip{\nu^\Dx_{(x,t)}}{q}\partial_x\phi(x,t)\ dxdt
\end{split}
\end{eqnarray*}
by the weak* convergence $\nu^\Dx \wto \nu$. Denote $\eta^\Dx(\omega,x,t) := \eta(u^\Dx(\omega,x,t))$. Then for every $\Dx>0$ we have
\begin{align*}
\int_0^T\int_{\R^d} &\ip{\nu^\Dx_{(x,t)}}{\eta}\partial_t\phi(x,t)\ dxdt + \int_{\R^d}\phi(x,0) \ip{\imv^\Dx_x}{\eta} \ dx = \int_\R\int_0^T -\partial_t\ip{\nu^\Dx_{(x,t)}}{\eta}\phi(x,t)\ dtdx \\
=&\ \int_\Omega\int_\R\int_0^T -\partial_t \eta^\Dx(\omega,x,t)\phi(x,t)\ dtdxdP(\omega) \\
\geq&\ \int_\Omega\int_\R\int_0^T \sum_i \ind_{\cell_i}(x)\frac{Q_\iphf(\omega,t) - Q_\imhf(\omega,t)}{\Dx}\ \phi(x,t)dtdxdP(\omega) \\
=&\ \int_\Omega\int_0^T\sum_i \frac{Q_\iphf(\omega,t) - Q_\imhf(\omega,t)}{\Dx}\int_{\cell_i}\phi(x,t)\ dxdtdP(\omega) \\
=&\ \int_\Omega\int_0^T\sum_i \left(Q_\iphf(\omega,t) - Q_\imhf(\omega,t)\right)\avg{\phi}_i^\Dx(t)\ dtdP(\omega) \\
=&\ -\int_\Omega\int_0^T\sum_i Q_\iphf(\omega,t)\frac{\avg{\phi}_{i+1}^\Dx(t) - \avg{\phi}_i^\Dx(t)}{\Dx}\ \Dx dtdP(\omega) \\
=&\ - \int_\Omega\int_0^T \sum_i q(u_i^\Dx(\omega,t))\frac{\avg{\phi}_{i+1}^\Dx(t) - \avg{\phi}_i^\Dx(t)}{\Dx}\ \Dx dtdP(\omega)\\
&\ -\int_\Omega\int_0^T \sum_i \left(Q_\iphf(\omega,t) - q(u_i^\Dx(\omega,t))\right)\frac{\avg{\phi}_{i+1}^\Dx(t) - \avg{\phi}_i^\Dx(t)}{\Dx}\ \Dx dtdP(\omega).
\end{align*}
(We have written $\avg{\phi}^\Dx_i(t) := \frac{1}{\Dx}\int_{\cell_i}\phi(x,t)\ dx$.) The first term can be written as
\begin{align*}
- \int_\Omega\int_0^T \sum_i q(u_i^\Dx(\omega,t))\frac{\avg{\phi}_{i+1}^\Dx(t) - \avg{\phi}_i^\Dx(t)}{\Dx}\ \Dx dt
&= -\int_0^T \sum_i \ip{\nu^\Dx_{(x_i,t)}}{q}\frac{\avg{\phi}_{i+1}^\Dx(t) - \avg{\phi}_i^\Dx(t)}{\Dx}\ \Dx dtdP(\omega) \\
&\to -\int_0^T\int_\R \ip{\nu_{(x,t)}}{q} \partial_x \phi(x,t)\ dxdt.
\end{align*}
The second term goes to zero:
\begin{align*}
\Big|\int_\Omega&\int_0^T \sum_i \left(Q_\iphf(\omega,t) - q(u_i^\Dx(\omega,t))\right)\frac{\avg{\phi}_{i+1}^\Dx(t) - \avg{\phi}_i^\Dx(t)}{\Dx}\ \Dx dtdP(\omega) \Big| \\
&\leq C\int_\Omega\int_0^T \sum_i \left|u^\Dx_{i+1}(\omega,t) - u_i^\Dx(\omega,t)\right| \left|\frac{\avg{\phi}_{i+1}^\Dx(t) - \avg{\phi}_i^\Dx(t)}{\Dx}\right|\ \Dx dtdP(\omega) \\
&\leq C\sup_\omega\left(\int_0^T \sum_i \left|u^\Dx_{i+1}(\omega,t) - u^\Dx_i(\omega,t)\right|^\tve\ \Dx dt\right)^{1/\tve} \|\partial_x\phi\|_{L^{\tve'}(\R\times(0,T))} \\
&\to 0
\end{align*}
by \eqref{eq:tvbound}, where $\tve'$ is the conjugate exponent of $\tve$. In conclusion, the limit $\nu$ satisfies \eqref{eq:emvsoln}.

The proof that the limit measure $\nu$ satisfies \eqref{eq:mvsoln} follows from the above by setting $\eta = \pm\id$ and $q = \pm f$.
\end{proof}

A similar construction can be readily performed in several space dimensions. To this end, we replace $\Soln^\Dx$ in Step 2 of Algorithm \ref{alg:approxmv} with the two-dimensional solution operator $\Soln^{\Delta}$, and the corresponding approximate solution $u^\Dx$ with $u^{\Delta}$. The weak* convergence of the resulting approximate Young measure $\nu^{\Delta}$ is described below.
\begin{theorem}
\label{thm:convmv2}
Assume that the approximate solutions $u^{\Delta}$ generated by scheme \eqref{eq:fvm2} satisfy the following:
\begin{itemize}
\item {\bf Uniform boundedness}:
\begin{equation}
\label{eq:linf2}
\|u^{\Delta}(\omega)\|_{L^{\infty}(\R^2 \times \R_+)} \leq C, \quad \forall \omega \in \Omega, \Dx,\Dy > 0.
\end{equation}
\item {\bf Weak BV}:  There exist  $1\leq \tve < \infty$ such that
\begin{equation}\label{eq:tvbound2}
\lim_{\Dx,\Dy\to 0}\int_0^T \sum_{i,j} \left(\left|u^\Delta_{i+1,j}(\omega,t) - u_{i,j}^\Delta(\omega,t)\right|^\tve + \left|u^\Delta_{i,j+1}(\omega,t) - u_{i,j}^\Delta(\omega,t)\right|^\tve \right) \Dx \Dy dt = 0 \qquad \forall\ \omega\in\Omega
\end{equation}
\item {\bf Entropy consistency}: The numerical scheme \eqref{eq:fvm2} is entropy stable with respect to an entropy pair $(\eta,q)$, in the sense that there exist locally Lipschitz numerical entropy fluxes $(Q^{x,\Dx},Q^{y,\Dy}) = (Q^{x,\Dx}_{\iphf,j}(t),Q^{y,\Dy}_{i,\jphf}(t))$, consistent with the entropy flux $q = (q^x,q^y)$, such that computed solutions satisfy the discrete entropy inequality
\begin{equation}\label{eq:dentrineq2}
\frac{d}{dt}\eta(u^\Delta) + \frac{1}{\Dx}\left(Q^{x,\Dx}_{\iphf,j} - Q^{x,\Dx}_{\imhf,j} \right) + \frac{1}{\Dy}\left(Q^{y,\Dy}_{i,\jphf} - Q^{y,\Dy}_{i,\jmhf} \right) \leq 0 \qquad \forall\ t>0,\ i,j\in\Z,\ \omega \in \Omega.
\end{equation}
\item {\bf Consistency with initial data}: Let $\sigma^{\Delta}$ be the law of the random field $u_0^{\Delta}$ that approximates the initial random field $u_0$. Then, the consistency conditions \eqref{eq:initialcond} and \eqref{eq:entrinitialcond} hold.
\end{itemize}
Then, the approximate Young measures $\nu^{\Delta}$ converge weak* (up to a subsequence) to a Young measure $\nu \in \Young(\R^2 \times \R_+, \R^N)$ as $\Dx,\Dy \rightarrow 0$ and $\nu$ is an EMV solution of \eqref{eq:mvcauchy} i.e,
\end{theorem}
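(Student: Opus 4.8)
The plan is to carry through the proof of Theorem \ref{thm:convmv} with the extra spatial variable appended at every step; the main new point is the weak BV error estimate, which is exactly why hypothesis \eqref{eq:tvbound2} carries one term per direction. First I would use the uniform bound \eqref{eq:linf2} to conclude that every $\nu^\Delta_{(x,y,t)}$ is supported in a fixed compact subset of $\R^N$ (Appendix \ref{app:youngunifbounded}), so that the fundamental theorem of Young measures (Appendix \ref{app:younglpbound}) produces a Young measure $\nu\in\Young(\R^2\times\R_+,\R^N)$ and a subsequence (not relabeled) with $\nu^\Delta\wto\nu$ weak*. It then remains to verify \eqref{eq:emvsoln} and \eqref{eq:mvsoln} for $\nu$.

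For the entropy inequality, fix $0\leq\phi\in C_c^1(\R^2\times[0,T))$. Weak* convergence identifies $\int_0^T\!\int_{\R^2}\ip{\nu}{\eta}\partial_t\phi + \ip{\nu}{q^x}\partial_x\phi + \ip{\nu}{q^y}\partial_y\phi\,dxdydt$ as the limit of the same expression with $\nu^\Delta$, so it suffices --- mirroring the one-dimensional computation --- to bound the discrete quantity $\int_0^T\!\int_{\R^2}\ip{\nu^\Delta}{\eta}\partial_t\phi\,dxdydt + \int_{\R^2}\phi(x,y,0)\ip{\sigma^\Delta_x}{\eta}\,dxdy$ from below by a sequence converging, as $\Dx,\Dy\to0$, to $-\int_0^T\!\int_{\R^2}\ip{\nu}{q^x}\partial_x\phi + \ip{\nu}{q^y}\partial_y\phi\,dxdydt$. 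For this I would: write $\partial_t\ip{\nu^\Delta_{(x,y,t)}}{\eta}$ cellwise as $\int_\Omega\partial_t\eta(u^\Delta_{ij}(\omega,t))\,dP(\omega)$; apply the discrete entropy inequality \eqref{eq:dentrineq2} to bound it below by minus the discrete divergence of $(Q^{x,\Dx},Q^{y,\Dy})$; sum by parts in both $i$ and $j$, producing the difference quotients $(\avg{\phi}^\Delta_{i+1,j}-\avg{\phi}^\Delta_{ij})/\Dx$ and $(\avg{\phi}^\Delta_{i,j+1}-\avg{\phi}^\Delta_{ij})/\Dy$ of the cell-averaged test function; and split each numerical entropy flux into the consistent value ($q^x(u^\Delta_{ij})$, resp.\ $q^y(u^\Delta_{ij})$) and a locally Lipschitz remainder. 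The consistent parts converge, using the weak* convergence and the $C^1$-smoothness of $\phi$, to $\int_0^T\!\int_{\R^2}\ip{\nu}{q^x}\partial_x\phi + \ip{\nu}{q^y}\partial_y\phi\,dxdydt$, while \eqref{eq:entrinitialcond} takes care of the $t=0$ term; this gives \eqref{eq:emvsoln}.

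The genuinely new ingredient is the remainder estimate, and \eqref{eq:tvbound2} is tailored to it: local Lipschitz consistency of the numerical entropy flux bounds the $x$-remainder by $C\int_0^T\!\sum_{i,j}|u^\Delta_{i+1,j}-u^\Delta_{ij}|\,\bigl|(\avg{\phi}^\Delta_{i+1,j}-\avg{\phi}^\Delta_{ij})/\Dx\bigr|\,\Dx\Dy\,dt$, and H\"older's inequality with exponents $\tve,\tve'$ turns this into $C\bigl(\int_0^T\!\sum_{i,j}|u^\Delta_{i+1,j}-u^\Delta_{ij}|^\tve\,\Dx\Dy\,dt\bigr)^{1/\tve}\|\partial_x\phi\|_{L^{\tve'}(\R^2\times(0,T))}$, with the analogous bound in $y$ involving $|u^\Delta_{i,j+1}-u^\Delta_{ij}|^\tve$ and $\|\partial_y\phi\|_{L^{\tve'}}$; both vanish as $\Dx,\Dy\to0$ by \eqref{eq:tvbound2}, uniformly in $\omega$, so Fubini and dominated convergence over $\Omega$ dispose of them. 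Finally, specializing to $\eta=\pm\id$ and $q=\pm f$ --- for which \eqref{eq:dentrineq2} holds with equality and \eqref{eq:initialcond} replaces \eqref{eq:entrinitialcond} --- yields \eqref{eq:mvsoln}, so $\nu$ is an EMV solution. I expect no conceptual obstacle: the only real care needed is the bookkeeping of the two summations by parts, the volume weight $\Dx\Dy$ together with the fixed ratio $\Dy=c\Dx$, and the a.e.\ bounded convergence of the two cell-average difference quotients to $\partial_x\phi$ and $\partial_y\phi$ --- the two-dimensional analogues of Lemmas 3.4 and 3.5 of \cite{FJO1}, to which we refer for the complete details.
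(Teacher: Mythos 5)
Your proposal is correct and follows essentially the same route as the paper: the paper itself gives no separate argument for Theorem \ref{thm:convmv2}, stating only that it is a straightforward generalization of the one-dimensional proof of Theorem \ref{thm:convmv} and deferring the two-dimensional bookkeeping (summation by parts in both directions, the cell-average difference quotients, the weak BV remainder estimate) to Lemmas 3.4 and 3.5 of \cite{FJO1} --- exactly the steps you spell out. Your treatment of the remainder term via H\"older with exponents $\tve,\tve'$ and the direction-by-direction use of \eqref{eq:tvbound2} matches the intended argument, so there is nothing to add.
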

The proof of the above theorem is a simple generalization of the proof of convergence theorem \ref{thm:convmv}, see Section $3.2$ of \cite{FJO1} (in particular lemmas $3.4$ and $3.5$) for details. The above construction can also be readily extended to three spatial dimensions.

\begin{remark}
The uniform $L^{\infty}$ bound \eqref{eq:linf}, \eqref{eq:linf2}  is a technical assumption that we require in this article. This assumption can be relaxed to only an $L^p$ bound. This extension is described in a forthcoming paper \cite{FSID5}.
\end{remark}
\begin{remark}
The conditions \eqref{eq:initialcond} and \eqref{eq:entrinitialcond}, which say that $\sigma^\Dx\to\sigma$ in a certain sense, are weaker than weak* convergence. It is readily checked that a sufficient condition for this is that $u_0 \in L^1(\R;\R^N)\cap L^\infty(\R;\R^N)$ and $u_0^\Dx(\omega,\cdot) \to u_0(\omega,\cdot)$ in $L^1(\R^d;\R^N)$ for all $\omega\in\Omega$ (which in fact implies that $\sigma^\Dx \to \sigma$ strongly).
\end{remark}

\subsubsection{Weak-$\ast$ convergence with atomic initial data}
In view of the nonuniqueness example \ref{ex:nonunique}, one can not expect an unique construction of EMV solutions for general MV initial data.  Instead, as argued before, we focus our attention on perturbations of atomic initial data $\sigma = \delta_{u_0}$ for some $u_0 \in L^1(\R^d,\R^N) \cap L^{\infty}(\R^d,\R^N)$. We construct approximate EMV solutions of \eqref{eq:mvcauchy} in this case using  the following specialization of Algorithm 4.1.

\begin{algorithm}\label{alg:atomic}
Let $(\Omega, \Sigmaalg, P)$ be a probability space and let $\rand : \Omega\to L^1(\R^d)\cap L^\infty(\R^d)$ be a random variable satisfying $\|X\|_{L^1(\R^d)} \leq 1$ $P$-almost surely.
\begin{description}
\item[\textbf{Step 1:}] Fix a small number $\amp>0$.  Perturb $u_0$ by defining $u_0^{\amp}(\omega,x) := u_0(x) + \amp X(\omega,x)$. Let $\imv^{\amp}$ be the law of $u_0^{\amp}$.
\item[\textbf{Step 2:}] For each $\omega\in\Omega$, let $u^{\Dl,\amp}(\omega) := \Soln^\Dl u_0^{\amp}(\omega)$, with $\Soln^{\Dl}$ being the solution operator corresponding to the numerical scheme \eqref{eqs:fvm}.
\item[\textbf{Step 3:}] Let $\nu^{\Dl,\amp}$ be the law of $u^{\Dl,\amp}$ with respect to $P$.
\qed\qedhere
\end{description}
\end{algorithm}

\begin{theorem}
\label{thm:alphaconv}
Let $\{\nu^{\Dx,\amp}\}$ be the family approximate EMV solutions constructed by Algorithm \ref{alg:atomic}. Then there exists a subsequence $(\Dx_n,\amp_n) \to 0$  such that
\[
\nu^{\Dx_n,\amp_n}\wto\nu\in \Emv(\delta_{u_0}),
\]
that is, $\nu^{\Dx_n,\amp_n}$ converges weak* to an EMV solution $\nu$ with atomic initial data $u_0$.
\end{theorem}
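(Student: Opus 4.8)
The plan is to re-run the argument of Theorem~\ref{thm:convmv}, carrying the perturbation parameter $\amp$ alongside the mesh parameter $\Dx$ and extracting a single diagonal subsequence. Since $\|X(\omega,\cdot)\|_{L^1(\R^d)}\le1$ $P$-a.s.\ and (as a standing assumption on the scheme) the perturbed data $u_0^\amp(\omega)=u_0+\amp X(\omega)$ ranges over a fixed bounded subset of $L^1(\R^d,\R^N)\cap L^\infty(\R^d,\R^N)$ for all $\omega\in\Omega$ and all $\amp$ in a bounded range $(0,\amp_0]$, the approximate solutions $u^{\Dx,\amp}(\omega)=\Soln^\Dl u_0^\amp(\omega)$ satisfy the uniform $L^\infty$ bound \eqref{eq:linf}, the weak BV estimate \eqref{eq:tvbound} and the discrete entropy inequality \eqref{eq:dentrineq} with constants uniform in $\amp$. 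Consequently the Young measures $\nu^{\Dx,\amp}$ are uniformly compactly supported, so by the fundamental theorem of Young measures (Theorem~\ref{thm:young}) there is a sequence $(\Dx_n,\amp_n)\to0$ along which $\nu^{\Dx_n,\amp_n}\wto\nu$ for some $\nu\in\Young(\R^d\times\R_+,\R^N)$. (Alternatively, one may iterate: fix $\amp$, let $\Dx\to0$ via Theorem~\ref{thm:convmv} to get $\nu^\amp\in\Emv(\imv^\amp)$, then let $\amp\to0$ and diagonalize; the direct route is cleaner since the computation below is verbatim that of Theorem~\ref{thm:convmv}.)

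Next I would verify that $\nu$ solves the MV Cauchy problem in the interior. Testing the discrete entropy balance against $\phi\in C^1_c(\R^d\times[0,T))$ and splitting off the numerical entropy flux error $Q_\iphf-q(u_i)$, which is bounded pointwise by $C\sum_j|u_{j+1}-u_j|$ and hence vanishes in the limit by the weak BV estimate \eqref{eq:tvbound}, while the remaining bulk terms converge by the weak* convergence $\nu^{\Dx_n,\amp_n}\wto\nu$ — this is exactly the chain of identities in the proof of Theorem~\ref{thm:convmv} — yields the entropy inequality \eqref{eq:emvsoln}; taking $\eta=\pm\id$ and $q=\pm f$ then gives \eqref{eq:mvsoln}. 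The only ingredient that differs from Theorem~\ref{thm:convmv} is the treatment of the initial term.

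The crucial step is therefore to identify the limiting initial datum as $\delta_{u_0}$, i.e.\ to verify \eqref{eq:initialcond} and \eqref{eq:entrinitialcond} with $\sigma=\delta_{u_0}$. Let $\sigma^{\Dx,\amp}$ be the law of the discretized initial field $u_0^{\Dx,\amp}(\omega,\cdot)$, which approximates $u_0^\amp(\omega,\cdot)$ in $L^1(\R^d)$ as $\Dx\to0$. Because $\|u_0^\amp(\omega,\cdot)-u_0\|_{L^1(\R^d)}\le\amp\|X(\omega,\cdot)\|_{L^1(\R^d)}\le\amp$ and the initial projection is $L^1$-consistent and $L^1$-stable, we get $u_0^{\Dx_n,\amp_n}(\omega,\cdot)\to u_0$ in $L^1(\R^d)$ for each $\omega$, dominated uniformly by the integrable constant $2\|u_0\|_{L^1}+\amp_0$; integrating over $\Omega$ by dominated convergence gives $\ip{\sigma^{\Dx_n,\amp_n}_x}{\id}=\int_\Omega u_0^{\Dx_n,\amp_n}(\omega,x)\,dP(\omega)\to u_0(x)$ in $L^1(\R^d)$, which is \eqref{eq:initialcond}. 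Since $\eta$ is continuous and the fields are uniformly $L^\infty$-bounded, $\eta(u_0^{\Dx_n,\amp_n}(\omega,\cdot))\to\eta(u_0)$ in $L^1_{loc}$ as well, so $\ip{\sigma^{\Dx_n,\amp_n}_x}{\eta}\to\eta(u_0(x))$ and \eqref{eq:entrinitialcond} holds (in fact with equality). Passing to the limit in the identities of the previous paragraph then shows $\nu\in\Emv(\delta_{u_0})$.

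I expect the main obstacle to be a matter of hypotheses rather than technique: the whole argument hinges on the weak BV bound \eqref{eq:tvbound} and the $L^\infty$ bound \eqref{eq:linf} holding \emph{uniformly in $\amp$} (equivalently, the decay rate in \eqref{eq:tvbound} and the constant in \eqref{eq:linf} must not degenerate as $\amp\to0$). For the schemes of interest this is automatic, since $\{u_0^\amp(\omega):\omega\in\Omega,\ 0<\amp\le\amp_0\}$ is bounded in $L^\infty$, but it should be carried as a standing assumption alongside those of Theorem~\ref{thm:convmv}. The only remaining care is bookkeeping of the diagonal extraction: one first selects the subsequence realizing the weak* limit via Theorem~\ref{thm:young}, and then notes that the vanishing of the flux errors and the convergence of the initial terms require only $\Dx_n,\amp_n\to0$, so no further refinement is needed.
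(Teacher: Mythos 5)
Your proposal is correct and in substance matches the paper's argument: the paper takes exactly the iterated route you mention as an alternative (for each fixed $\amp$ pass $\Dx\to0$ via Theorem \ref{thm:convmv} to get $\nu^\amp\in\Emv(\imv^\amp)$, then extract a further weak* convergent subsequence as $\amp\to0$ by the fundamental theorem of Young measures and pass to the limit in \eqref{eq:emvsoln}${}_{\amp}$), while your direct double-index diagonal extraction is an equivalent variant. Your explicit remarks on the uniformity in $\amp$ of the bounds \eqref{eq:linf} and \eqref{eq:tvbound}, and on identifying the limiting initial datum as $\delta_{u_0}$ via $\|u_0^\amp(\omega)-u_0\|_{L^1}\leq\amp$, supply details the paper leaves implicit.
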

\begin{proof}
By Theorem \ref{thm:convmv} we know  that for every $\amp > 0$ there exists a subsequence $\nu^{\Dx_n,\amp}$ which converges weak* to an EMV solution $\nu^{\amp}$ of \eqref{eq:mvcauchy} with initial data $\imv^{\amp}$. Thus, \eqref{eq:emvsoln} holds with $(\nu,\imv)$ replaced by  $(\nu^{\amp},\imv^\amp)$;
we abbreviate the corresponding entropy statement  as \eqref{eq:emvsoln}${}_\amp$. The convergence of the sequence $\nu^{\amp_n}$ as $\amp_n \rightarrow 0$ is a consequence of the fundamental theorem of Young measures: by Theorem \ref{thm:young}, there exists a weak* convergent subsequence $\nu^{\amp_n} \rightharpoonup \nu$. The fact that $\nu$ is  an EMV solution follows at once by  taking the limit $\amp_n \to 0$ in \eqref{eq:emvsoln}${}_{\amp_n}$.
\end{proof}


\subsection{What are we computing? Weak* convergence of space-time averages}\label{sec:what}
We begin by quoting \cite[p. 143]{Lax07}: ``Just because we cannot prove that compressible flows with prescribed initial values exist doesn't mean that we cannot compute them" . The question is what are the computed quantities encoded in the EMV solutions.

According to Theorems \ref{thm:convmv}, \ref{thm:alphaconv}, the approximations generated by Algorithm $4.1$ and $4.5$ converge to an EMV solution in the following sense: for all $g \in C_0(\R^N)$ and $\psi \in L^1(\R^d \times \R_+)$,
\begin{equation}\label{eq:func1}
\lim\limits_{\Dx \rightarrow 0} \int_{\R_+}\int_{\R^d} \psi(x,t) \ip{\nu^{\Dl}_{(x,t)}}{g}\ dxdt = \int_{\R_+}\int_{\R^d} \psi(x,t) \ip{\nu_{(x,t)}}{g}\ dxdt.
\end{equation}
As we assume that the approximate solutions are $L^\infty$-bounded (property \eqref{eq:linf}), any $g \in C(\R^N)$ can serve as a test function in \eqref{eq:func1}; see Appendix \ref{app:younglpbound}. In particular, we can choose $g(\xi) = \xi$ to obtain the \emph{mean} of the measure valued solution. Similarly, the variance can be computed by choosing the test function $g(\xi) = \xi \otimes \xi$. Higher statistical moments can be computed analogously.

In practice, the goal of any numerical simulation is to accurately compute \emph{statistics of space-time averages} or \emph{statistics of functionals of interest} of solution variables and to compare them to experimental or observational data. Thus, the weak* convergence of approximate Young measures, computed by Algorithms $4.1$ and $4.5$ provides an approximation of exactly these \emph{observable} quantities of interest.

\subsubsection{Monte Carlo approximation}
In order to compute statistics of space-time averages in \eqref{eq:func1}, we need to compute phase space integrals with respect to the measure $\nu^{\Dx}$:
$$
\ip{\nu^{\Dx}_{(x,t)}}{g} := \int\limits_{\R^N} g(\xi)\ d\nu^{\Dx}_{(x,t)}(\xi).
$$
 The last ingredient in our  construction of EMV solutions, therefore, is  numerical approximation which is necessary to compute these phase space integrals. To this end, we utilize the equivalent representation of the measure $\nu^{\Dx}$ as the law of the \emph{random field} $u^{\Dx}$:
\begin{equation}\label{eq:func2}
\ip{\nu^{\Dx}_{(x,t)}}{g} := \int\limits_{\R^N} g(\xi)\ d\nu^{\Dx}_{(x,t)}(\xi) = \int_{\Omega} g(u^{\Dx}(\omega;x,t))\ dP(\omega).
\end{equation}
We will approximate this integral by a Monte Carlo sampling procedure:
\begin{algorithm}\label{alg:montecarlo}
Let $\Dx > 0$ and let $M$ be a positive integer. Let $\sigma^{\Dx}$ be the initial Young measure in \eqref{eq:mvcauchy} and let $u_0^\Dx$ be a random field $u_0^\Dx:\Omega\times\R^d \to \R^N$ such that $\sigma^{\Dx}$ is the law of $u_0^\Dx$.
\begin{description}
\item[{\bf Step 1:}] Draw $M$ independent and identically distributed random fields $u_0^{\Dx,k}$ for $k=1,\dots, M$.
\item[{\bf Step 2:}] For each $k$ and \emph{for a fixed} ${\omega}\in\Omega$, use the finite difference scheme \eqref{eq:fvm} to numerically approximate the conservation law \eqref{eq:cauchy} with initial data $u_0^{\Dx,k}({\omega})$. Denote $u^{\Dx,k}({\omega}) = \Soln^{\Dx}u_0^{\Dx,k}({\omega}).$
\item[{\bf Step 3:}] Define the approximate measure-valued solution
\[
\nu^{\Dx,M} := \frac{1}{M}\sum_{k=1}^M \delta_{u^{\Dx,k}({\omega})}.
\]
\qed\qedhere
\end{description}
\end{algorithm}
For every $g\in C(\R^N)$ we have
\[
\ip{\nu^{\Dx,M}}{g} = \frac{1}{M} \sum_{k=1}^{M} g\bigl(u^{\Dx,k}(\omega)\bigr).
\]
Thus, the space-time average \eqref{eq:func1} is approximated by
\begin{equation}\label{eq:mc1}
\int_{\R_+}\int_{\R^d} \psi(x,t) \ip{\nu^\Dx_{(x,t)}}{g}\ dxdt \approx \frac{1}{M} \sum_{k=1}^{M} \int_{\R_+}\int_{\R^d} \psi(x,t) g\bigl(u^{\Dx,k}(\omega;x,t)\bigr)\ dxdt.
\end{equation}
Note that, as in any Monte Carlo method, the approximation $\nu^{\Dx,M}$ depends on the choice of $\omega\in\Omega$, i.e., the choice of seed in the random number generator. However, we can prove that the quality of approximation is independent of this choice, $P$-almost surely:

\begin{theorem}[Convergence for large samples]\label{thm:mcconv}
Algorithm \ref{alg:montecarlo} converges, that is,
\[
\nu^{\Dx,M} \wto \nu^\Dx \quad \text{weak*},
\]
and, for a subsequence $M\to\infty$, $P$-almost surely. Equivalently, for every $\psi \in L^1(\R^d \times \R_+)$ and $g \in C(\R^N)$,
\begin{equation}\label{eq:mc2}
\lim_{M \to \infty}  \frac{1}{M} \sum_{k=1}^{M} \int_{\R_+}\int_{\R^d} \psi(x,t) g\bigl(u^{\Dx,k}(x,t)\bigr) dx dt = \int_{\R_+}\int_{\R^d} \psi(x,t) \ip{\nu^{\Dx}_{(x,t)}}{g}\ dxdt.
\end{equation}
The limits are uniform in $\Dx$.
\end{theorem}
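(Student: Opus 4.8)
The plan is to read Algorithm~\ref{alg:montecarlo} as an instance of the law of large numbers, and then to upgrade the resulting almost-sure convergence --- which a priori holds for one fixed test pair $(\psi,g)$ at a time --- to a single almost-sure event valid for all test pairs simultaneously, using separability of the relevant function spaces together with the uniform $L^\infty$ bound \eqref{eq:linf}.

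First I would fix $\Dx>0$ and set up the i.i.d.\ structure. The random fields $u_0^{\Dx,k}$, $k=1,\dots,M$, are i.i.d.\ with common law $\sigma^\Dx$; since the scheme's solution operator $\Soln^\Dx$ is deterministic (and measurable, so that the $u^{\Dx,k}$ are genuine random fields), the evolved fields $u^{\Dx,k}=\Soln^\Dx u_0^{\Dx,k}$ are again i.i.d., now with common law $\nu^\Dx$ (a well-defined Young measure by Proposition~\ref{prop:lawwelldef}). Given $\psi\in L^1(\R^d\times\R_+)$ and $g\in C(\R^N)$, put
\[
Y_k \;:=\; \int_{\R_+}\int_{\R^d}\psi(x,t)\,g\bigl(u^{\Dx,k}(\omega;x,t)\bigr)\ dxdt ,\qquad k=1,\dots,M ,
\]
so that $\frac1M\sum_{k=1}^M Y_k$ is precisely the left-hand side of \eqref{eq:mc2}. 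By \eqref{eq:linf}, $|u^{\Dx,k}(\omega;x,t)|\le C$ for all $\omega,x,t$ and all $\Dx$, hence $|Y_k|\le A:=\|\psi\|_{L^1}\sup_{|\xi|\le C}|g(\xi)|$, a bound independent of $\Dx$, $k$ and $\omega$. The $Y_k$ are i.i.d.\ and bounded, and by Fubini's theorem $\E[Y_k]=\int_{\R_+}\int_{\R^d}\psi(x,t)\ip{\nu^\Dx_{(x,t)}}{g}\ dxdt$, which is the right-hand side of \eqref{eq:mc2}. Now I invoke the law of large numbers: Kolmogorov's strong law gives $\frac1M\sum_{k=1}^MY_k\to\E[Y_1]$ $P$-a.s.; alternatively --- and this makes the uniformity in $\Dx$ explicit --- $\E\bigl[(\tfrac1M\sum_{k=1}^MY_k-\E[Y_1])^2\bigr]=\tfrac1M\mathrm{Var}(Y_1)\le A^2/M$ with $A$ independent of $\Dx$, which yields convergence in $L^2(\Omega)$ at rate $AM^{-1/2}$ uniform in $\Dx$, and, along $M_\ell=\ell^2$, $P$-a.s.\ convergence by Borel--Cantelli; this is the subsequence referred to in the statement. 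Either way, \eqref{eq:mc2} holds $P$-a.s.\ for the fixed pair $(\psi,g)$.

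The main obstacle is that the null set above depends on $(\psi,g)$, whereas the weak* statement $\nu^{\Dx,M}\wto\nu^\Dx$ asks for one good event covering all test pairs at once. I would resolve this by separability: $L^1(\R^d\times\R_+)$ is separable, and so is $C(\{|\xi|\le C\})$ (continuous functions on a compact set); choosing countable dense families $\{\psi_m\}$ and $\{g_n\}$ therein and intersecting the (countably many) full-measure events obtained above for the pairs $(\psi_m,g_n)$ produces a single full-measure event $\Omega_0$. On $\Omega_0$, for an arbitrary admissible pair $(\psi,g)$ and any $\delta>0$ I pick $\psi_m,g_n$ with $\|\psi-\psi_m\|_{L^1}<\delta$ and $\sup_{|\xi|\le C}|g(\xi)-g_n(\xi)|<\delta$; by \eqref{eq:linf} once more, replacing $(\psi,g)$ by $(\psi_m,g_n)$ changes both $\frac1M\sum_k Y_k$ and its limit by at most $\delta\bigl(\|\psi\|_{L^1}+\sup_{|\xi|\le C}|g(\xi)|+\delta\bigr)$, uniformly in $M$ and in $\Dx$. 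Letting $M\to\infty$ and then $\delta\to0$ gives \eqref{eq:mc2} for every $(\psi,g)$ on $\Omega_0$, i.e.\ $\nu^{\Dx,M}\wto\nu^\Dx$ weak* $P$-a.s.; if $\Dx$ ranges over a countable sequence $\Dx_n\to0$ one intersects the corresponding events once more, and since all the constants entering the estimates are independent of $\Dx$, the convergence --- and its $L^2(\Omega)$ rate $AM^{-1/2}$ --- is uniform in $\Dx$. The only routine points left are the measurability of the $Y_k$ and the elementary density estimates just used, which I would not spell out.
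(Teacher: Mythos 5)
Your proposal is correct and its core is the same argument as the paper's: exploit the i.i.d.\ structure of the $u^{\Dx,k}$ to compute the $L^2(\Omega)$ error as $\mathrm{Var}(Y_1)/M$, bound the variance via the uniform $L^\infty$ bound \eqref{eq:linf} to get the rate $CM^{-1/2}$ independent of $\Dx$, and pass to a subsequence for $P$-a.s.\ convergence. Your additional separability step, which collects the null sets over a countable dense family of test pairs $(\psi_m,g_n)$ to obtain a single almost-sure event on which $\nu^{\Dx,M}\wto\nu^\Dx$ for \emph{all} admissible $(\psi,g)$, is a genuine (and welcome) refinement that the paper's proof leaves implicit, since the paper only establishes \eqref{eq:mc2} for one fixed pair at a time.
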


The proof involves an adaptation of the law of large numbers for the present setup and is provided in Appendix \ref{app:MCproof}.
Combining \eqref{eq:mc2} with the convergence established in Theorem \ref{thm:convmv},
we conclude with the following.
\begin{corollary}[Convergence with mesh refinement]
There are subsequences $\Dx \to 0$ and $M\rightarrow \infty$ such that
\[
\nu^{\Dx,M} \wto \nu \quad \text{weak*,}
\]
or equivalently, for every $\psi \in L^1(\R^d \times \R_+)$ and $g \in C(\R^N)$,
\begin{equation}\label{eq:mc3}
\lim_{\Dx \to 0} \lim_{M \to \infty}  \frac{1}{M} \sum_{k=1}^{M} \int_{\R_+}\int_{\R^d} \psi(x,t) g\bigl(u^{\Dx,k}(x,t)\bigr)\ dx dt =
\int_{\R_+}\int_{\R^d} \psi(x,t) \ip{\nu_{(x,t)}}{g}\ dxdt
\end{equation}
The limits in $\Dx$ and $M$ are interchangeable.
\end{corollary}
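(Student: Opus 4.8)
The plan is to deduce the corollary from the two limit theorems already at our disposal — Theorem~\ref{thm:convmv}, which furnishes a subsequence $\Dx_n\to0$ with $\nu^{\Dx_n}\wto\nu$ for an EMV solution $\nu$, and Theorem~\ref{thm:mcconv}, which furnishes a subsequence $M_j\to\infty$ with $\nu^{\Dx,M_j}\wto\nu^\Dx$ as $j\to\infty$, $P$-almost surely and \emph{uniformly in $\Dx$} — by a diagonal extraction. The first step is to metrize the relevant mode of convergence. By the uniform bound \eqref{eq:linf} all the Young measures in play ($\nu^\Dx$, $\nu^{\Dx,M}$, $\nu$) are valued in $\Prob(K)$ for a single fixed compact $K\subset\R^N$, so (as in Appendix~\ref{app:younglpbound}) in \eqref{eq:mc3} one may test against \emph{all} pairs $(\psi,g)\in L^1(\R^d\times\R_+)\times C(K)$; since $L^1(\R^d\times\R_+)\times C(K)$ is separable and the set of $\Prob(K)$-valued Young measures is norm-bounded, the weak* topology \eqref{eq:weak-conv} restricted to it is metrizable, and I fix a metric $d$ inducing it. All convergences below are read in $d$.

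Next, fix once and for all an $\omega$ in the full-$P$-measure set on which Theorem~\ref{thm:mcconv} holds, so that $\delta_j := \sup_{\Dx>0} d\bigl(\nu^{\Dx,M_j},\nu^\Dx\bigr)\to0$ as $j\to\infty$, and let $\Dx_1>\Dx_2>\cdots\to0$ be the subsequence of Theorem~\ref{thm:convmv}, along which $d(\nu^{\Dx_n},\nu)\to0$. For each $n$ pick $j(n)\to\infty$ with $\delta_{j(n)}<\tfrac1n$. The triangle inequality then gives
\[
d\bigl(\nu^{\Dx_n,M_{j(n)}},\nu\bigr)\ \le\ d\bigl(\nu^{\Dx_n,M_{j(n)}},\nu^{\Dx_n}\bigr)+d\bigl(\nu^{\Dx_n},\nu\bigr)\ \le\ \delta_{j(n)}+d\bigl(\nu^{\Dx_n},\nu\bigr)\ \xrightarrow[n\to\infty]{}\ 0,
\]
i.e.\ $\nu^{\Dx_n,M_{j(n)}}\wto\nu$. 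Unwinding $d$ back into the pairing against $(\psi,g)$ turns this into \eqref{eq:mc3}, read as a single joint limit along the subsequences $(\Dx_n,M_{j(n)})$; combined with the approximation \eqref{eq:mc1} and the Monte Carlo identity \eqref{eq:mc2} this is precisely the displayed statement of the corollary.

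Finally, the interchangeability of the limits is a Moore--Osgood phenomenon: because the inner limits $\lim_{j\to\infty}\nu^{\Dx,M_j}=\nu^\Dx$ are \emph{uniform in $\Dx$} (this is exactly $\delta_j\to0$) and the outer limit $\lim_{n\to\infty}\nu^{\Dx_n}=\nu$ exists, the double limit over the pair $(\Dx_n,M_j)$ exists and equals $\nu$, which is the precise meaning of the claim; in particular iterating as $\lim_{\Dx\to0}\lim_{M\to\infty}$ yields $\nu$. I do not anticipate a genuine difficulty: the whole argument rests on the elementary triangle inequality above, and the only points requiring attention are that the Monte Carlo subsequence of Theorem~\ref{thm:mcconv} be taken independently of $\Dx$ — which is exactly the ``uniform in $\Dx$'' assertion we are permitted to invoke — and that the $P$-null exceptional set be frozen \emph{before}, not after, the diagonal extraction of $j(n)$.
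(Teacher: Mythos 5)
Your proposal is correct and follows essentially the same route as the paper, which simply asserts the corollary by ``combining \eqref{eq:mc2} with the convergence established in Theorem \ref{thm:convmv}'' without writing out the diagonal extraction; your argument supplies exactly the details the paper leaves implicit (metrizability of weak* convergence on the uniformly bounded set of Young measures, the uniformity in $\Dx$ from Theorem \ref{thm:mcconv}, and the triangle-inequality/Moore--Osgood step for interchangeability). Your care in fixing the $P$-null set before the extraction of $j(n)$ is the right precaution and is consistent with the paper's intent.
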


\section{Examples of weak* convergent numerical schemes}
\label{sec:5}
In this section, we provide concrete examples of numerical schemes that satisfy the criteria \eqref{eqs:EMVcond}  of Theorem \ref{thm:convmv},
for weak* convergence to EMV solutions of \eqref{eq:mvcauchy}.

\subsection{Scalar conservation laws}
We begin by considering scalar conservation laws. Monotone finite difference (volume) schemes (see \cite{CM1,GR1} for a precise definition) for scalar equations are uniformly bounded in $L^{\infty}$ (as they satisfy a discrete maximum principle), satisfy a discrete entropy inequality  (using the Crandall-Majda numerical entropy fluxes \cite{CM1}) and are TVD -- the total variation of the approximate solutions is non-increasing over time. Consequently, the approximate solutions satisfy the weak BV estimate \eqref{eq:tvbound} (resp, \eqref{eq:tvbound2} in the multi-dimensional case) with $\tve = 1$. Thus, monotone schemes, approximating scalar conservation laws, satisfy all the abstract criteria of Theorem \ref{thm:convmv}.

In fact, one can obtain a precise convergence rate for monotone schemes \cite{Kutz1}:
\begin{equation}\label{eq:kutz}
\left\|u^\Dx(\omega,\cdot,t) - u(\omega,\cdot,t)\right\|_{L^1(\R^d)} \leq Ct\TV(u_0(\omega))\sqrt{|\Dx|} \qquad \forall \ \omega,
\end{equation}
where $u(\omega) = \lim_{\Dx\to 0} u^\Dx(\omega)$ denotes the entropy solution of the Cauchy problem for a scalar conservation law with initial data $u_0(\omega)$. Using this error estimate, we obtain the following strong convergence results for monotone schemes.
\begin{theorem}\label{thm:convscalar}
Let $\nu^\Dx$ be generated by Algorithm \ref{alg:approxmv}, and let $\nu$ be the law of the entropy solution $u(\omega)$. If $\TV(u_0(\omega)) \leq C$ for all $\omega\in\Omega$, then $\nu^\Dx \to \nu$ strongly as $\Dx\to 0$.
\end{theorem}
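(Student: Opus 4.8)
The plan is to leverage the quantitative Kuznetsov-type error estimate \eqref{eq:kutz} for monotone schemes, applied pathwise in $\omega$, together with the elementary observation that the \emph{joint} law of $\bigl(u^\Dx,u\bigr)$ furnishes an explicit transport plan between $\nu^\Dx_{(x,t)}$ and $\nu_{(x,t)}$ at each space-time point. This yields convergence of the \emph{whole} sequence $\nu^\Dx$, with no compactness or subsequence extraction, in contrast to the weak* statements of Theorems \ref{thm:convmv} and \ref{thm:alphaconv}.

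First I would collect the uniform bounds. Since the scheme is monotone it satisfies a discrete maximum principle, so $\|u^\Dx(\omega)\|_{L^\infty(\R^d\times\R_+)}\le\|u_0(\omega)\|_{L^\infty(\R^d)}$ for every $\omega$ and every $\Dx>0$, and the entropy solution $u(\omega)=\lim_{\Dx\to0}u^\Dx(\omega)$ obeys the same bound; together with $\TV(u_0(\omega))\le C$ this makes $u_0(\cdot)$ uniformly bounded in the sense of Appendix \ref{app:youngunifbounded}. Hence all the measures $\nu^\Dx_{(x,t)}$ and $\nu_{(x,t)}$ are supported in one fixed compact subset of $\R$; in particular they lie in $\Prob^1(\R)$ and $W_1\bigl(\nu^\Dx_{(x,t)},\nu_{(x,t)}\bigr)$ is finite. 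Next, for fixed $(x,t)$ I would consider the measurable map $\Phi_{(x,t)}\colon\omega\mapsto\bigl(u^\Dx(\omega;x,t),u(\omega;x,t)\bigr)\in\R\times\R$ (measurability follows from the same measurable-dependence facts used to define the laws $\nu^\Dx$ and $\nu$; see Appendix \ref{app:younglaw}). Its pushforward $\pi_{(x,t)}:=(\Phi_{(x,t)})_\#P$ is a probability measure on $\R\times\R$ with marginals $\nu^\Dx_{(x,t)}$ and $\nu_{(x,t)}$, i.e.\ $\pi_{(x,t)}\in\Pi\bigl(\nu^\Dx_{(x,t)},\nu_{(x,t)}\bigr)$. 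Using this particular coupling in the definition of the $1$-Wasserstein distance gives
\[
W_1\bigl(\nu^\Dx_{(x,t)},\nu_{(x,t)}\bigr)\le\int_\Omega\bigl|u^\Dx(\omega;x,t)-u(\omega;x,t)\bigr|\,dP(\omega).
\]

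Then I would integrate this inequality over $x\in\R^d$ and $t\in[0,T]$ for an arbitrary fixed $T>0$, use Tonelli's theorem (the integrand is nonnegative and measurable) to exchange the $x$-, $t$- and $\omega$-integrations, and insert \eqref{eq:kutz} together with $\TV(u_0(\omega))\le C$ and $P(\Omega)=1$. This yields
\[
\int_0^T\!\!\int_{\R^d}W_1\bigl(\nu^\Dx_{(x,t)},\nu_{(x,t)}\bigr)\,dx\,dt\le\int_\Omega C\,\TV(u_0(\omega))\,\sqrt{|\Dx|}\int_0^T t\,dt\,dP(\omega)\le C'\,T^2\sqrt{|\Dx|},
\]
which tends to $0$ as $\Dx\to0$. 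Since $T>0$ is arbitrary, this is exactly the strong convergence \eqref{eq:sconv} with $p=1$ (understood, as is natural given the linear-in-$t$ growth of the error bound, on the time slabs $\R^d\times[0,T]$), so $\nu^\Dx\to\nu$ strongly.

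The computations are routine once \eqref{eq:kutz} is granted. The points needing care are the joint measurability of $\omega\mapsto\bigl(u^\Dx(\omega;x,t),u(\omega;x,t)\bigr)$ — which is what legitimizes the diagonal coupling and guarantees its marginals are the prescribed laws — and the verification that the monotone scheme and its limit really enjoy the uniform $L^\infty$ bound that places every $\nu^\Dx_{(x,t)}$ and $\nu_{(x,t)}$ in $\Prob^1(\R)$; both are supplied by the construction recalled in the appendices and by the standard theory of monotone schemes for scalar conservation laws.
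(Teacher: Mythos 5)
Your proposal is correct and follows essentially the same route as the paper's proof: both construct the coupling $\pi^{\Dx}_{(x,t)}$ as the joint law of $\bigl(u^{\Dx}(x,t),u(x,t)\bigr)$, use it to bound $W_1\bigl(\nu^{\Dx}_{(x,t)},\nu_{(x,t)}\bigr)$ by $\int_\Omega|u^{\Dx}-u|\,dP$, and then integrate Kuznetsov's estimate \eqref{eq:kutz} over space-time. The extra care you take with joint measurability and the uniform $L^\infty$ bounds is a welcome elaboration of steps the paper leaves implicit, but it is the same argument.
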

\begin{proof}
Define $\pi^\Dx_{\xt}\in\Prob(\R^N\times\R^N)$ as the law of the random variable $\left(u^\Dx(\xt), u(\xt)\right)$,
\[
\pi^\Dx_{\xt}(A) := P\left(\bigl(u^\Dx(\xt), u(\xt)\bigr) \in A\right), \qquad A\subset \R\times\R \text{ Borel measurable}.
\]
Then $\pi^\Dx_{\xt}$ is a Young measure for all $\xt$ and $\Dx>0$. Clearly, $\pi^\Dx_{\xt} \in \Pi\bigl(\nu^\Dx_{\xt}, \nu_{\xt}\bigr)$, and hence
\[
W_1\Bigl(\nu^\Dx_{\xt}, \nu_{\xt}\Bigr) \leq \int_{\R^N\times\R^N} |\xi - \zeta|\ d\pi(\xi, \zeta)
= \int_{\Omega} |u^\Dx(\omega,x,t) - u(\omega,x,t)|\ dP(\omega).
\]
Hence, by Kutznetsov's error estimate \eqref{eq:kutz},
\[
\int_0^T \int_\R W_1\Bigl(\nu^\Dx_{\xt}, \nu_{\xt}\Bigr)\ dxdt \leq C\sqrt{|\Dx|} \to 0 \qquad \text{as } \Dx\to 0.
\]
\end{proof}

\begin{remark}
We can relax the uniform boundedness of $\TV(u_0(\omega))$ to just integrability of the function $\omega \mapsto \TV(u_0(\omega))$.
\end{remark}
\begin{remark}
Note that, in light of Theorem \ref{thm:scalarstable} and Example \ref{ex:nonunique}, the limit entropy measure-valued solution $\nu$ is unique only if the initial measure-valued data $\sigma$ is atomic.
\end{remark}

\subsection{Systems of conservation laws}
We present two classes of schemes, approximating systems of conservation laws, that satisfy the convergence criteria
\eqref{eqs:EMVcond} of Theorem \ref{thm:convmv}, respectively the convergence criteria of Theorem \ref{thm:convmv2}. 

\subsubsection{TeCNO finite difference schemes}
The \textbf{TeCNO} schemes, introduced in \cite{FTSID2,FJO1}, are finite difference schemes of the form \eqref{eq:fvm} with flux function
\begin{equation}\label{eq:esf1}
F_\iphf := \tilde{F}^p_\iphf - \frac{1}{2}D_\iphf \bigl(\ev_{i+1}^- - \ev_i^+\bigr).
\end{equation}
Here, $\tilde{F}^p_\iphf$ is a $p$-th order accurate ($p\in\N$) entropy conservative numerical flux (see \cite{TAD2,LMR1}), $D_\iphf$ is a positive definite matrix, and $\ev_j^\pm$ are the cell interface values of a $p$-th order accurate ENO reconstruction of the entropy variable $\ev := \eta'(u)$ (see \cite{HEOC1,FTSID1}). The multi-dimensional version (on a Cartesian grid) was also designed in \cite{FTSID2}, see also \cite{FJO1}. It was shown in \cite{FTSID2,FJO1} that the TeCNO schemes
\begin{itemize}
\item are (formally) $p$-th order accurate
\item are entropy stable -- they satisfy a discrete entropy inequality of the form \eqref{eq:dentrineq} (see Theorem $4.1$ of \cite{FTSID2} for the one-dimensional case and Theorem $6.1$ of \cite{FTSID2} for the multi-dimensional case) 
\item have weakly bounded variation, i.e., they satisfy a bound of the form \eqref{eq:tvbound} in the one-dimensional case and \eqref{eq:tvbound2} (see theorem 6.6 of \cite{FJO1} and in general section 3.2 of \cite{FJO1} for the multi-dimensional case).  
\end{itemize}
Hence, under the assumption \eqref{eq:linf} that the scheme is bounded in $L^{\infty}$, the approximate Young measures, generated by the TeCNO scheme, converge to an EMV solution of \eqref{eq:mvcauchy}.

\subsubsection{Shock capturing space time Discontinuous Galerkin (DG) schemes}
Although suitable for Cartesian grids, finite difference schemes of the type \eqref{eq:fvm} are difficult to extend to unstructured grids in several space dimensions. For problems with complex domain geometry that necessitates the use of unstructured grids (triangles, tetrahedra), an alternative discretization procedure is the space-time discontinuous finite element procedure of \cite{JS1,JJS1,Bar1,HSID1}. In this procedure, the entropy variables serve as degrees of freedom and entropy stable numerical fluxes like \eqref{eq:esf1} need to be used at cell interfaces. Further stabilization terms like streamline diffusion and shock capturing terms are also necessary. In a recent paper \cite{HSID1}, it was shown that a shock capturing streamline diffusion space-time DG method satisfied a discrete entropy inequality and a suitable version of the weak BV bound \eqref{eq:tvbound}, see Theorem $3.1$ of \cite{HSID1} for the precise statements and results. Hence, this method was also shown to converge to an EMV solution in \cite{HSID1} (see Theorems $4.1$ and $4.2$ of \cite{HSID1}). We remark that the space-time DG methods are fully discrete, in contrast to semi-discrete finite difference schemes such as \eqref{eq:fvm}.

\section{Numerical Results}\label{sec:6}
Our overall goal in this section will be to compute approximate EMV solutions of \eqref{eq:mvcauchy} with atomic initial data using Algorithm \ref{alg:atomic}, as well as to investigate the stability of these solutions with respect to initial data. In Sections \ref{sec:khnonatomic} and \ref{sec:khatomic} we consider the Kelvin-Helmholtz problem \eqref{eq:khi}. In Section \ref{sec:richtmesh} we consider the  Richtmeyer-Meshkov problem; see e.g.\ \cite{GGZ99} and the references therein.

For the rest of the section, we will present numerical experiments for the two-dimensional compressible Euler equations
\begin{equation}
\label{eq:2deuler}
\frac{\partial}{\partial t}
\begin{pmatrix}
\rho \\ \rho \velx \\ \rho \vely \\ E
\end{pmatrix}
+ \frac{\partial}{\partial x_1}
\begin{pmatrix}
\rho \velx \\ \rho (\velx)^2 + p \\ \rho \velx\vely \\ (E+p)\velx
\end{pmatrix}
+ \frac{\partial}{\partial x_2}
\begin{pmatrix}
\rho \vely \\ \rho \velx \vely \\ \rho (\vely)^2 + p \\ (E+p)\vely
\end{pmatrix}
= 0.
\end{equation}
Here, the density $\rho$, velocity field $(\velx,\vely)$, pressure $p$ and total energy $E$ are related by the equation of state
$$
E = \frac{p}{\gamma -1} + \frac{\rho ((\velx)^2 + (\vely)^2)}{2}.
$$
The relevant entropy pair is given by
\[
\eta(u)=\frac{-\rho s}{\gamma - 1}, \qquad q^1(u) = \velx\eta(u), \qquad q^2(u) = \vely\eta(u).
\]
with $s = \log(p) - \gamma \log(\rho)$ being the thermodynamic entropy. The adiabatic constant $\gamma$ is set to $1.4$.

\subsection{Kelvin-Helmholtz problem: mesh refinement ($\Dx \downarrow 0$)}\label{sec:khnonatomic}
As our first numerical experiment, we consider the two-dimensional compressible Euler equations of gas dynamics \eqref{eq:2deuler} with the initial data
\begin{equation}
\label{eq:kh}
u_0(x, \omega) = \begin{cases}
u_L & \text{if } I_1 < x_2 < I_2 \\
u_R & \text{if } x_2 \leq I_1 \text{ or } x_2 \geq I_2,
\end{cases}
\qquad x\in[0,1]^2
\end{equation}
with $\rho_L = 2$, $\rho_R = 1$, $\velx_L = -0.5$, $\velx_R = 0.5$, $\vely_L=\vely_R=0$ and $p_L=p_R=2.5$. 

The computational domain is $[0,1]^2$ and we consider periodic boundary conditions. Furthermore, the interface profiles
\[
I_j = I_j(x,\omega) := J_j + \amp Y_j(x,\omega), \qquad j=1,2
\]
are chosen to be small perturbations around $J_1:=0.25$ and $J_2:=0.75$, respectively, with
\[
Y_j(x,\omega) = \sum_{n=1}^m a_j^n(\omega) \cos\left(b_j^n(\omega) + 2n\pi x_1\right), \qquad j=1,2.
\]
Here, $a_j^n = a_j^n(\omega) \in [0,1]$ and $b_j^n = b_j^n(\omega)\in[-\pi,\pi]$, $i=1,2$, $n=1,\dots,m$ are randomly chosen numbers. The coefficients $a_j^n$ have been normalized such that $\sum_{n=1}^m a_j^n = 1$ to guarantee that $|I_j(x,\omega) - J_j| \leq \amp$ for $j=1,2$. We set $m=10$.

Observe that by making $\epsilon$ small, this $\omega$-ensemble of initial data lies inside an arbitrarily small ball centered at $u_0$. Indeed, it is readily checked that measured in, say, the \ $L^p([0,1]^2)$-norm, every sample $u_0(\cdot,\omega)$ is $O(\eps^{1/p})$ away from the unperturbed steady state in \eqref{eq:khi}. 

A representative (single realization with fixed $\omega$) initial datum for the density in shown in Figure \ref{fig:kh2d_ini} (left). We observe that the resulting measure valued Cauchy problem involves a random perturbation of the interfaces between the two streams (jets). This should be contrasted with initial value problem \eqref{eq:khi},  \eqref{eq:sodpinit}, where the amplitude was randomly perturbed. We note that the law of the above initial datum can readily be written down and serves as the initial Young measure in the measure valued Cauchy problem \eqref{eq:mvcauchy}. Observe that this Young measure is not atomic for some points in the domain.

\begin{figure}[ht]
\includegraphics[width=0.5\linewidth]{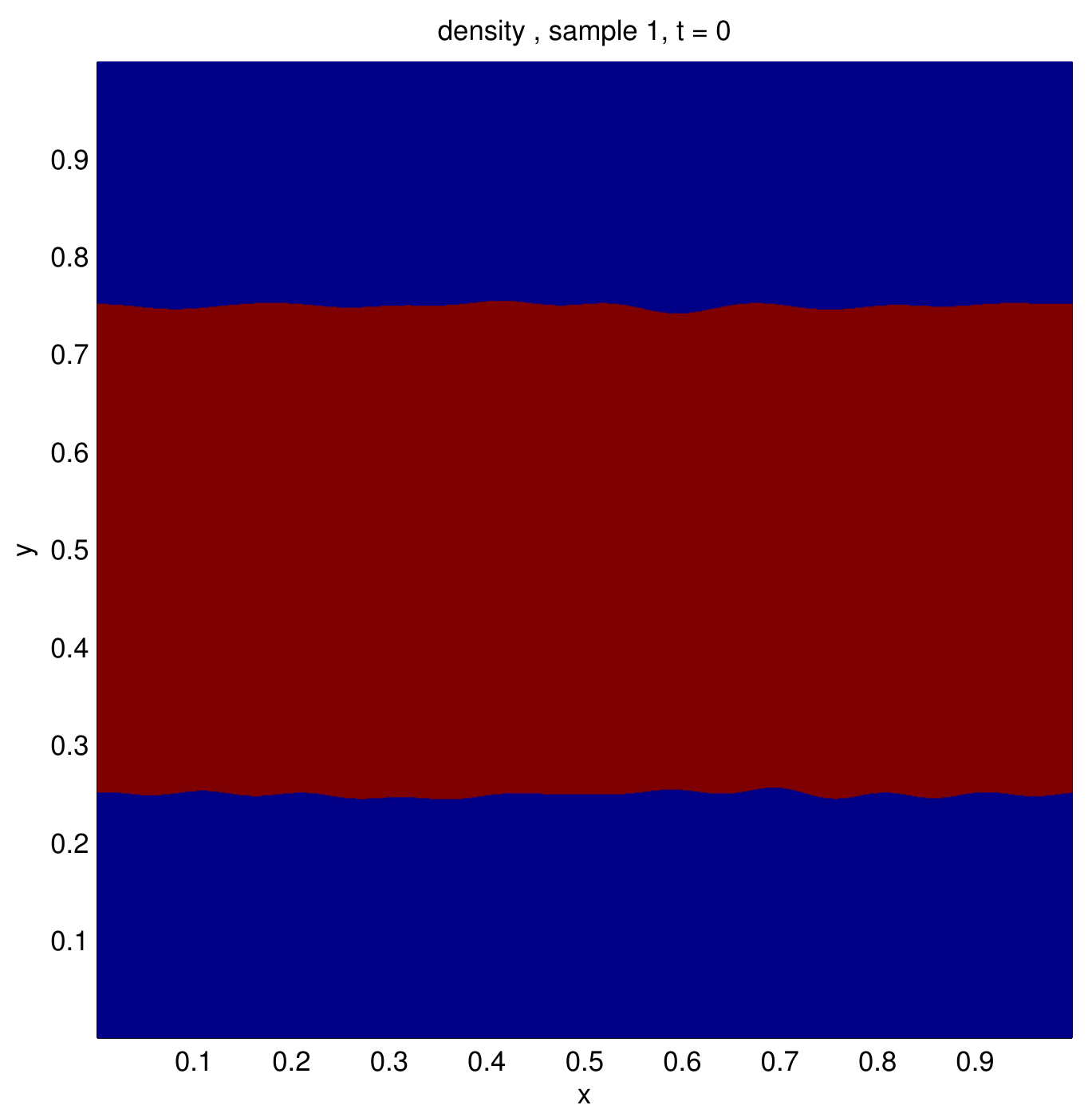}\hfill
\includegraphics[width=0.5\linewidth]{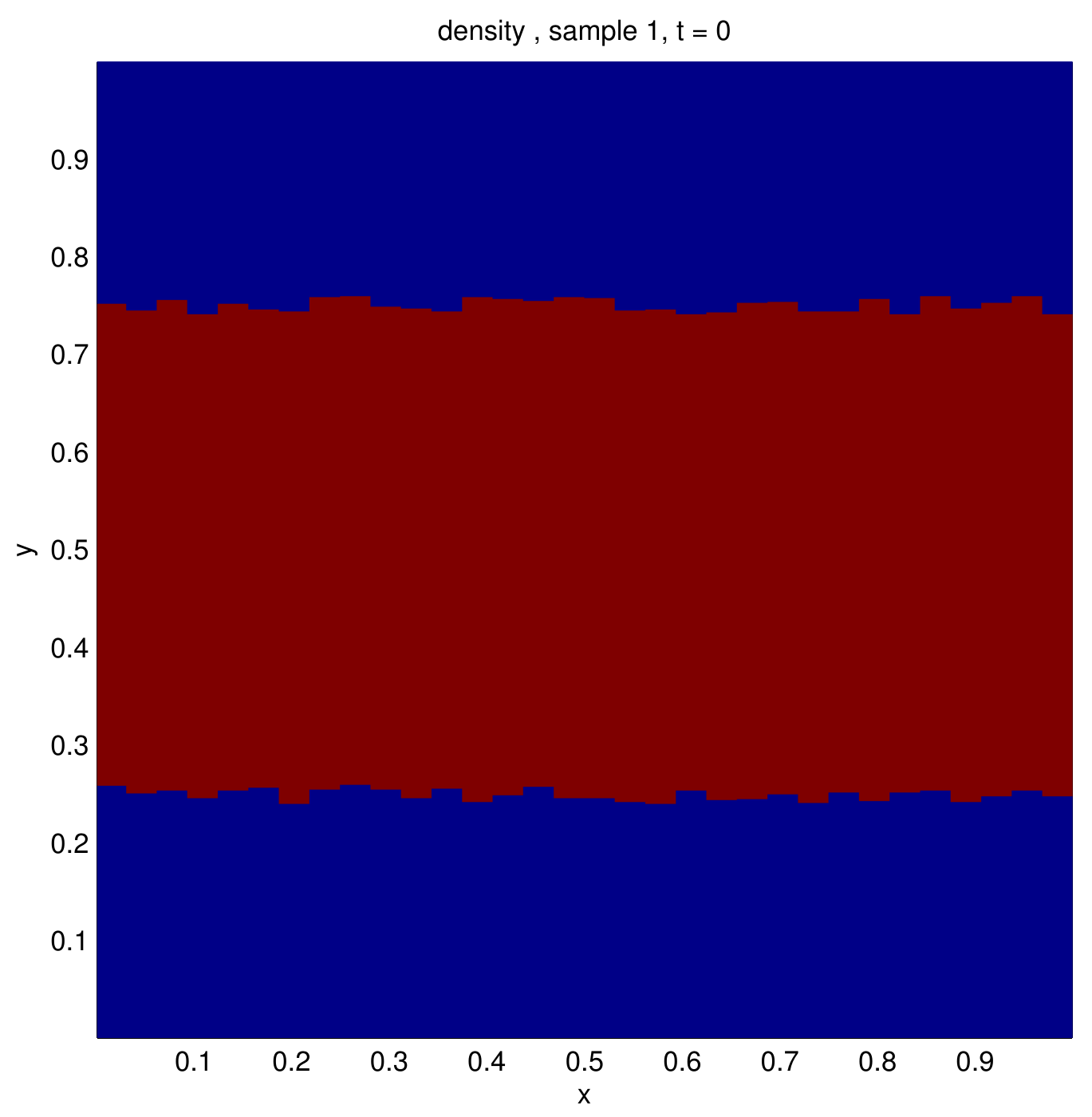}
\caption{Representative initial data for the Kelvin-Helmholtz problem: amplitude perturbation (left) and phase perturbation (right) with $\amp = 0.01$.}
\label{fig:kh2d_ini}
\end{figure}

\subsubsection{Lack of sample convergence}
We approximate the above MV Cauchy problem with the second-order entropy stable TeCNO2 scheme of \cite{FTSID2}. In Figure \ref{fig:3} we show the density at time $t = 2$ for a single sample, i.e, for a fixed $\omega \in \Omega$, at different grid resolutions, ranging from $128^2$ points to $1024^2$ points. The figure suggests that the approximate solutions do not seem to converge as the mesh is refined. In particular, finer and finer scale structures are formed as the mesh is refined, as already seen in Figure \ref{fig:kh-density1}. To further verify this lack of convergence, we compute the $L^1$ difference of the approximate solutions at successive mesh levels \eqref{eq:chyrates} and present the results in Figure \ref{fig:4}. We observe that this difference does not go to zero, suggesting that the approximate solutions may not converge as the mesh is refined.
\begin{figure}
\centering
\subfigure[$128^2$]{\includegraphics[width=0.45\linewidth]{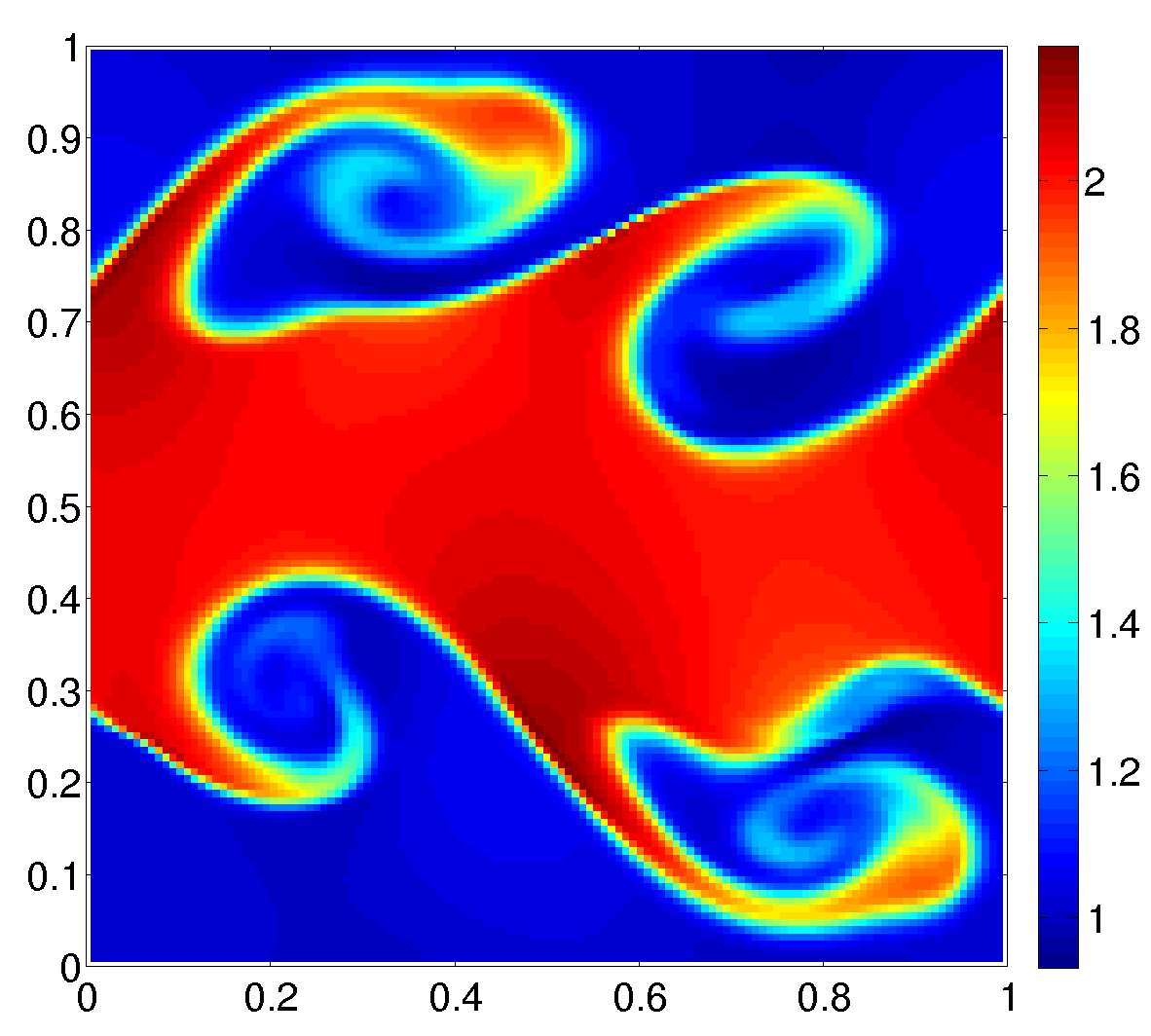}}
\subfigure[$256^2$]{\includegraphics[width=0.45\linewidth]{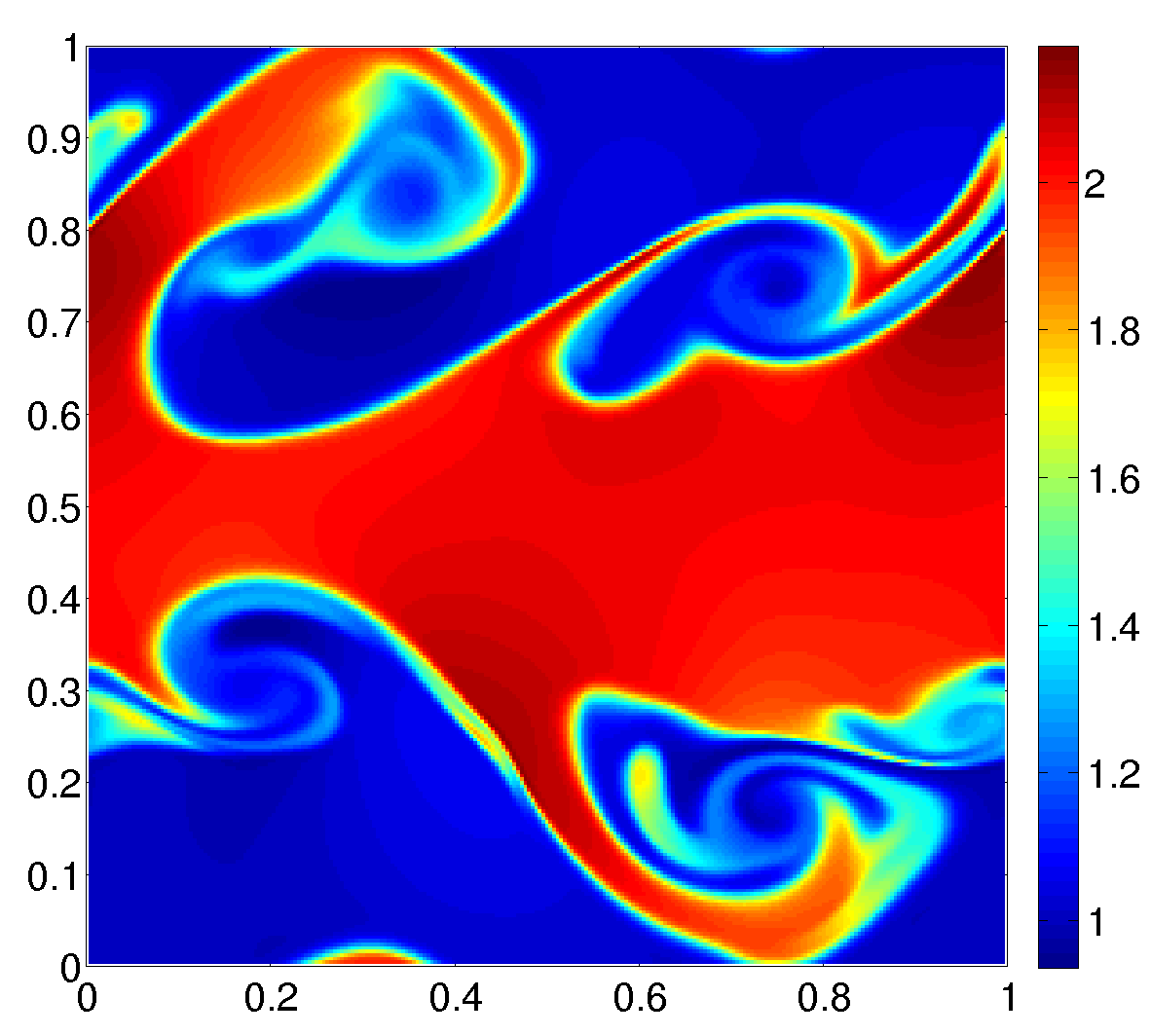}} \\
\subfigure[$512^2$]{\includegraphics[width=0.45\linewidth]{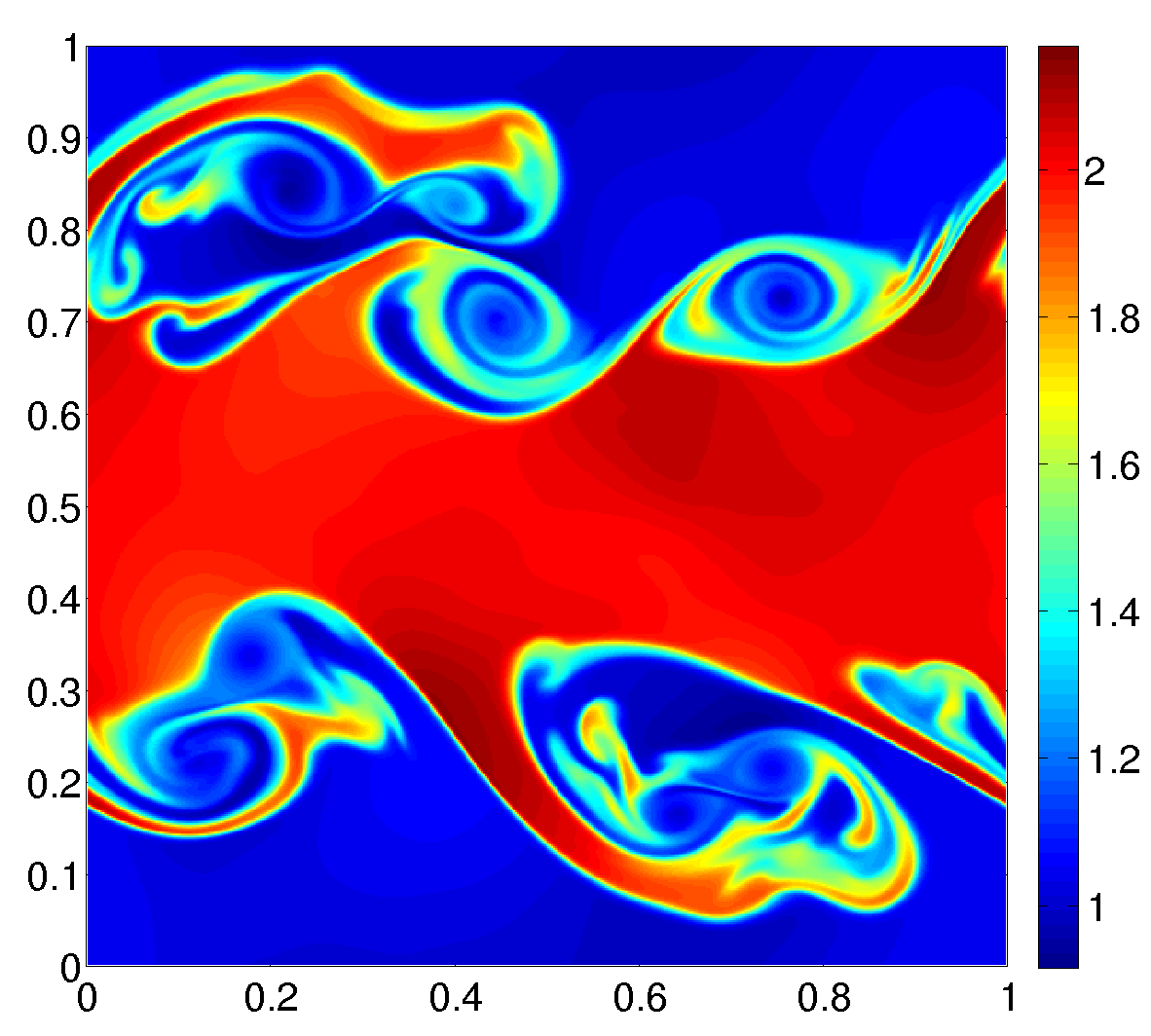}}
\subfigure[$1024^2$]{\includegraphics[width=0.45\linewidth]{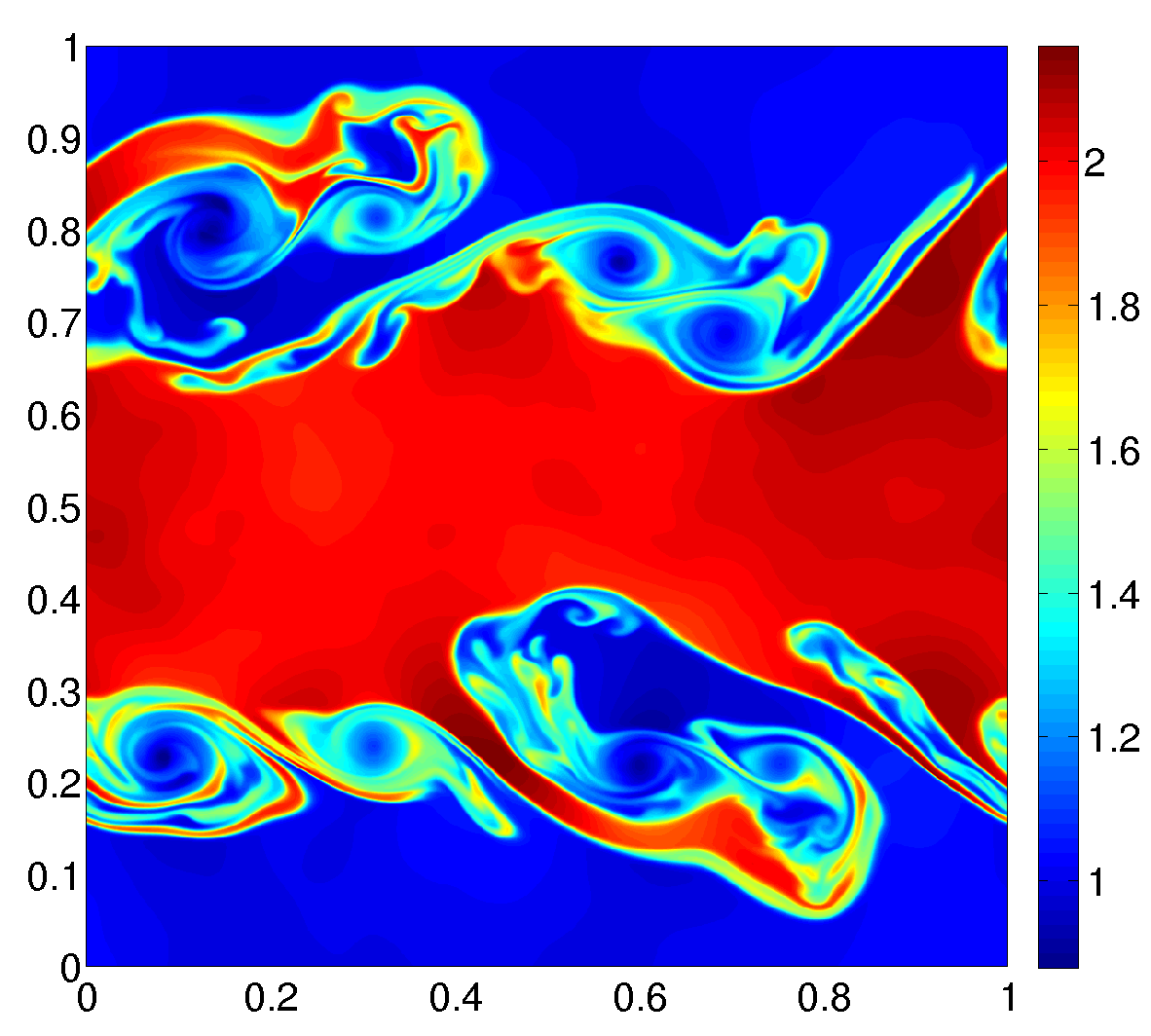}}
\caption{Approximate density for the Euler equations \eqref{eq:2deuler} with initial data \eqref{eq:kh}, $\amp = 0.01$ and for a fixed $\omega$ (single sample), computed with the second-order TeCNO2 scheme of \cite{FTSID2}, at time $t=2$ at different mesh resolutions.}
\label{fig:3}
\end{figure}
\begin{figure}
\centering
\includegraphics[width=6cm]{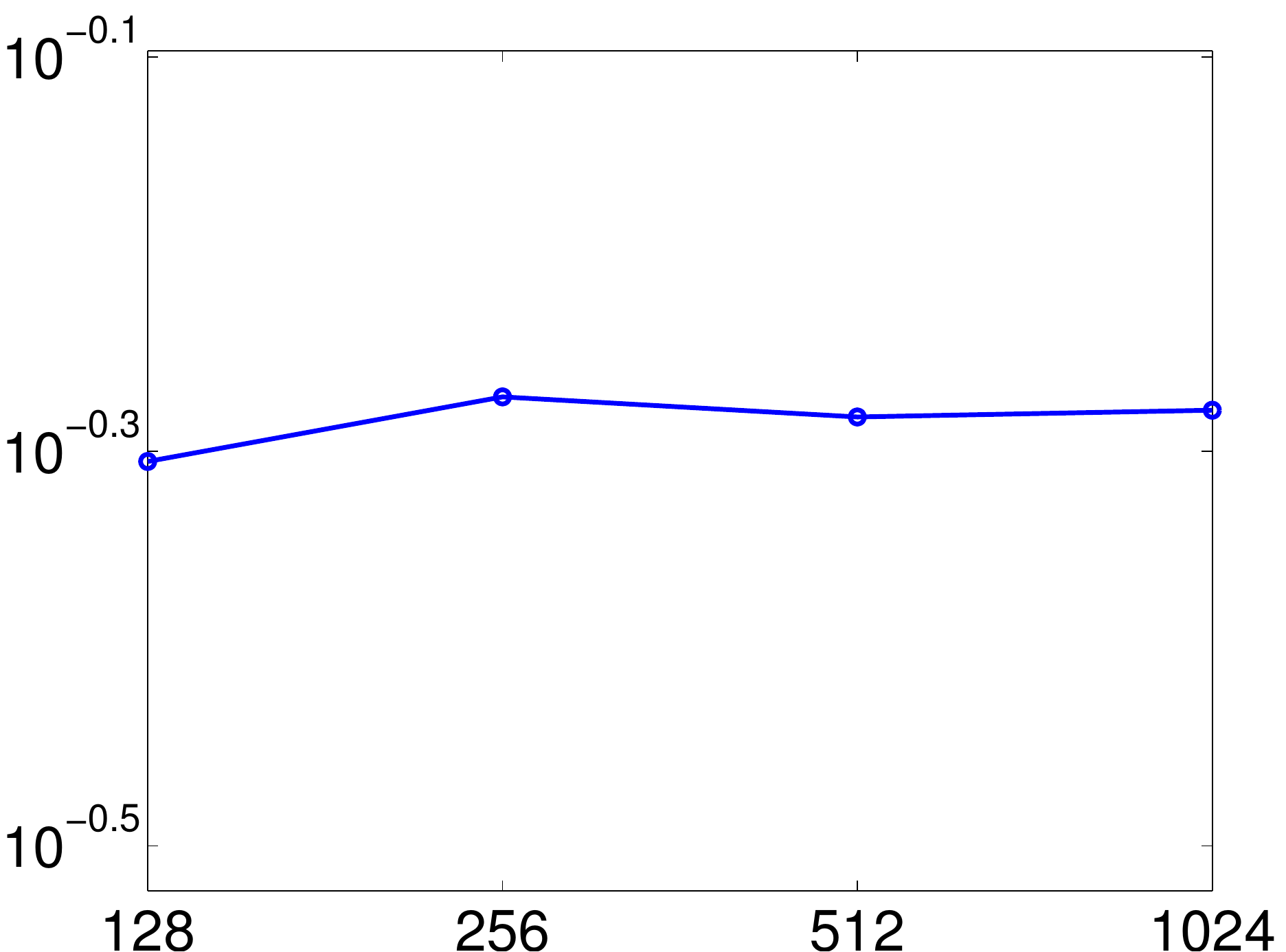}
\caption{The Cauchy rates \eqref{eq:chyrates} at $t=2$ for the density ($y$-axis) for a single sample of the Kelvin-Helmholtz problem, vs.\ different mesh resolutions ($x$-axis)}
\protect \label{fig:4}
\end{figure}

\subsubsection{Convergence of the mean and variance}
The lack of convergence of the numerical schemes for single samples is not unexpected, given the results already mentioned in the introduction. Next, we will compute statistical quantities of the interest for this problem. First, we compute the Monte-Carlo approximation of the mean \eqref{eq:mc1}, denoted by $\bar{u}^{\Dx}(x,t)$, at every point $(x,t)$ in the computational domain. This sample mean of the density, computed with $M=400$ samples and the second-order TeCNO2 scheme is presented in Figure \ref{fig:5} for a set of successively refined grid resolutions. The figure clearly suggests that the sample mean converges as the mesh is refined. This stands in stark contrast with the lack of convergence, at the level of single samples, as shown in Figure \ref{fig:kh-density1} and Figure \ref{fig:3}. Furthermore, Figure \ref{fig:5} also reveals that small scale structures, present in single sample (realization) computations, are indeed smeared or averaged out in the mean. This convergence of the mean is further quantified by computing the $L^1$ difference of the mean,
\begin{equation}
\label{eq:cr2}
\|\bar{u}^{\Dx} - \bar{u}^{\Dx/2}\|_{L^1([0,1]^2)}.
\end{equation}
and plotting the results in Figure \ref{fig:68}(a). As predicted by the theory presented in Theorems \ref{thm:convmv2} and \ref{thm:alphaconv}, these results confirm that the sequence of approximate means form a Cauchy sequence, and hence converge to a limit as the mesh is refined. Similar convergence results were also observed for the means of the other conserved variables, namely momentum and total energy (not shown here). Furthermore, Figure \ref{fig:5} also suggests that the mean is varying in the $y$-direction only. This is completely consistent with the symmetries of the equations, of the initial data and the fact that periodic boundary conditions are employed. This is also in sharp contrast with the situation for single realizations where there is strong variation along both directions (see Figure \ref{fig:3}).
\begin{figure}
\centering
\subfigure[$128^2$]{\includegraphics[width=0.45\linewidth]{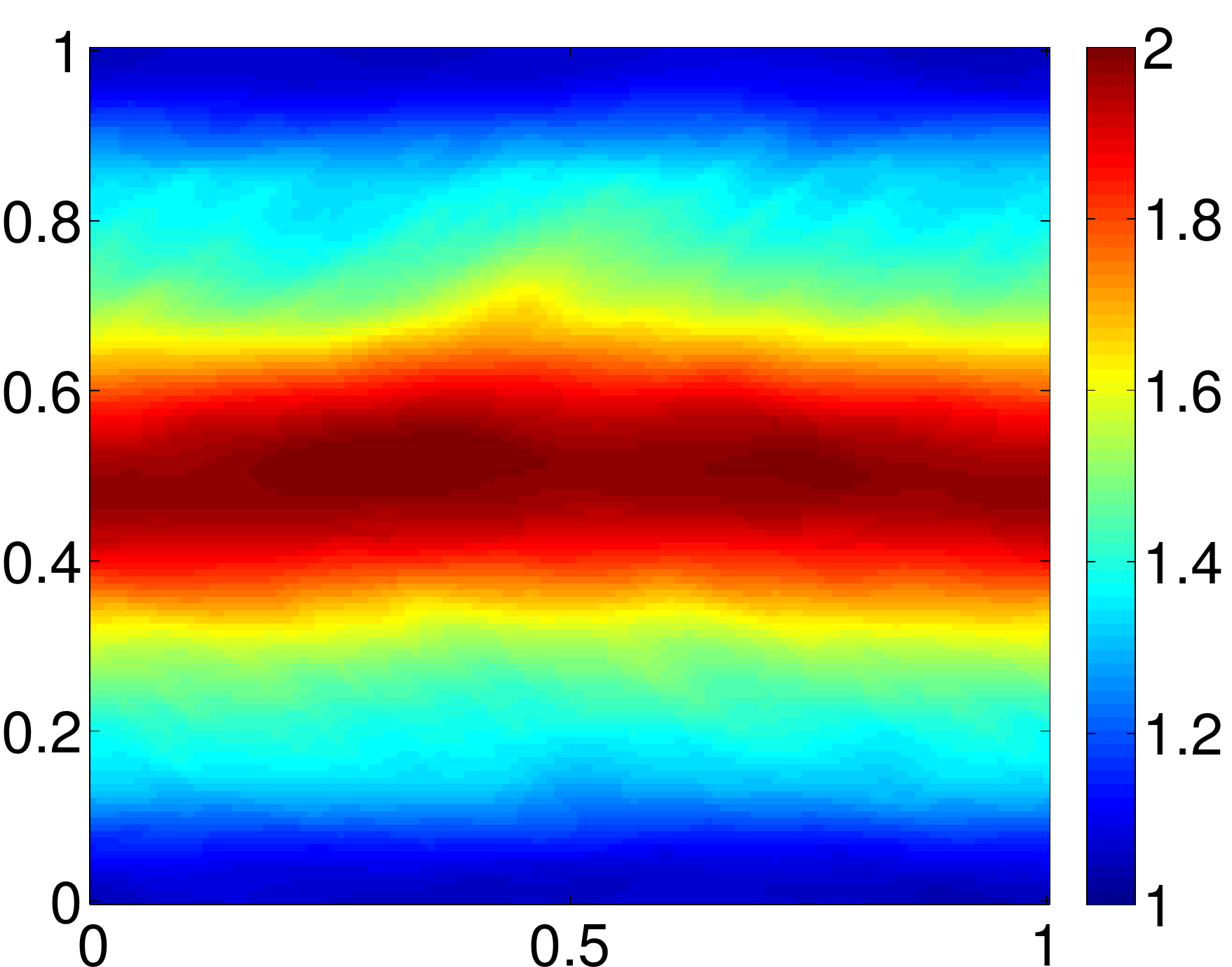}}
\subfigure[$256^2$]{\includegraphics[width=0.45\linewidth]{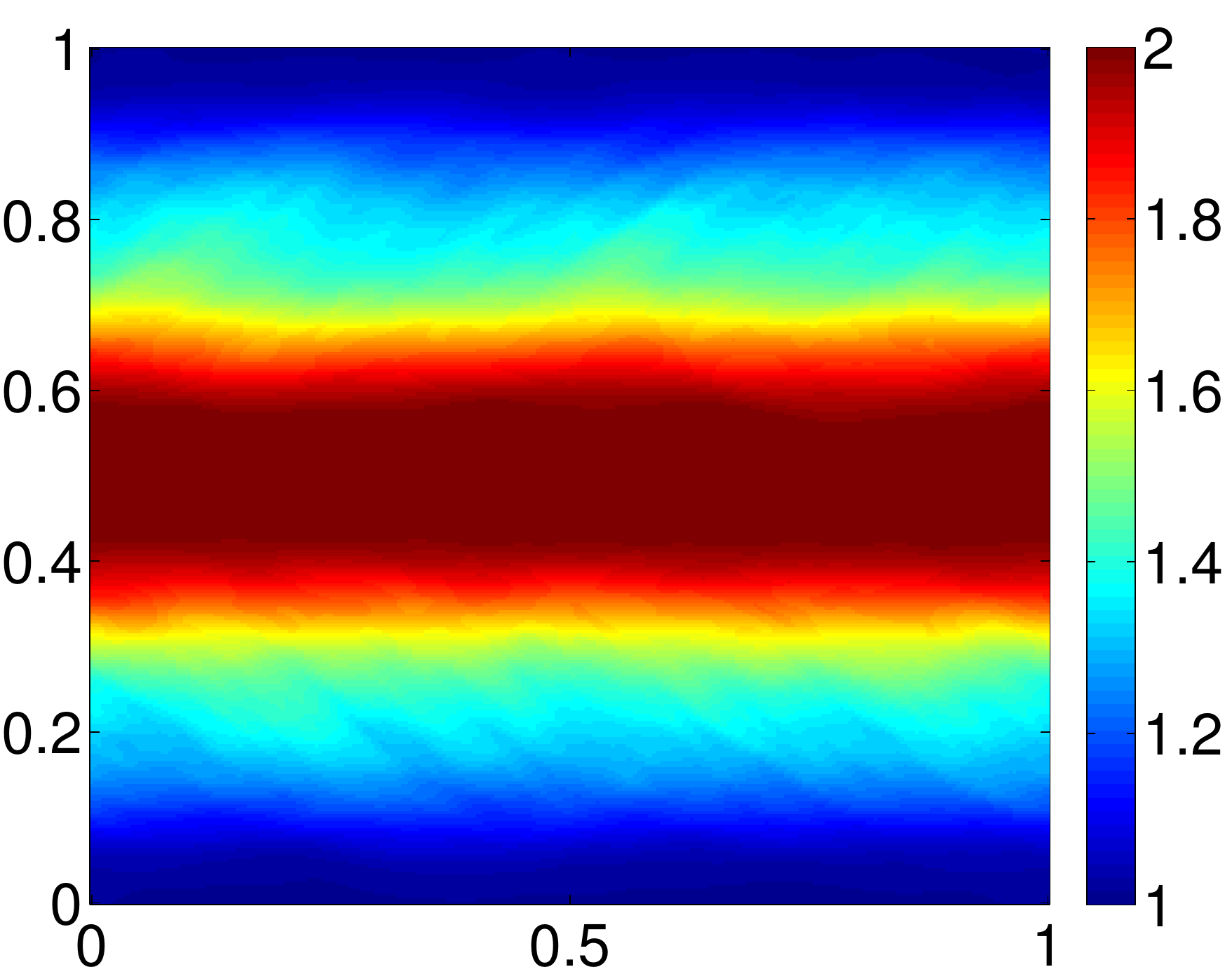}} \\
\subfigure[$512^2$]{\includegraphics[width=0.45\linewidth]{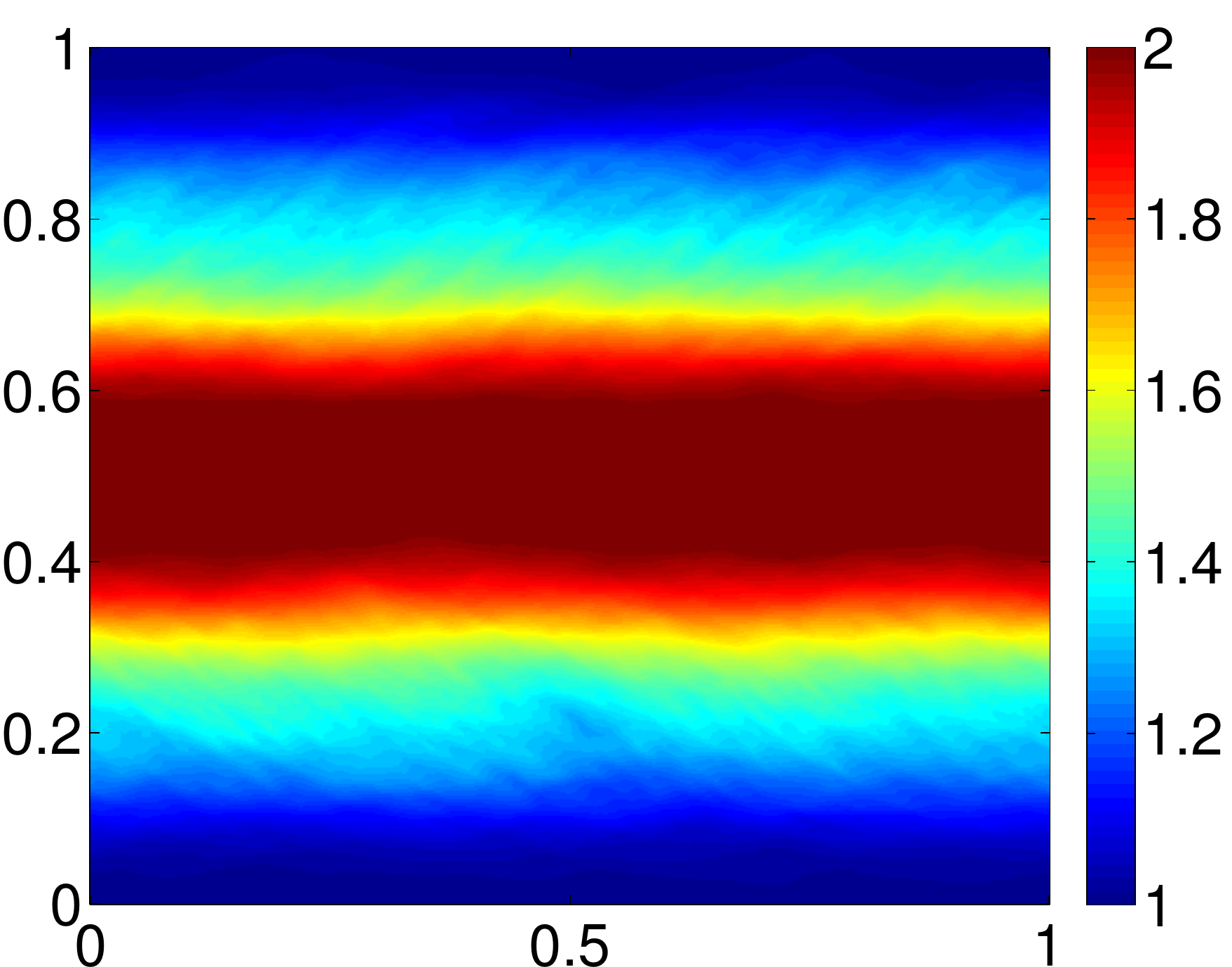}}
\subfigure[$1024^2$]{\includegraphics[width=0.45\linewidth]{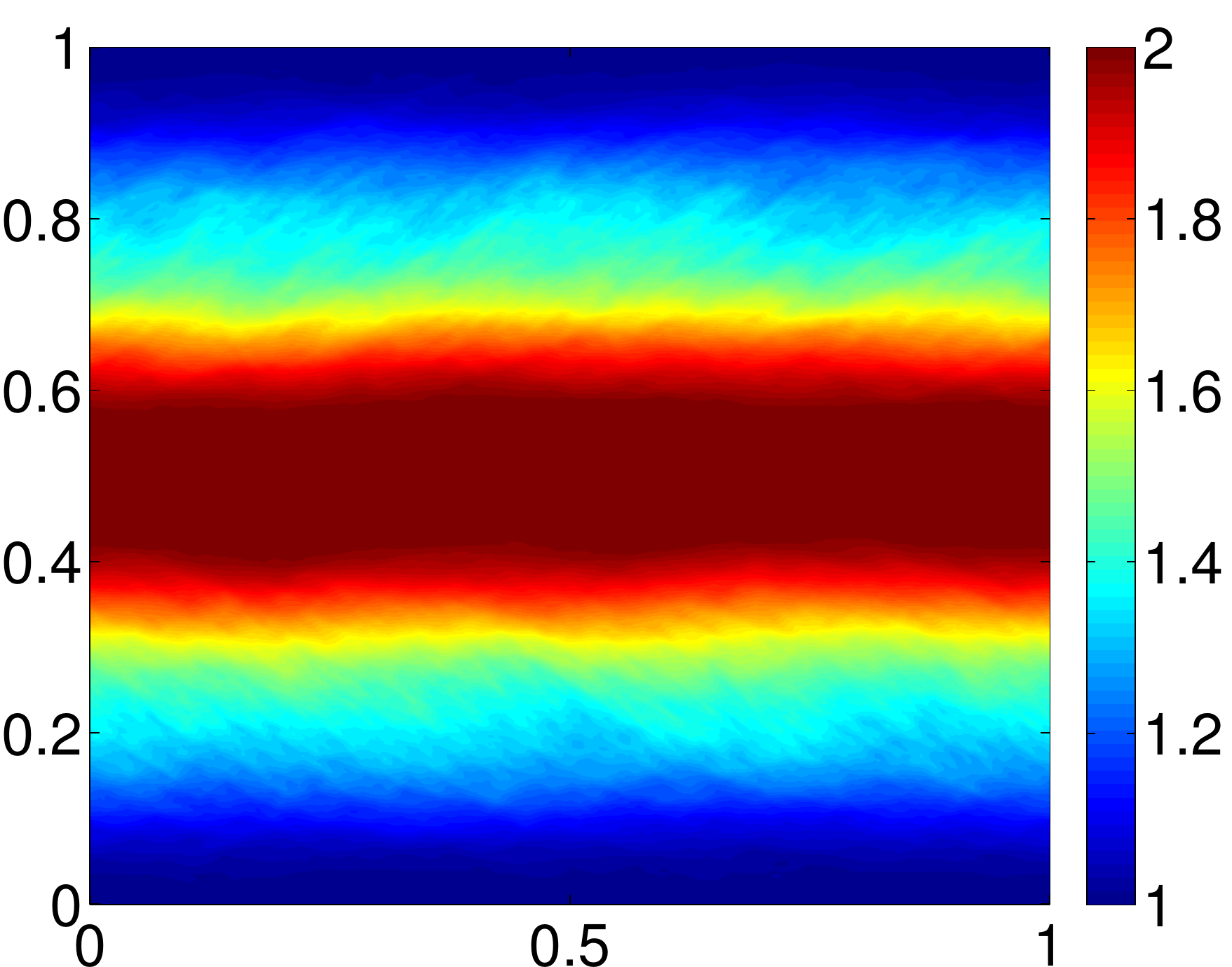}}
\caption{Approximate sample means of the density for the Kelvin-Helmholtz problem \eqref{eq:kh} at time $t=2$ and different mesh resolutions. All results are with $400$ Monte Carlo samples.}
\label{fig:5}
\end{figure}
\begin{figure}
\centering
\subfigure[Mean]{\includegraphics[width=6cm]{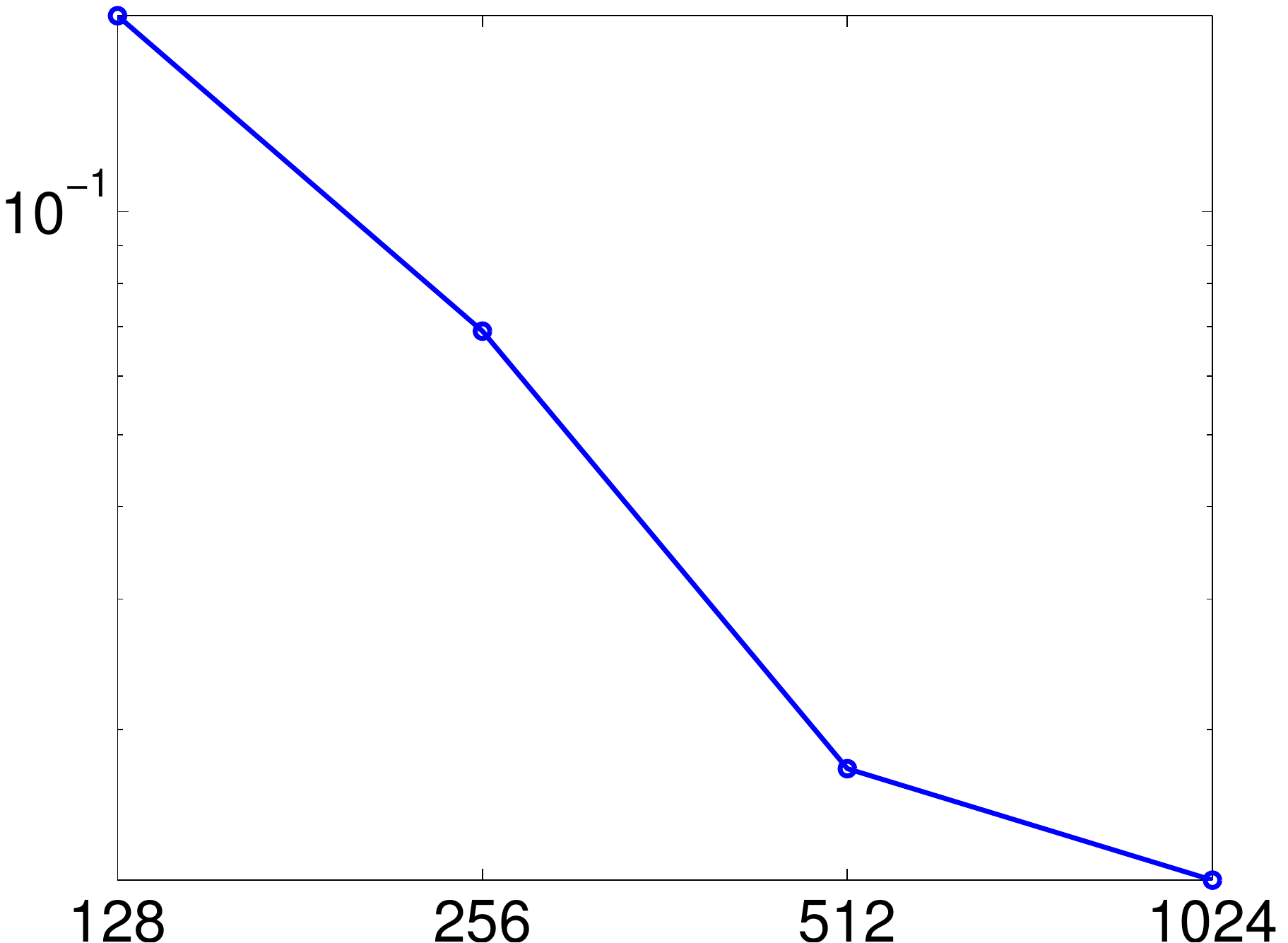}}
\subfigure[Variance]{\includegraphics[width=6cm]{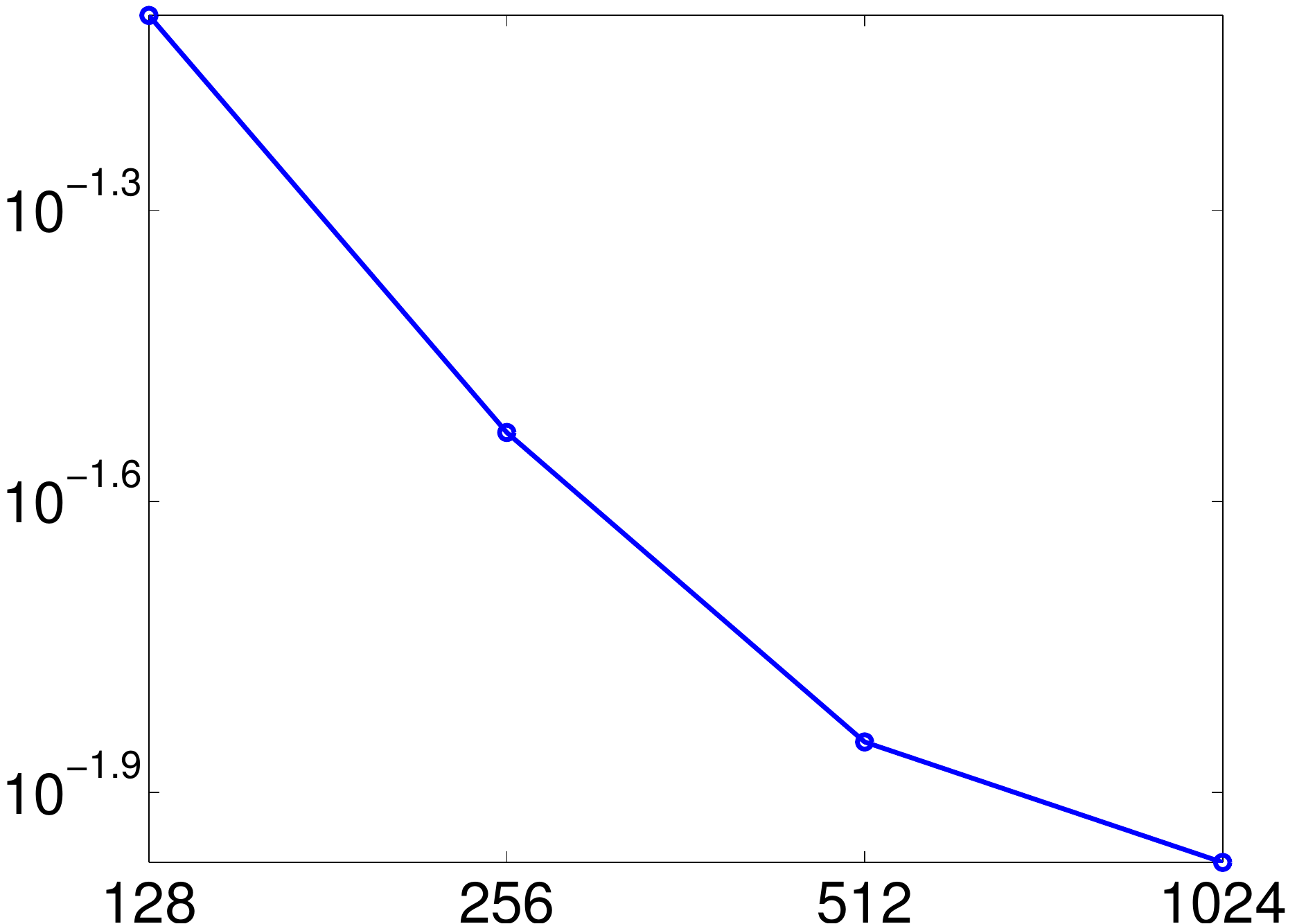}}
\caption{Cauchy rates \eqref{eq:cr2} for the sample mean and variance of the density ($y$-axis) vs.\ mesh resolution ($x$-axis) for the Kelvin-Helmholtz problem \eqref{eq:kh}.}
\protect \label{fig:68}
\end{figure}

Next, we compute the sample variance and show the results in Figure \ref{fig:7}. The results suggest that the variance also converges with grid resolution. This convergence is also demonstrated quantitatively by plotting the $L^1$ differences of the variance at successive levels of resolution, shown in Figure \ref{fig:68}(b). Again, the figure suggests that the sequence forms a Cauchy sequence, and hence is convergent. Furthermore, the variance itself shows no small scale features, even on very fine mesh resolutions (see Figure \ref{fig:7}). This figure also reveals that the variance is higher near the initial mixing layer.
\begin{figure}
\centering
\subfigure[$128^2$]{\includegraphics[width=0.45\linewidth]{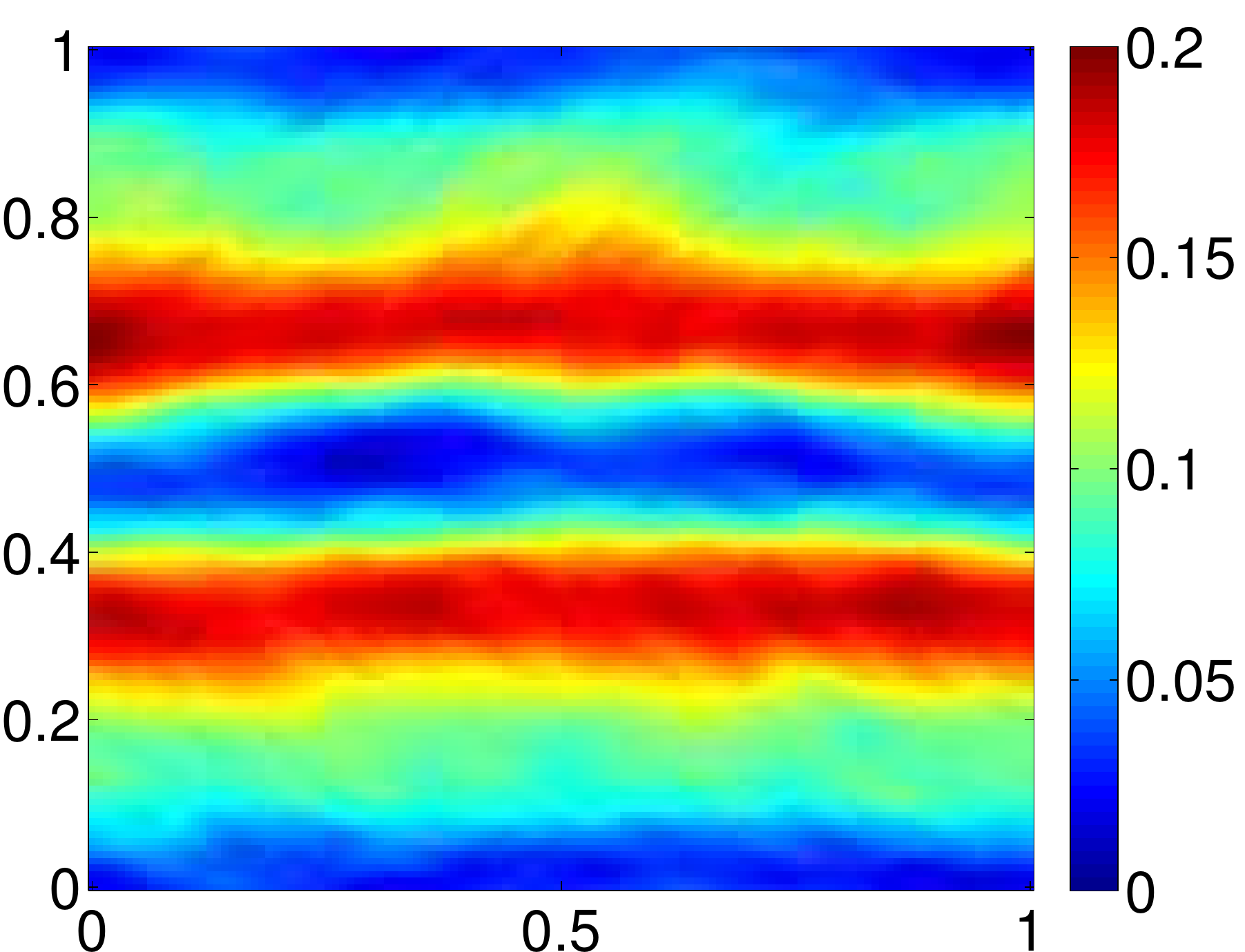}}
\subfigure[$256^2$]{\includegraphics[width=0.45\linewidth]{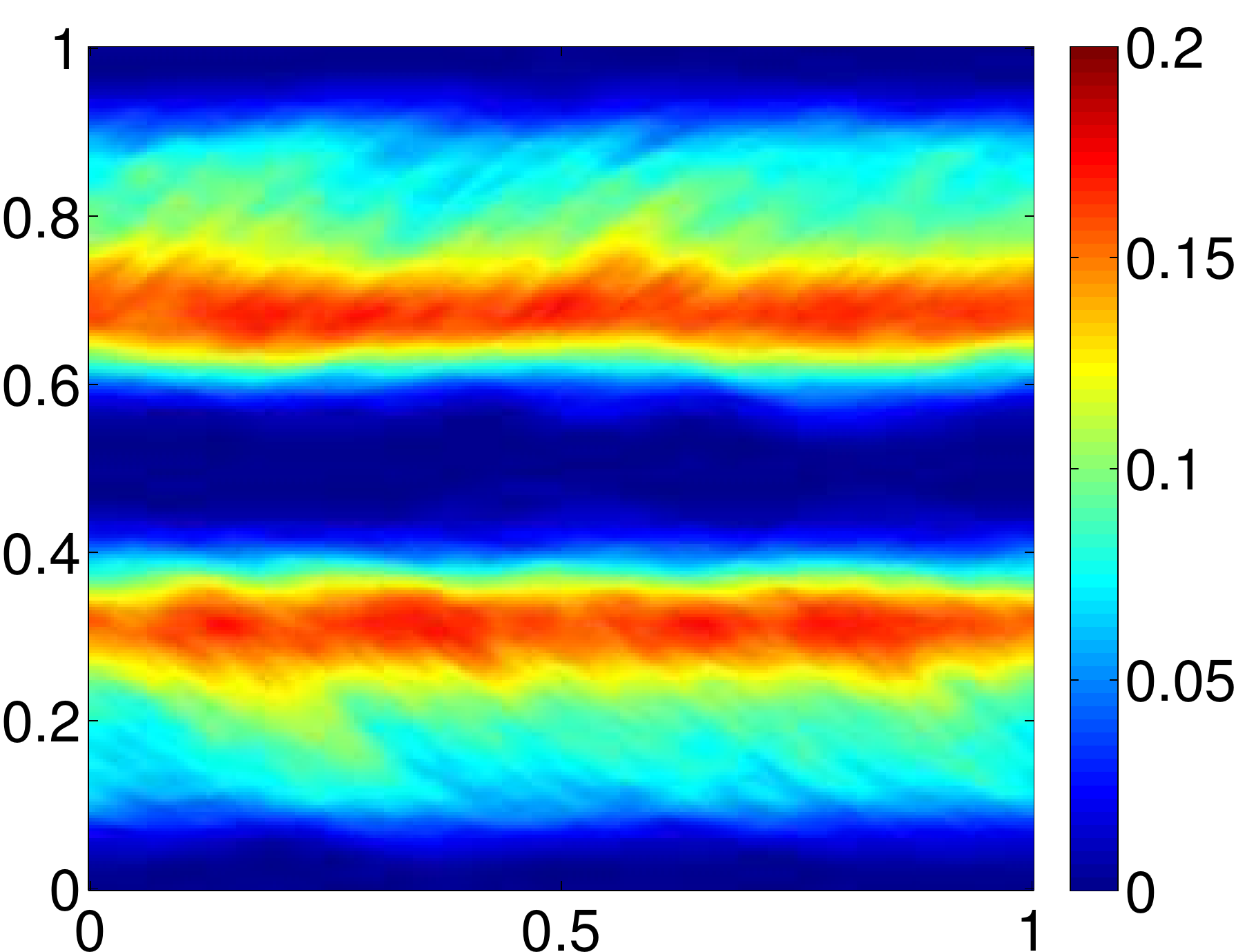}} \\
\subfigure[$512^2$]{\includegraphics[width=0.45\linewidth]{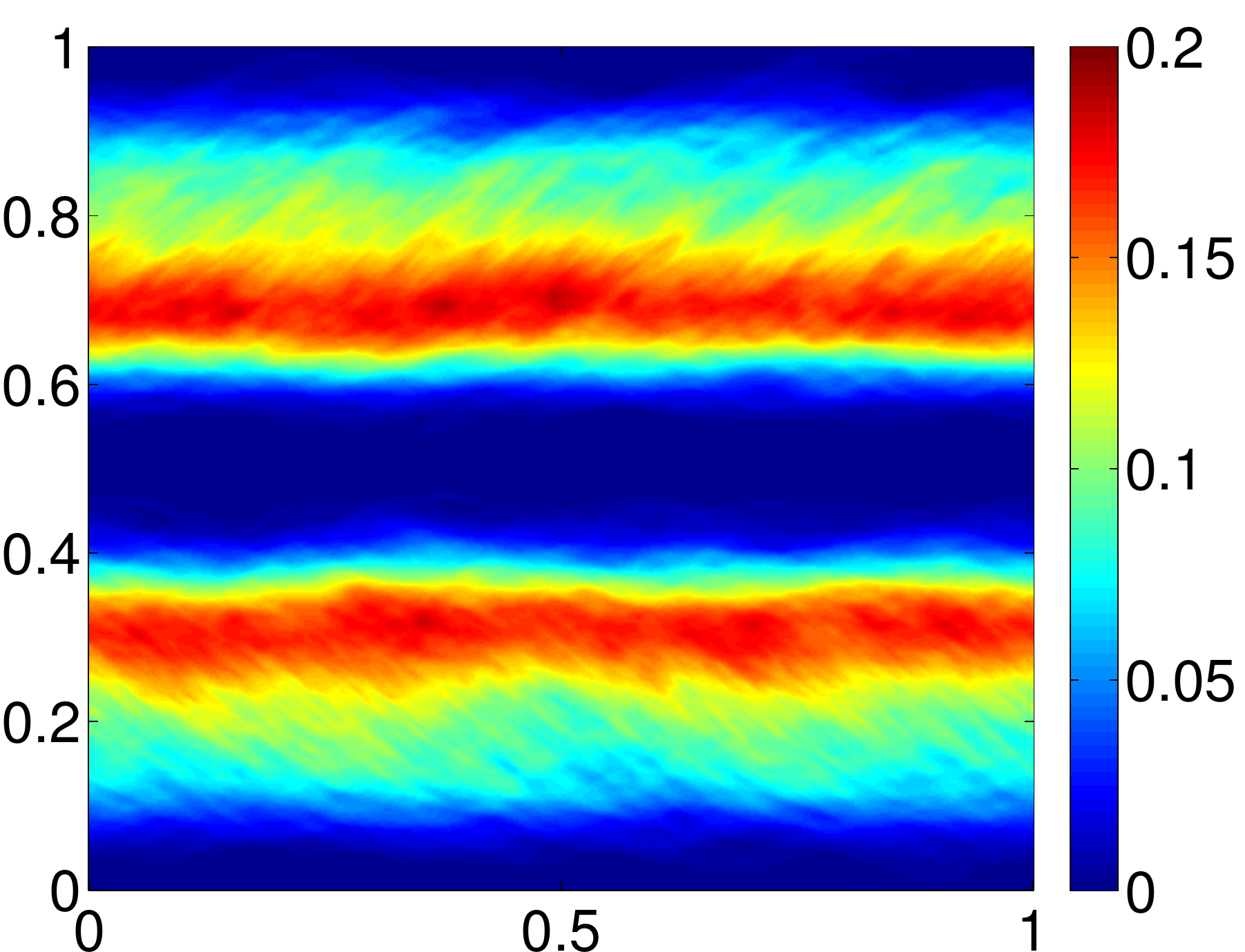}}
\subfigure[$1024^2$]{\includegraphics[width=0.45\linewidth]{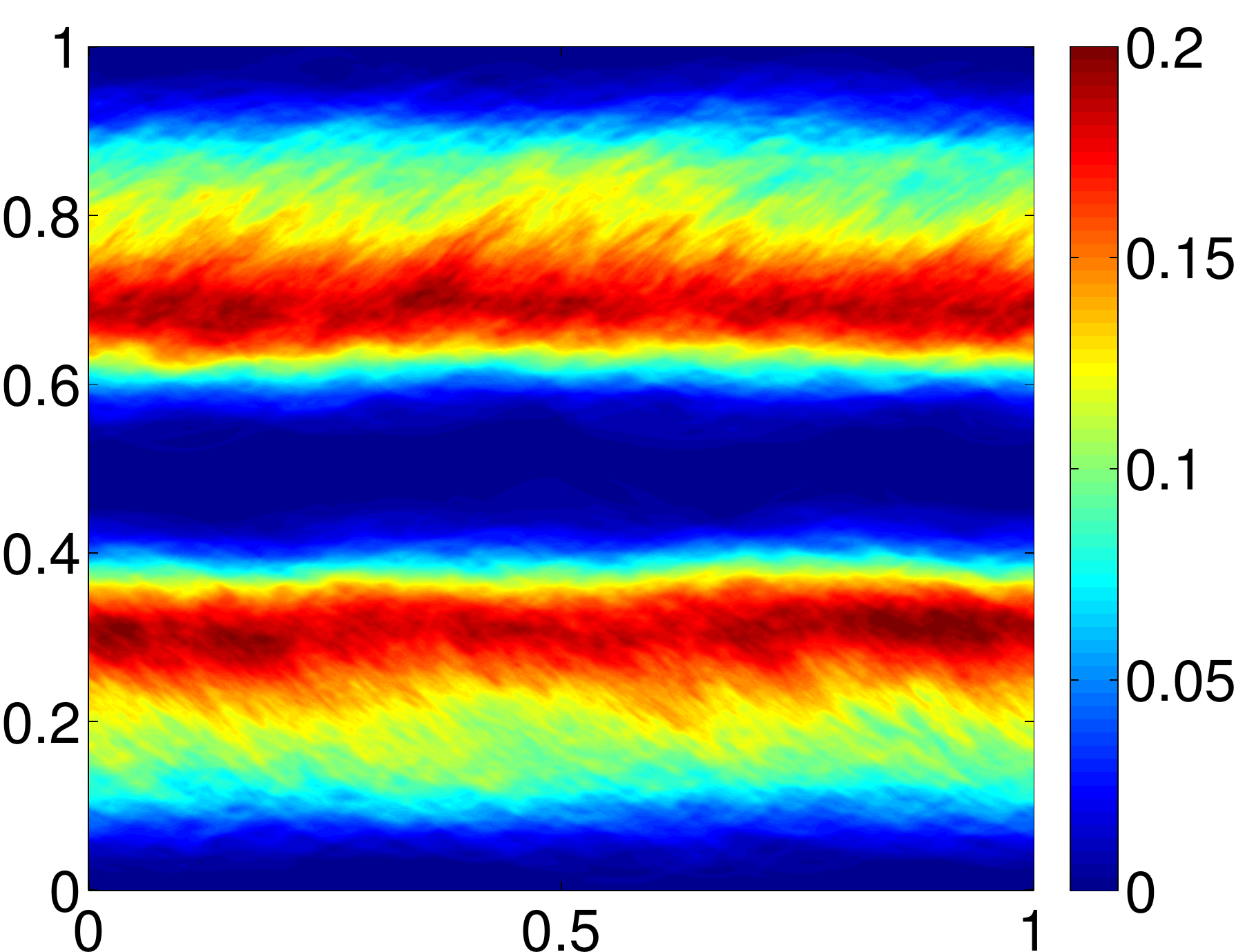}}
\caption{Approximate sample variances of the density for the Kelvin-Helmholtz problem \eqref{eq:kh} at time $t=2$ and different mesh resolutions. All results are with $400$ Monte Carlo samples.}
\label{fig:7}
\end{figure}
\begin{figure}[htbp] 
\centering
\includegraphics[width=6cm]{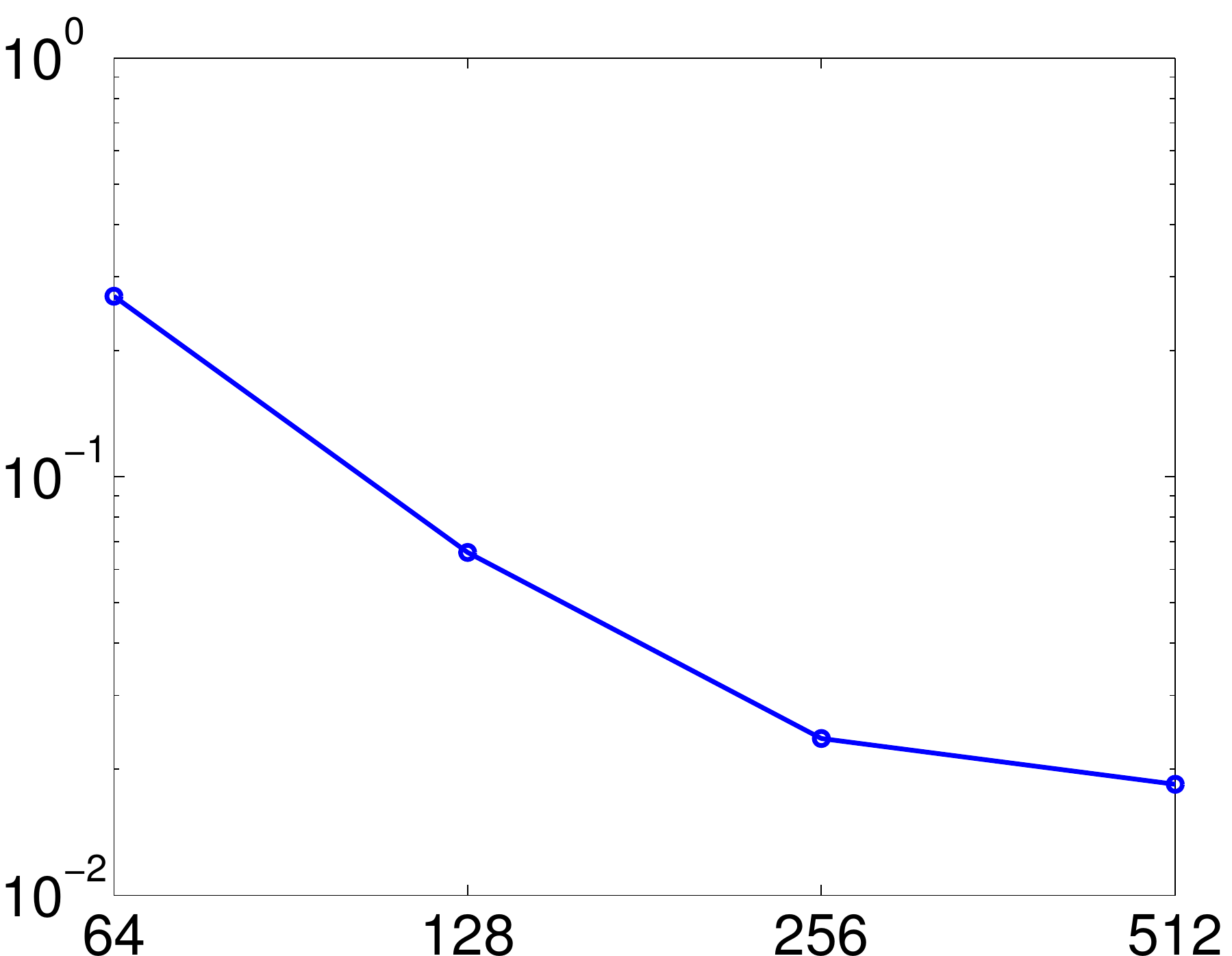}
\caption{Cauchy rates in the Wasserstein distance \eqref{eq:wdist10} at time $t=2$ for the density ($y$-axis) with respect to different mesh resolutions ($x$-axis), for the Kelvin-Helmholtz problem \eqref{eq:kh}.}
\label{fig:10}
\end{figure}
\subsubsection{Strong convergence to an EMV solution}
Convergence of the mean and variance (as well as higher moments (not shown here)) confirm the weak* convergence predicted by (the multi-dimensional version of) Theorems \ref{thm:convmv} and \ref{thm:alphaconv}. Note that the convergence illustrated in Figure \ref{fig:68} is in $L^1$ of the spatial domain. Next, we test strong convergence of the numerical approximations by computing the Wasserstein distance between two successive mesh resolutions:
\begin{equation}
\label{eq:wdist9}
W_1\left(\nu^{\Dx}_{(x,t)},\nu^{\Dx/2}_{(x,t)}\right)
\end{equation}
(see Appendix \ref{app:wasscomp}). In Figure \ref{fig:10} we show the $L^1$-norm of the Wasserstein distance between successive mesh resolutions
\begin{equation}
\label{eq:wdist10}
\left\|W_1\left(\nu^{\Dx}_{(\cdot,t)},\ \nu^{\Dx/2}_{(\cdot,t)}\right)\right\|_{L^1([0,1]^2)}
\end{equation}
at time $t=2$; recall that this is the quantity appearing in \eqref{eq:sconv}. The figure suggests that this difference between successive mesh resolutions converges to zero. Hence, the approximate Young measures converge strongly in both space-time as well as phase space to a limit Young measure.

In Figure \ref{fig:11} we show the pointwise difference in Wasserstein distance \eqref{eq:wdist10} between two successive mesh levels. The figure reveals that this distance decreases as the mesh is refined. Moreover, we see that the Wasserstein distance between approximate Young measures at successive resolutions is concentrated at the interface mixing layers. This is to be expected as the variance is also concentrated along these layers (cf. the variance plots in Figure \ref{fig:7}).
\begin{figure}
\centering
\subfigure[$W_1\left(\nu^{256}_{(x,t)},\nu^{512}_{(x,t)}\right)$]{\includegraphics[width=0.45\linewidth]{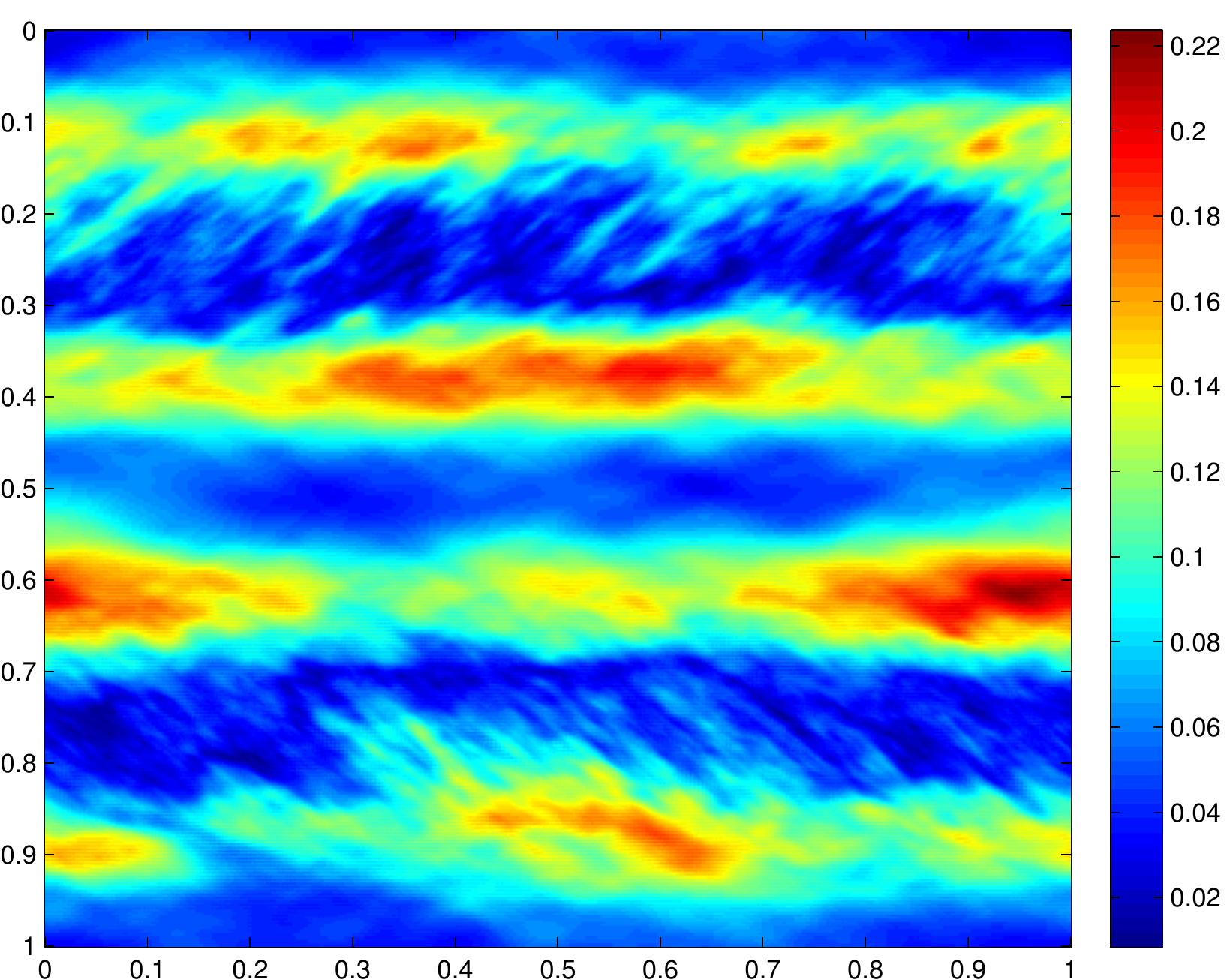}}
\subfigure[$W_1\left(\nu^{512}_{(x,t)},\nu^{1024}_{(x,t)}\right)$]{\includegraphics[width=0.45\linewidth]{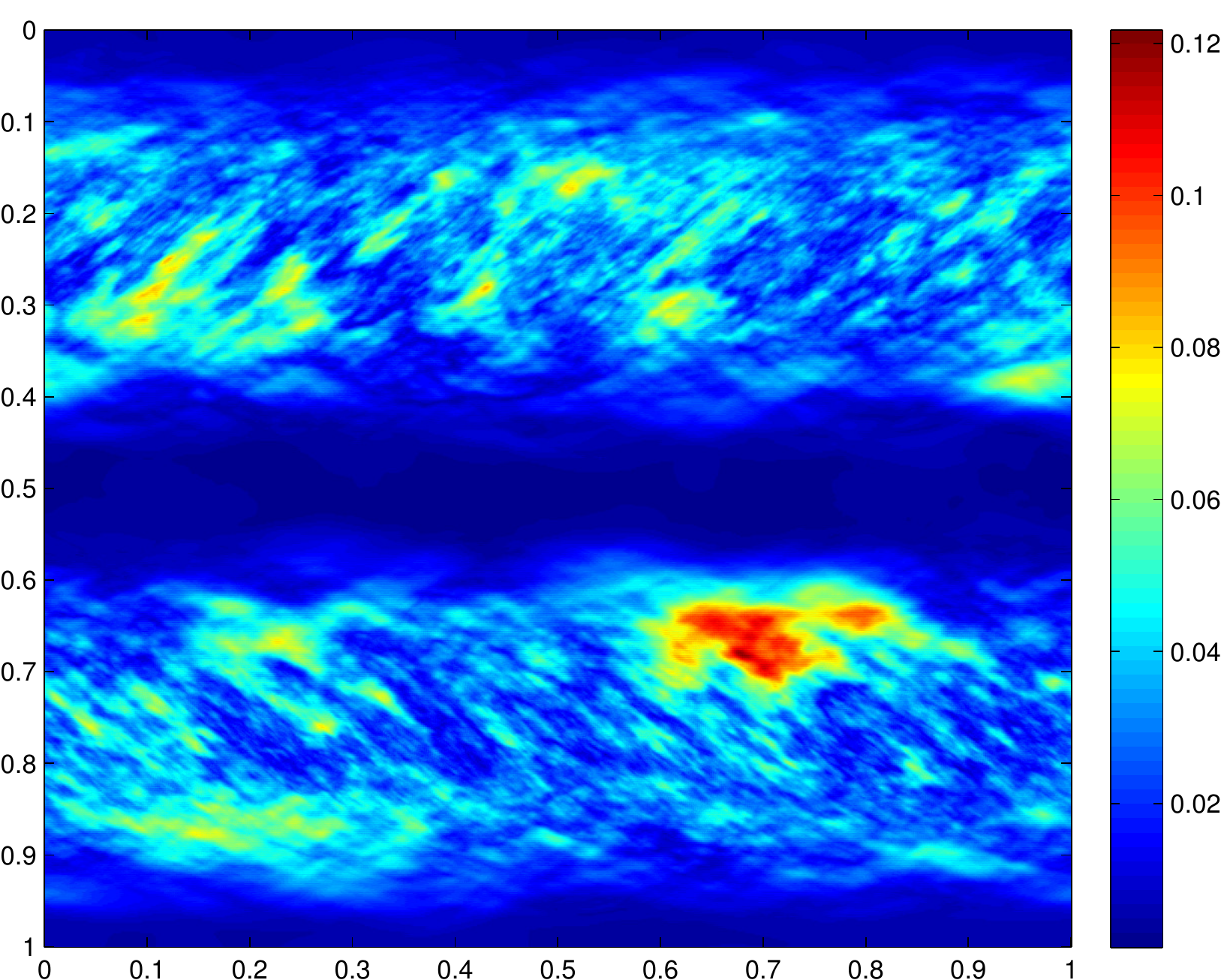}}
\caption{Wasserstein distances between the approximate Young measure (density) \eqref{eq:wdist9} at successive mesh resolutions, at time $t=2$.}
\protect \label{fig:11}
\end{figure}

\subsection{Kelvin-Helmholtz: vanishing variance around atomic initial data ($\amp\downarrow 0$)}\label{sec:khatomic}
Our aim is to compute the entropy measure-valued solutions of the two-dimensional Euler equations  with atomic initial measure, concentrated on the Kelvin-Helmholtz data \eqref{eq:khi}. We utilize Algorithm \ref{alg:atomic} for this purpose and consider the perturbed initial data \eqref{eq:kh}. Observe that this perturbed initial data converges strongly (cf.\ \eqref{eq:sconv}) to the initial data \eqref{eq:khi}  as $\amp \rightarrow 0$. Following Algorithm \ref{alg:atomic}, we wish to study the limit behavior of approximate solutions $\nu^{\Dx,\amp}$ as $\amp \to 0$. To this end, we retain the same set-up as the previous numerical experiment and compute approximate solutions using the TeCNO2 scheme of \cite{FTSID2} at a very fine mesh resolution of $1024^2$ points for different values of $\amp$.

Results \emph{for a single sample} at time $t=2$ and different $\amp$'s are presented in Figure \ref{fig:12}. The figures indicate that there is no convergence as $\amp \rightarrow 0$. The spread of the mixing region seems to remain large even when the perturbation parameter is reduced. This lack of convergence is further quantified in Figure \ref{fig:13}, where we plot the $L^1$ difference of the approximate density for successively reduced values of $\amp$. This difference remains large even when $\amp$ is reduced by an order of magnitude.
\begin{figure}
\centering
\subfigure[$\amp = 2\times 10^{-2}$]{\includegraphics[width=0.45\linewidth]{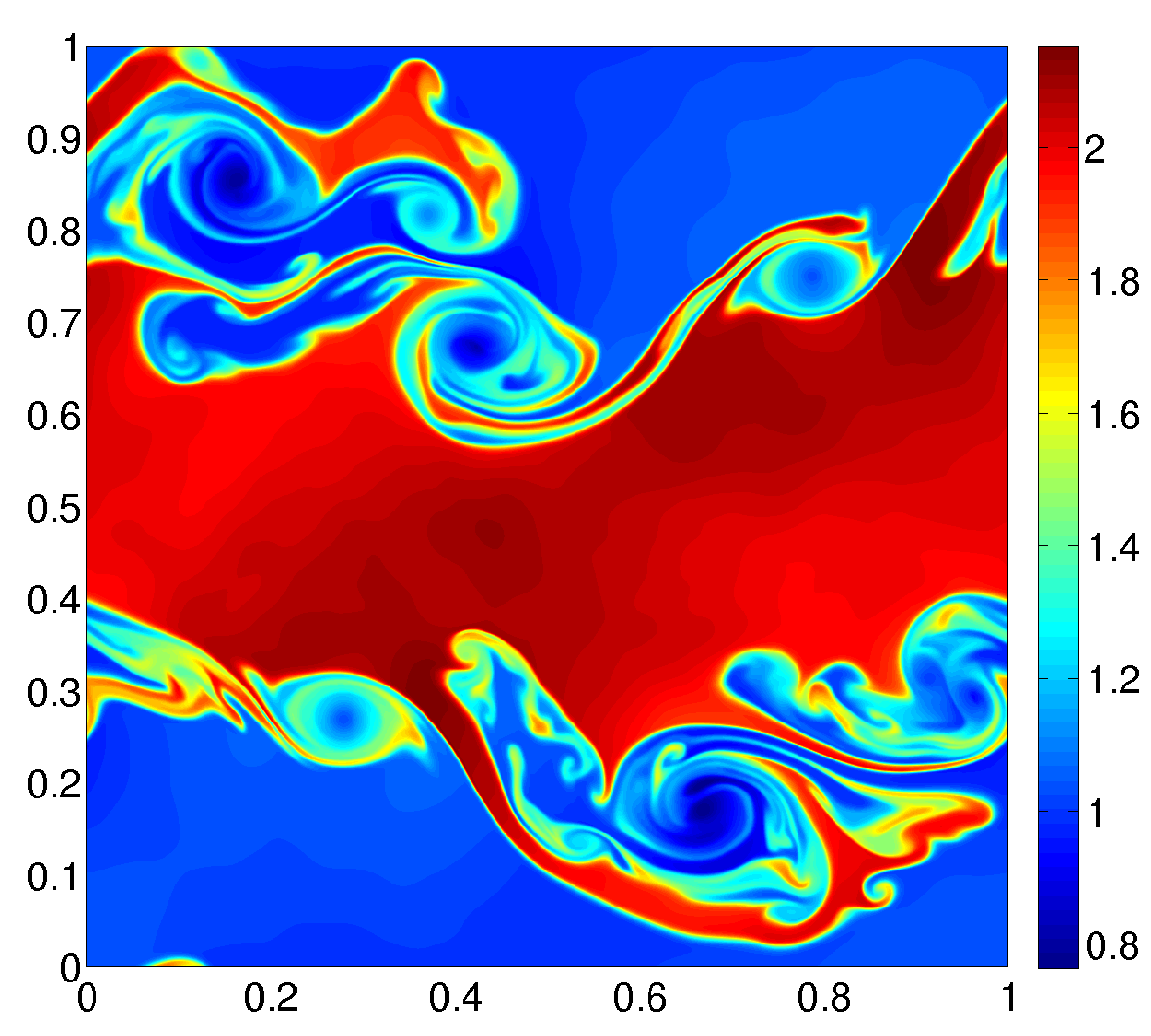}}
\subfigure[$\amp = 10^{-2}$]{\includegraphics[width=0.45\linewidth]{kh-1024-eps=1e-2_video4.png}} \\
\subfigure[$\amp = 5\times 10^{-3}$]{\includegraphics[width=0.45\linewidth]{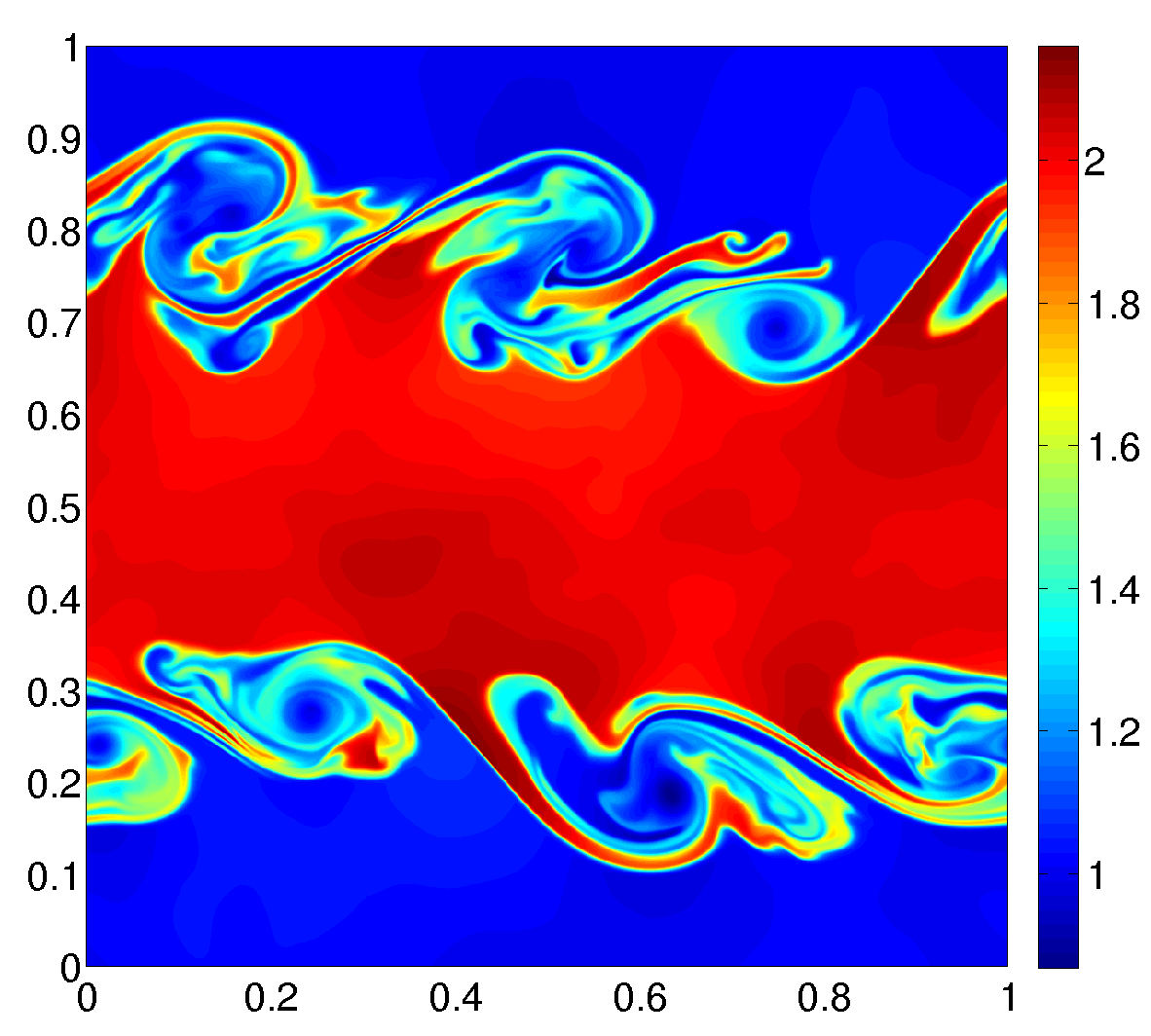}}
\subfigure[$\amp = 2.5\times 10^{-3}$]{\includegraphics[width=0.45\linewidth]{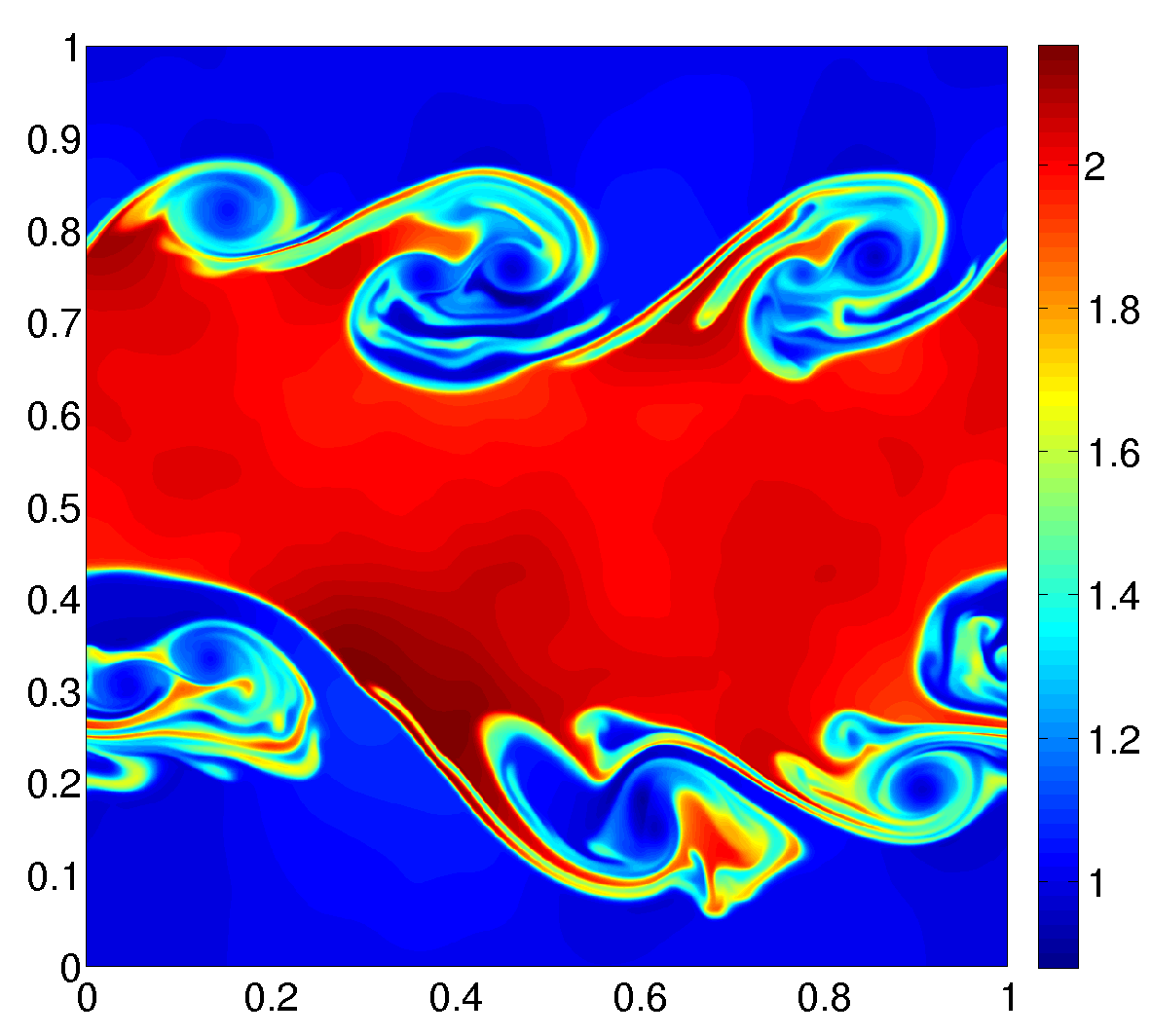}}
\caption{Approximate density, computed with the TeCNO2 scheme for a single sample with initial data \eqref{eq:kh} for different initial perturbation amplitudes $\amp$ on a grid of $1024^2$ points.}
\protect \label{fig:12}
\end{figure}
\begin{figure}
\centering
\includegraphics[width=6cm]{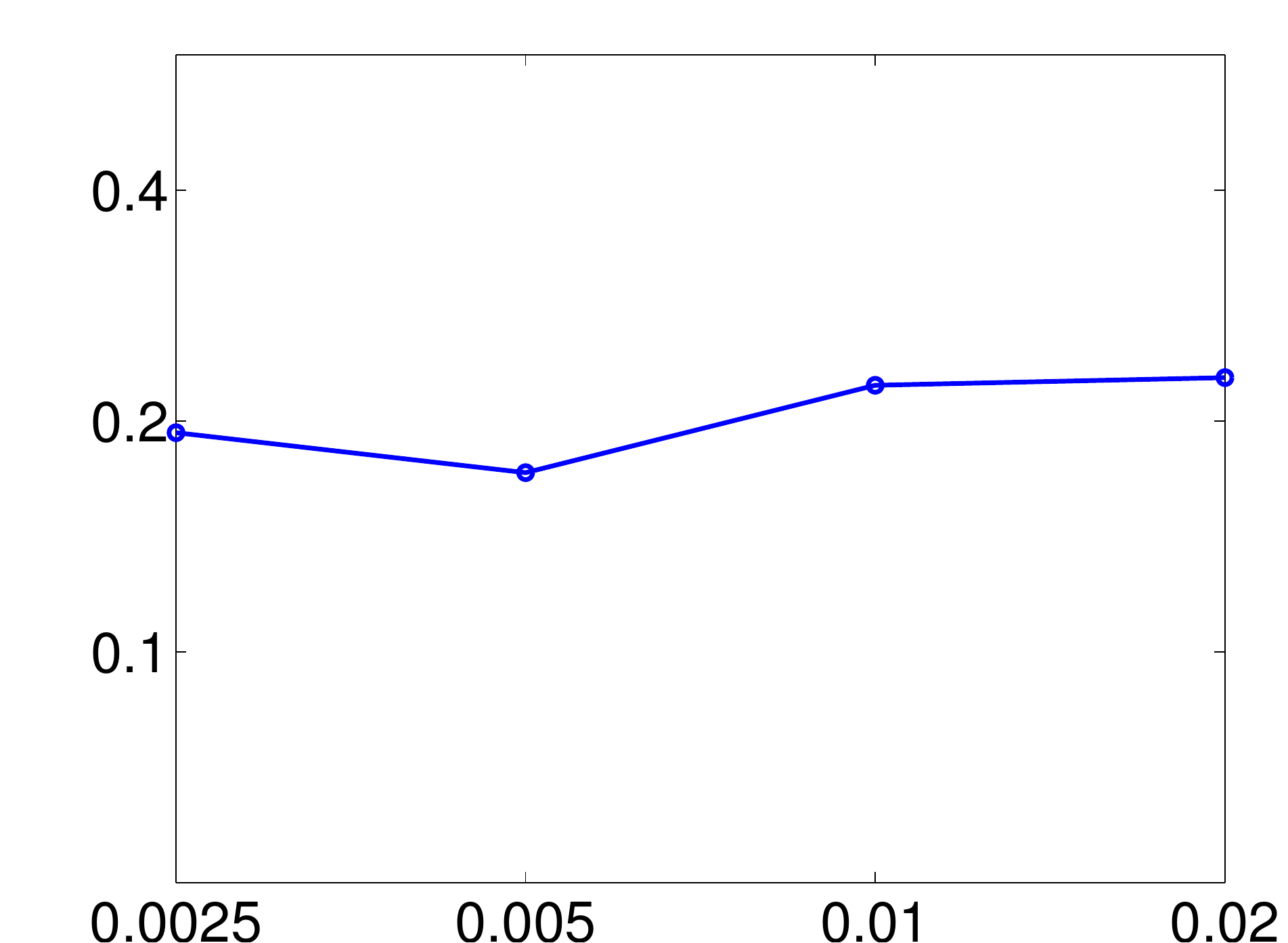}
\caption{The Cauchy rates ($L^1$ difference for successively reduced $\amp$) for the density ($y$-axis) at $t=2$ for a single sample vs. different values of the perturbation parameter $\amp$ ($x$-axis).}
\protect \label{fig:13}
\end{figure}

Next, we compute the mean of the density over $400$ samples at a fixed grid resolution of $1024^2$ points and for different values of the perturbation parameter $\amp$. This sample mean is plotted in Figure \ref{fig:14}. The figure clearly shows pointwise convergence as $\amp \rightarrow 0$, to a limit different from the steady state solution \eqref{eq:khi}. This convergence of the mean with respect to decaying $\amp$ is quantified in Figure \ref{fig:15}(a), where we compute the $L^1$ difference of the mean for successive values of $\amp$. We observe that the mean forms a Cauchy sequence, and hence converges.
\begin{figure}
\centering
\subfigure[$\amp = 2e-2$]{\includegraphics[width=0.45\linewidth]{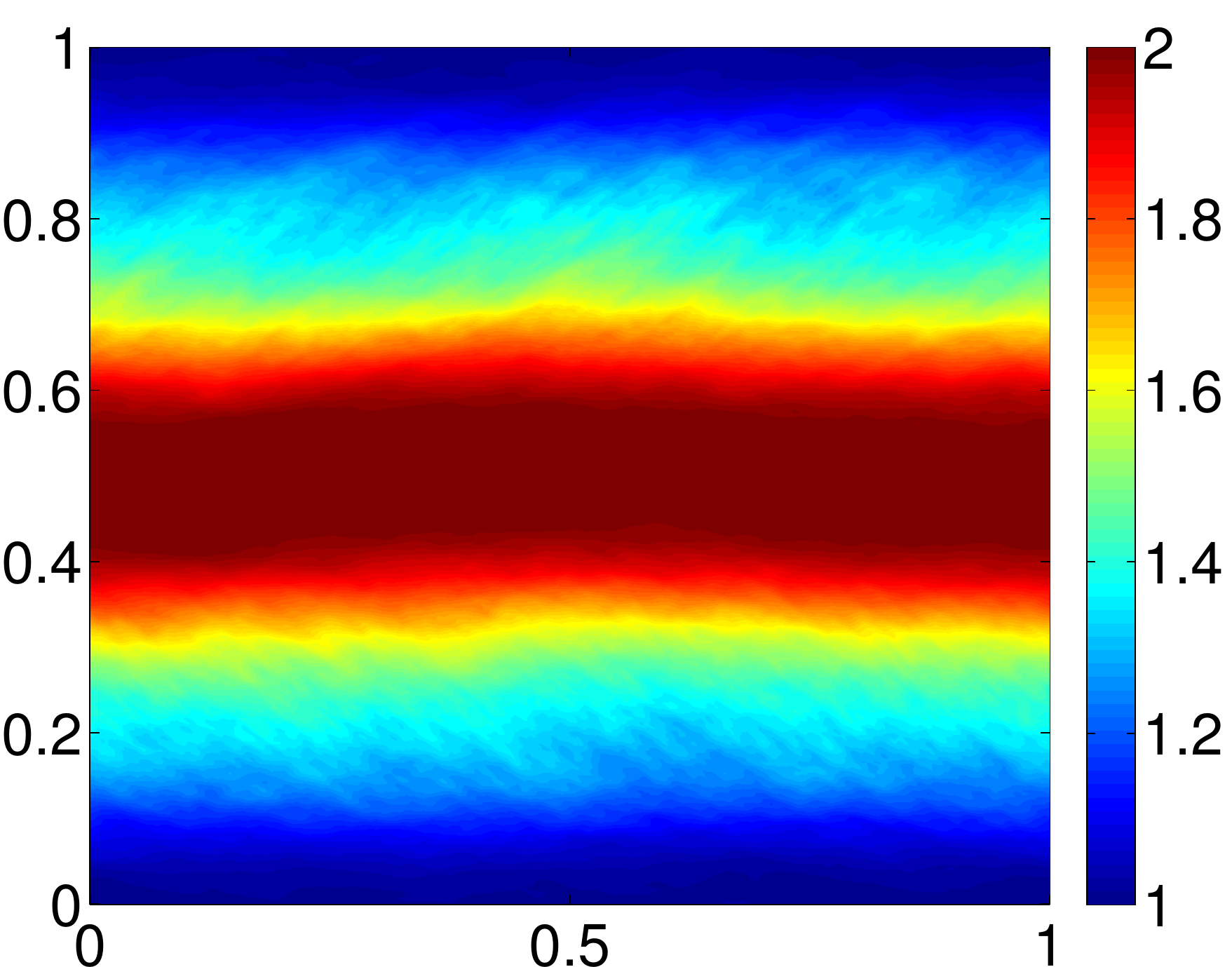}}
\subfigure[$\amp = 1e-2$]{\includegraphics[width=0.45\linewidth]{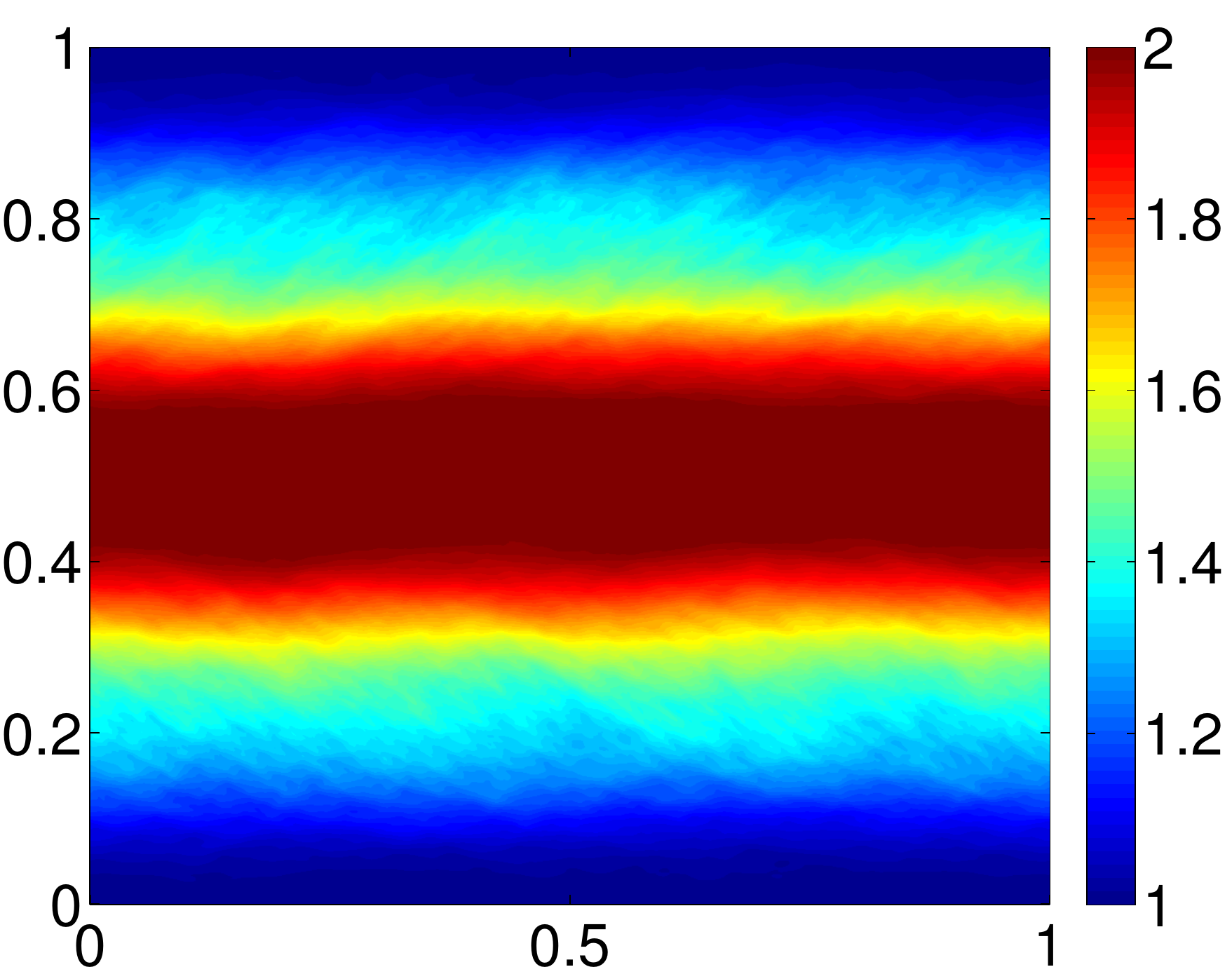}} \\
\subfigure[$\amp = 5e-3$]{\includegraphics[width=0.45\linewidth]{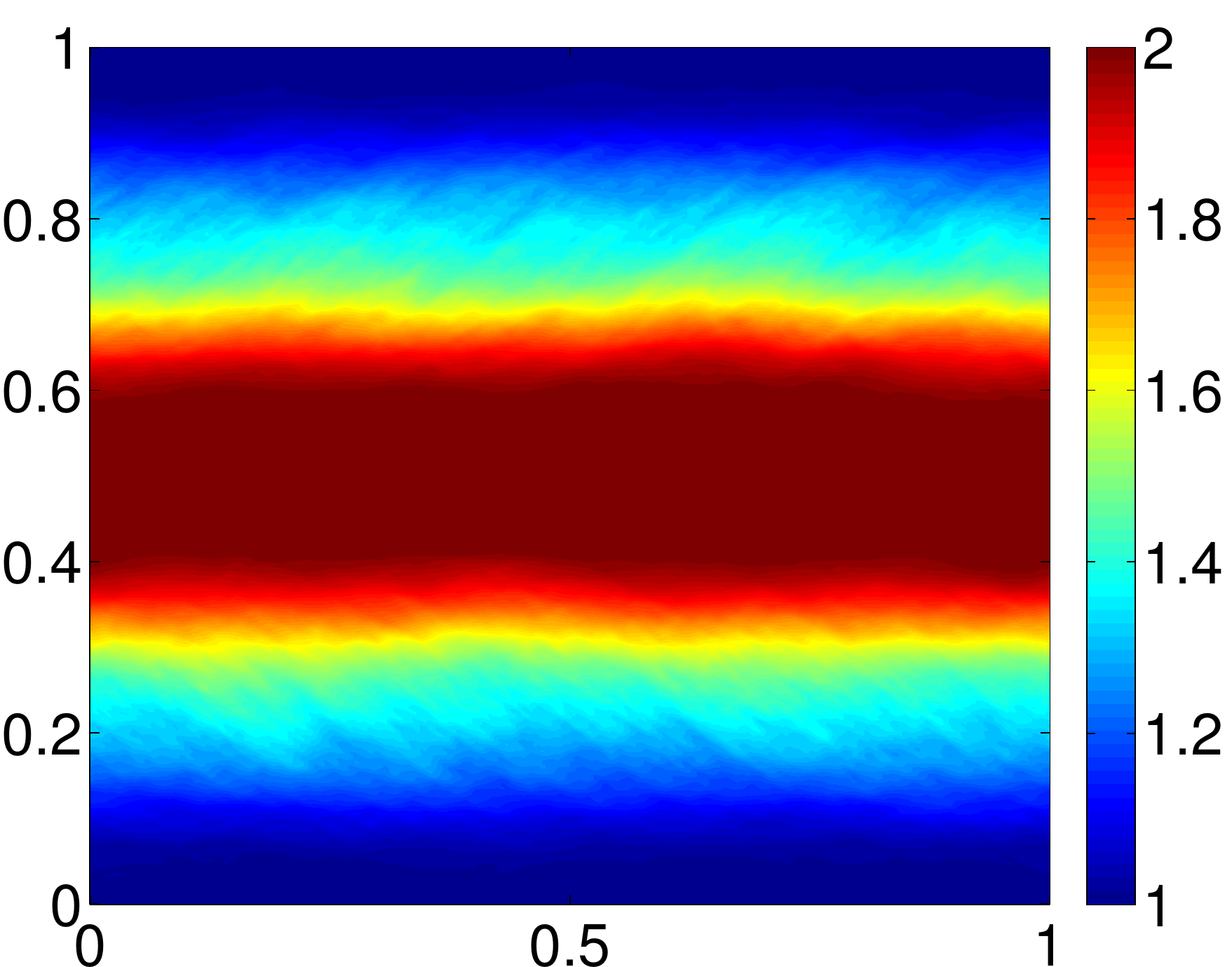}}
\subfigure[$\amp = 2.5e-3$]{\includegraphics[width=0.45\linewidth]{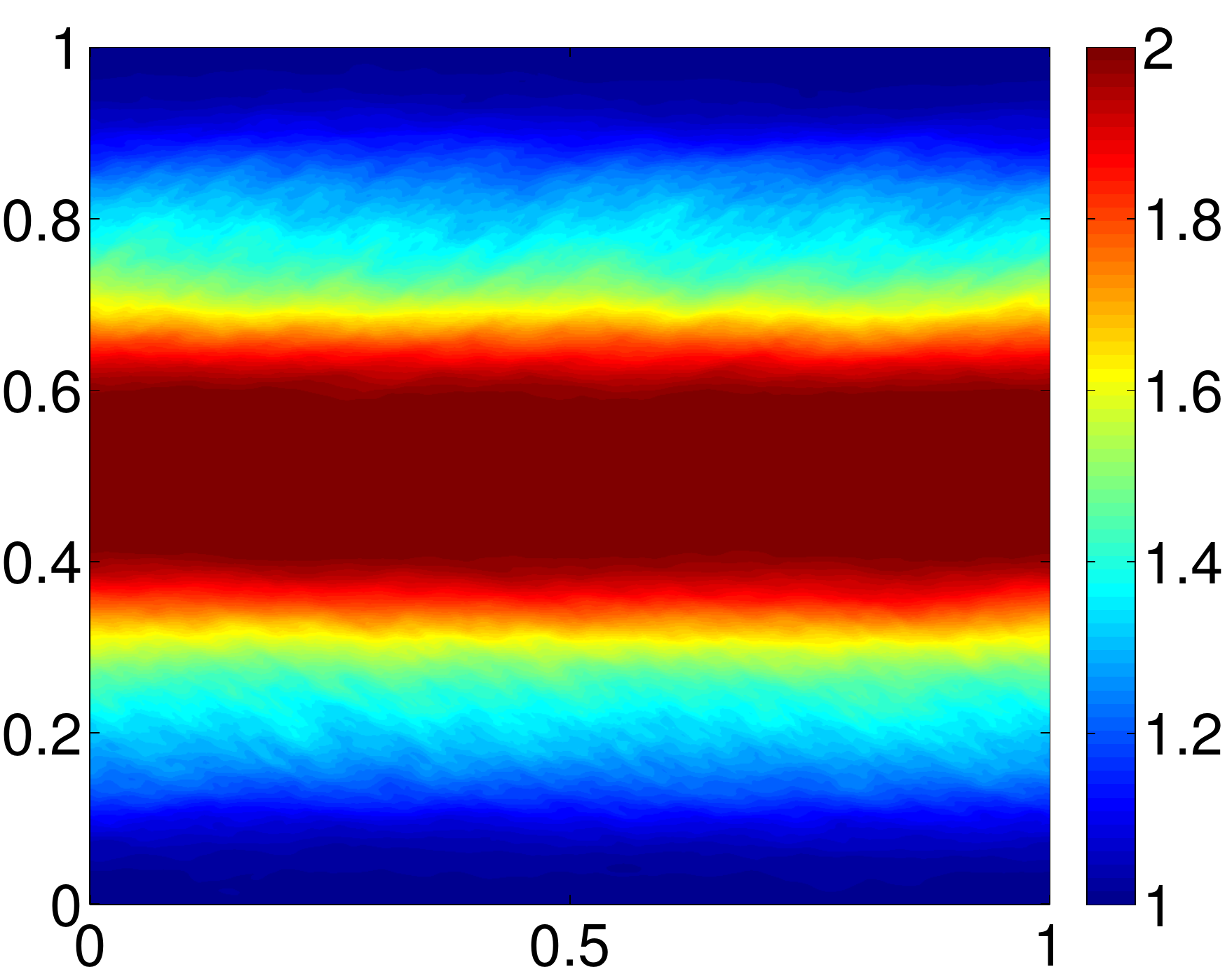}}
\caption{Approximate sample means of the density for the Kelvin-Helmholtz problem \eqref{eq:kh} at time $t=2$ and different values of perturbation parameter $\amp$. All the computations are on a grid of $1024^2$ mesh points and $400$ Monte-Carlo samples.}
\label{fig:14}
\end{figure}
\begin{figure}[htbp] 
\centering
\subfigure[Mean]{\includegraphics[width=0.45\linewidth]{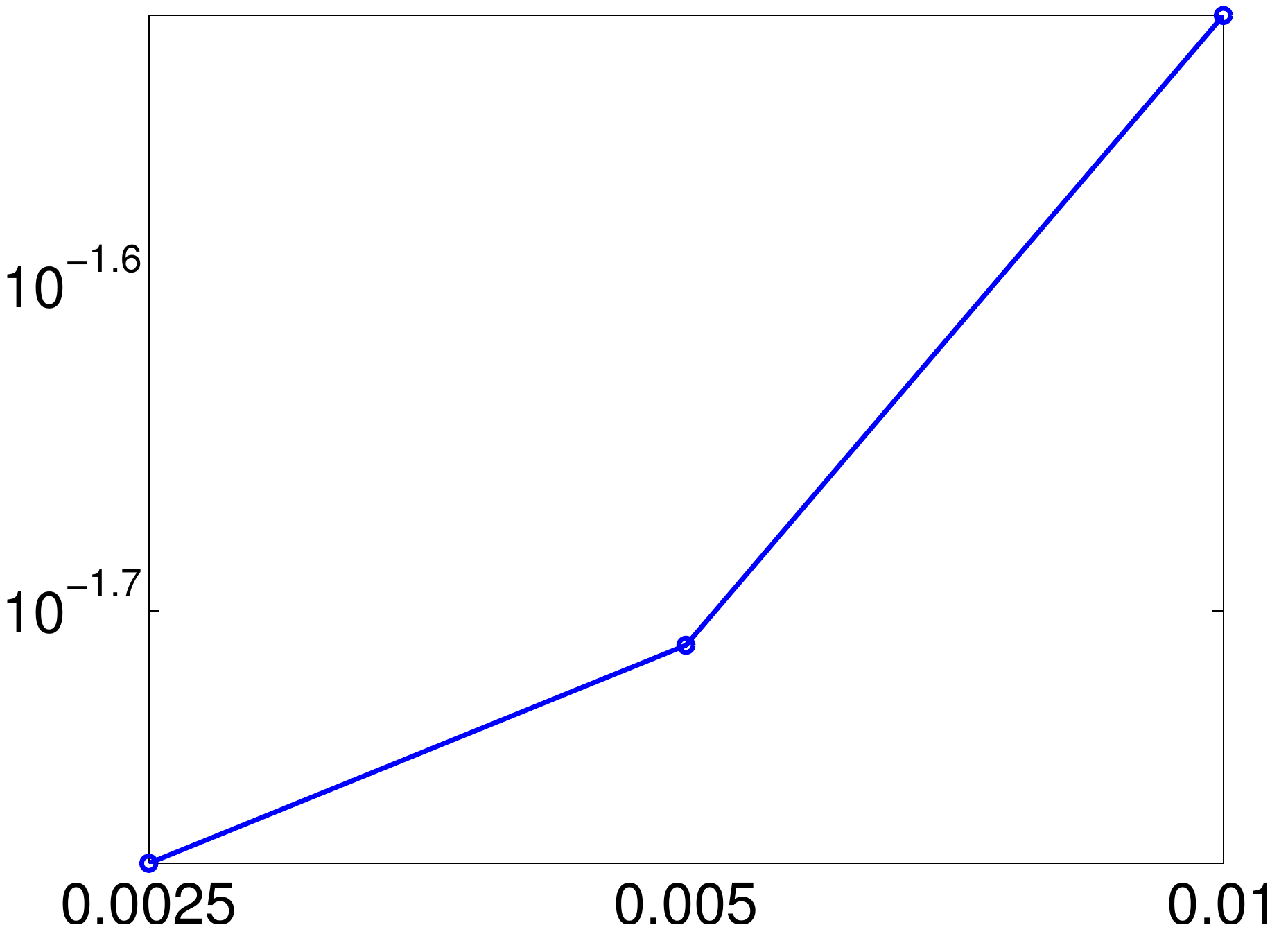} }
\subfigure[Variance]{\includegraphics[width=0.45\linewidth]{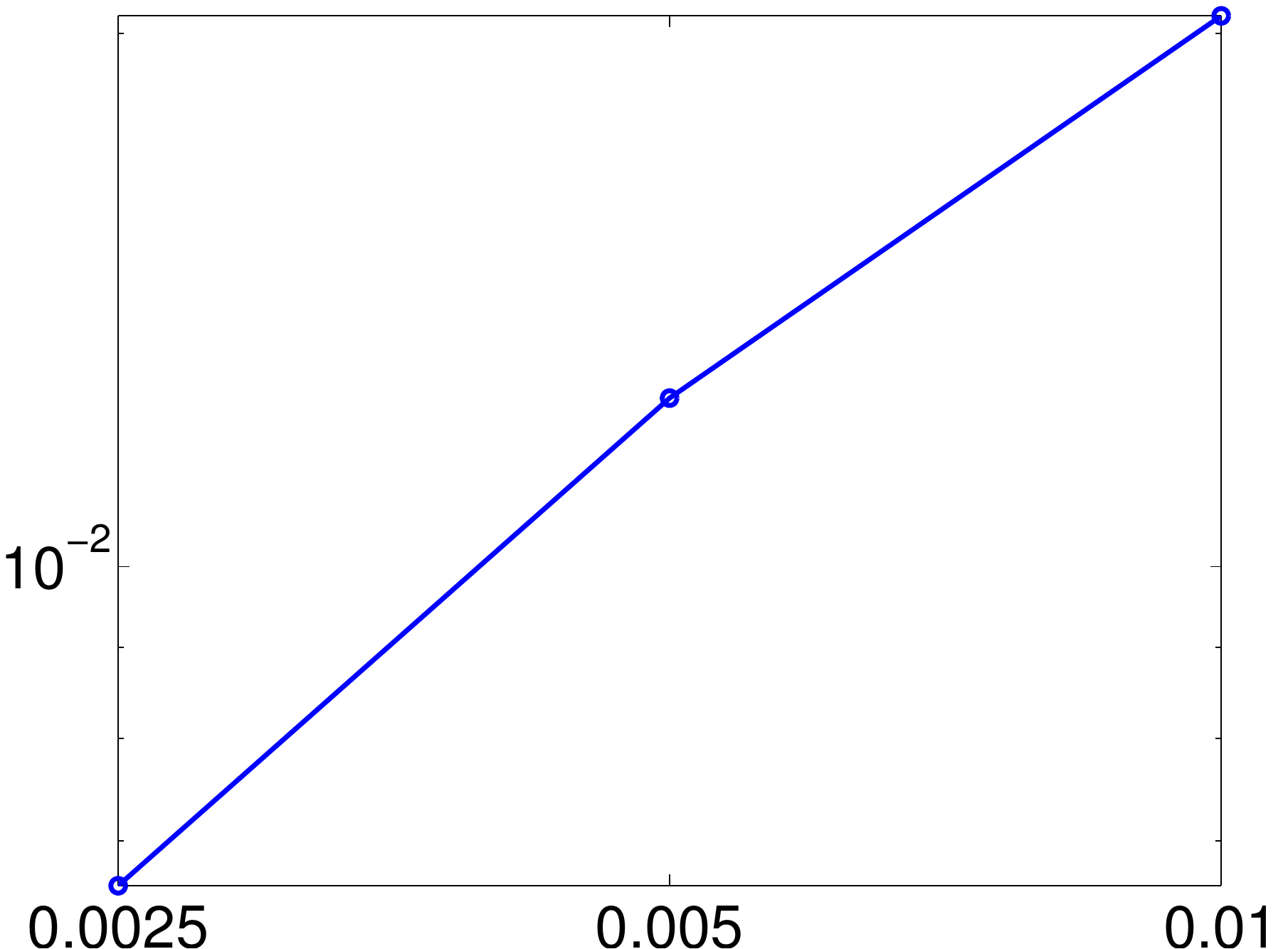} }
\caption{Cauchy rates for the sample mean and the sample variance of the density ($y$-axis) for the Kelvin-Helmholtz problem \eqref{eq:kh} for different values of $\amp$ ($x$-axis). All the computations are on a grid of $1024^2$ mesh points and $400$ Monte-Carlo samples.}
\label{fig:15}
\end{figure}

Similarly the computations of the sample variance for different values of $\amp$ are presented in Figure \ref{fig:16}. Note that this figure, as well as the computations of the difference in variance in $L^1$ for successive reductions of the perturbation parameter $\amp$ (shown in Figure \ref{fig:15}(b)), clearly show convergence of variance as $\amp \rightarrow 0$. Moreover, Figure \ref{fig:16} clearly indicates that in the $\amp \rightarrow  0$ limit, the limit variance is non-zero. Hence, this strongly suggests the fact that EMV solution can be \emph{non-atomic, even for atomic initial data}. These results are consistent with the claims of Theorem \ref{thm:alphaconv}.
\begin{figure}[htbp] 
\centering
\subfigure[$\amp = 2e-2$]{\includegraphics[width=0.45\linewidth]{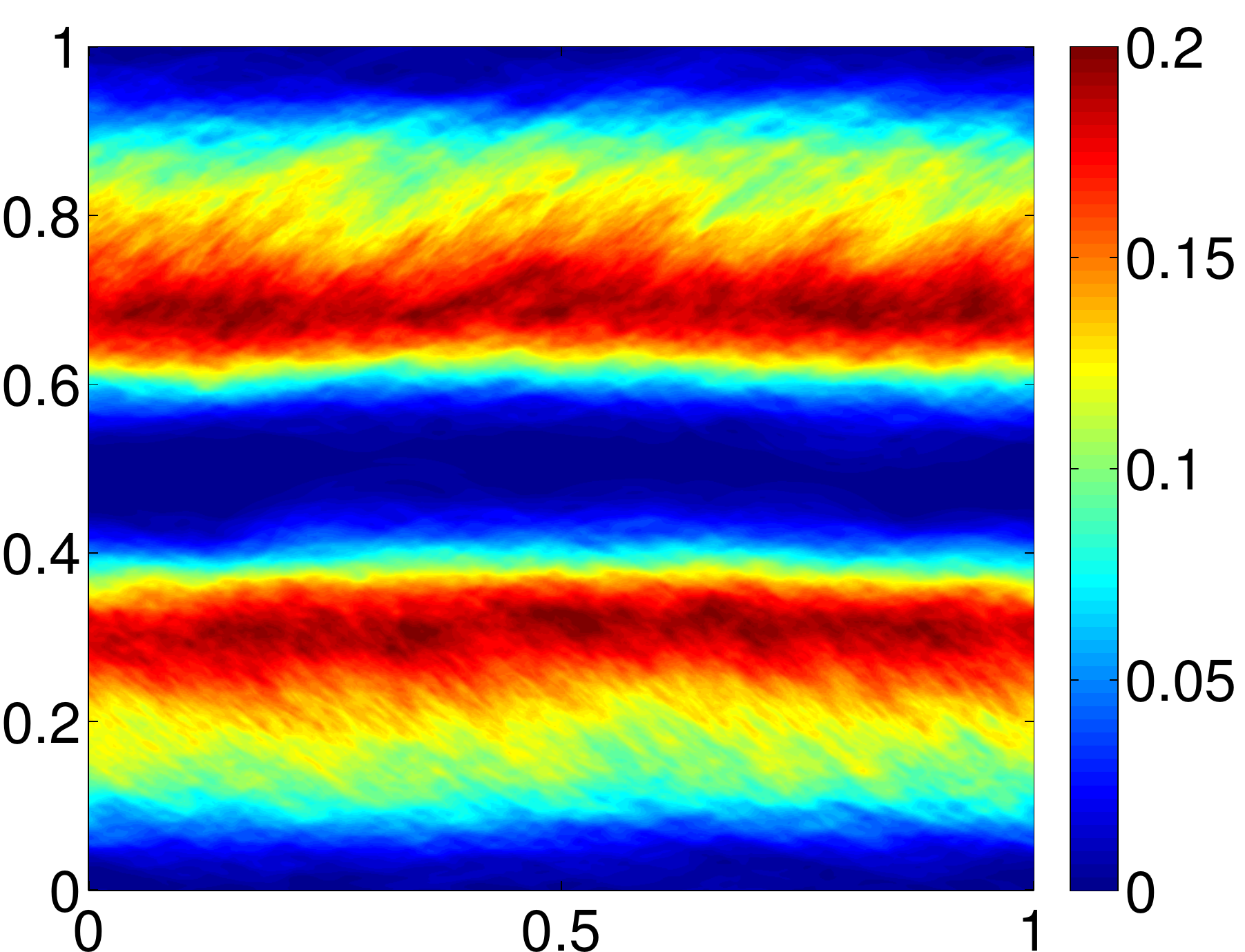}}
\subfigure[$\amp = 1e-2$]{\includegraphics[width=0.45\linewidth]{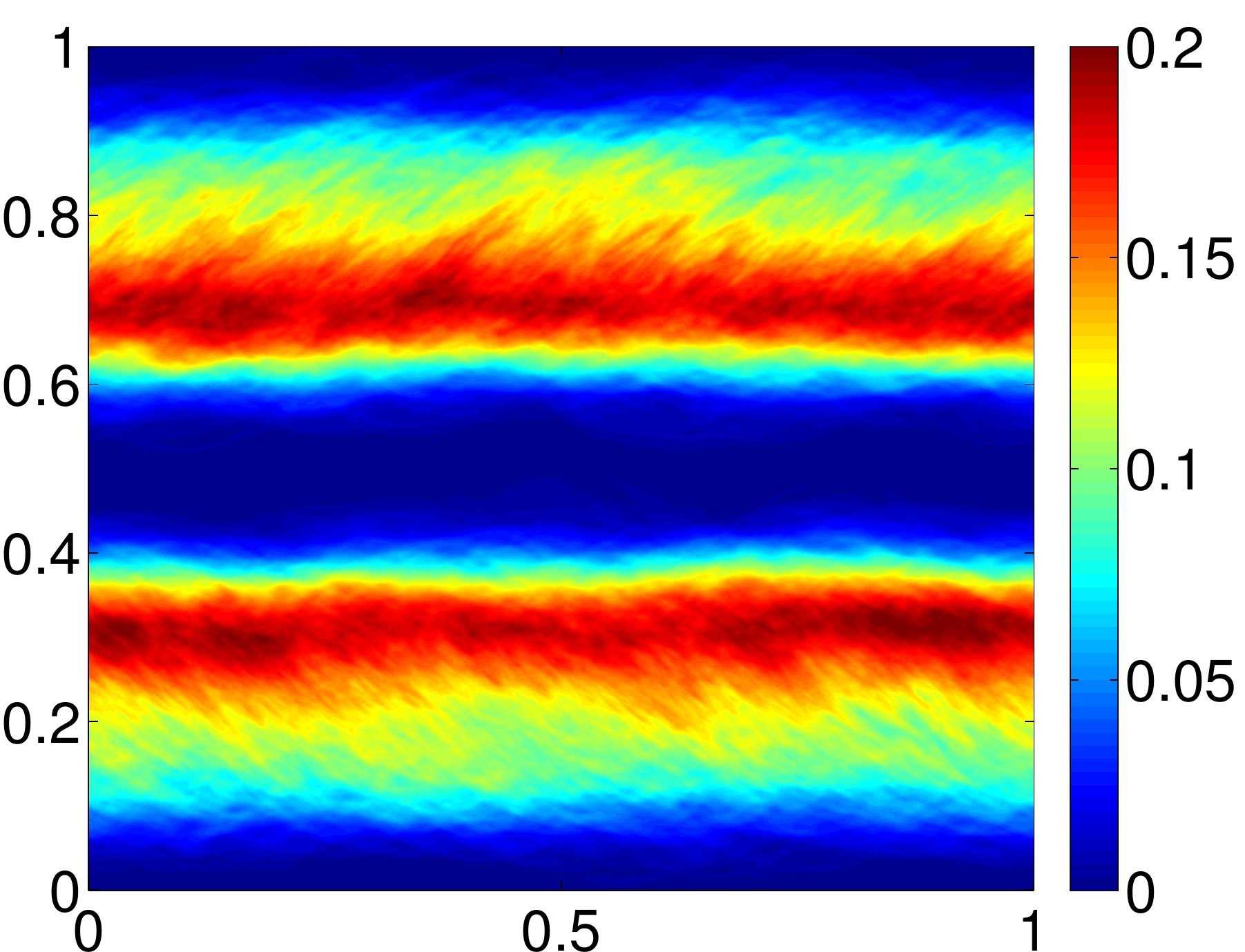}} \\
\subfigure[$\amp = 5e-3$]{\includegraphics[width=0.45\linewidth]{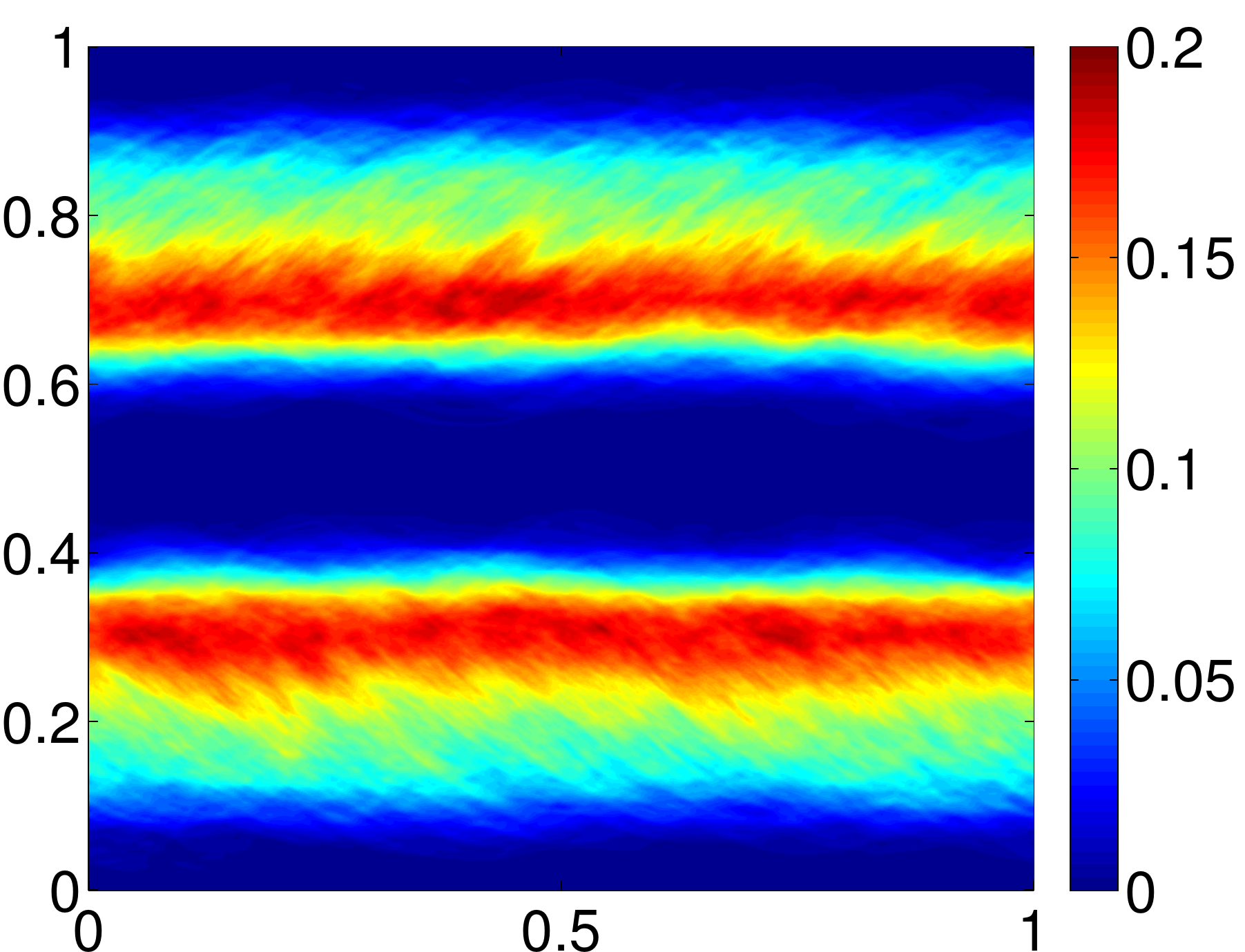}}
\subfigure[$\amp = 2.5e-3$]{\includegraphics[width=0.45\linewidth]{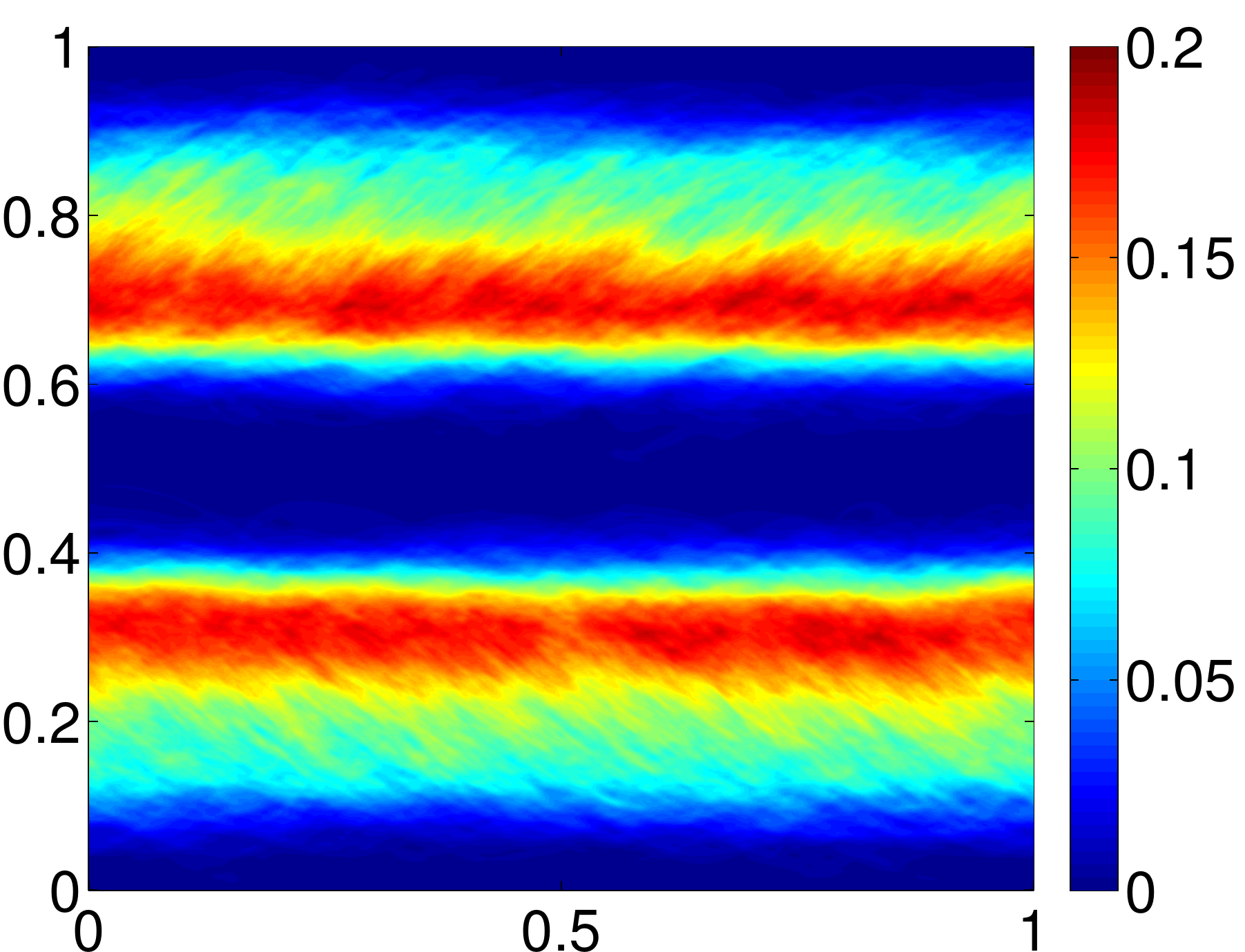}}
\caption{Approximate sample variances of the density for the Kelvin-Helmholtz instability at time $t=2$ and different values of perturbation parameter $\amp$. All the computations are on a grid of $1024^2$ mesh points and $400$ Monte-Carlo samples}
\label{fig:16}
\end{figure}

To further demonstrate the non-atomicity of the resulting measure valued solution, we have plotted the probability density functions (approximated by empirical histograms) for density at the points $x=(0.5,0.7)$ and $x=(0.5,0.8)$ in Figure \ref{fig:pdfTime} for a fixed mesh of size $1024^2$. We see that the initial unit mass centered at $\rho=2$ ($\rho=1$, respectively) at $t=0$ is smeared out over time, and at $t=2$ the mass has spread out over a range of values of $\rho$ between 1 and 2.

Figure \ref{fig:pdfRefine} shows the same quantities, but for a fixed time $t=2$ over a series of meshes. Although a certain amount of noise seems to persist on the finer meshes -- most likely due to the low number of Monte Carlo samples -- it can be seen that the probability density functions seem to converge with mesh refinement.
\begin{figure}
\centering
\subfigure[$t=0$]{\includegraphics[width=0.19\linewidth]{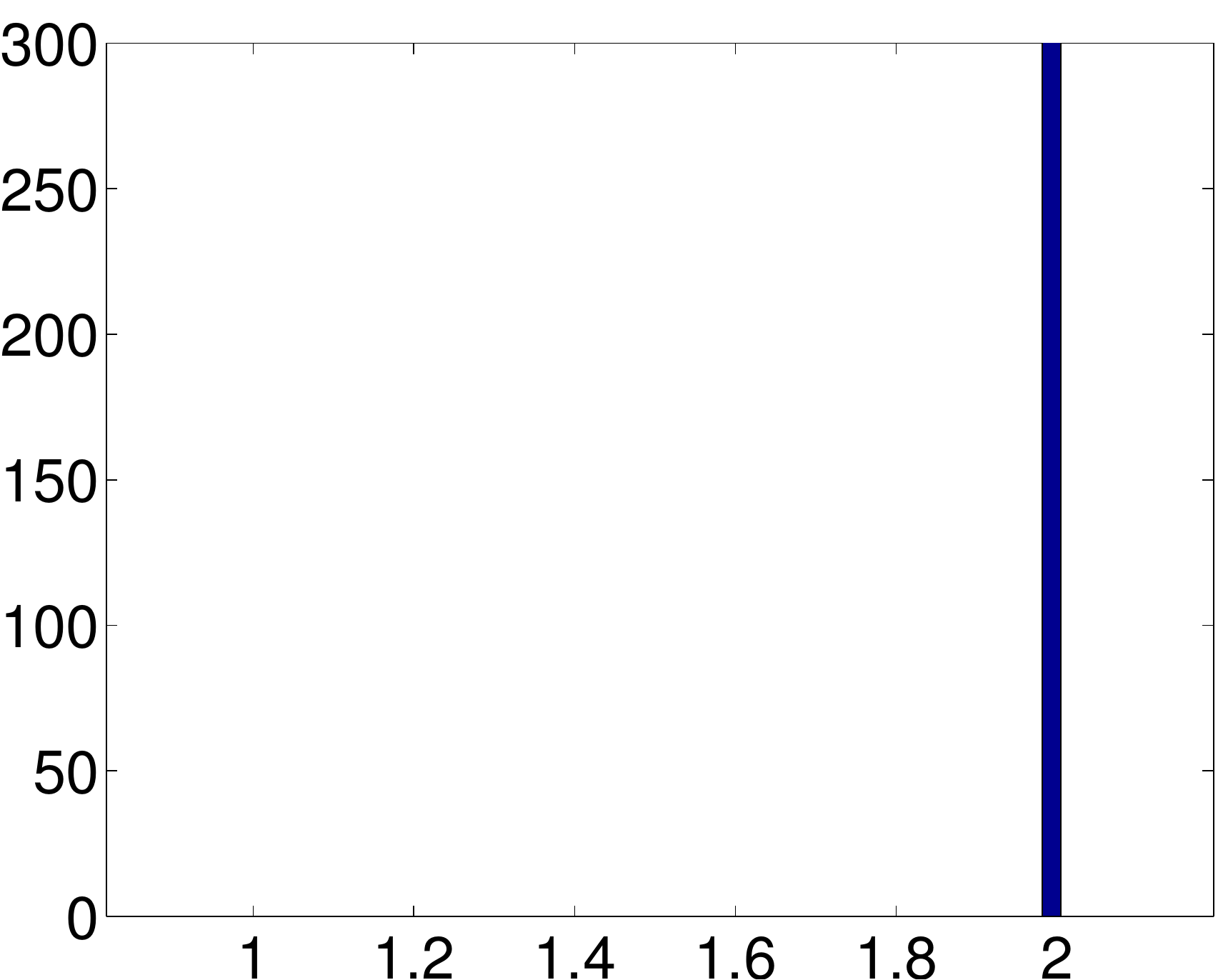}}
\subfigure[$t=0.5$]{\includegraphics[width=0.19\linewidth]{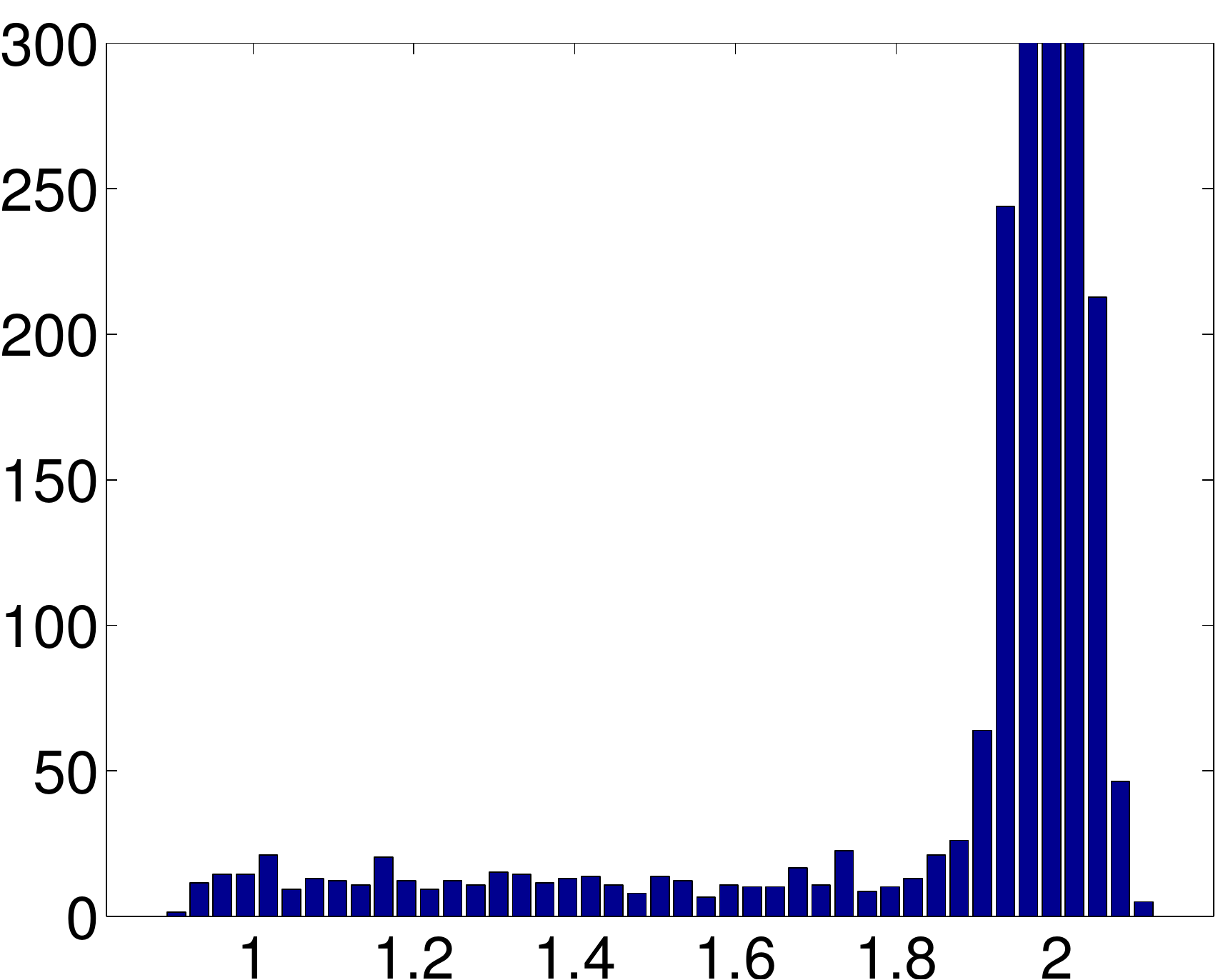}}
\subfigure[$t=1$]{\includegraphics[width=0.19\linewidth]{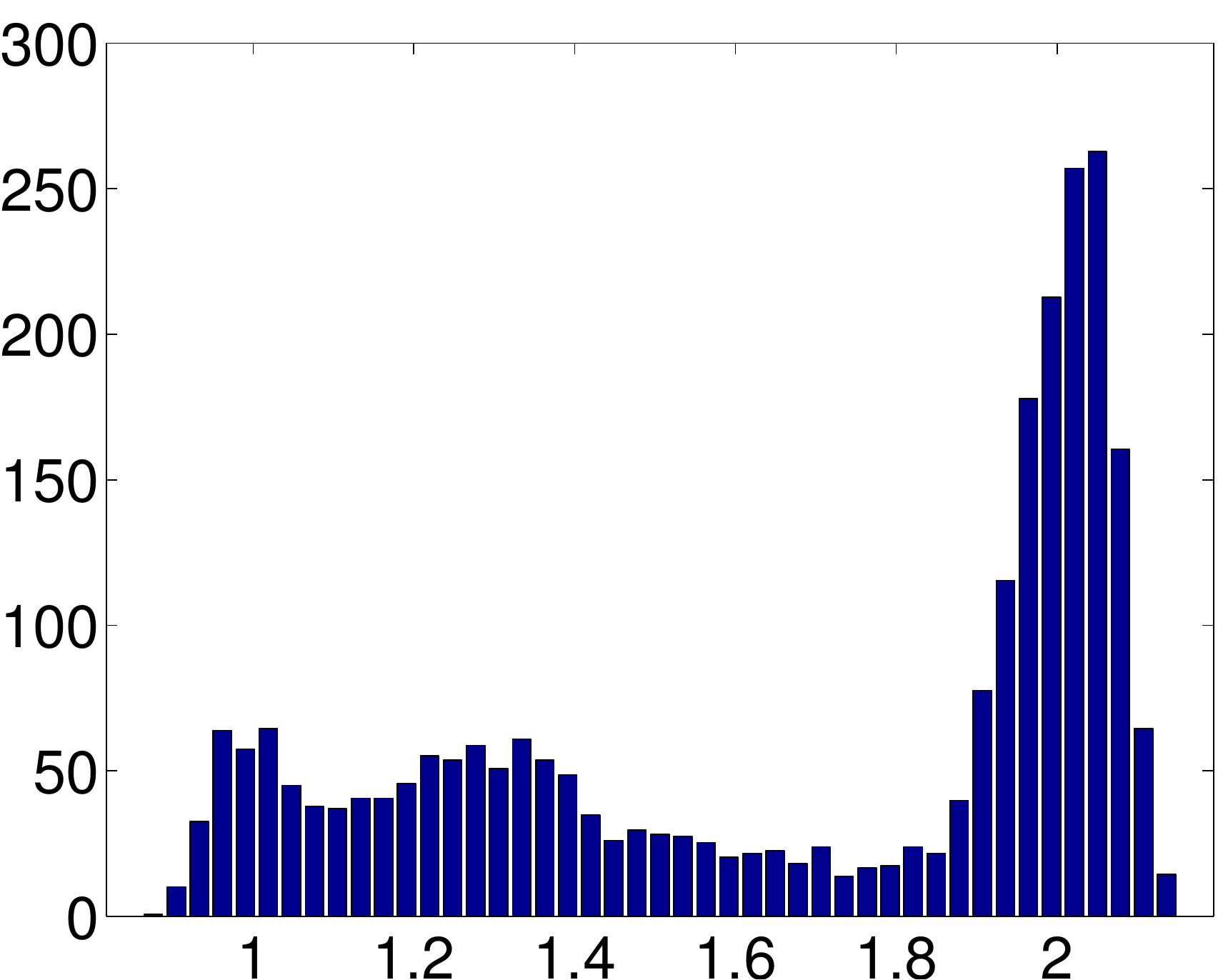}}
\subfigure[$t=1.5$]{\includegraphics[width=0.19\linewidth]{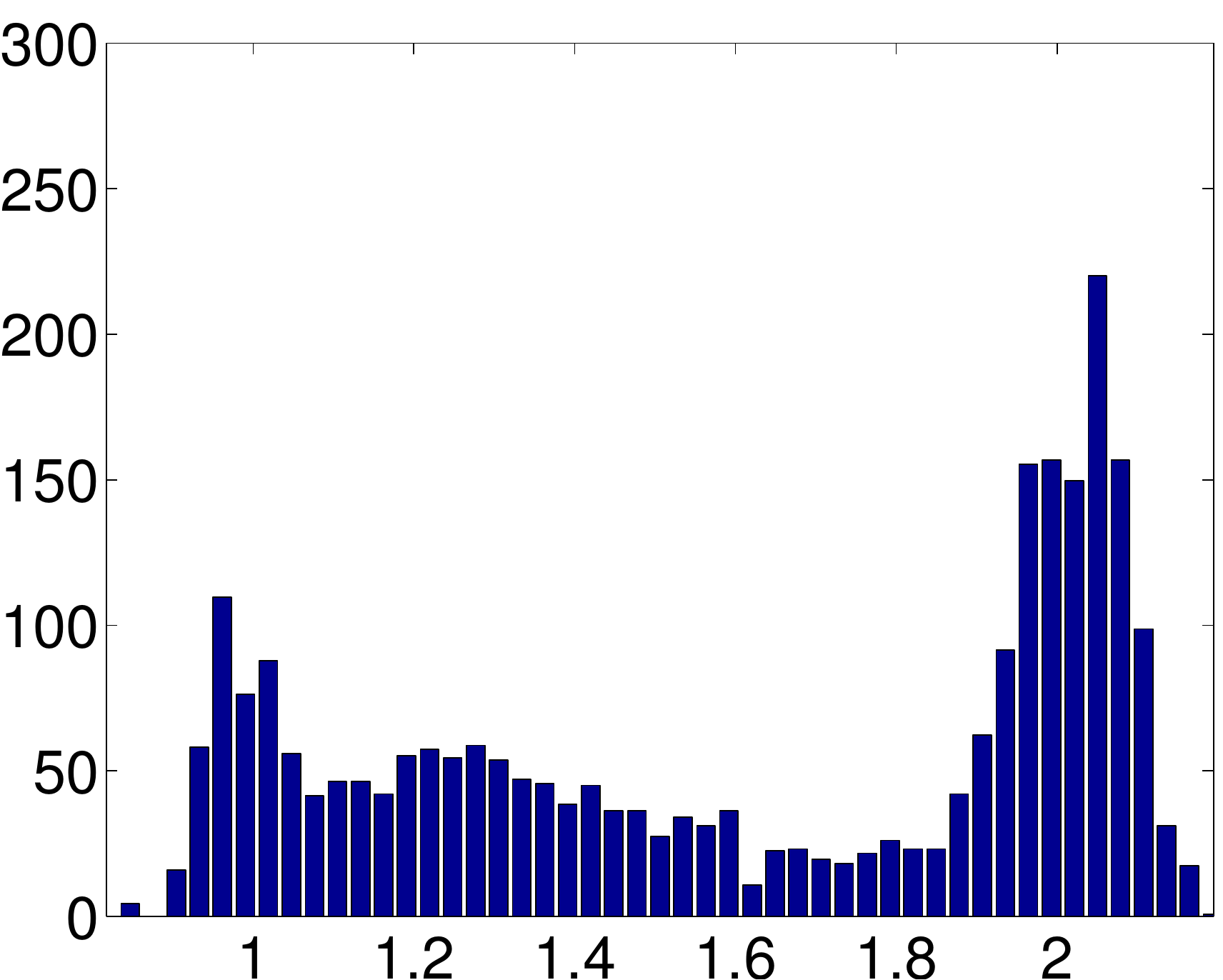}}
\subfigure[$t=2$]{\includegraphics[width=0.19\linewidth]{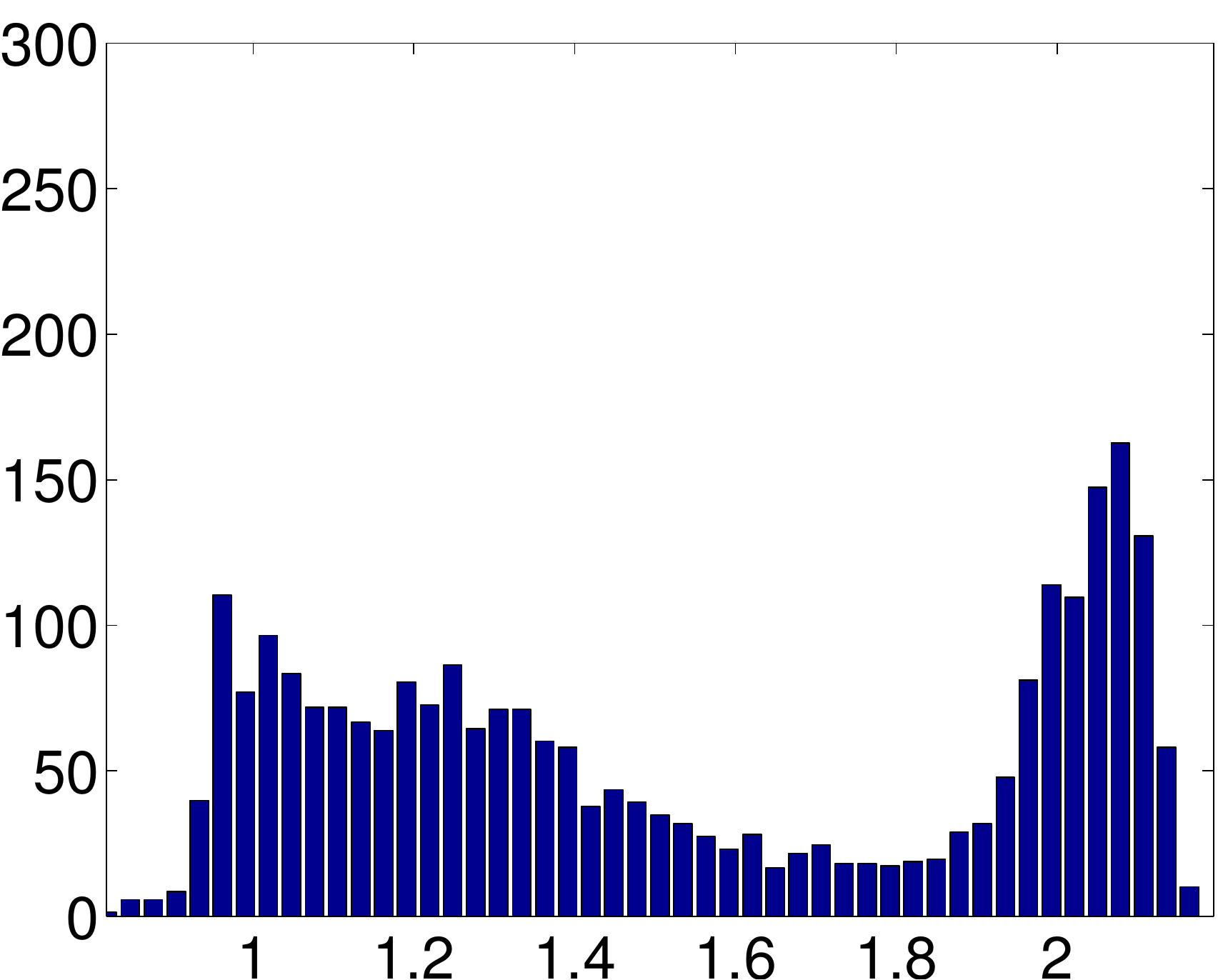}}
\subfigure[$t=0$]{\includegraphics[width=0.19\linewidth]{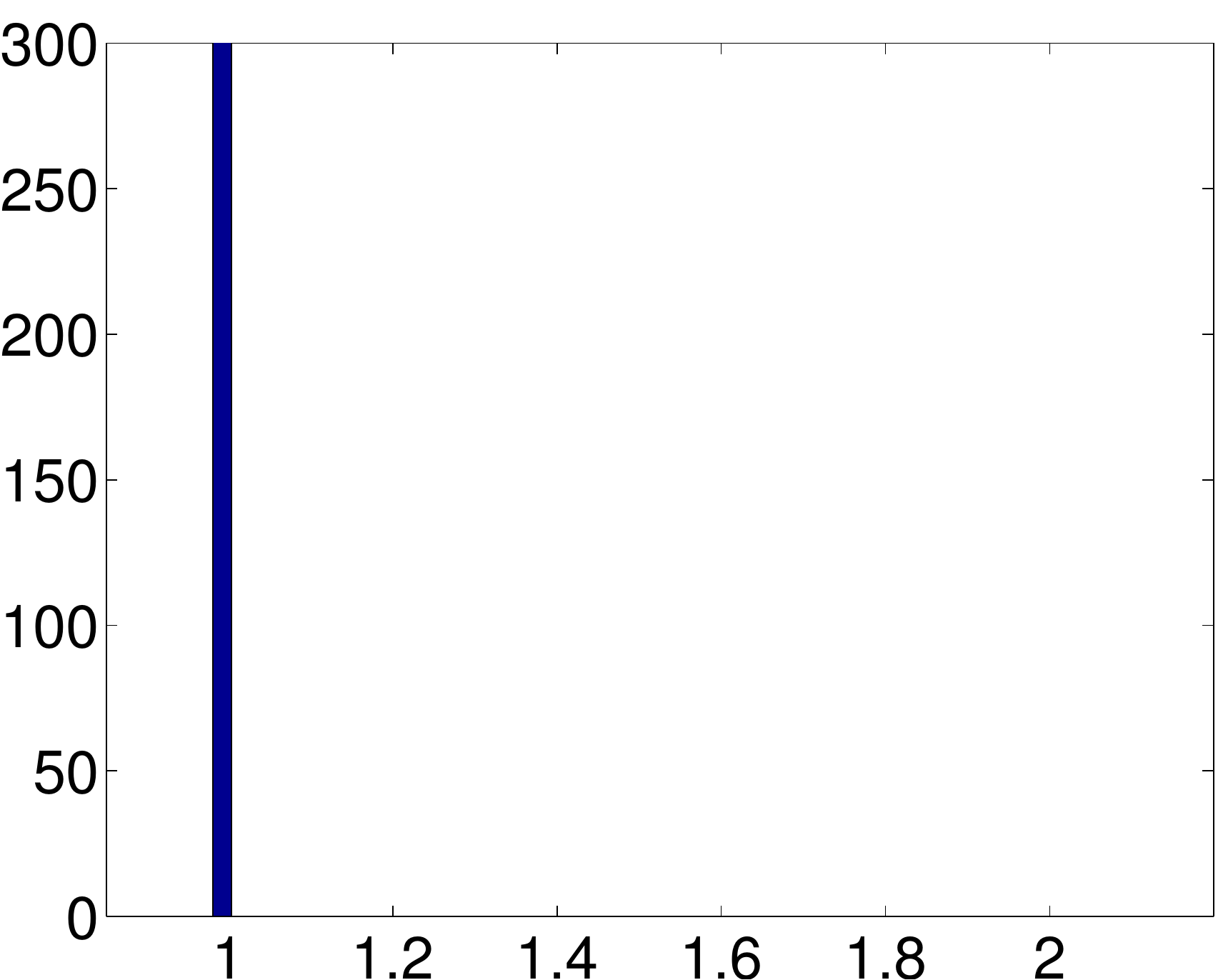}}
\subfigure[$t=0.5$]{\includegraphics[width=0.19\linewidth]{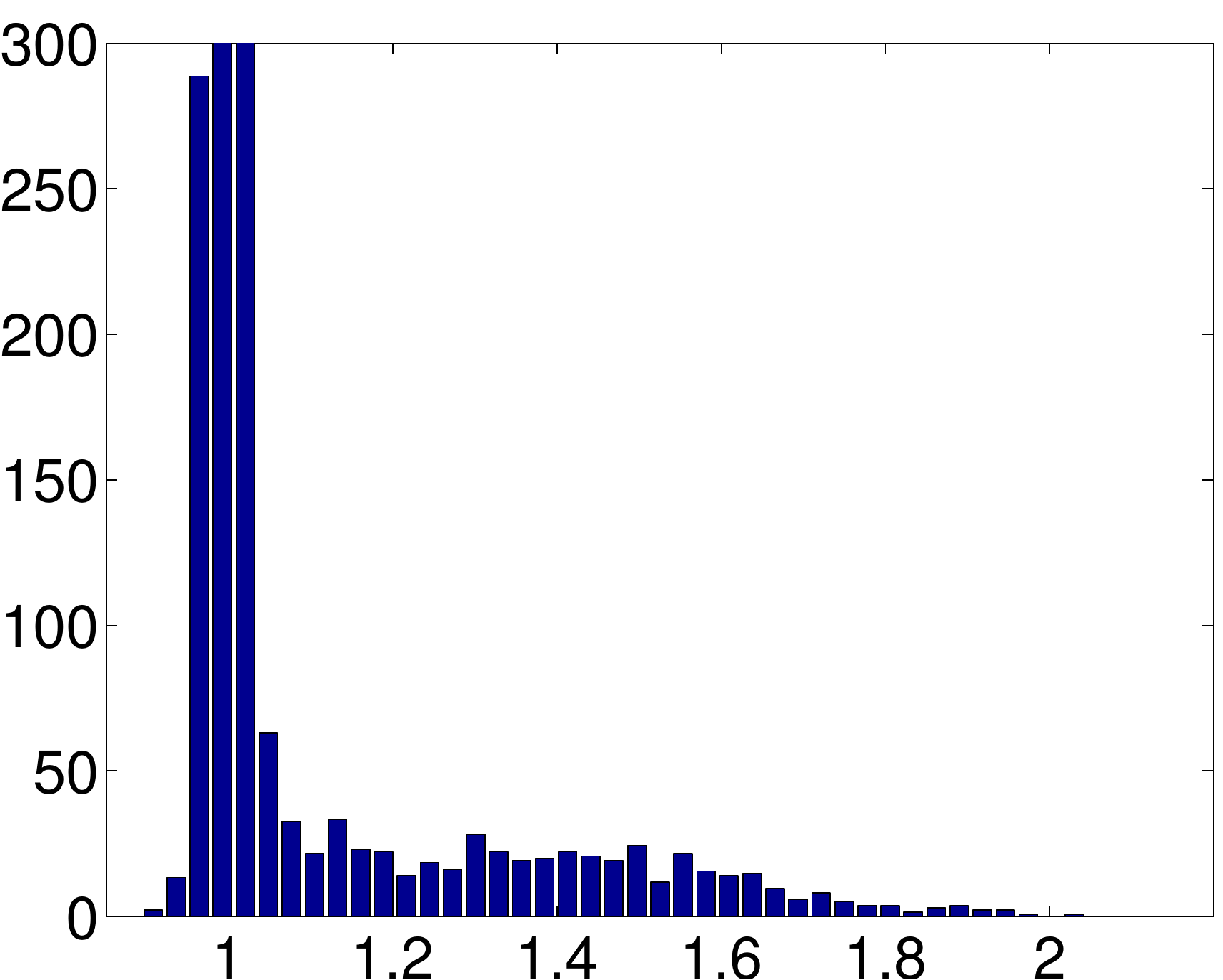}}
\subfigure[$t=1$]{\includegraphics[width=0.19\linewidth]{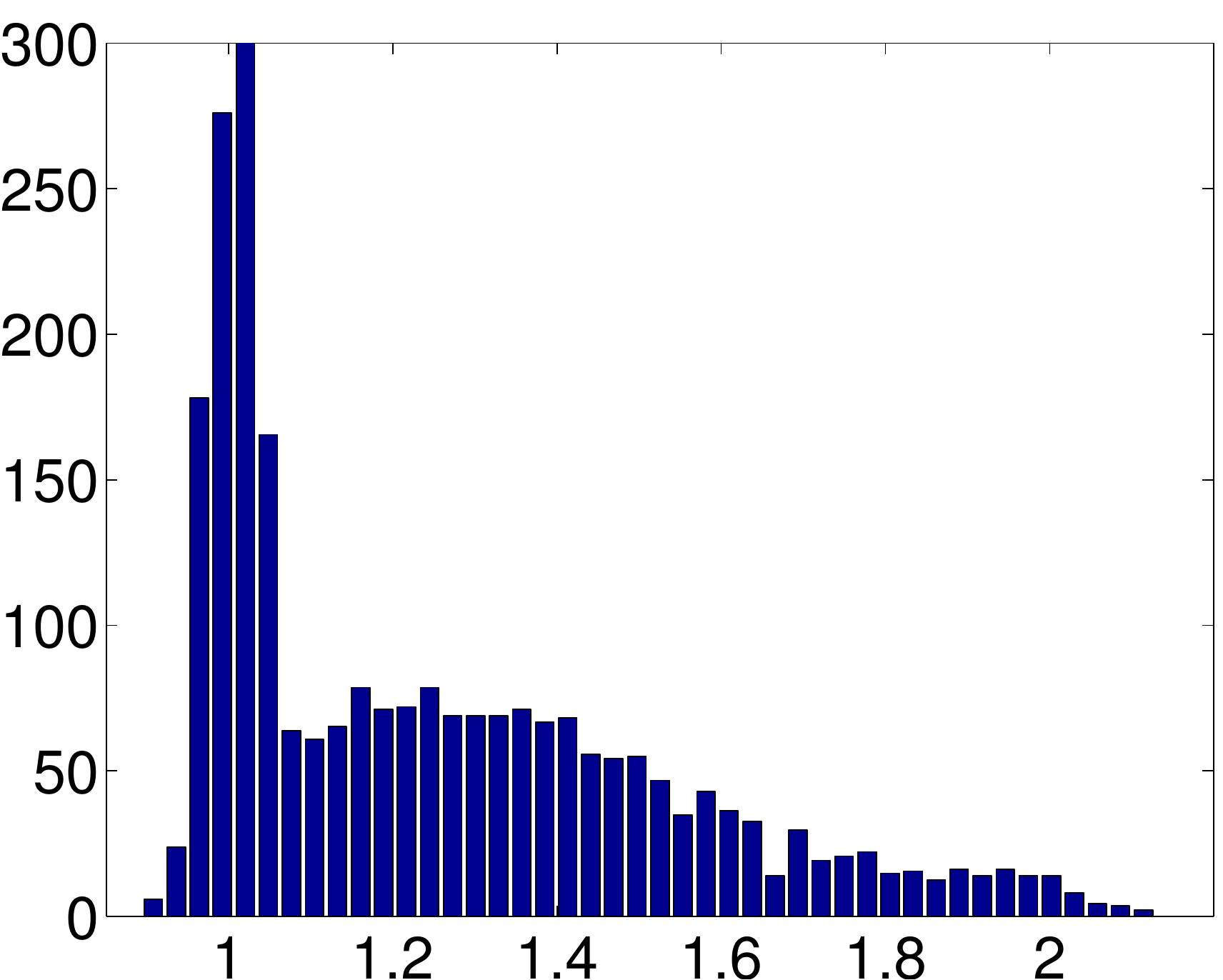}}
\subfigure[$t=1.5$]{\includegraphics[width=0.19\linewidth]{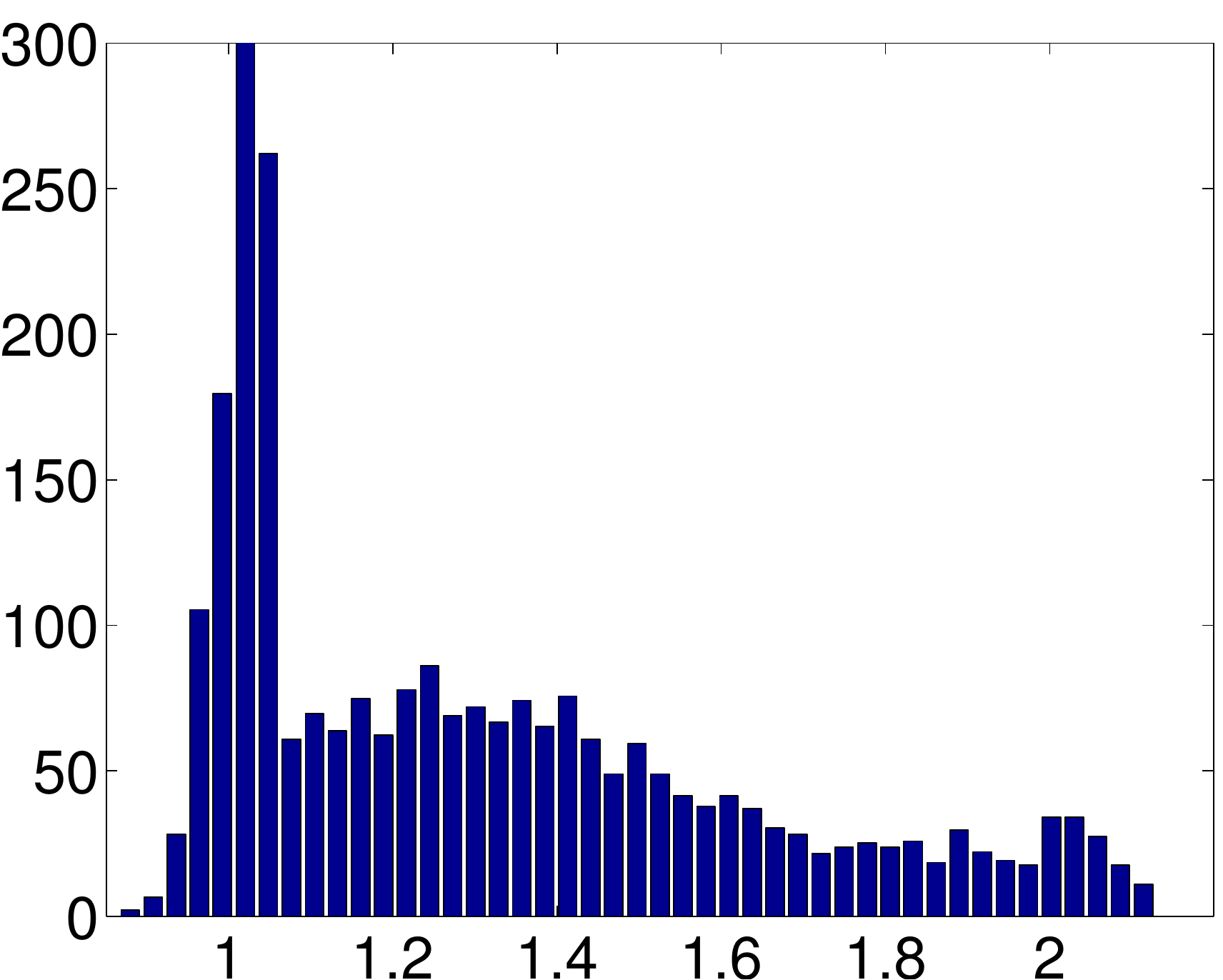}}
\subfigure[$t=2$]{\includegraphics[width=0.19\linewidth]{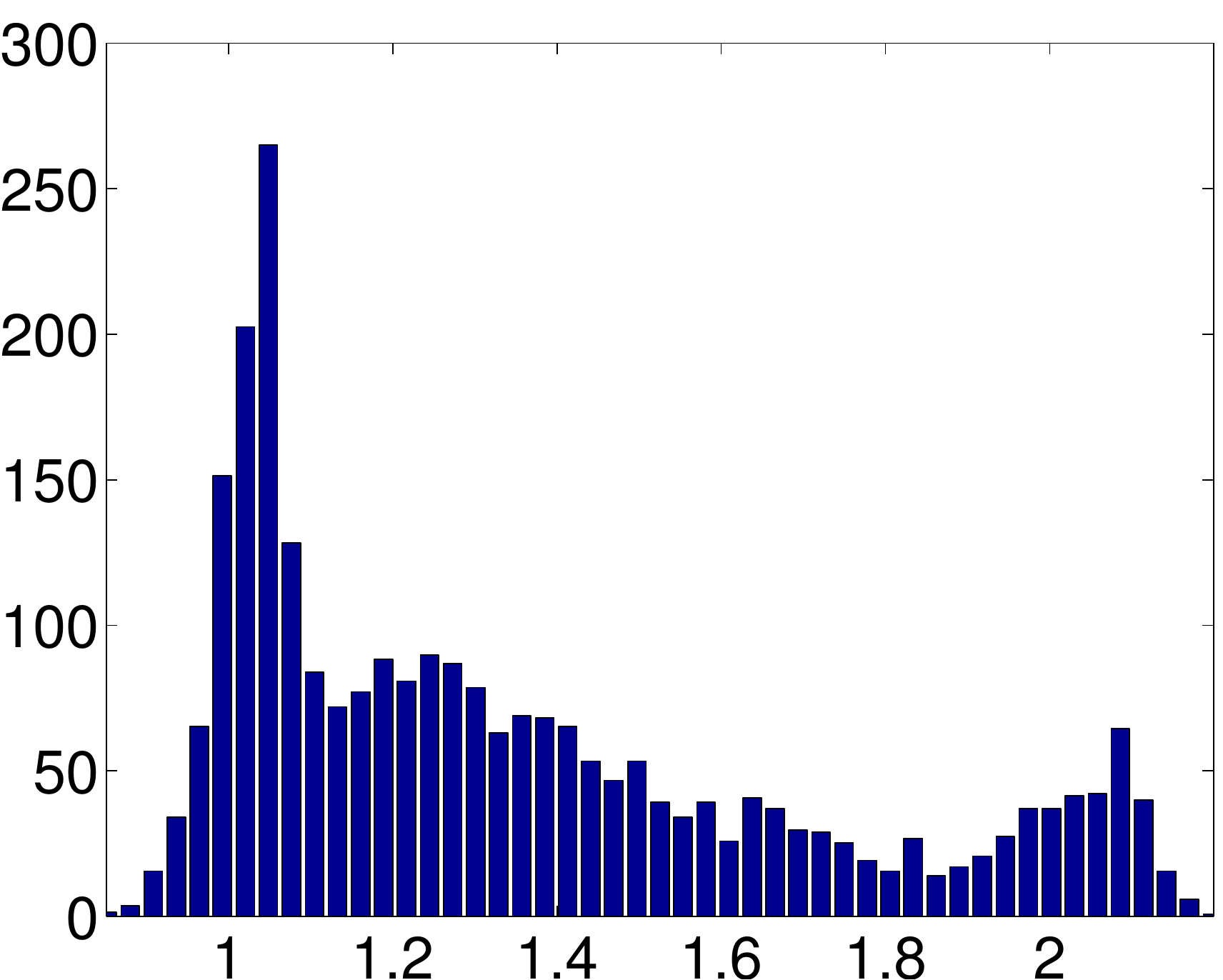}}
\caption{The approximate PDF for density $\rho$ at the points $x=(0.5, 0.7)$ (first row) and $x=(0.5,0.8)$ (second row) on a grid of $1024^2$ mesh points.}
\label{fig:pdfTime}
\end{figure}

\begin{figure}
\centering
\subfigure[nx = 128]{\includegraphics[width=0.24\linewidth]{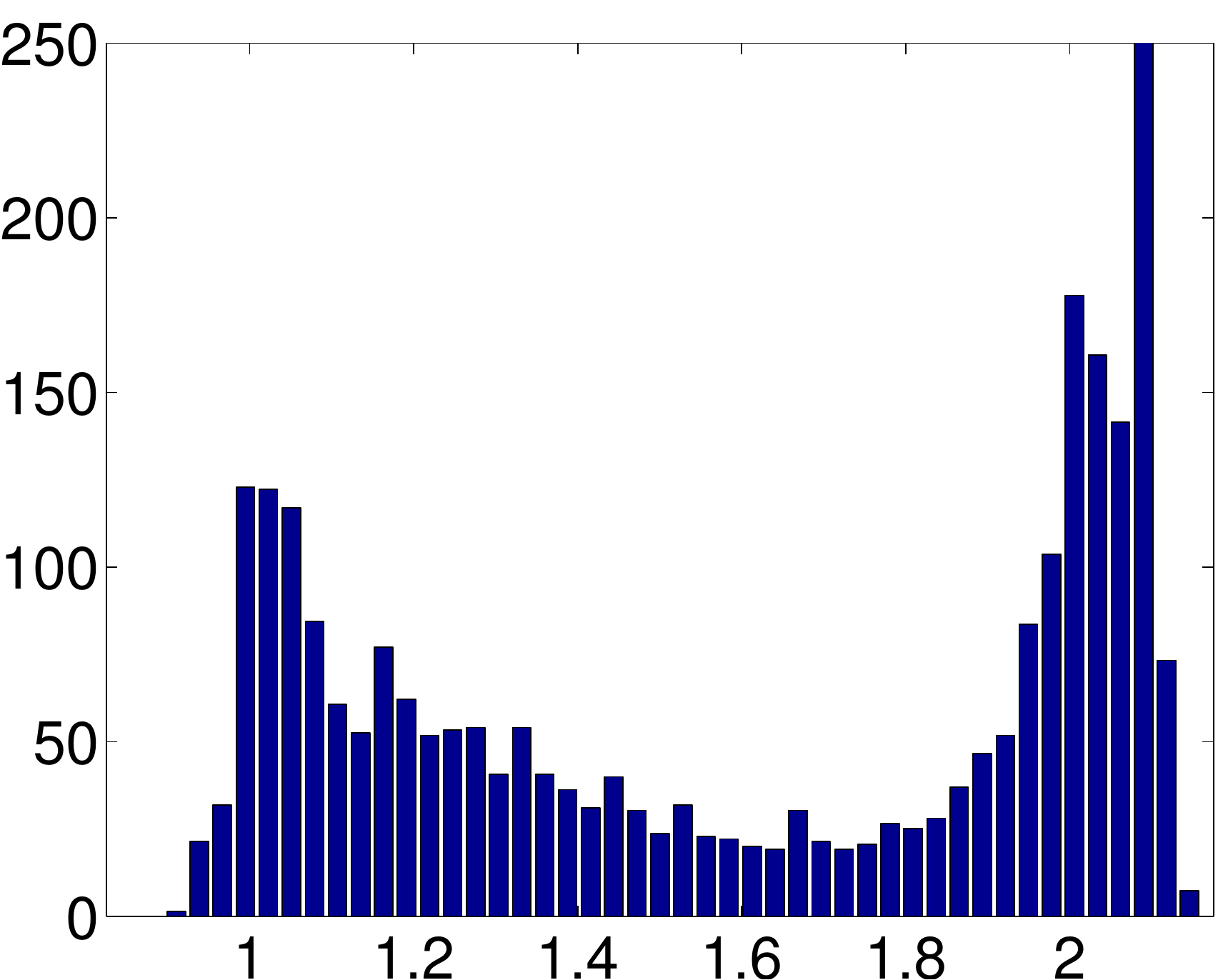}}
\subfigure[nx = 256]{\includegraphics[width=0.24\linewidth]{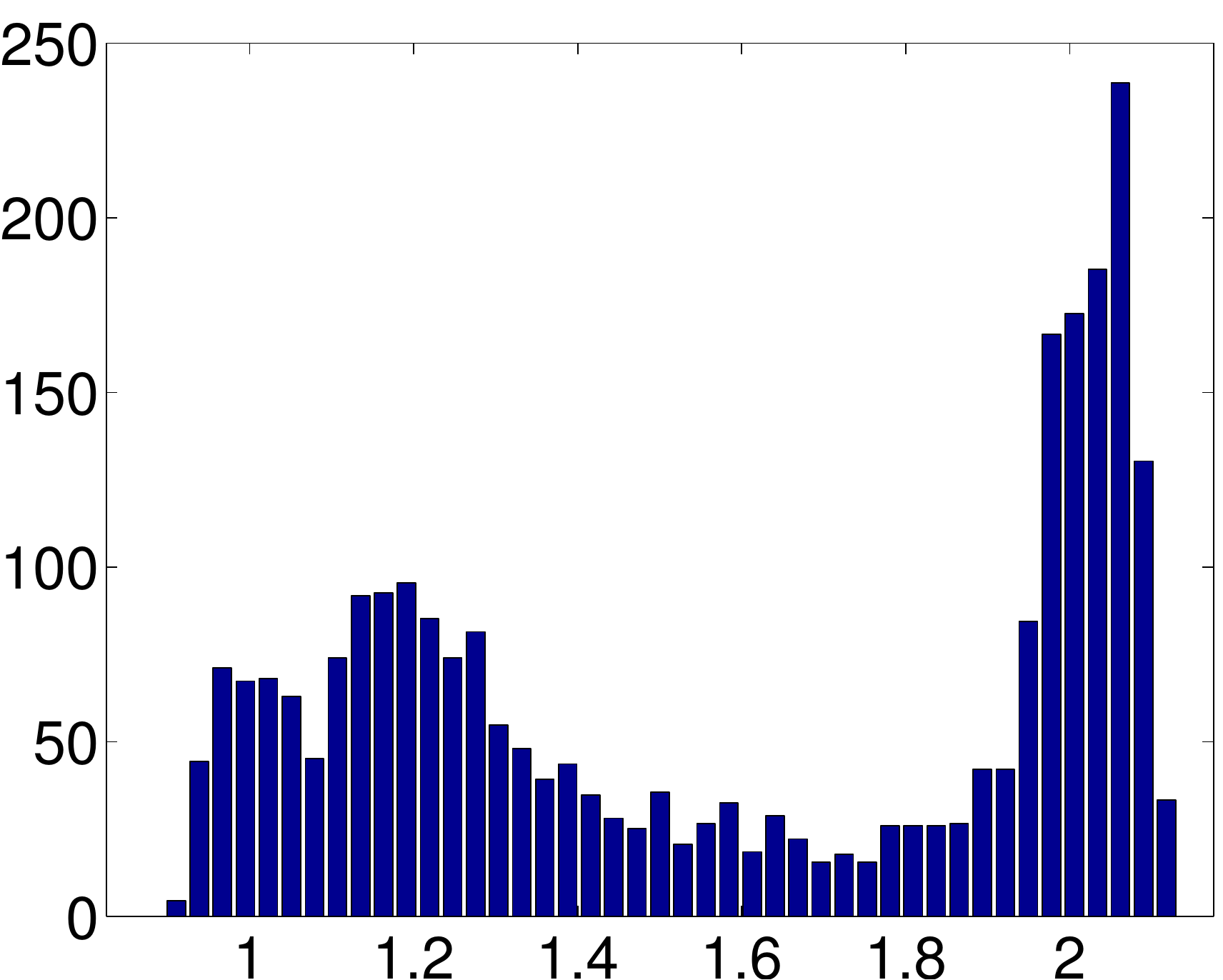}}
\subfigure[nx = 512]{\includegraphics[width=0.24\linewidth]{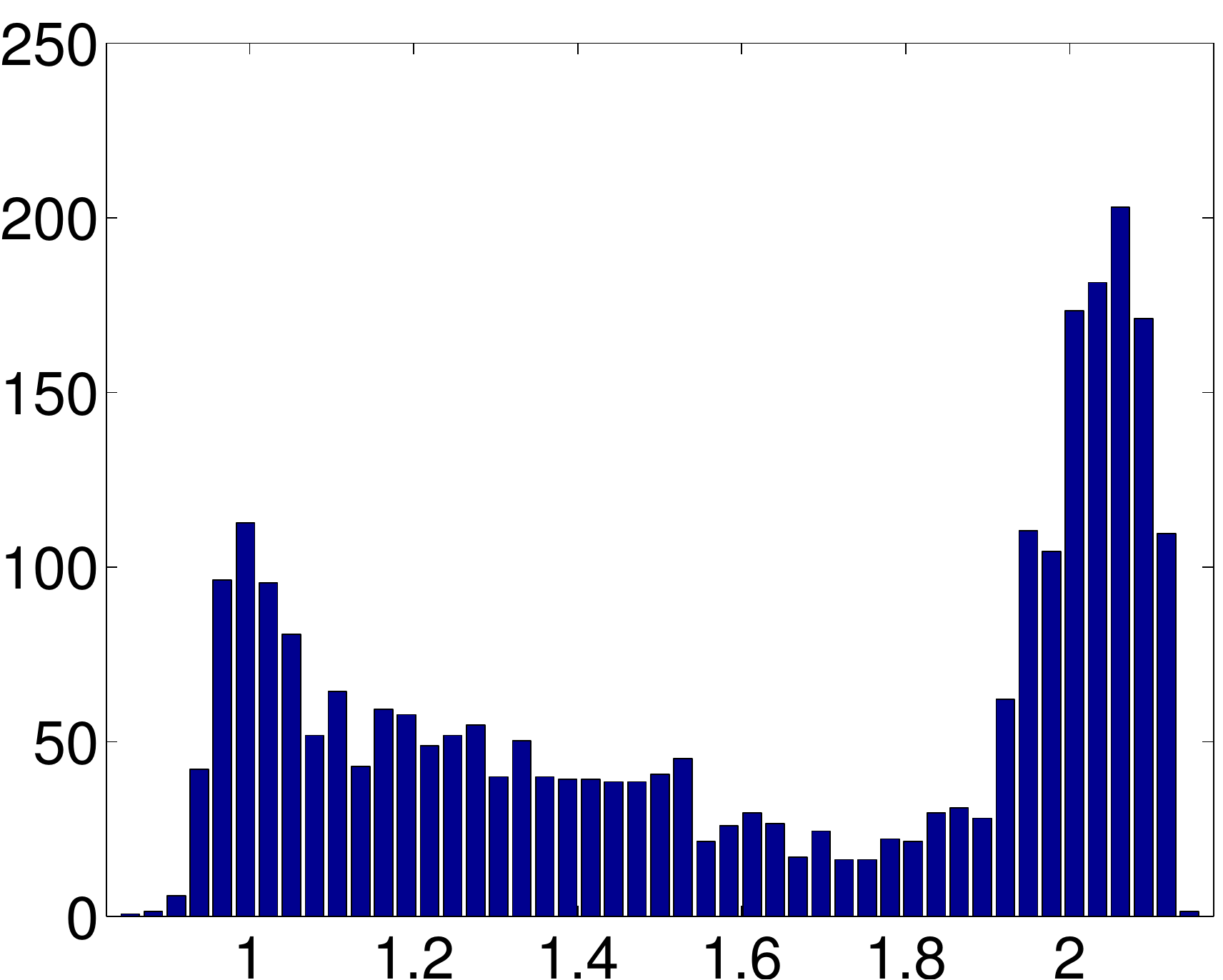}}
\subfigure[nx = 1024]{\includegraphics[width=0.24\linewidth]{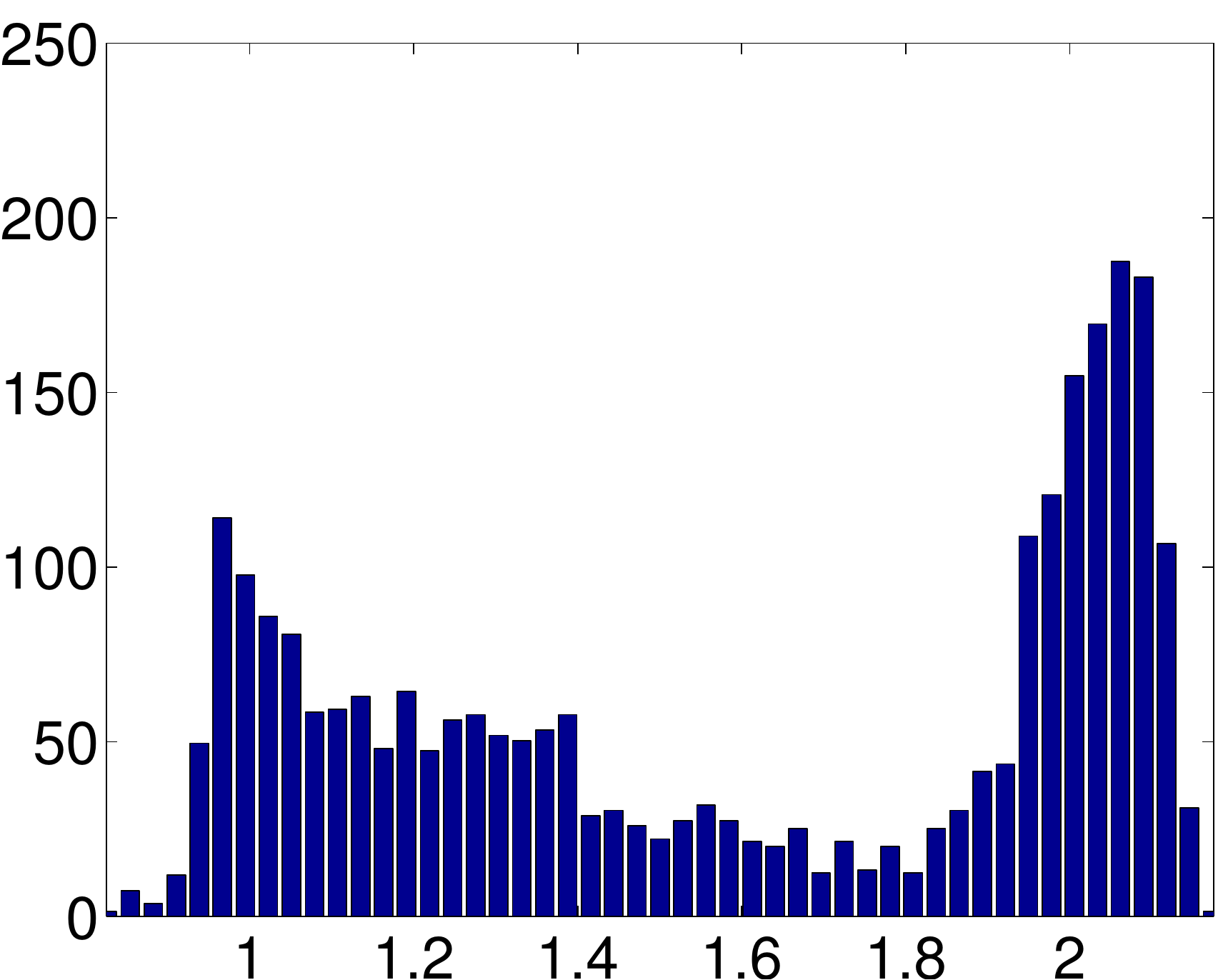}} \\
\subfigure[nx = 128]{\includegraphics[width=0.24\linewidth]{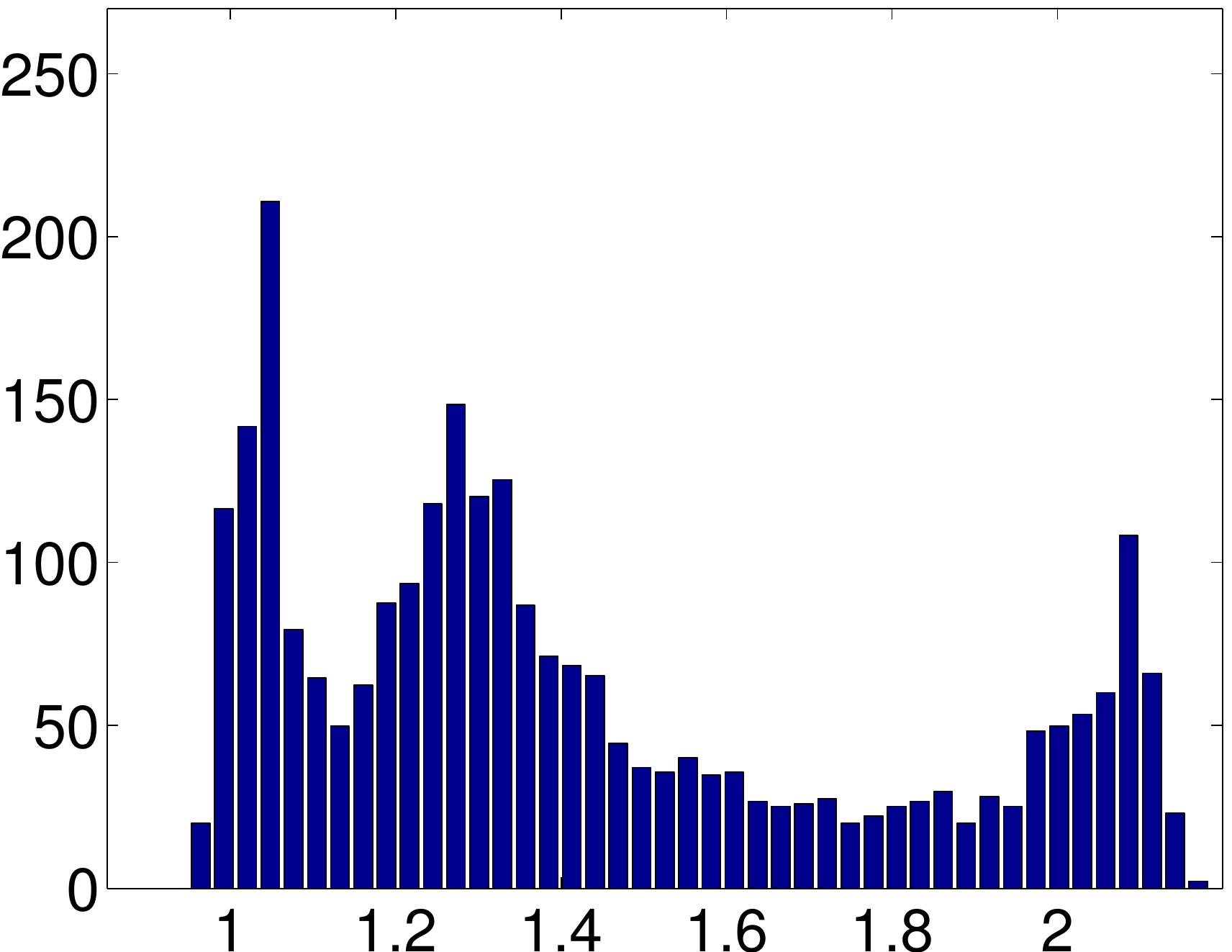}}
\subfigure[nx = 256]{\includegraphics[width=0.24\linewidth]{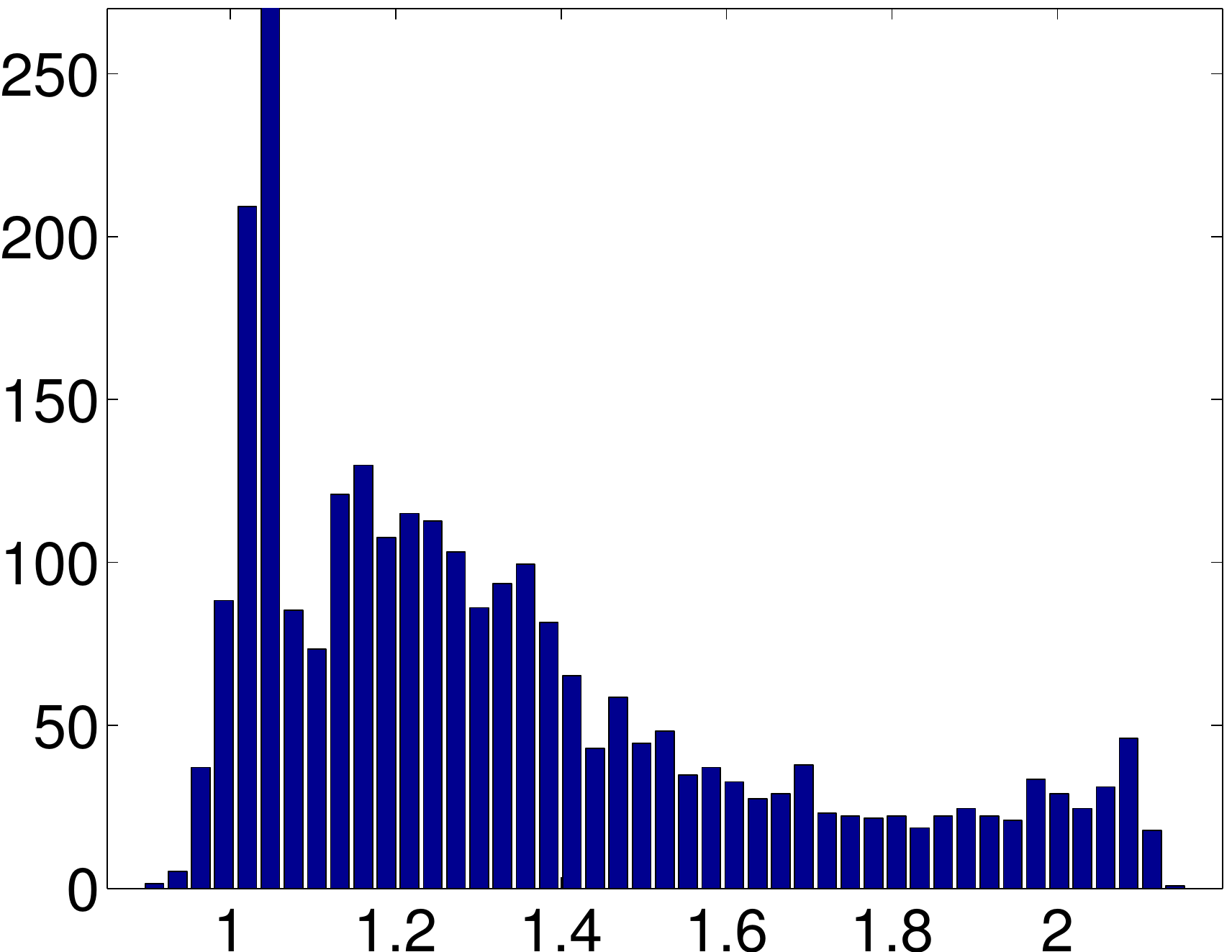}}
\subfigure[nx = 512]{\includegraphics[width=0.24\linewidth]{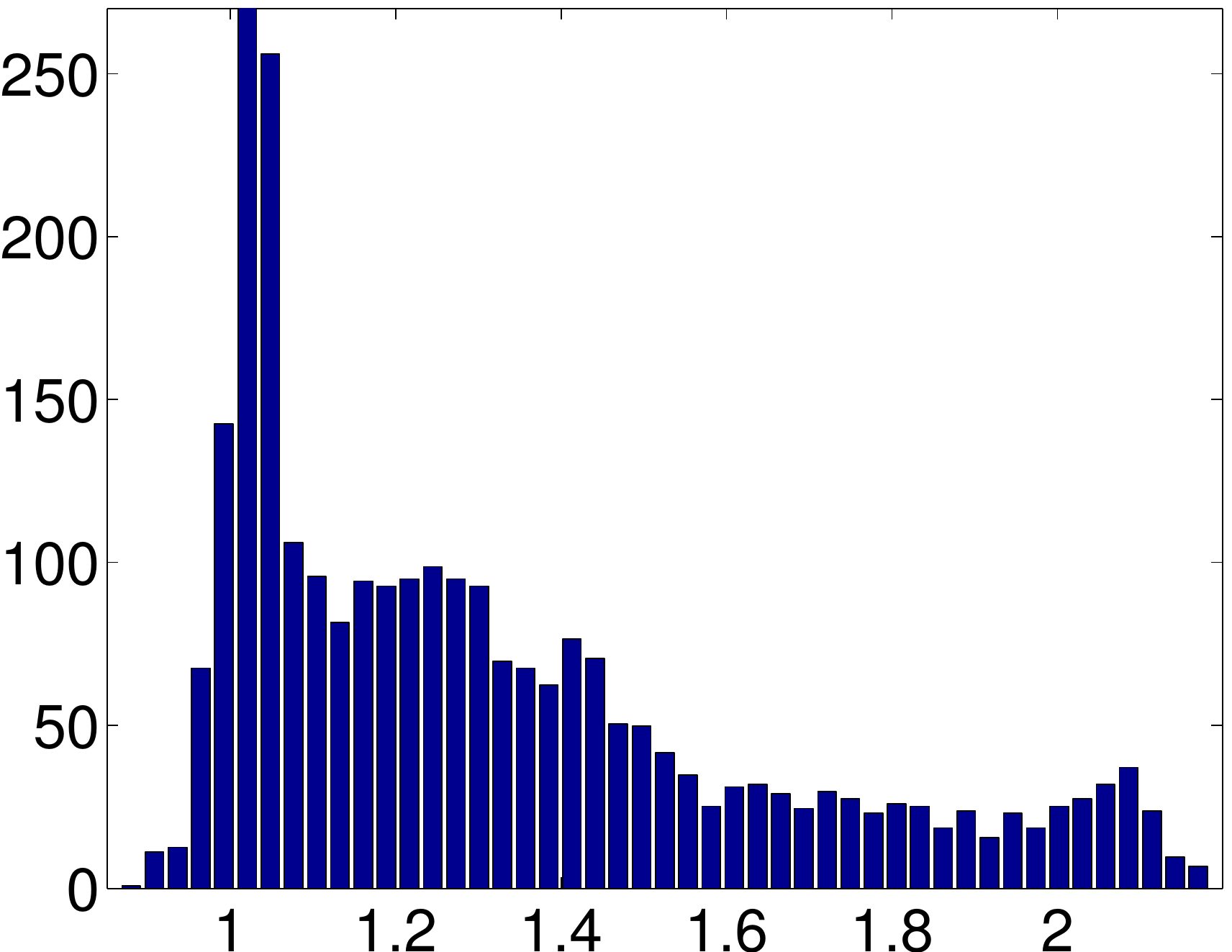}}
\subfigure[nx = 1024]{\includegraphics[width=0.24\linewidth]{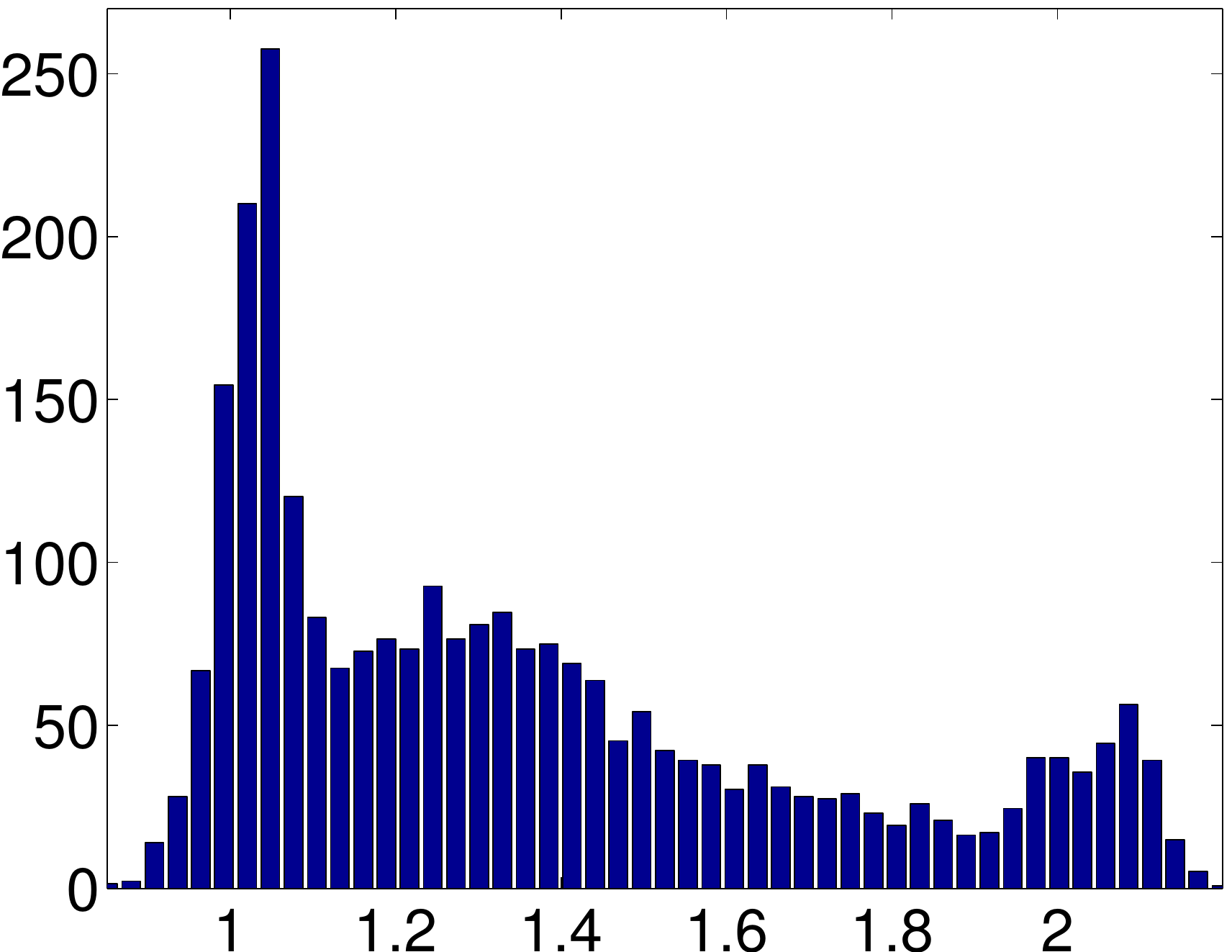}}
\caption{The approximate PDF for density $\rho$ at the points $x=(0.5, 0.7)$ (first row) and $x=(0.5,0.8)$ (second row) on a series of meshes.}
\label{fig:pdfRefine}
\end{figure}


\subsection{Richtmeyer-Meshkov problem}\label{sec:richtmesh}
As a second numerical example, we consider the two-dimensional Euler equations \eqref{eq:2deuler} in the computational domain $x\in[0,1]^2$ with initial data:
\begin{equation}
\label{eq:rminit}
p(x) = \begin{cases}
20 & \text{if $|x-(0.5,0.5)| < 0.1$} \\
1 & \text{otherwise,}
\end{cases}
\qquad
\rho(x) = \begin{cases}
2 & \text{if $|x-(0.5,0.5)| < I(x,\omega)$} \\
1 & \text{otherwise,}
\end{cases}
\qquad \velx = \vely =0.
\end{equation}
The radial density interface $I(x,\omega) = 0.25 + \amp Y(\phi(x),\omega)$
is perturbed with
\begin{equation}
Y(\varphi,\omega) = \sum_{n=1}^m a^n(\omega) \cos \left( \varphi + b^n(\omega) \right),
\end{equation}
where $\varphi(x) = \arccos((x_1 - 1/2)/|x - (0.5,0.5)|)$ and $a_n,b_n,k$ are the same as in Section \ref{sec:khnonatomic}.

We repeat that the computational domain is $[0,1]^2$ and we use periodic boundary conditions in both directions.

\subsubsection{Lack of sample convergence}
As in the case of the Kelvin-Helmholtz problem, we test whether numerical approximations for a single sample converge as the mesh is refined. To this end, we compute the approximations of the two-dimensional Euler equations with initial data \eqref{eq:rminit} using a second-order MUSCL type finite volume scheme, based on the HLLC solver, and implemented in the FISH code \cite{Kap1}. The numerical results, presented in Figure \ref{fig:19}, show the effect of grid refinement on the density for a single sample at time $t=4$. Note that by this time, the leading shock wave has exited the domain but has reentered from the corners on account of the periodic boundary conditions. Furthermore, this reentry shock wave interacts and strongly perturbs the interface forming a very complex region of small scale eddy like structures. As seen from Figure \ref{fig:19}, there seems to be no convergence as the mesh is refined. This lack of convergence is quantified in Figure \ref{fig:20}, where we present differences in $L^1$ for successive mesh resolutions \eqref{eq:chyrates} and see that the approximate solutions for a single sample do not form a Cauchy sequence.
\begin{figure}
\centering
\subfigure[$128^2$]{\includegraphics[width=0.45\linewidth]{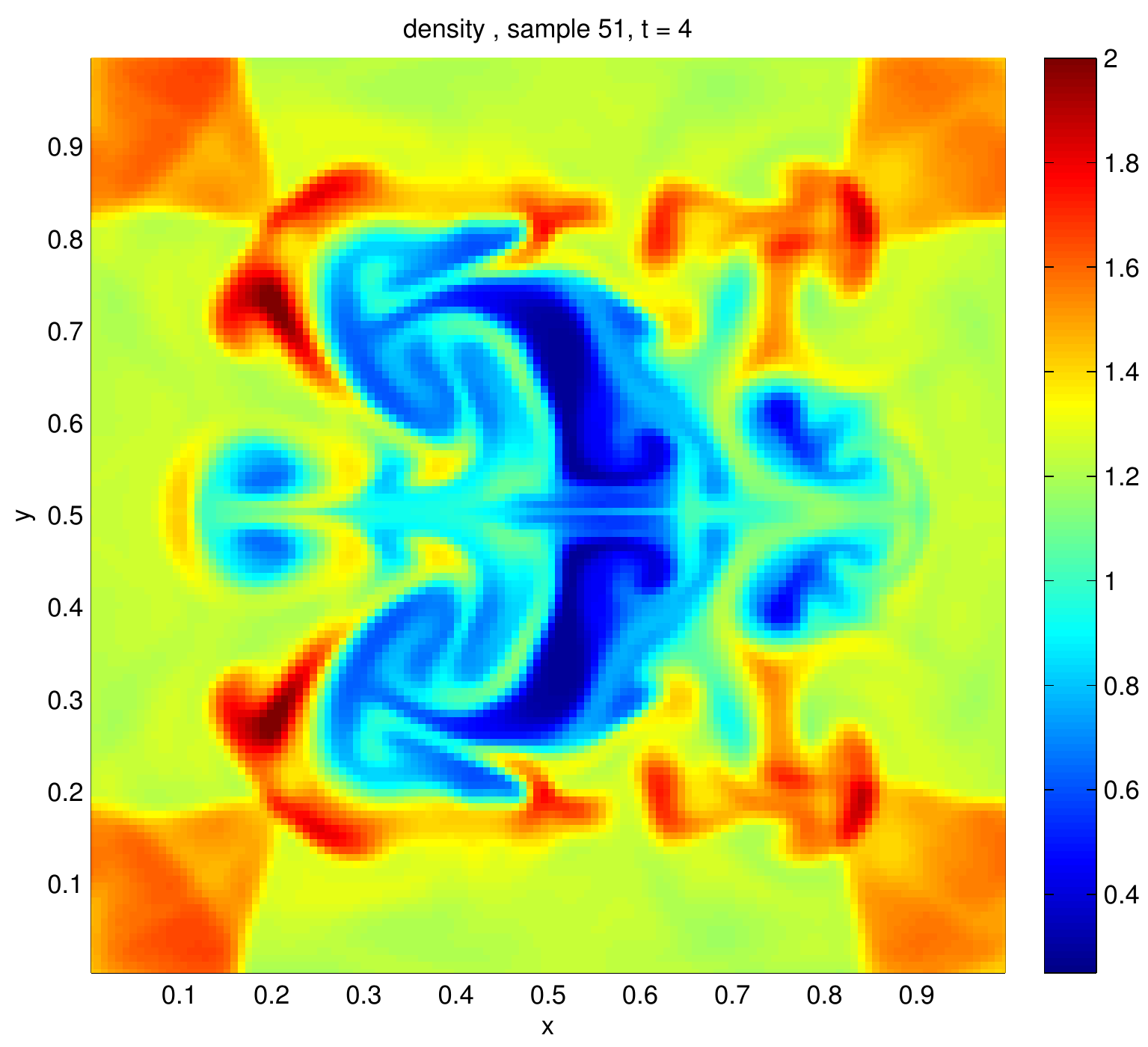}}
\subfigure[$256^2$]{\includegraphics[width=0.45\linewidth]{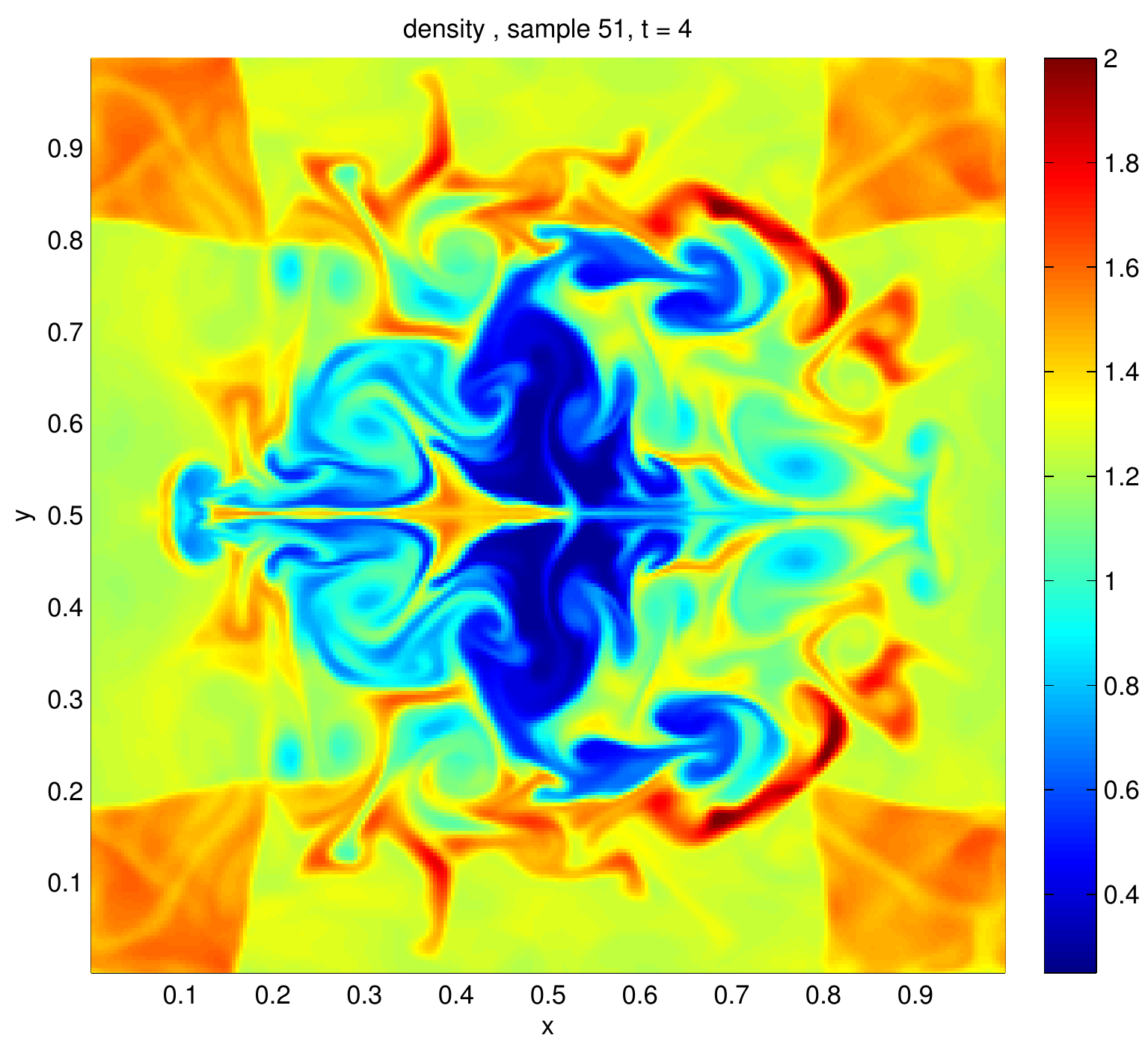}}\\
\subfigure[$512^2$]{\includegraphics[width=0.45\linewidth]{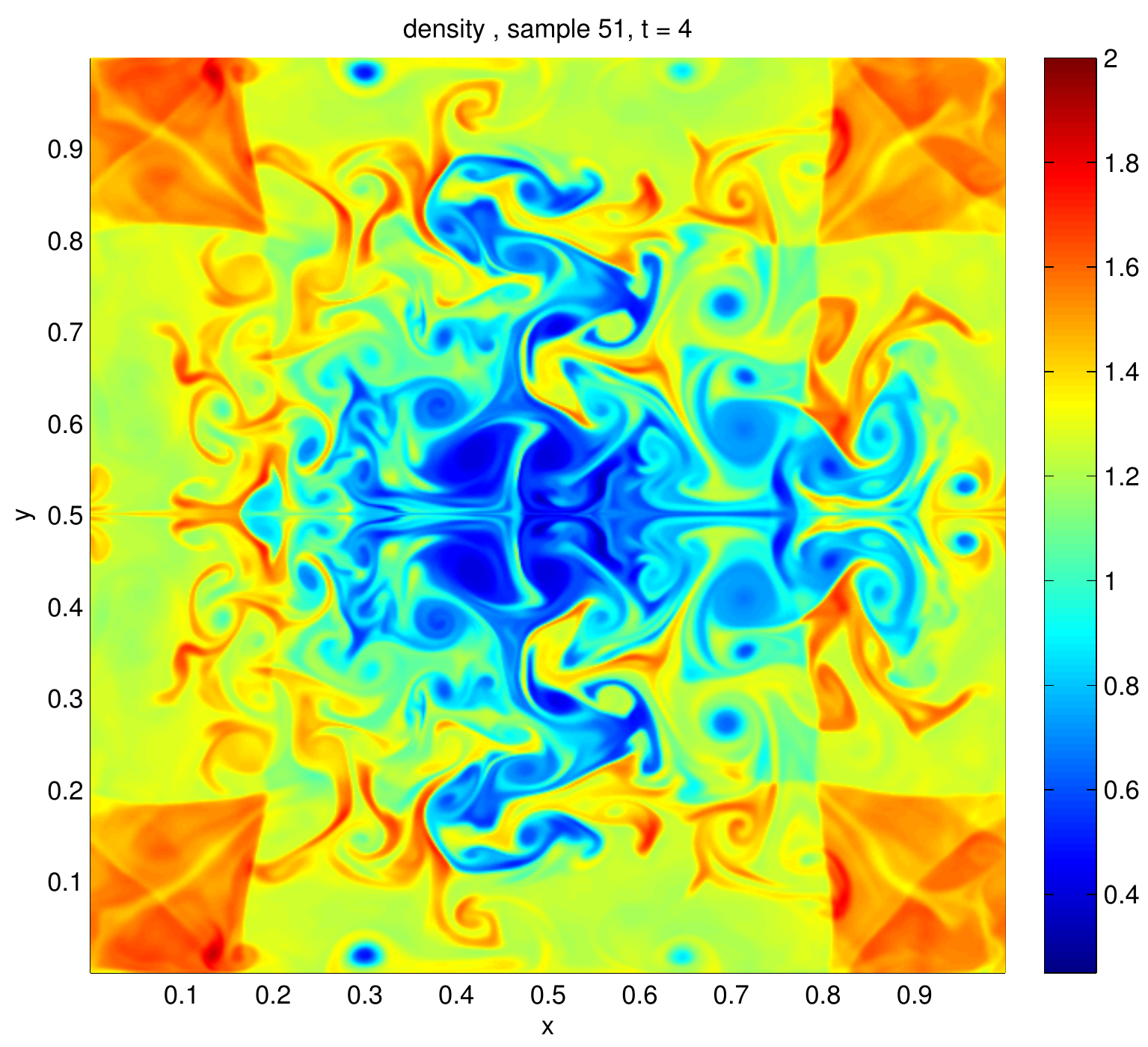}}
\subfigure[$1024^2$]{\includegraphics[width=0.45\linewidth]{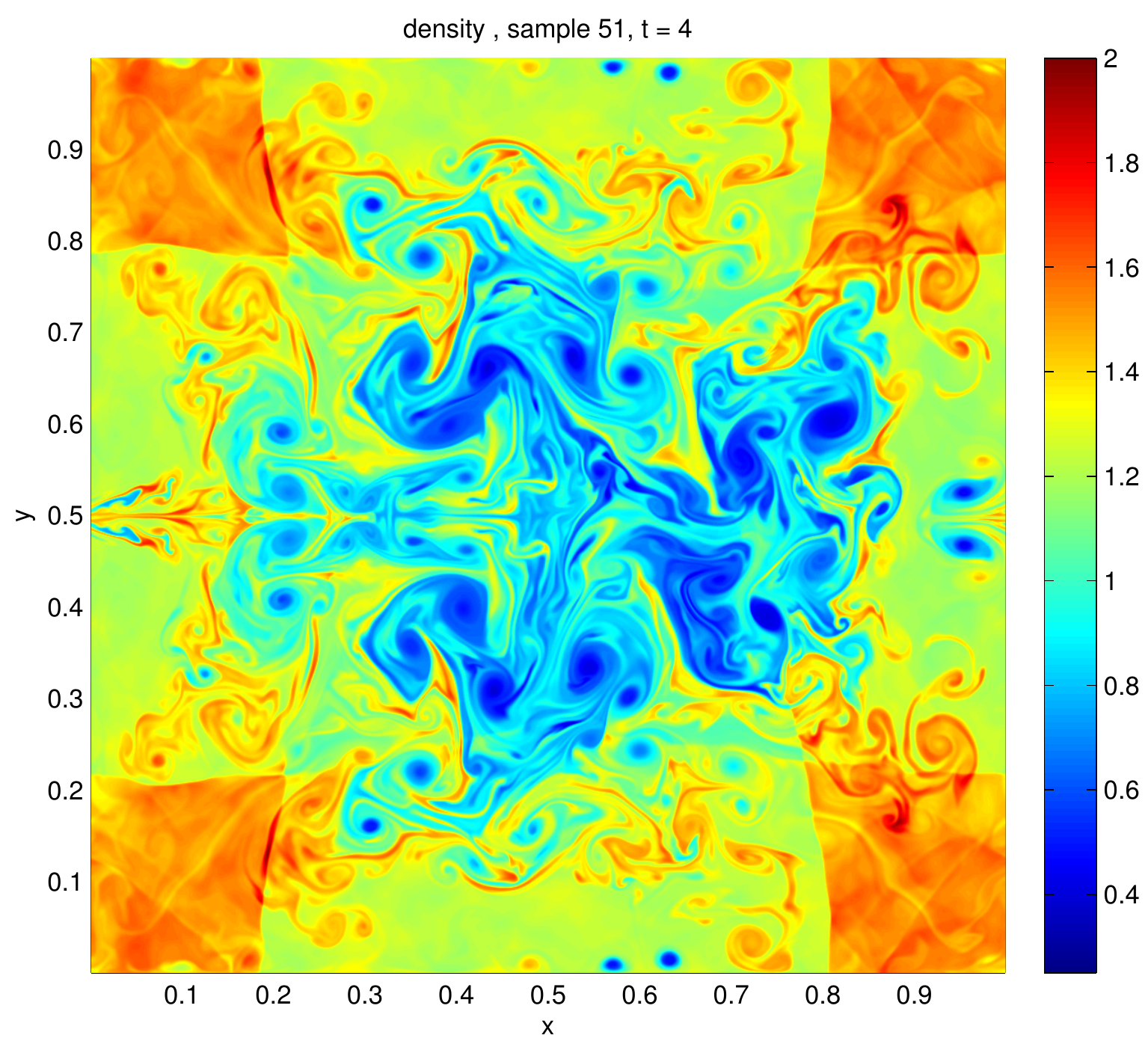}}
\caption{Approximate density for a single sample for the Richtmeyer-Meshkov problem \eqref{eq:rminit} for different grid resolutions at time $t=4$.}
\label{fig:19}
\end{figure}
\begin{figure}
\centering
\includegraphics[width=6cm]{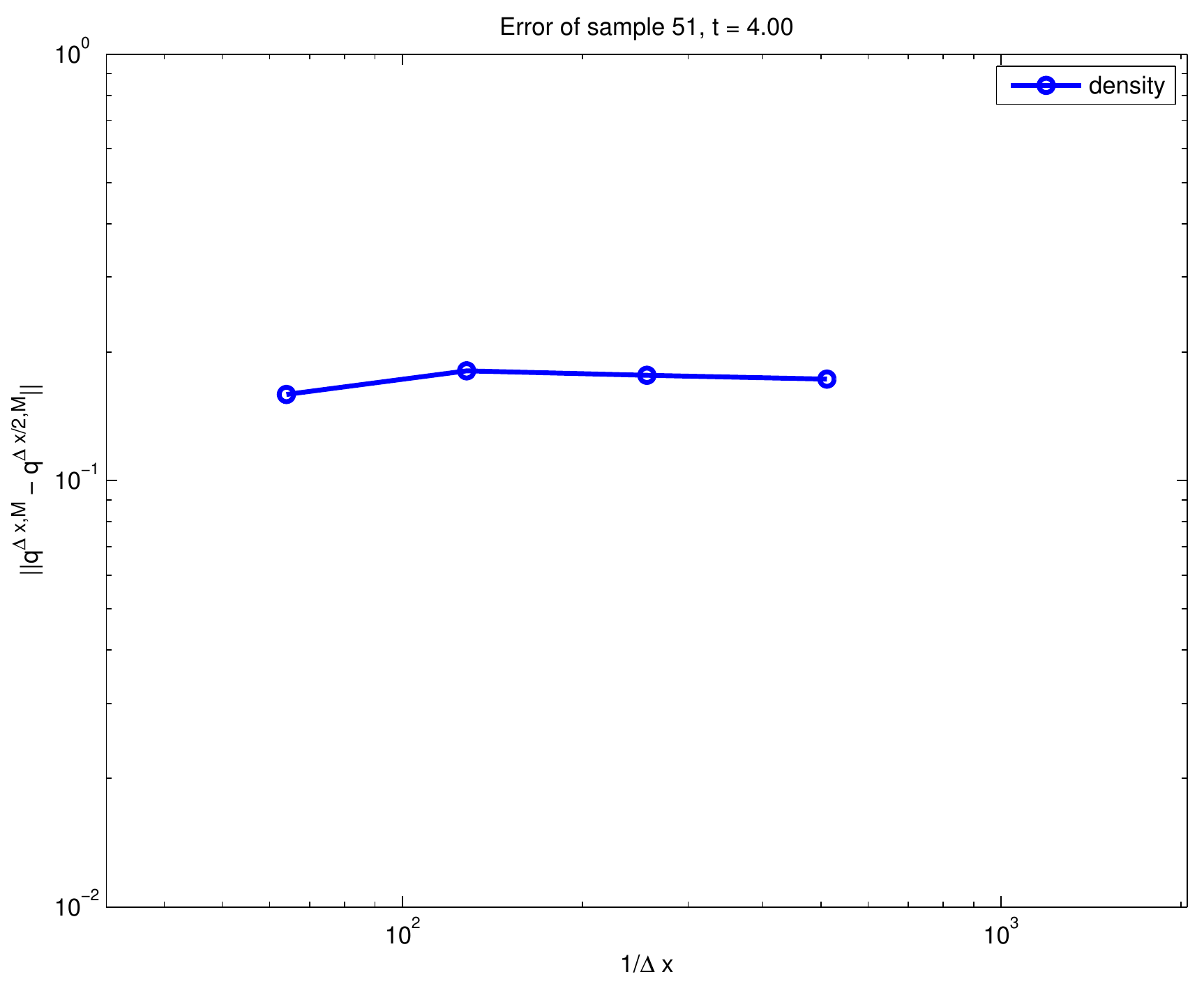}
\caption{Cauchy rates \eqref{eq:chyrates} for the density ($y$-axis) in a single sample of the Richtmeyer-Meshkov problem \eqref{eq:rminit} at time $t=4$, with respect to different grid resolutions ($x$-axis).}
\label{fig:20}
\end{figure}
\subsubsection{Convergence of the mean and the variance}
Next, we test for convergence of statistical quantities of interest as the mesh is refined. First, we check the convergence of the mean through the Monte Carlo approximation \eqref{eq:mc1} with $M=400$ samples. The numerical results for the density at time $t=4$ at different grid resolutions are presented in Figure \ref{fig:21}. The figure clearly shows that the mean converges as the mesh is refined. This convergence is further verified in Figure \ref{fig:2224}(a) where we plot the difference in mean \eqref{eq:cr2} for successive resolutions. This figure proves that the mean of the approximations form a Cauchy sequence and hence, converge. From Figure \ref{fig:21}, we also observe that small scale features are averaged out in the mean and only large scale structures, such as the strong reentrant shocks (mark the periodic boundary conditions) and mixing regions, are retained through the averaging process.
\begin{figure}
\centering
\subfigure[$128^2$]{\includegraphics[width=0.45\linewidth]{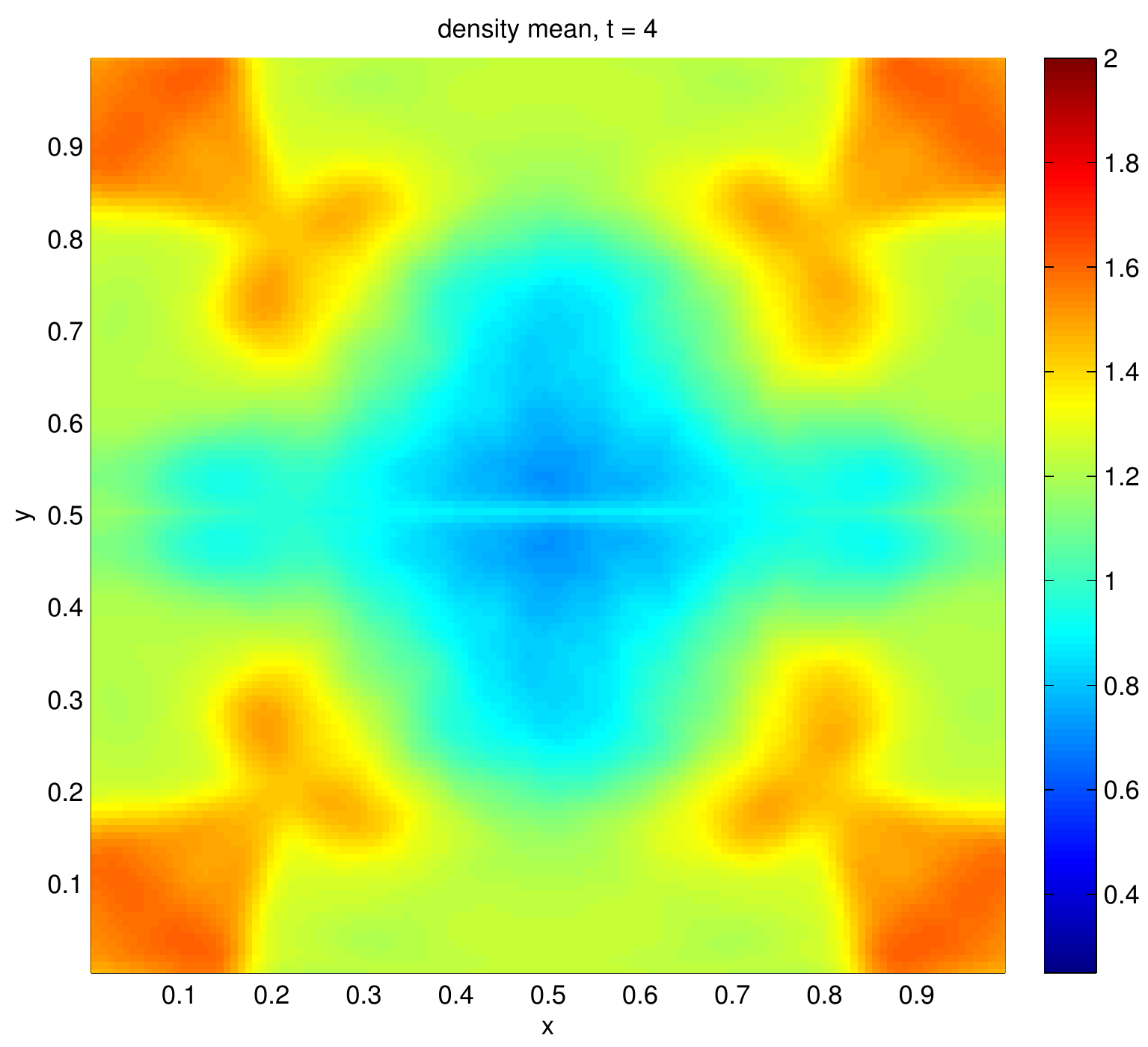}}
\subfigure[$256^2$]{\includegraphics[width=0.45\linewidth]{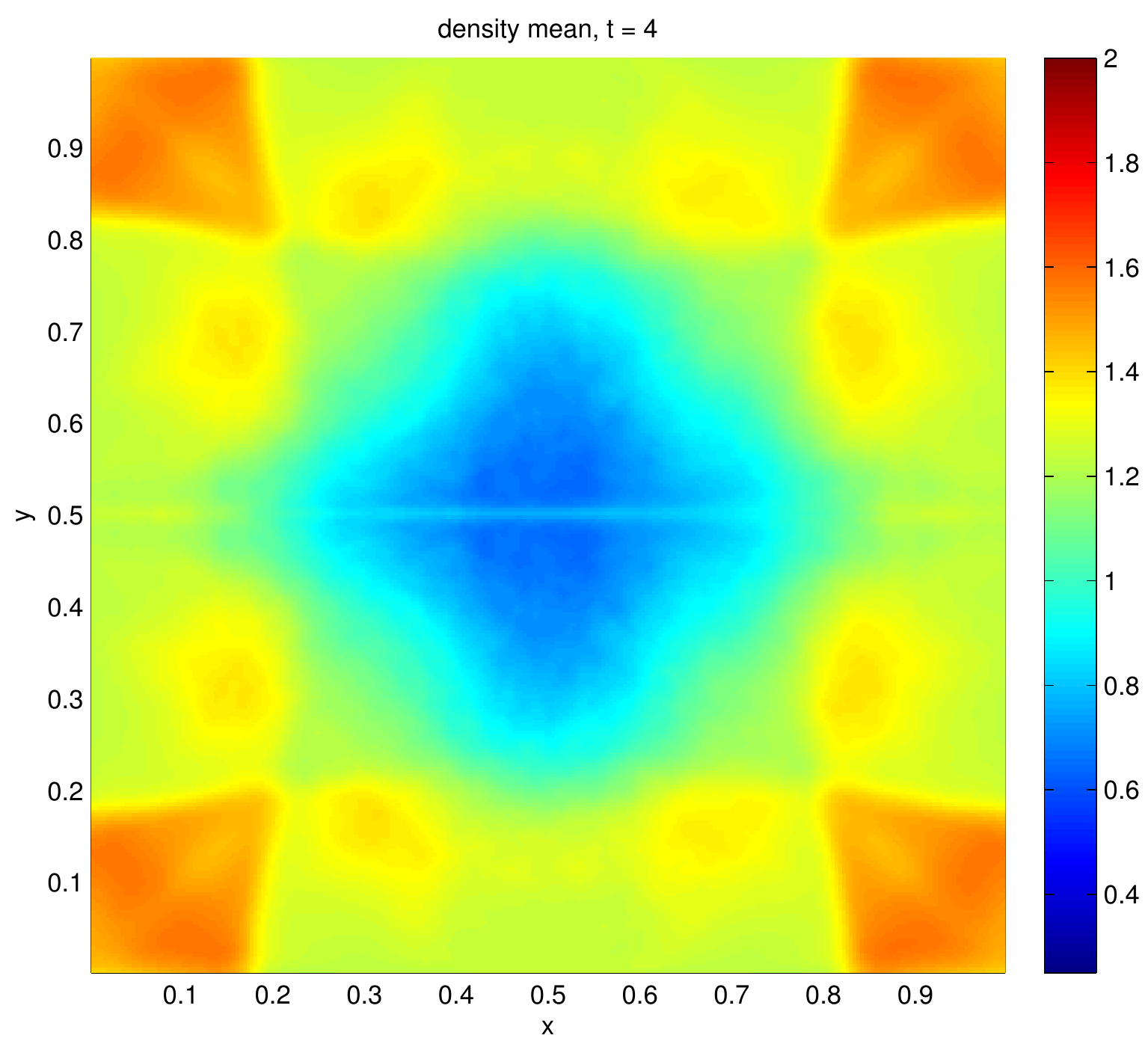}}\\
\subfigure[$512^2$]{\includegraphics[width=0.45\linewidth]{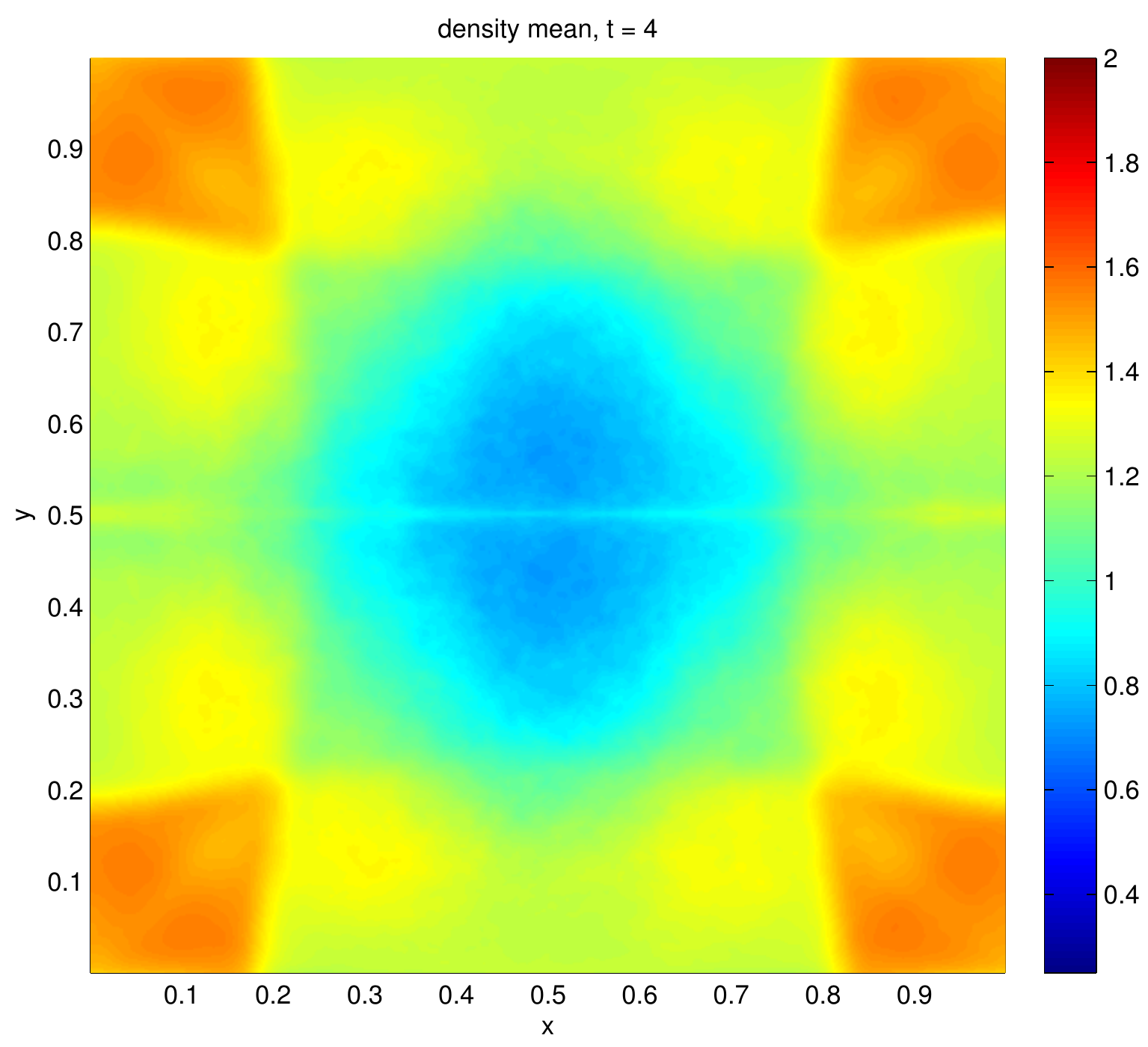}}
\subfigure[$1024^2$]{\includegraphics[width=0.45\linewidth]{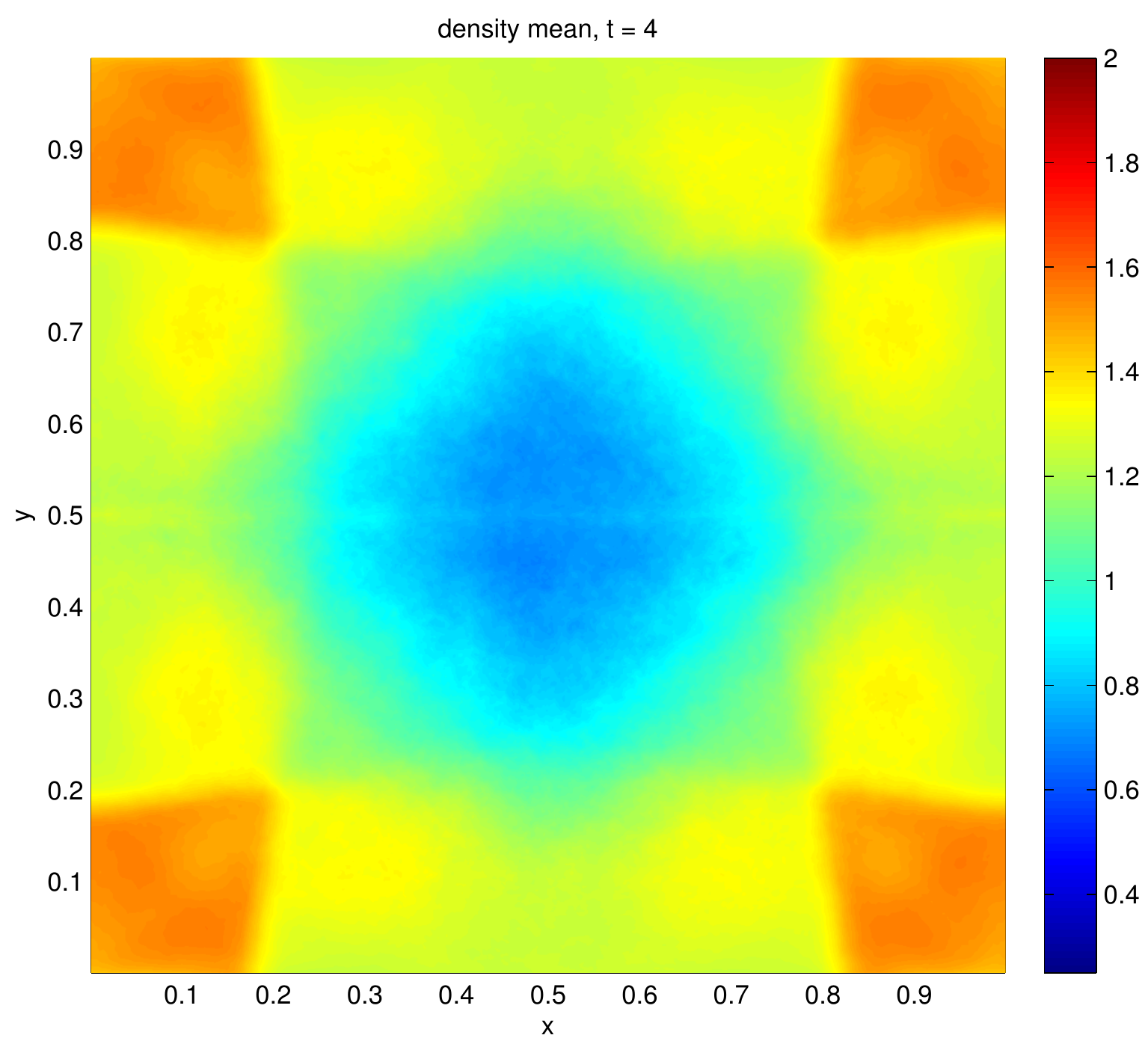}}
\caption{The mean density for the Richtmeyer-Meshkov problem with initial data \eqref{eq:rminit} for different grid resolutions at time $t=4$. All results are obtained with $400$ Monte Carlo samples.}
\label{fig:21}
\end{figure}
\begin{figure}
\centering
\subfigure[Mean]{\includegraphics[width=6cm]{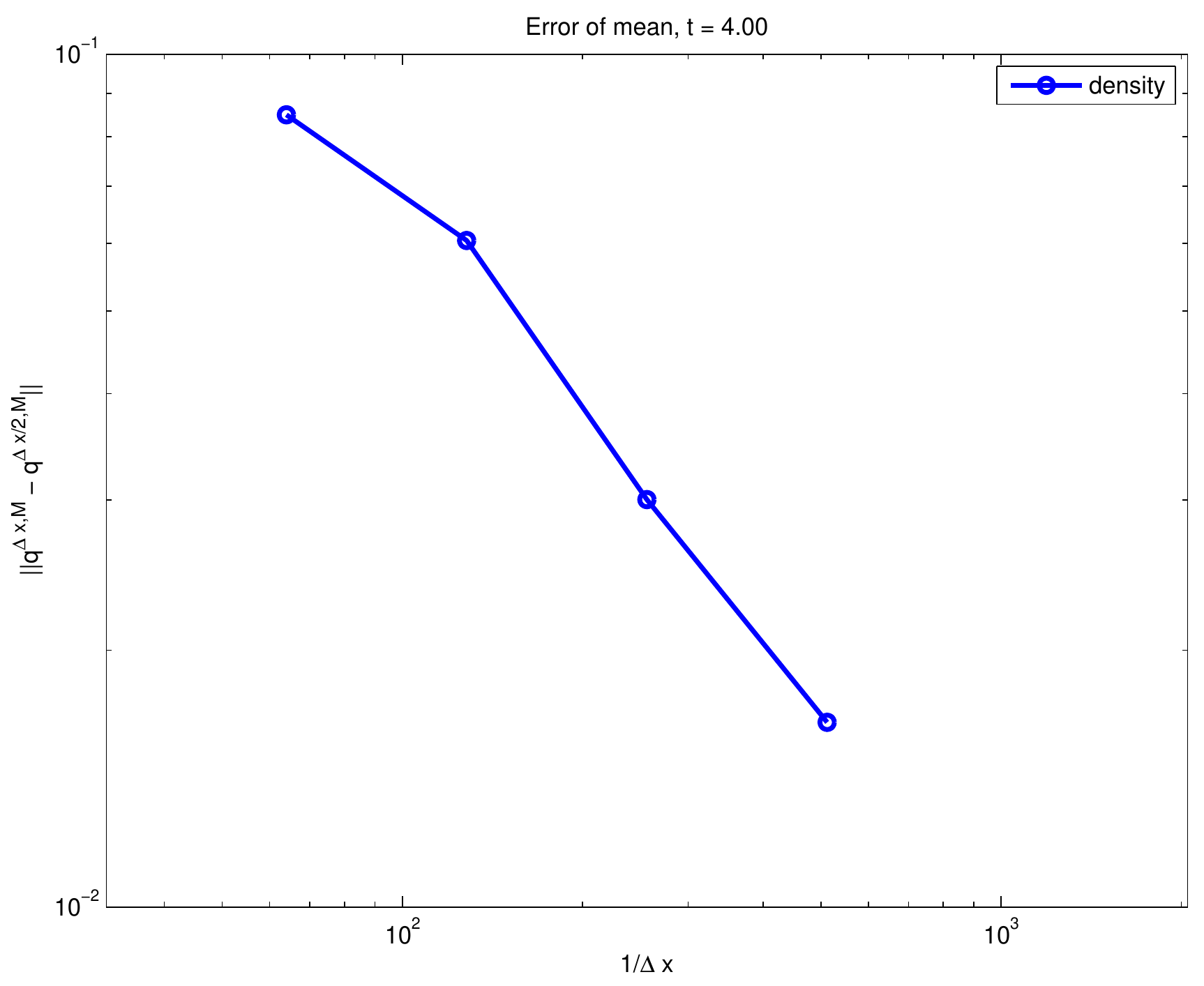} }
\subfigure[Variance]{\includegraphics[width=6cm]{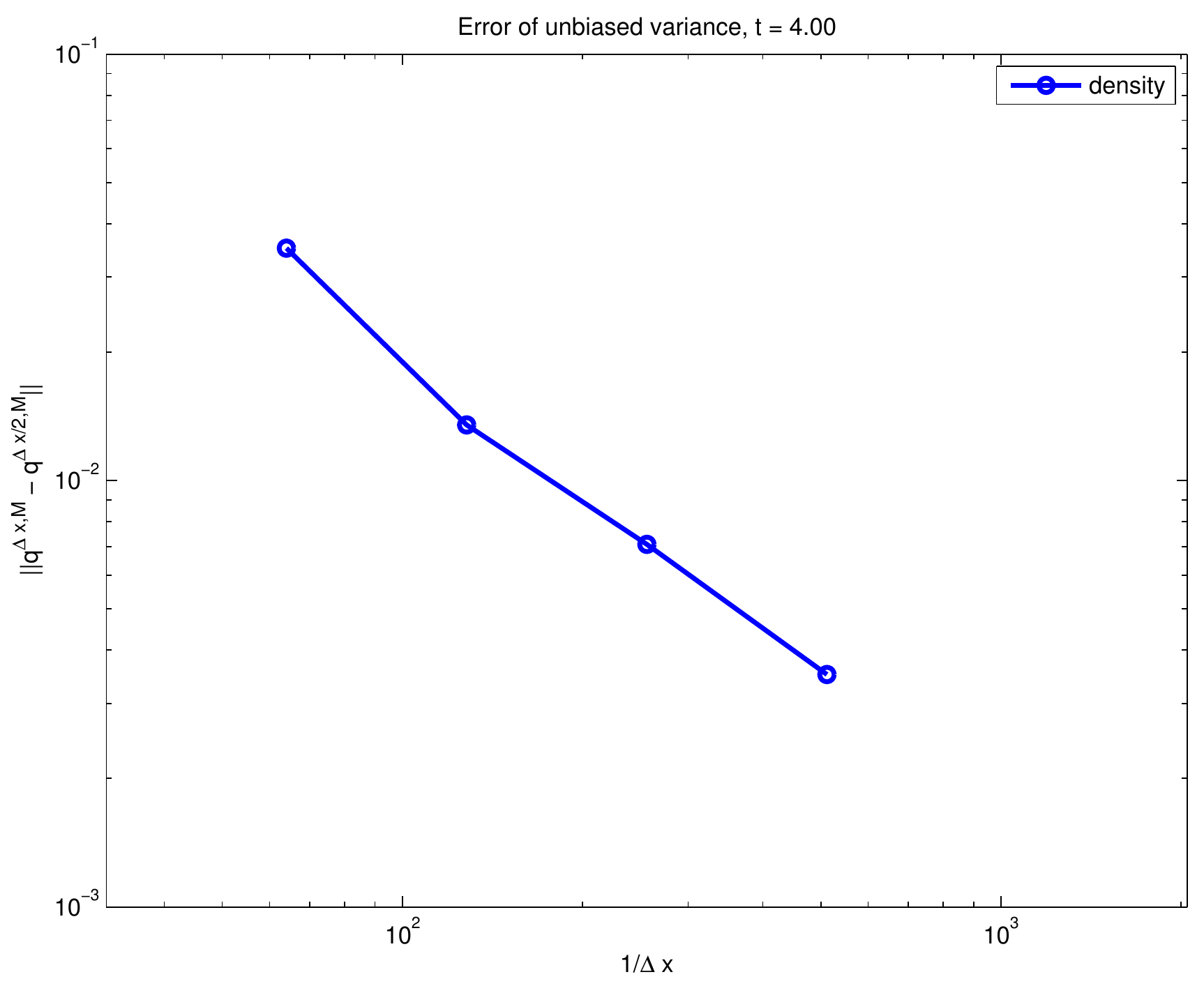}}
\caption{Cauchy rates \eqref{eq:cr2} for the mean and variance ($y$-axis) of the Richtmeyer-Meshkov problem \eqref{eq:rminit} at time $t=4$ and at different grid resolutions ($x$-axis). All results are obtained with $400$ Monte Carlo samples.}
\label{fig:2224}
\end{figure}

Next, we check for the convergence of the variance for the Richtmeyer-Meskhov problem \eqref{eq:rminit}. The results, shown in Figure \ref{fig:23} for time $t=4$, at different mesh resolutions and with $400$ Monte Carlo approximations, clearly indicate that the variance of the approximate Young measures converge as the mesh is refined. This is also verified from Figure \ref{fig:2224}(b) where the difference in $L^1$ of the variances at successive mesh resolutions is plotted and shown to form a Cauchy sequence. Furthermore, Figure \ref{fig:23} also demonstrates that the variance is concentrated at the shocks and even more so in the mixing layer, around the original interface. 
\begin{figure}
\centering
\subfigure[$128^2$]{\includegraphics[width=0.45\linewidth]{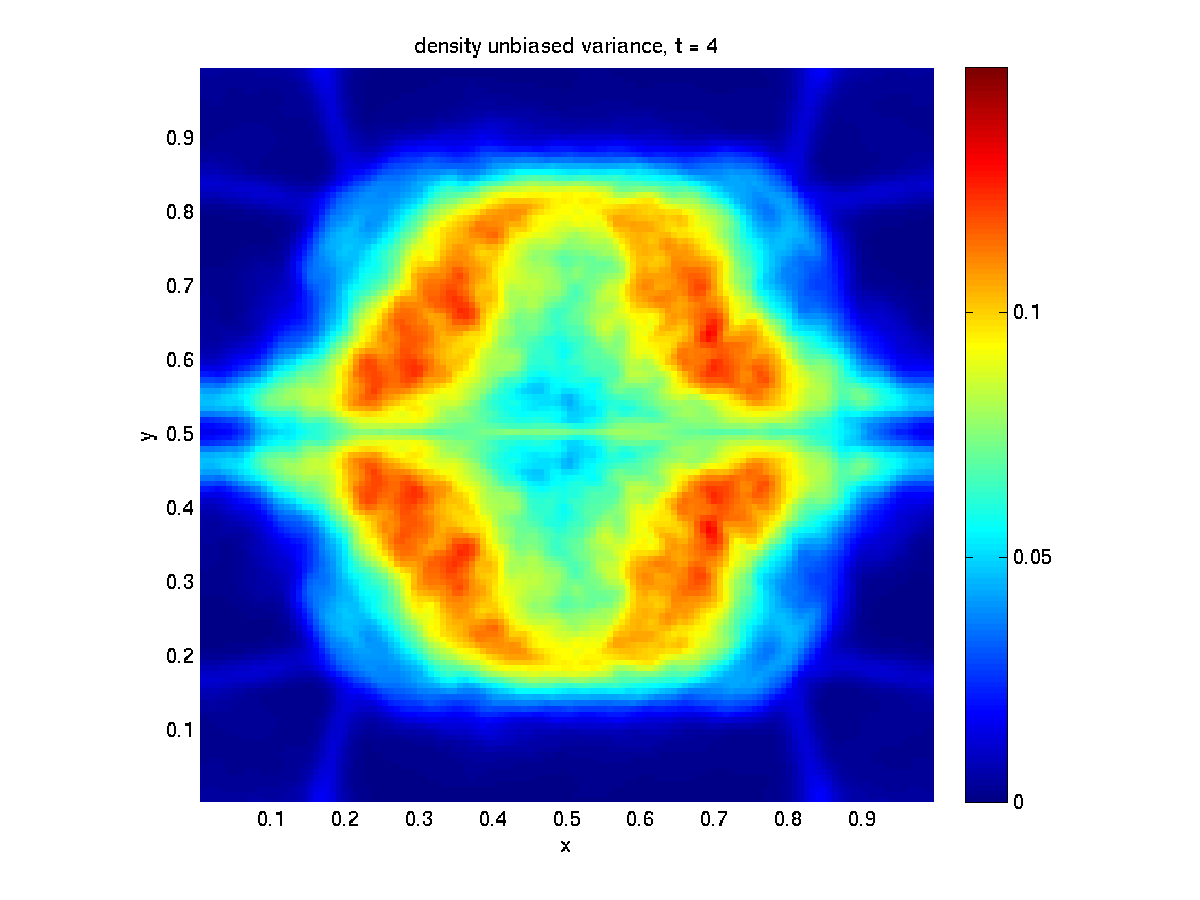}}
\subfigure[$256^2$]{\includegraphics[width=0.45\linewidth]{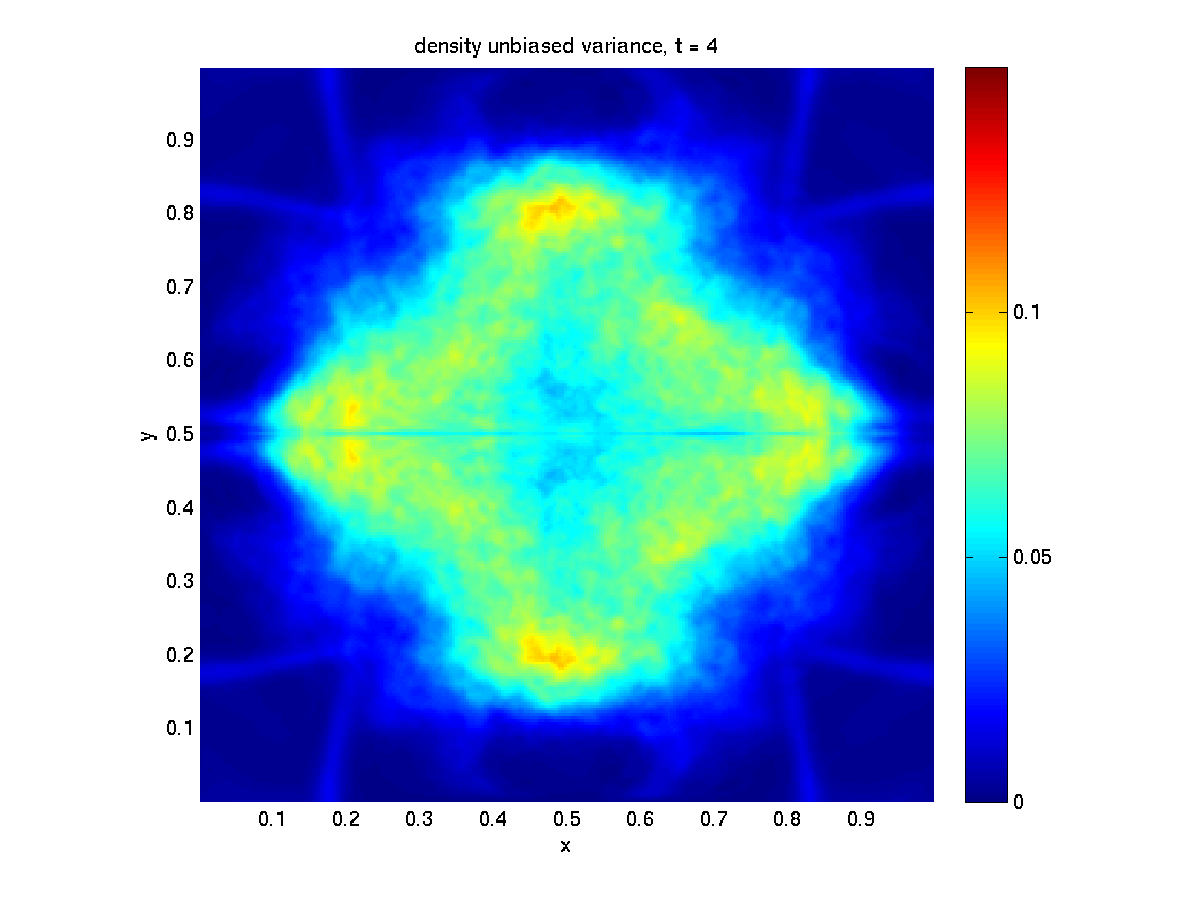}}\\
\subfigure[$512^2$]{\includegraphics[width=0.45\linewidth]{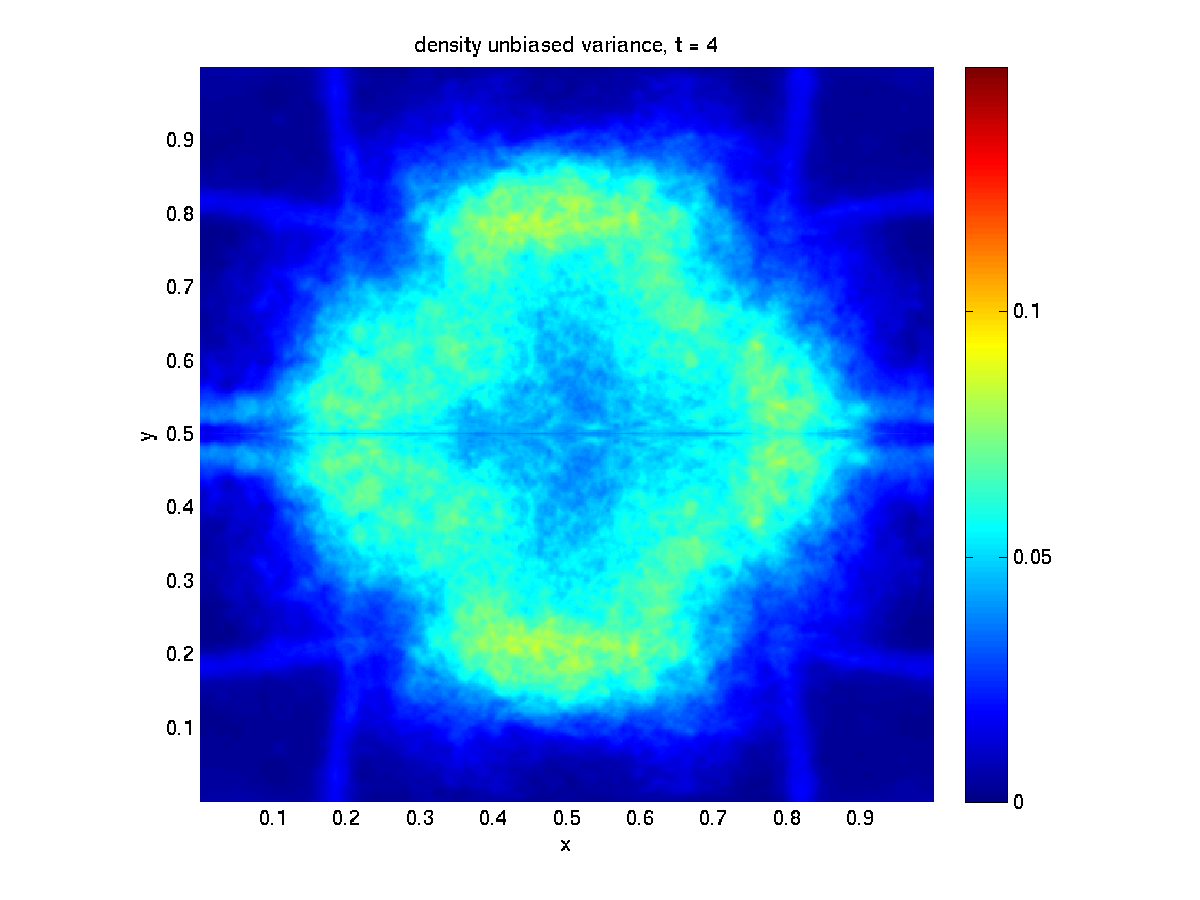}}
\subfigure[$1024^2$]{\includegraphics[width=0.45\linewidth]{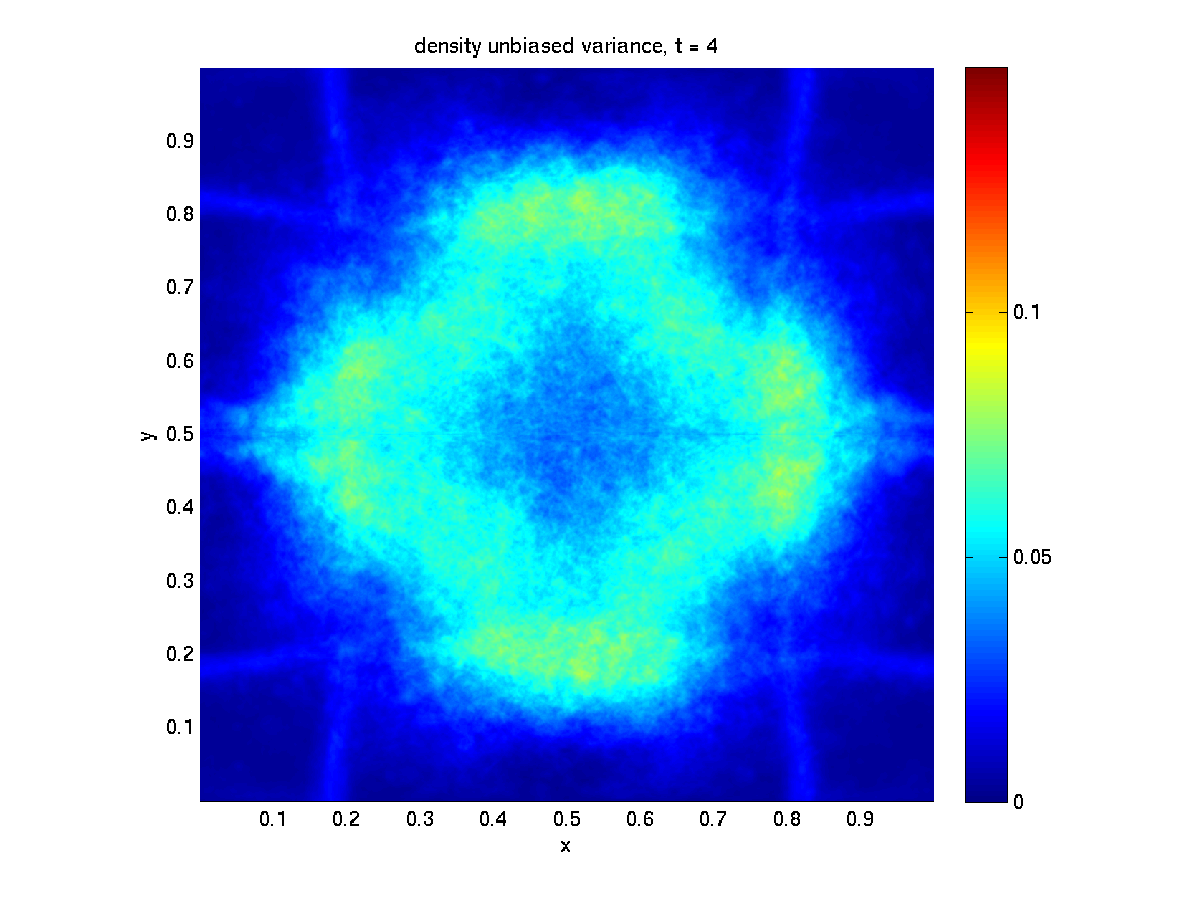}}
\caption{Variance of the density with initial data \eqref{eq:rminit} for different grid resolutions at time $t=4$. All results are obtained with $400$ Monte Carlo samples.}
\protect \label{fig:23}
\end{figure}

\subsection{Measure valued (MV) stability}
\label{sec:mvstab}
The above experiments clearly illustrate that the numerical procedure proposed here does succeed in computing an EMV solution of the underlying systems of conservation laws \eqref{eq:mvcauchy}. Are the computed solutions stable? As argued in Section \ref{sec:3}, uniqueness (stability) of EMV solutions for a general measure valued initial data is not necessarily true, even for scalar conservation laws. Moreover, the scalar case suggests that at most a weaker concept of stability, that of MV stability can be expected for EMV solutions (see Terminology \ref{def:mvstab}). As stated before, MV stability amounts to stability with respect to perturbations of atomic initial data. We examine this weaker notion of stability through numerical experiments.

To this end, we consider the Kelvin-Helmholtz problem as our test bed and investigate stability with respect to the following perturbations:

\subsubsection{Stability with respect to different numerical schemes.}
As a first check of MV stability, we consider the perturbed Kelvin-Helmholtz initial data \eqref{eq:kh} with a fixed perturbation size $\amp = 0.01$ and compute approximate measure valued solutions using Algorithm \ref{alg:approxmv}. Three different schemes are compared:
 \begin{itemize}
 \item [1.] (Formally) second-order TeCNO2 scheme of \cite{FTSID2}.
 \item [2.] Third-order TeCNO3 scheme of \cite{FTSID2}.
 \item [3.] Second-order high-resolution finite volume scheme, based on the HLLC approximate Riemann solver, and implemented in the FISH code \cite{Kap1}.
 \end{itemize}
 We will compare the mean and the variance of the approximate measures, at a resolution of $1024^2$ points and $400$ Monte Carlo samples, at time $t=2$. As the mean and the variance with TeCNO2 scheme have already been depicted in Figures \ref{fig:5}(d) and \ref{fig:7}(d), respectively, we plot the mean and variance with the TeCNO3 and FISH schemes in Figure \ref{fig:mvs1}. These results, together with the results for the TeCNO2 scheme (Figures \ref{fig:5}(d) and \ref{fig:7}(d)) clearly show that mean and variance of the approximate measure valued solution are very similar even though the underlying approximation schemes are different. In particular, comparing the TeCNO2 and TeCNO3 schemes, we remark that although both schemes have the same design philosophy (see \cite{FTSID2} and Section \ref{sec:5}), their formal order of accuracy is different. Hence, the underlying numerical viscosity operators are different. In spite of different numerical regularizations, both schemes seem to be converging to the same measure valued solution -- at least in terms of its first and second moments. This agreement is even more surprising for the FISH scheme of \cite{Kap1}. This scheme utilizes a very different design philosophy based on HLLC approximate Riemann solvers and an MC slope limiter. Furthermore, it is unclear whether this particular scheme satisfies the discrete entropy inequality \eqref{eq:dentrineq2} or the weak BV bound \eqref{eq:tvbound2}. Nevertheless, the measure valued solutions computed by this scheme seem to converge to the same EMV solution as computed by the TeCNO schemes. We have observed similar agreement between different schemes for smaller values of the perturbation parameter $\amp$ as well as in the Richtmeyer-Meshkov problem. Furthermore, all the three schemes agree with respect to higher moments as well. These numerical results at least indicate MV stability with respect to different numerical discretizations.
\begin{figure}
\centering
\subfigure[Mean, TeCNO3]{\includegraphics[width=0.45\linewidth]{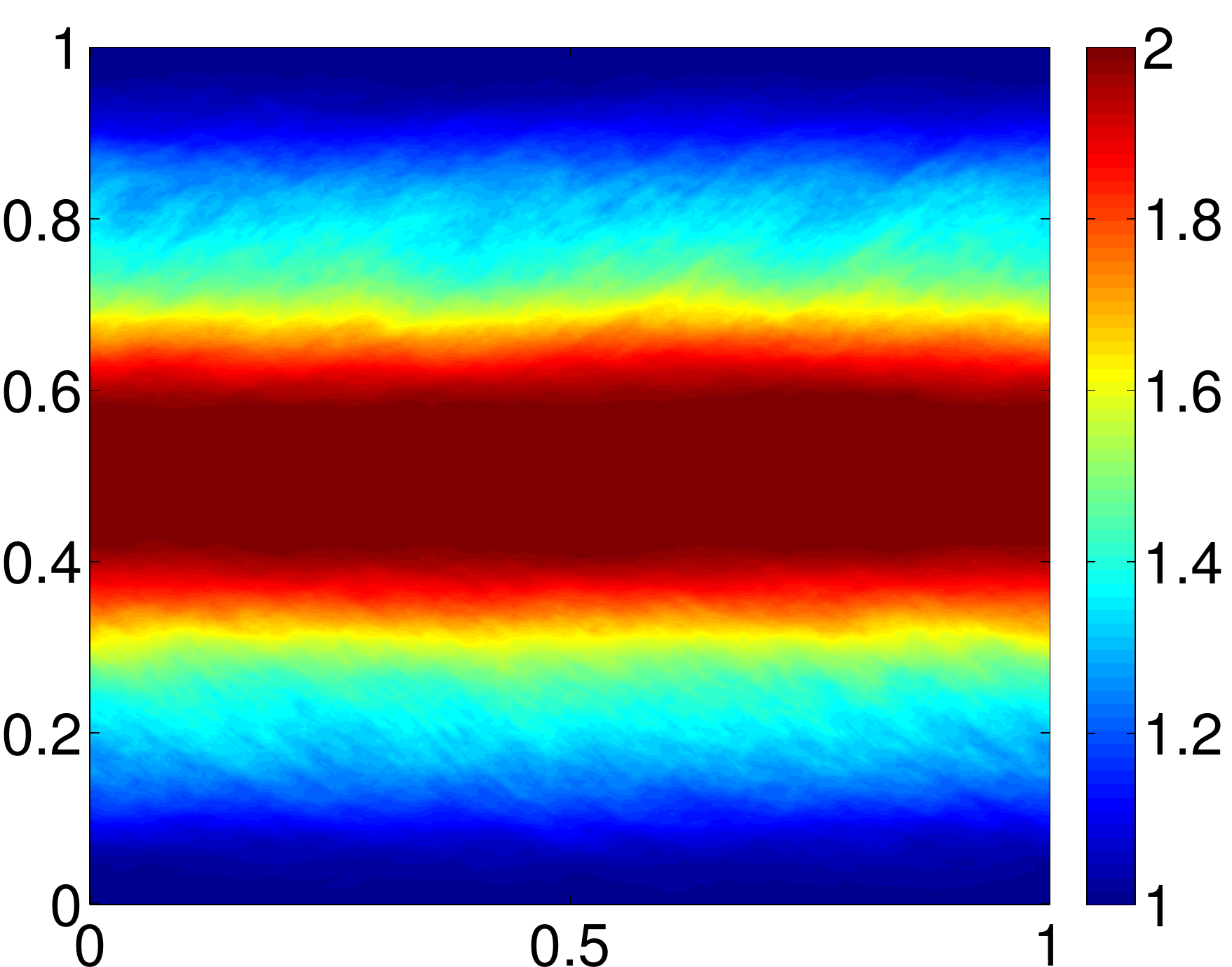}}
\subfigure[Mean, FISH]{\includegraphics[width=0.45\linewidth]{khif_mean_t=2_1024}}\\
\subfigure[Variance, TeCNO3]{\includegraphics[width=0.45\linewidth]{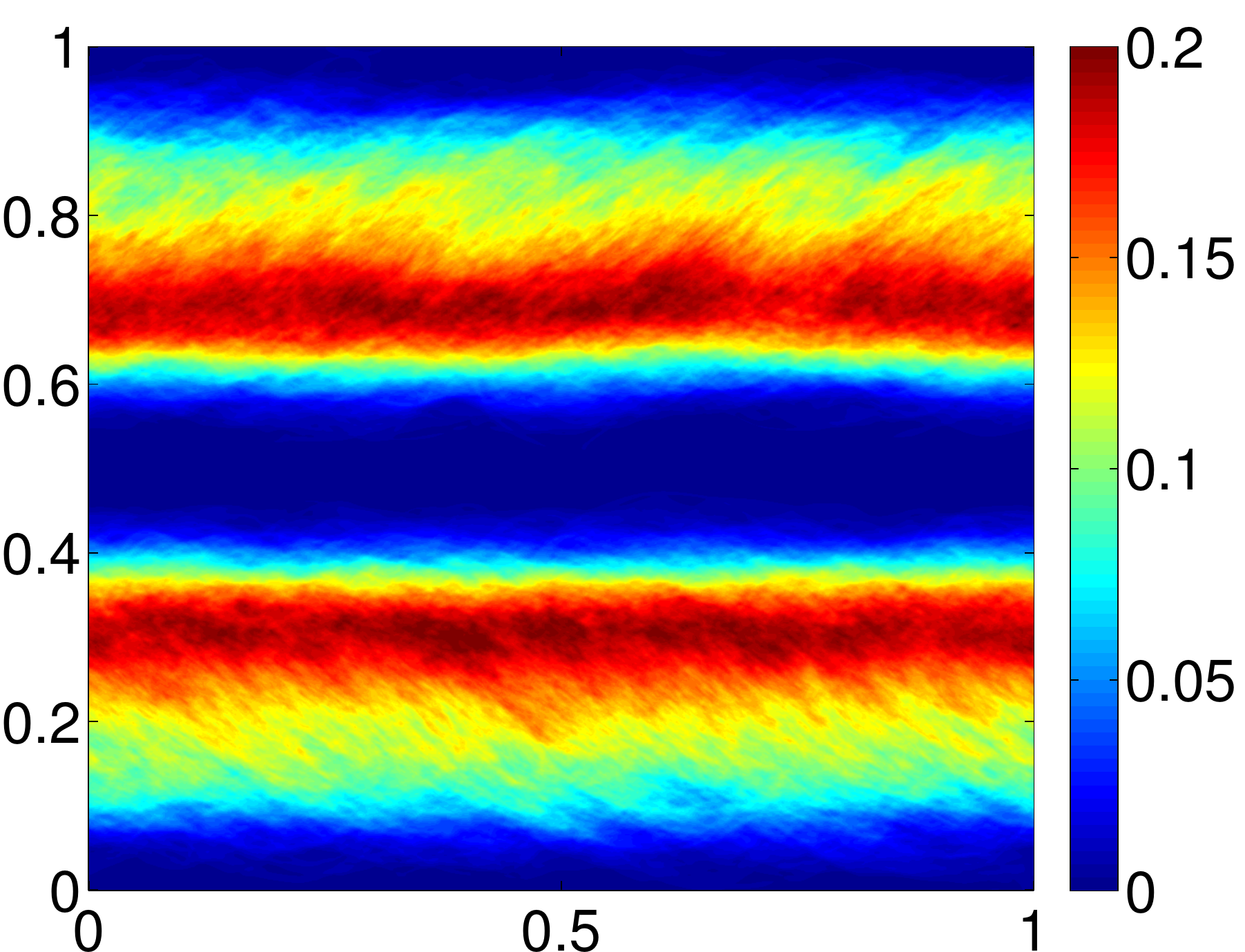}}
\subfigure[Variance, FISH]{\includegraphics[width=0.45\linewidth]{khif_var_t=2_1024}}
\caption{Mean and variance of the density for the Kelvin-Helmholtz problem with initial data \eqref{eq:kh}, at time $t=2$ at a resolution of $1024^2$ points and with $200$ Monte Carlo samples. Different numerical schemes are compared.}
\protect \label{fig:mvs1}
\end{figure}
\subsubsection{MV stability with respect to different perturbations}
A more stringent test of MV stability is with respect to different types of initial perturbations. To be more specific, we consider the Kelvin-Helmholtz problem with the \emph{phase} perturbations of \eqref{eq:kh} and compare them with \emph{amplitude} perturbations \eqref{eq:sodpinit} and \eqref{eq:khi}. Note that for small values of the perturbation parameter $\amp$, both the amplitude and phase perturbations are close to the atomic initial data \eqref{eq:khi} and to one another (for instance in the Wasserstein metric). We test whether the resulting approximate MV solutions are also close. To this end, we compute the approximate measure valued solutions with the phase perturbation and amplitude perturbation, for $\amp = 0.0005$, with the TeCNO3 scheme, at a grid resolution of $1024^2$ points and 400 Monte Carlo samples, and plot the results in Figure \ref{fig:mvs2}. The results show that the mean and variance with different initial perturbations are very similar when the amplitude $\amp$ of the perturbations is small. 

An even more stringent test of stability is provided by the following \emph{phase} perturbation of the Kelvin-Helmholtz problem \eqref{eq:kh}. The same set-up (computational domain of $[0,1]^2$ and periodic boundary conditions) as in the description of \eqref{eq:kh} is used but with an interface perturbation of the form:
\begin{equation}
\label{eq:genper}
I_j = I_j(x_1,\omega) := J_j + \amp Y_j(x_1,\omega).
\end{equation}
As in \eqref{eq:kh}, we set $J_1 = 0.25$ and $J_2 = 0.75$ but with an interface variation of the form:
\begin{equation}
\label{eq:genper1}
Y_j(x_1,\omega) = \sum_{n=1}^k a_j^n \mathbbm{1}_{A_n} \; , \; j=1,2.
\end{equation}
Here, $a_j^n = a_j^n(\omega) \in [-\hf,\hf]$ are randomly chosen numbers from a uniform distribution. As a second variant, the $a_j^n$ are drawn from the standard normal distribution. The $A_n$ are equally spaced intervals, i.e. $A_n = [(n-1)h,nh)$ with $h = \unitfrac{1}{32}$. Thus, the initial interface perturbation is discontinuous, with uncorrelated random variation of the interface inside each interval. Such types of random initial data are motivated from observed or measured data, see \cite{sssid3}. A representative realization of this initial datum is shown in Figure \ref{fig:kh2d_ini} (right). 

The resulting approximate MV solutions, computed with a perturbation of size $\amp = 0.005$, at time $t=2$ and at a resolution of $1024^2$ are shown in Figure \ref{fig:mvs3}. The mean (top) and variance (bottom) are plotted. Results with the coefficients $a_j^n$, chosen from both an uniform distribution (left) as well as a standard normal distribution (right) are shown. As seen from the figure, the computed mean appears identical for the two choices of distributions. The same holds for the variance, where the resulting variances for both sets of distributions are very similar. Furthermore, they are also very similar to the corresponding statistical quantities, computed with the amplitude perturbation  \eqref{eq:sodpinit} and \eqref{eq:khi} as well as the sinusoidal phase perturbation \eqref{eq:kh} (compare with Figure \ref{fig:mvs2}). Thus, we observe that the computed MV solutions are very similar to each other, even for four different sets of initial perturbations. Similar results were also observed for higher moments. This clearly indicates MV stability of the computed MV solution with the Kelvin-Helmholtz initial data.

\begin{figure}
\centering
\subfigure[Mean, phase perturbation]{\includegraphics[width=0.45\linewidth]{khif_mean_1}}
\subfigure[Mean, amplitude perturbation]{\includegraphics[width=0.45\linewidth]{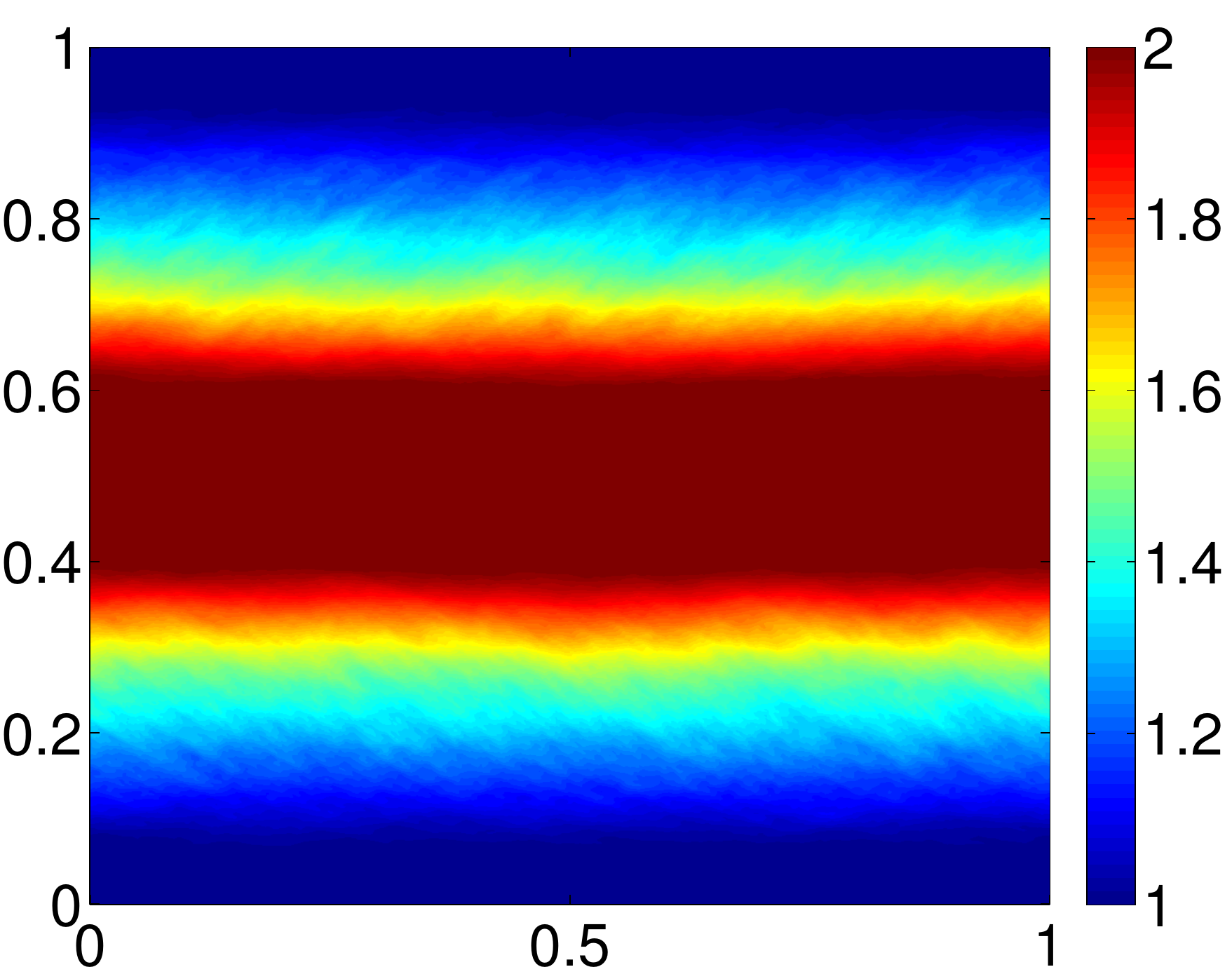}}\\
\subfigure[Variance, phase perturbation]{\includegraphics[width=0.45\linewidth]{khif3_var_t=2_1e-2_1024}}
\subfigure[Variance, amplitude perturbation]{\includegraphics[width=0.45\linewidth]{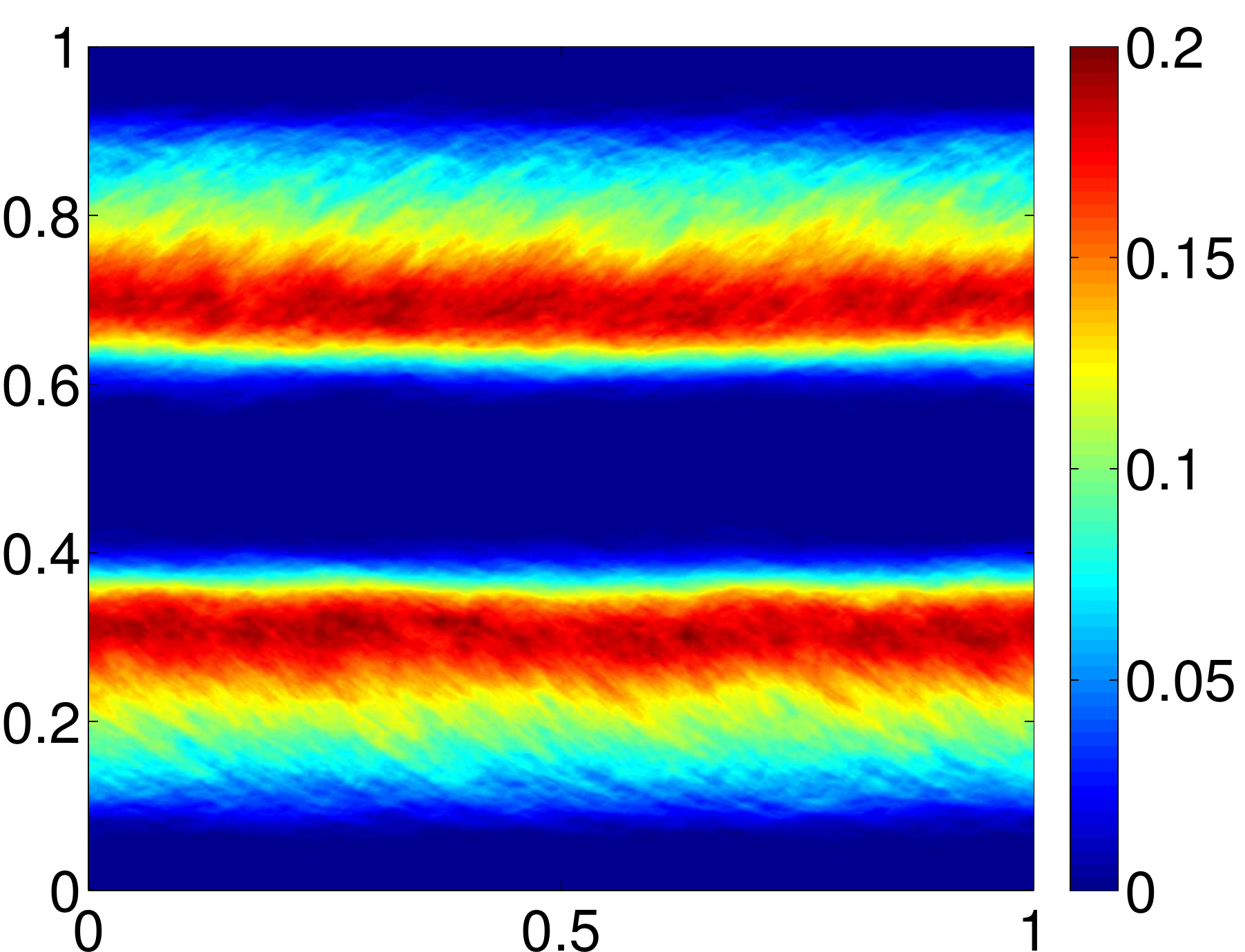}}
\caption{Mean (top) and variance (bottom) of the density for the Kelvin-Helmholtz problem with different initial data: phase perturbations \eqref{eq:kh} (left) and amplitude perturbations \eqref{eq:khi}, \eqref{eq:sodpinit} (right), at time $t=2$ at a resolution of $1024^2$ points and with $400$ Monte Carlo samples. All computations are with the TeCNO3 scheme.}
\protect \label{fig:mvs2}
\end{figure}

\begin{figure}
\centering
\subfigure[Mean, uniform distribution]{\includegraphics[width=0.45\linewidth]{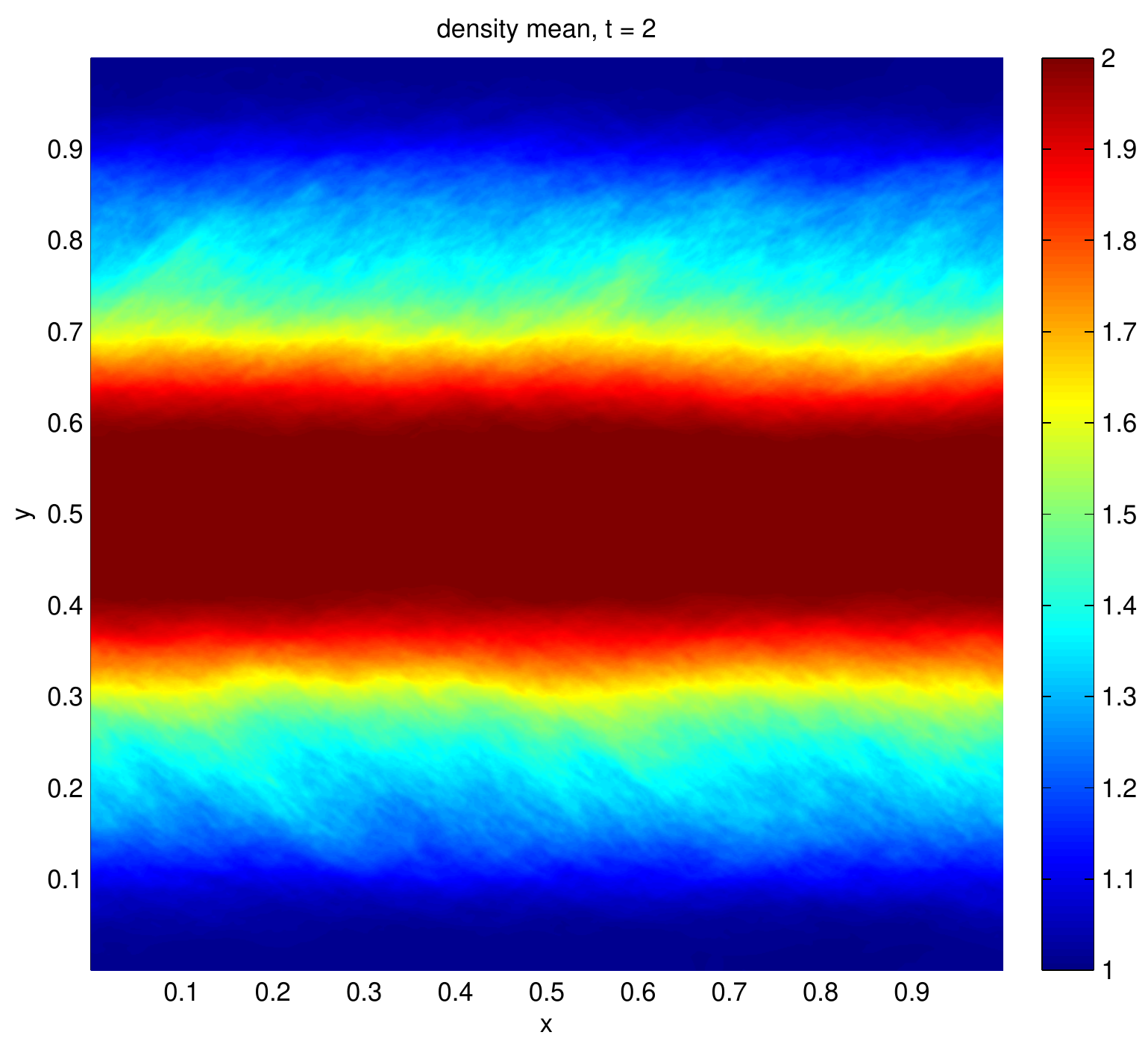}}
\subfigure[Mean, normal distribution]{\includegraphics[width=0.45\linewidth]{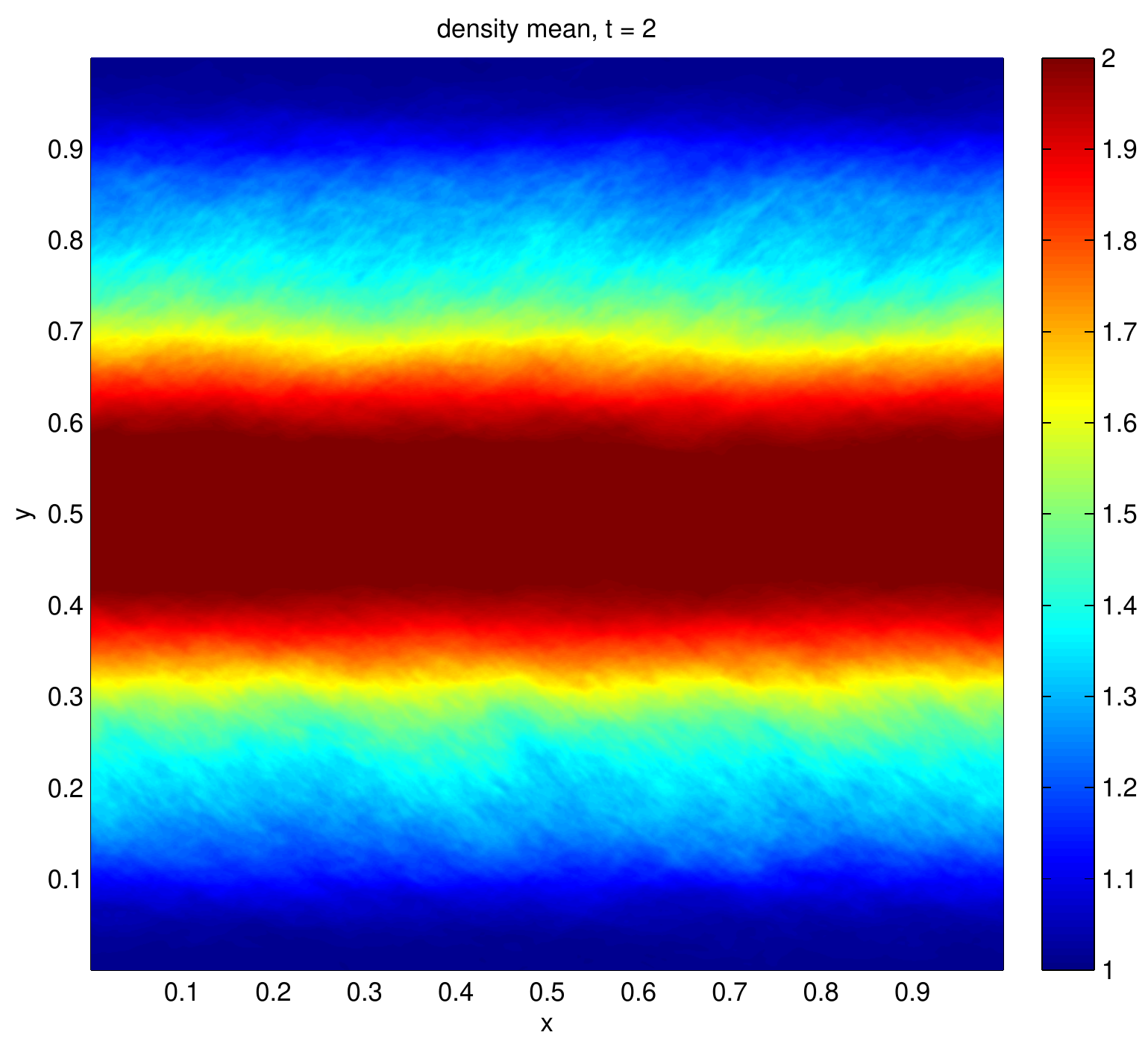}}\\
\subfigure[Variance, uniform distribution]{\includegraphics[width=0.45\linewidth]{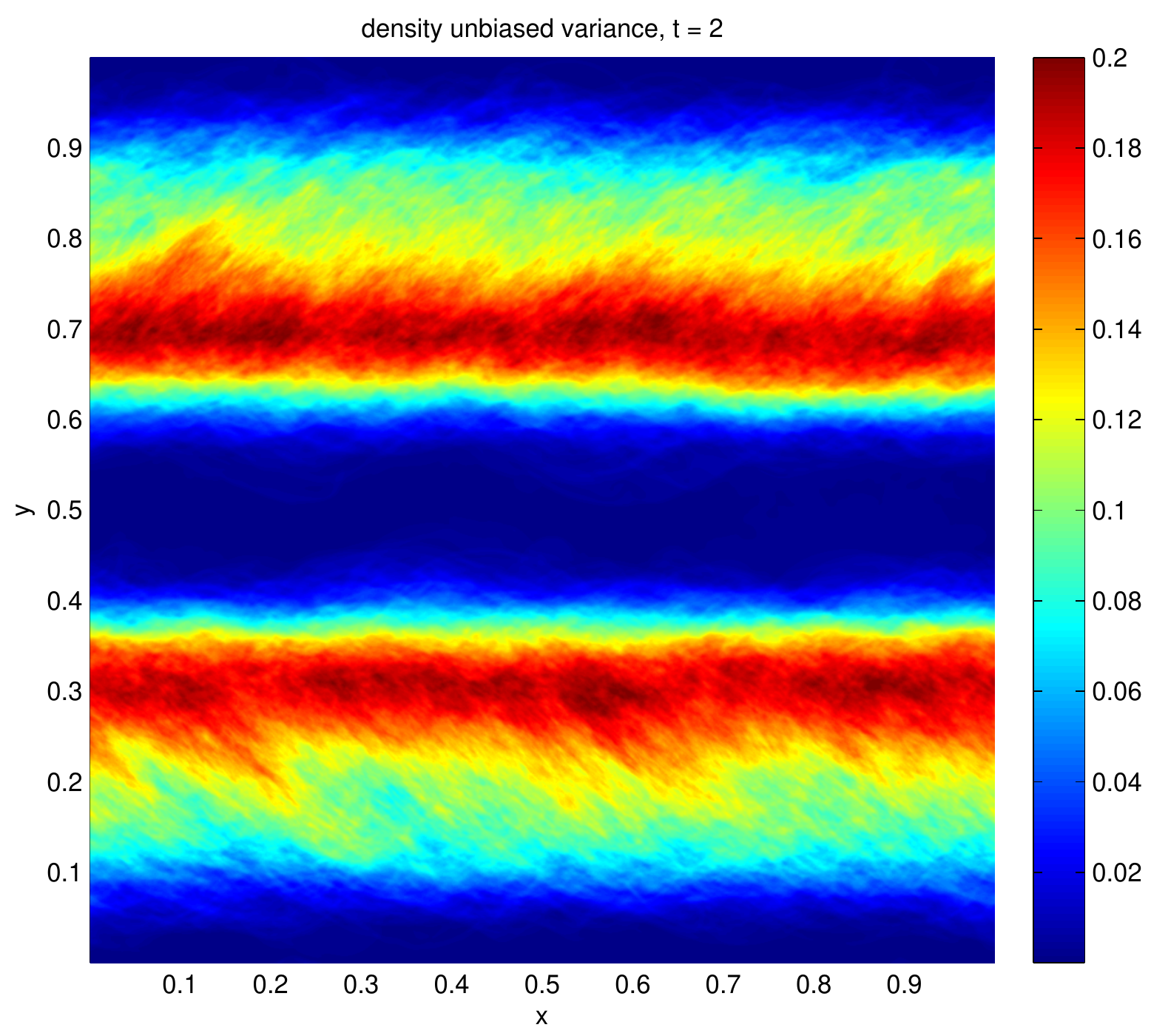}}
\subfigure[Variance, normal distribution]{\includegraphics[width=0.45\linewidth]{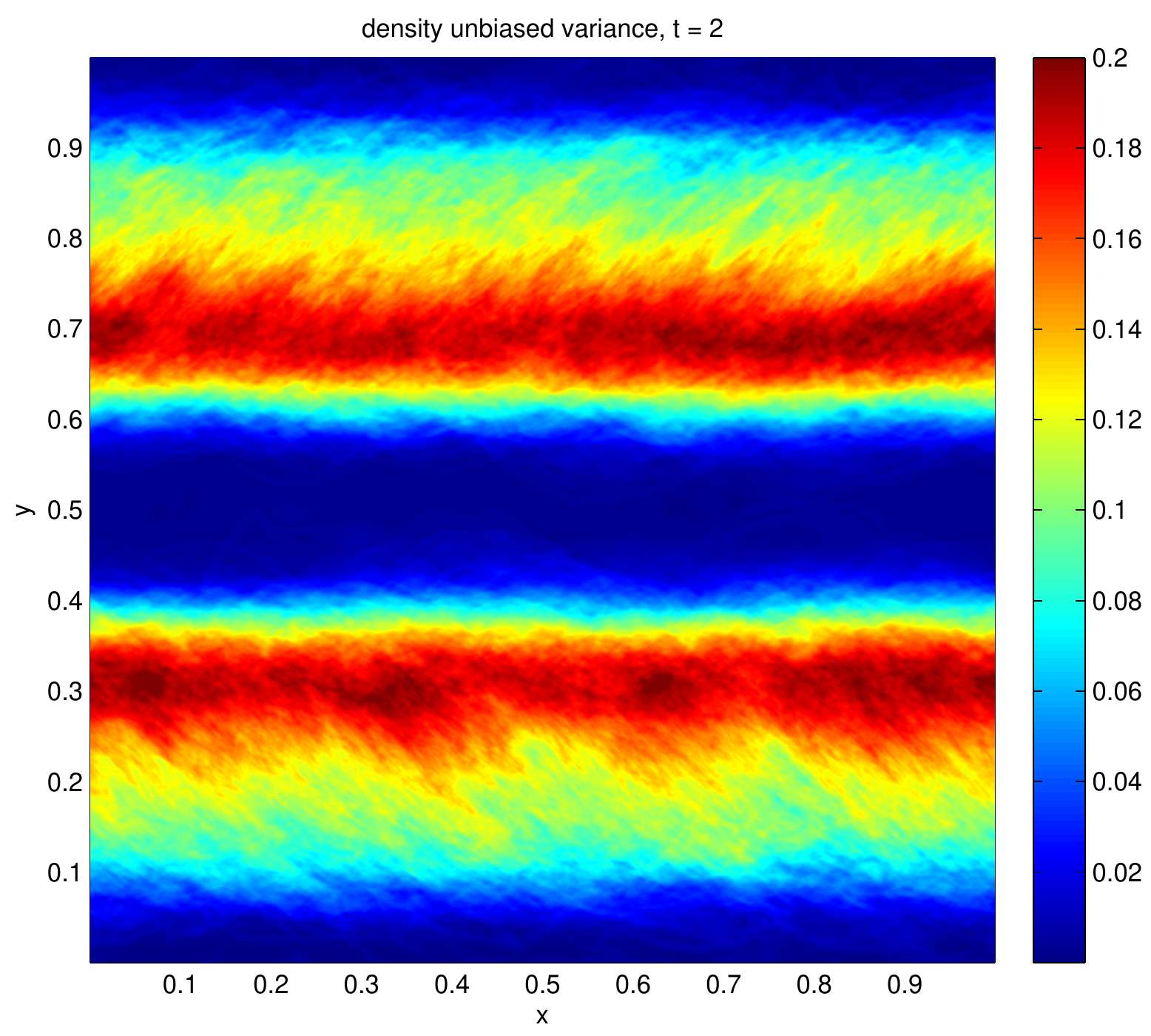}}
\caption{Mean (top) and variance (bottom) of the density for the Kelvin-Helmholtz problem with different initial data: discontinuous phase perturbations \eqref{eq:genper} with uniformly distributed coefficients in \eqref{eq:genper1} (left) and with (standard) normally distributed coefficients in \eqref{eq:genper1} (right), at time $t=2$ at a resolution of $1024^2$ points and with $400$ Monte Carlo samples. All computations are with the second-order FISH scheme.}
\protect \label{fig:mvs3}
\end{figure}

\section{Discussion}\label{sec:7}
We conclude with a brief discussion on the highlights of the current paper which are put in  perspective for future results.
Currently, the notion of entropy solutions is the generic framework for interpreting the notion of solutions for $N\times N$ systems of hyperbolic conservation laws \eqref{eq:cauchy} in $d$-spatial dimensions. Entropy solutions are \emph{bounded functions} which satisfy the equation  \eqref{eq:wsoln} and its associated  entropy inequality(-ies) \eqref{eq:entrcond} in the sense of distributions. Though the existence and uniqueness of entropy solutions has been established for  scalar conservation laws ($N=1$) and for one-dimensional systems ($d=1$), there are no known global existence and uniqueness (stability) results for generic multi-dimensional systems, when $N, d > 1$. In fact, recent papers \cite{CDL1,CDL2,CDL3} provide examples of multi-dimensional systems with infinitely many entropy solutions.

\subsection{What do the numerical experiments tell us}
Despite a wide variety of numerical methods, such as finite volume, finite difference and discontinuous Galerkin methods that have been developed and successfully employed to approximate systems of conservation laws, none of these methods has been shown to converge to an entropy solution for a generic system of conservation laws.
Given this background, we investigate here the issues of convergence of numerical approximations as well as the stability of the underlying entropy solutions. Our numerical experiments demonstrate that  even state of the art numerical methods may not necessarily {converge} as the mesh is refined. As shown in Figures \ref{fig:kh-density1} and \ref{fig:3}, finer and finer structures emerge as the grid is refined. The production  of oscillations at finer and finer scales prevents convergence under mesh refinement. We also present numerical experiments that demonstrate the lack of stability of entropy solutions with respect to perturbations of initial data; see Figures \ref{fig:kh-errors}(b) and \ref{fig:12}.

This lack of convergence to entropy solutions should not be considered as a failure of the numerical methods. Rather, they illustrate the shortcomings of the notion of entropy solutions to multi-dimensional systems of conservation laws. In particular, these experiments support the contention that entropy solutions are inadequate in describing some of the complex flow features that are modeled by systems of conservation laws such as the persistence of structures on finer and finer scales. Together with the recent results on the non-uniqueness of entropy solutions, our numerical evidence 
motivated us to seek a different, more versatile notion of solutions for these equations.

To this end, we focus on the notion of \emph{entropy measure-valued (EMV) solutions}, first introduced by DiPerna in \cite{DIP1}, see also \cite{DM1}. We propose a measure-valued Cauchy problem \eqref{eq:mvcauchy} and seek solutions that are Young measures (parametrized probability measures). These \emph{entropy measure valued solutions} are sought to be consistent with the underlying equations in the sense of distributions \eqref{eq:mvsoln} and satisfy a suitable form of the entropy inequality \eqref{eq:mventrineq}. \emph{The main aim} of the current paper was then  to design numerical procedure that can be rigorously shown to converge to an EMV solution. We work with an equivalent representation of measures as probability laws of random fields. The resulting initial random field is then evolved using a ``reliable'' entropy stable numerical scheme. The law of the resulting (random) weak* convergent approximations provides an approximation to the measure valued solution. For a numerical scheme to be weak* convergent, it is required to satisfy a set of minimal criteria outlined in Theorem  \ref{thm:convmv}:
\begin{itemize}
\item {\bf Uniform boundedness} of the approximations in $L^{\infty}$;
\item {\bf Discrete entropy inequality};
\item {\bf Space-time weak BV bound}.
\end{itemize}
 The TeCNO schemes of \cite{FTSID2} and the space-time DG schemes of \cite{HSID1} are examples of (formally) high-order schemes satisfying the discrete entropy inequality and weak BV bounds. The uniform $L^{\infty}$ bound is  a technical assumption that will be relaxed in a forthcoming paper \cite{FSID5}. 
Thus, we provide sufficient conditions that can guide the {design} of such ``reliable'' numerical methods for systems of conservation laws, with particular attention to multi-dimensional systems. Note that for systems of conservation laws, the above conditions play a role similar to that played by the well-known criteria of  discrete maximum principle(s), entropy inequalities and the TVD property in the numerical analysis of scalar conservation laws.

The convergence of numerical approximations to an EMV solution of \eqref{eq:mvcauchy} is interpreted in the \emph{weak*} sense, namely, that \emph{statistics} of space-time averages of the unknowns converge as the mesh is refined. 
A Monte Carlo method is used to approximate the EMV solution and we also show convergence of the resulting numerical procedure. To our knowledge,  this \emph{provides the first  set of rigorous convergence results for numerical approximations of generic multi-dimensional systems of conservation laws}.
These convergence results are illustrated by a large number of numerical experiments, and we make the following key observations:
\begin{itemize}
\item In general, there is no observed convergence of numerical approximations (neither in $L^1$ or in weaker norms) for single realizations (samples), with respect to increasing mesh resolutions. This has been demonstrated with two examples for the two-dimensional Euler equations.
\item However, as predicted by the theory, statistical quantities of interest such as the mean and the variance (or even higher moments) of an ensemble of solutions do converge as the mesh is refined.
\item In fact, a stronger convergence is observed. The approximate Young measures seem to converge in the strong sense \eqref{eq:sconv} to an EMV solution. 
\end{itemize}
The numerical approximation procedure, presented in Algorithm \ref{alg:atomic}, was also employed to compute EMV solutions with respect to atomic initial data. In general, the computed measure valued solution is \emph{not necessarily atomic}; see Figures \ref{fig:16}, \ref{fig:pdfTime} and \ref{fig:pdfRefine}. This is particularly striking in the specific case of the Kelvin-Helmholtz problem, where an entropy solution (the steady state data \eqref{eq:khi}), interpreted as an atomic entropy measure valued solution, exists but is unstable.  

This key observation implies that the solution operator can act to spread the support of the initial atomic measure. This bursting out of the initial atomic measure is, in our opinion, very significant. Just as the formation of shock waves precludes the existence of global classical solutions, leading to the replacement of point values with local averages as the appropriate solution concept, this observed bursting out of an initial atomic measure into a non-atomic measure implies that we have to look beyond integrable functions in order to obtain existence of solutions to systems of conservation laws. The concept of entropy measure-valued solutions, based on one-point statistics, appears to be a natural extension. In particular, given the proposed Algorithm \ref{alg:atomic}, we are also able to address Lax's question raised in Section \ref{sec:what}: what we are computing are the \emph{statistics}  --- ensemble average, variance etc.\ --- of an entropy measure valued solution.

\subsubsection*{Stability}
As the results of this paper and the forthcoming paper \cite{FSID5} show, the convergence of numerical approximations also provides a (constructive) proof of existence for EMV solutions of \eqref{eq:mvcauchy}. The questions of uniqueness and stability are much more delicate. From Remark \ref{rem:initialdata}, Example \ref{ex:nonunique} and the results of \cite{Sch89} and references therein, we know that EMV solutions may not be unique if the initial measure is non-atomic, even for scalar conservation laws. We propose a weaker stability concept, that of measure-valued stability. This concept implies possible stability for the statistics of space-time averages in problems where the initial measure is close to atomic. Numerical experiments examining this weaker concept of stability were presented in Section \ref{sec:mvstab}. From these experiments, we observed that 
\begin{itemize}
\item Different numerical schemes appear to converge to the same EMV solution as the mesh is refined.
\item Different types of perturbations of atomic initial data were considered and the resulting approximate EMV solutions seemed to converge to the same EMV solution, corresponding to atomic initial data.
\end{itemize}
These experiments indicate that our approximation procedure is indeed stable. Furthermore, they also suggest that the weaker notion of MV stability might be an appropriate framework to discuss the question of stability of EMV solutions.


\subsection{Issues for future investigation}
Our results raise several  issues which are left open. We conclude this section with a few comments suggesting possible paths for  future investigation.\newline
{\bf On the notion of stability}. The only rigorous results available are of the measure valued-strong uniqueness type, see Theorem \ref{thm:classicalsoln} and \cite{BDS1,DMT12}. Here, the stability is ensured when a classical solution (an atomic measure concentrated on a Lipschitz solution) is present. This also implies local (in time) uniqueness of EMV solutions for atomic initial data, concentrated on smooth functions. Given the paucity of rigorous stability results, there is a considerable open territory  for future  theoretical investigation of weaker concepts of stability, such as measure valued  stability for systems of conservation laws. Moreover, additional admissibility criteria such as entropy rate criteria of \cite{DAF1} or other variants might be necessary to ensure even MV stability of the EMV solution. This issue is dealt extensively in a forthcoming paper \cite{FLM15} where the concept of measure valued solutions is further augmented with additional admissibility criteria, in the form of conditions on multi-point correlations, that increase the chance of singling out a unique solution. 

\smallskip\noindent
{\bf Weak* convergent schemes}. As mentioned before, we  provided here a numerical procedure, as well as sufficient conditions on numerical schemes, such that the approximations converge to an EMV solution. Some examples of schemes satisfying these criteria were presented. These results will hopefully encourage the development of other kinds of numerical schemes, such as of the WENO, RKDG and spectral viscosity type, that satisfy the abstract criteria of this paper, and hence converge to measure valued solutions of systems of conservation laws, even in several space dimensions. 

\smallskip\noindent
{\bf Computing the measure valued solutions} requires evaluation of phase space integrals. Our proposal in this paper was to employ Monte Carlo sampling. This procedure can be very expensive computationally, on account of the slow convergence with respect to the number of samples. We foresee the design of more computationally efficient methods by adapting schemes such as Multi-level Monte Carlo \cite{schwabsid1,sssid1,sssid3}, stochastic collocation finite volume methods \cite{SRTSID1} and gPC based stochastic Galerkin methods \cite{DPL1}, which have recently been developed to deal with uncertainty quantification for systems of conservation laws. Such extensions are the subject of ongoing research.

\appendix
\section{Young measures}\label{app:young}
We provide here a very short introduction to Young measures. The reader may wish to consult \cite{Fol99,Bil08} for the theory of Radon measures and probability measures, and \cite{Bal89,Bal95} on the theory of Young measures.

\subsection{Probability measures}\label{app:probmeas}
\begin{enumerate}[label=\textbf{\ref{app:probmeas}.\arabic*},leftmargin=0cm,itemindent=1.1cm,labelsep=0.4cm,align=left]
\item
We denote by $\M(\R^N)$ the set of finite Radon measures on $\R^N$, which are inner regular Borel measures $\mu$ with finite total variation $|\mu|(\R^N)$. Let $C_0(\R^N)$ be the space of continuous real-valued functions on $\R^N$ which vanish at infinity, equipped with the supremum norm. Then it can be shown (see e.g.\ \cite[Section 7.3]{Fol99}) that $\M(\R^N)$ can be identified with the dual space of $C_0(\R^N)$ through the pairing $\ip{\mu}{g} = \int_{\R^N} g(\xi)\ d\mu(\xi)$. We do not distinguish between these two equivalent definitions of $\M$. By a slight abuse of notation, we shall sometimes write $\ip{\mu}{g(\xi)} = \int_{\R^N}g(\xi)\ d\mu(\xi).$ We will be particularly interested in the pairing $\ip{\mu}{\id} = \int_{\R^N}\xi\ d\mu(\xi)$ between $\mu$ and the identity function $\id(\xi) = \xi$.

\item
The duality between $C_0(\R^N)$ and $\M(\R^N)$ induces a weak* topology on $\M(\R^N)$, that of \emph{weak* convergence}. A sequence $\mu^n\in\M(\R^N)$ converges \emph{weak*} to $\mu\in\M(\R^N)$ provided $\ip{\mu^n}{g} \to \ip{\mu}{g}$ for all $g\in C_0(\R^N)$. (This is also called weak or vague convergence, see \cite{Bil08,Fol99}.)

\item\label{app:wasserstein}
The set of probability measures on $\R^N$ is the subset
\[
\Prob(\R^N) := \left\{\mu\in\M(\R^N)\ :\ \mu \geq 0,\ \mu(\R^N) = 1\right\}.
\]
Let $\Prob^p(\R^N) \subset \Prob(\R^N)$ for $p\in[1,\infty)$ denote the set of probability measures $\mu$ such that $\ip{\mu}{|\xi|^p} < \infty$. For $\mu,\rho\in\Prob^p(\R^N)$ the \emph{Wasserstein metric} $W_p$ is defined as
\[
W_p(\mu,\rho) := \inf\left\{\int_{\R^N\times\R^N}|\xi-\zeta|^p\ d\pi(\xi,\zeta)\ :\ \pi\in\Pi(\mu,\rho)\right\}^{1/p},
\]
where $\Pi(\mu,\rho)$ is the set of probability measures on $\R^N\times\R^N$ with marginals $\mu$ and $\rho$:
\[
\Pi(\mu,\rho) := \biggl\{\pi \in \Prob(\R^N\times\R^N)\ :\ \pi(A\times\R^N) = \mu(A),\ \pi(\R^N\times A) = \rho(A)\ \forall \text{ Borel } A\subset\R^N\biggr\}.
\]
It can be shown that $W_p$ for any $p$ metrizes the topology of weak convergence on $\Prob^p(\R^N)$ (see \cite[Proposition 7.1.5]{AGS05} or \cite[Chapter 7]{Vil}).

\item\label{app:wasscomp}
Let $\mu, \rho \in \Prob(\R)$, and let $F, G : \R\to[0,1]$ be their distribution functions,
\[
F(x) := \mu((-\infty,x]), \qquad G(y) := \rho((-\infty,y]).
\]
Then it can be shown that
\[
W_p(\mu, \rho) = \left(\int_0^1 \left|F^{-1}(s) - G^{-1}(s)\right|^p\ ds\right)^{1/p},
\]
see \cite[p.\ 75]{Vil}. This gives rise to an efficient algorithm for computing the Wasserstein distance between discrete probability distributions. Let $x_1, \dots, x_n$ and $y_1, \dots, y_n$ be random numbers drawn from the probability distributions $\mu$ and $\rho$, respectively, and define the discrete distributions $\mu_n := (\delta_{x_1} + \dots + \delta_{x_n})/n$ and $\rho_n := (\delta_{y_1} + \dots + \delta_{y_n})/n$. By the law of large numbers, we have $\mu_n \to \mu$ and $\rho_n \to \rho$ weak* as $n\to\infty$, almost surely. Moreover, their distribution functions are
\[
F_n(x) = \frac{\#\{x_j\ :\ x_j \leq x\}}{n}, \qquad G_n(y) = \frac{\#\{y_j\ :\ y_j \leq y\}}{n}.
\]
Hence, \emph{if the sequences $x_j$ and $y_j$ are sorted in increasing order}, then
\[
W_p(\mu_n, \rho_n)^p = \int_0^1 \left|F_n^{-1}(s) - G_n^{-1}(s)\right|^p\ ds = \frac{1}{n}\sum_{j=1}^n |x_j-y_j|^p.
\]
The latter expression is very easy to implement on a computer.

The analogous problem when $\mu,\rho\in\Prob(\R^N)$ is more complex, but can be solved in $O(n^3)$ time using the so-called Hungarian algorithm; see \cite{Mun57}.
\end{enumerate}

\subsection{Young measures}\label{app:youngmeas}
\begin{enumerate}[label=\textbf{\ref{app:youngmeas}.\arabic*},leftmargin=0cm,itemindent=1.1cm,labelsep=0.4cm,align=left]
\item
A \emph{Young measure} from $D\subset\R^k$ to $\R^N$ is a function which maps $\xt\in D$ to a probability measure on $\R^N$. More precisely, a Young measure is a weak* measurable map $\nu : D \to \Prob(\R^N)$, that is, the mapping $\xt \mapsto \ip{\nu(\xt)}{g}$ is Borel measurable for every $g\in C_0(\R^N)$. We denote the image of $\xt\in D$ under $\nu$ by $\nu_\xt := \nu(\xt) \in \Prob(\R^N)$. The set of all Young measures from $D$ into $\R^N$ is denoted by $\Young(D,\R^N)$. When $N=1$ we write $\Young(D) := \Young(D,\R)$.

\item\label{app:youngunifbounded}
A Young measure $\nu\in\Young(D,\R^N)$ is \emph{uniformly bounded} if there is a compact set $K\subset\R^N$ such that $\supp \nu_\xt \subset K$ for all $\xt\in D$. Note that if $\nu$ is atomic, $\nu = \delta_u$, then $\nu$ is uniformly bounded if and only if $\|u\|_{L^\infty(D)} < \infty$.

\item
If $u:\R^k\to\R^N$ is any measurable function then $\nu_{\xt} := \delta_{u(\xt)}$ defines a Young measure, and we have $u(\xt) = \ip{\nu_\xt}{\id}$ for every $\xt$. Conversely, we will say that a given Young measure $\nu$ is \emph{atomic} if it can be written as $\nu = \delta_{u}$ for a measurable function $u$.

\item
Two topologies on $\Young(D,\R^N)$ arise naturally in the study of Young measures: those of weak* and strong convergence. A sequence $\nu^n \in \Young(D,\R^N)$ converges \emph{weak*} to $\nu \in \Young(D,\R^N)$ if $\ip{\nu^n}{g} \wsto \ip{\nu}{g}$ in $L^\infty(D)$ for all $g\in C_0(\R^N)$, that is,
\[
\int_D \phi(z) \ip{\nu^n_z}{g}\ dz \to \int_D \phi(z) \ip{\nu_z}{g}\ dz \qquad \forall\ \phi\in L^1(D).
\]
We say that $\nu^n\in\Young(D,\R^N)$ converges \emph{strongly} to $\nu\in\Young(D,\R^N)$ if
\[
\bigl\|W_p(\nu^n,\nu)\bigr\|_{L^p(D)} \to 0
\]
for some $p\in[1,\infty)$. If $\nu$ is atomic, $\nu = \delta_u$ for some $u:D\to\R^N$, then $\nu^n \to \nu$ strongly if and only if
\[
\int_D \int_{\R^N} |\xi - u(z)|^p\ d\nu^n_z(\xi) dz \to 0.
\]

\item
The \emph{fundamental theorem of Young measures} was first introduced by Tartar for $L^\infty$-bounded sequences \cite{T1} and then generalized by Schonbek \cite{Sch82} and Ball \cite{Bal89} for  sequences of measurable functions. We provide a further generalization: every sequence $\nu^n\in\Young(D,\R^N)$ which does not ``leak mass at infinity'' (condition \eqref{eq:ballbound}) has a weak* convergent subsequence:
\begin{theorem}\label{thm:young}
Let $\nu^n \in \Young(D,\R^N)$ for $n\in\N$ be a sequence of Young measures. Then there exists a subsequence $\nu^m$  which converges weak* to a nonnegative measure-valued function $\nu:D\to\M_+(\R^N)$ in the sense that
\begin{itemize}
\item[(i)] $\ip{\nu^m_\xt}{g} \wsto \ip{\nu}{g}$ in $L^\infty(D)$ for all \ $g\in C_0(\R^N)$,
\end{itemize}
and moreover satisfies
\begin{itemize}
\item[(ii)] $\|\nu_\xt\|_{\M(\R^N)} \leq 1$ for a.e.\ $\xt\in D$;
\item[(iii)] If $K\subset\R^N$ is closed and $\supp \nu^n_\xt \subset K$ for a.e.\ $\xt\in D$ and $n$ large,  then $\supp \nu_\xt \subset K$ for a.e.\ $\xt\in D$.
\end{itemize}
Suppose further that for every bounded, measurable $E \subset D$, there is a nonnegative $\ballfunc\in C(\R^N)$ with $\lim_{|\xi|\to\infty}\ballfunc(\xi)=\infty$ such that
\begin{equation}\label{eq:ballbound}
\sup_n \int_E \ip{\nu^n_z}{\ballfunc}\ dz < \infty.
\end{equation}
Then
\begin{itemize}
\item[(iv)] $\|\nu_\xt\|_{\M(\R^N)}=1$ for a.e.\ $\xt\in D$,
\end{itemize}
whence $\nu \in \Young(D,\R^N)$.
\end{theorem}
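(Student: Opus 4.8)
The plan is to build the limiting measure-valued map by combining a diagonal weak* compactness argument in $L^\infty(D)=(L^1(D))^*$ with the Riesz representation theorem, and to invoke the tightness hypothesis \eqref{eq:ballbound} only at the very last stage, to exclude loss of mass at $\xi=\infty$. Since $D\subset\R^k$ carries Lebesgue measure, $L^1(D)$ is separable, so bounded sequences in $L^\infty(D)$ admit weak* convergent subsequences. Fix a countable dense set $\{g_k\}\subset C_0(\R^N)$. For each $k,n$ the function $\xt\mapsto\ip{\nu^n_\xt}{g_k}$ lies in the ball of radius $\|g_k\|_\infty$ in $L^\infty(D)$, because $\nu^n_\xt$ is a probability measure; a diagonal extraction then yields one subsequence (not relabelled) and functions $\ell_k\in L^\infty(D)$ with $\ip{\nu^n_\cdot}{g_k}\wsto\ell_k$ for every $k$.

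Next I would assemble the limit. For arbitrary $g\in C_0(\R^N)$ and any sub-probability measure $\mu$ one has $\abs{\ip{\mu}{g-g_k}}\leq\|g-g_k\|_\infty$, so $\ip{\nu^n_\cdot}{g}$ is weak* Cauchy along the subsequence and converges to some $L_g\in L^\infty(D)$; the assignment $g\mapsto L_g$ is linear with $\|L_g\|_{L^\infty(D)}\leq\|g\|_\infty$, and testing the weak* convergence against nonnegative $\phi\in L^1(D)$ together with $\ip{\nu^n_\xt}{g}\geq0$ gives $L_g\geq0$ a.e.\ whenever $g\geq0$. A standard density/measurable-selection argument — fix $L^\infty$-representatives of the $L_{g_k}$ and discard a common null set on which $\Q$-linearity, the norm bound, or positivity on a countable sup-norm-dense subset of the nonnegative cone of $C_0(\R^N)$ fail — shows that for a.e.\ $\xt\in D$ the map $g\mapsto L_g(\xt)$ is a positive bounded linear functional on $C_0(\R^N)$ of norm at most $1$. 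By the Riesz representation theorem this functional equals $\ip{\nu_\xt}{\cdot}$ for a unique $\nu_\xt\in\M_+(\R^N)$ with $\|\nu_\xt\|_{\M(\R^N)}\leq1$; weak* measurability of $\xt\mapsto\nu_\xt$ is inherited since $\xt\mapsto\ip{\nu_\xt}{g}=\lim_k\ip{\nu_\xt}{g_k}$ is a pointwise limit of measurable functions. This establishes (i) and (ii).

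For (iii), if $\supp\nu^n_\xt\subset K$ for a.e.\ $\xt$ and all large $n$, then every $g\in C_0(\R^N)$ with $\supp g\cap K=\emptyset$ satisfies $\ip{\nu^n_\xt}{g}=0$, hence $\ip{\nu_\xt}{g}=0$ a.e.; running over a countable family of such $g$ whose supports exhaust the open set $\R^N\setminus K$ yields $\nu_\xt(\R^N\setminus K)=0$ a.e., i.e.\ $\supp\nu_\xt\subset K$. For (iv) it suffices to show $\|\nu_\xt\|_{\M(\R^N)}=1$ a.e.\ on each bounded measurable $E\subset D$, as $D$ is $\sigma$-finite. Fix $\ballfunc=\ballfunc_E$ as in \eqref{eq:ballbound}, set $C_E:=\sup_n\int_E\ip{\nu^n_\xt}{\ballfunc}\,d\xt<\infty$, choose cut-offs $\chi_R\in C_c(\R^N)$ with $\ind_{\{\abs\xi\leq R\}}\leq\chi_R\leq\ind_{\{\abs\xi\leq 2R\}}$, and put $m_R:=\inf_{\abs\xi\geq R}\ballfunc(\xi)$, so $m_R\to\infty$. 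Since $1-\chi_R\leq\ballfunc/m_R$ on all of $\R^N$, we get $\ip{\nu^n_\xt}{\chi_R}\geq1-m_R^{-1}\ip{\nu^n_\xt}{\ballfunc}$; integrating over $E$ and passing to the weak* limit against $\ind_E\in L^1(D)$ gives $\int_E\ip{\nu_\xt}{\chi_R}\,d\xt\geq\abs E-C_E/m_R$. Letting $R\to\infty$, $\chi_R\uparrow1$ and monotone convergence yield $\int_E\|\nu_\xt\|_{\M(\R^N)}\,d\xt\geq\abs E$, which together with $\|\nu_\xt\|_{\M(\R^N)}\leq1$ a.e.\ forces $\|\nu_\xt\|_{\M(\R^N)}=1$ a.e.\ on $E$; hence $\nu\in\Young(D,\R^N)$.

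I expect the only genuinely new point to be the last step: parts (i)--(iii) are essentially the classical Tartar--Ball argument, whereas condition \eqref{eq:ballbound} is exactly the ingredient that prevents mass from escaping to infinity in the weak* limit, and the cut-off estimate above is where it is used. The other place demanding care — routine but not entirely trivial — is the passage in the second step from the countable family of scalar weak* limits $\ell_k$ to a bona fide weak*-measurable, measure-valued map $\xt\mapsto\nu_\xt$, which must be carried out off a single null set so that the Riesz representation can be applied pointwise.
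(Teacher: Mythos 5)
Your proposal is correct, and for the compactness/representation part it takes a genuinely different route from the paper. The paper works in the space $L^\infty_w(D;\M(\R^N))$ of weak* measurable measure-valued maps, identifies it (via Edwards' theorem) with the dual of $L^1(D;C_0(\R^N))$, and extracts a single weak* convergent subsequence there; the limit object is then automatically a weak* measurable map into $\M_+(\R^N)$, so no pointwise reassembly is needed. You instead diagonalize over a countable dense family $\{g_k\}\subset C_0(\R^N)$ using only scalar weak* compactness in $L^\infty(D)=(L^1(D))^*$, and then reconstruct $\nu_\xt$ by applying the Riesz representation theorem pointwise off a single null set on which $\mathbb{Q}$-linearity, the norm bound and positivity hold simultaneously on a countable dense $\mathbb{Q}$-vector subspace. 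Your route is more elementary (it avoids the vector-valued duality theorem entirely) at the cost of the null-set bookkeeping you correctly flag as the delicate step; the paper's route buys the measurability of $\xt\mapsto\nu_\xt$ for free and in fact yields convergence against all integrands $\Psi\in L^1(D;C_0(\R^N))$, not just products $\phi(\xt)g(\xi)$. For part (iv) the two arguments are essentially the same tightness computation: the paper truncates with cutoffs $\theta_R$ built from sublevel sets of $\ballfunc$ and uses $1-\theta_R\leq\ballfunc/R$, while you truncate with cutoffs supported on balls $\{\abs{\xi}\leq 2R\}$ and use $1-\chi_R\leq\ballfunc/m_R$ with $m_R=\inf_{\abs{\xi}\geq R}\ballfunc$; both exploit \eqref{eq:ballbound} identically. (Two minor remarks: your appeal to monotone convergence at the end of (iv) is unnecessary, since $\ip{\nu_\xt}{\chi_R}\leq\|\nu_\xt\|_{\M(\R^N)}\leq 1$ already closes the estimate; and your treatment of (iii) by exhausting the open set $\R^N\setminus K$ with countably many test functions is a clean repair of a step the paper states somewhat loosely.)
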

\begin{proof}
The proof is a generalization of Ball \cite{Bal89}.

Denote by $L_{w}^\infty(D;\M(\R^N))$ the set of weak* measurable functions $\mu:D\to\M(\R^N)$, equipped with the norm
$$
\|\mu\|_{\infty,\M}:= \esssup_{\xt\in D}\|\mu_\xt\|_\M.
$$
From the fact that $C_0(\R^N)$ is separable it can be shown (see \cite[Theorem 8.18.2]{Edw}) that $L_{w}^\infty(D;\M(\R^N))$ is isometrically isomorphic to the dual of $L^1(D;C_0(\R^N))$. The sequence $\mu^n$ is bounded in $L_{w}^\infty(D;\M(\R^N))$ since $\|\mu^n\|_{\infty,\M}\equiv 1$, and hence there is a $\mu\in L_{w}^\infty(D;\M(\R^N))$ and a weak* convergent subsequence $\mu^m$ of $\mu^n$ such that $\ip{\mu^m}{\Psi}_{\infty,\M}\to\ip{\mu}{\Psi}_{\infty,\M}$, or equivalently,
$$
\int_{D} \ip{\mu^m_\xt}{\Psi(\xt,\cdot)}~ d\xt \to \int_{D} \ip{\mu_\xt}{\Psi(\xt,\cdot)}~ d\xt \qquad \text{as }m\to\infty
$$
for all $\Psi\in L^1(D;C_0(\R^N))$. In particular, letting $\Psi(\xt,\xi)=\phi(\xt)g(\xi)$ for $\phi\in L^1(D)$ and $g\in C_0(\R^N)$, we obtain \textit{(i)}. We claim that $\mu_\xt \geq 0$ for a.e.\ $\xt\in D$. If not, then there would be a nonnegative $\Psi\in L^1(D;C_0(\R^N))$ such that $\int_D\ip{\mu_\xt}{\Psi(\xt,\cdot)}\ d\xt < 0$. But then
\[
0 > \int_{D} \ip{\mu_\xt}{\Psi(\xt,\cdot)}~ d\xt = \lim_{m\to\infty} \int_D \ip{\mu^m_\xt}{\Psi(\xt,\cdot)}~ d\xt \geq 0
\]
(since $\mu^m_\xt \geq 0$ for all $\xt$), a contradiction.

\textit{(ii)} follows from the weak* lower semicontinuity of the norm $\|\cdot\|_{\infty,\M}$. To see that \textit{(iii)} holds, let $g\in C_0(\R^N)$ be such that $g\bigr|_{K} = 0$. Since $\mu^m \to K$ in measure, it follows that $\ip{\mu^m}{g} \to 0$ in measure (that is, $|\{\xt\in D:|\ip{\mu^m_\xt}{g}|>\delta \}| \to 0$ for all $\delta>0$). Hence,
$$
\int_D \phi(\xt)\ip{\mu_\xt}{g}~d\xt = \lim_m \int_D \phi(\xt)\ip{\mu^m_\xt}{g}~d\xt = 0
$$
for all $\phi\in L^1(D)$, 
and therefore $\ip{\mu_\xt}{g} = 0$ for a.e.~ $\xt\in D$. This is precisely \textit{(ii)}.

Assume now that \eqref{eq:ballbound} holds. Fix a set $E\subset D$ of finite, nonzero Lebesgue measure $|E|$, and denote the average integral over $E$ as $\intavg_E = \frac{1}{|E|}\int_E$. For every $R>0$ we define
$$
\theta_R(\xi) = \begin{cases}
1 & \ballfunc(\xi) \leq R \\
1+R-\ballfunc(\xi) & R<\ballfunc(\xi)\leq R+1 \\
0 & R+1 < \ballfunc(\xi).
\end{cases}
$$
Then $\theta_R \in C_0(\R^N)$, so
$$
\lim_m\intavg_E \ip{\mu^m_\xt}{\theta_R}~d\xt = \intavg_E \ip{\mu_\xt}{\theta_R}~d\xt \leq \intavg_E \|\mu_\xt\|_\R~d\xt \leq 1,
$$
the last inequality following from the fact that $\|\mu_\xt\|_\R \leq 1$ for all $\xt$. Conversely,
\[
0 \leq \intavg_E \left(1-\ip{\mu^m_\xt}{\theta_R}\right)\ d\xt = \intavg_E \ip{\mu^m_\xt}{1-\theta_R}\ d\xt \leq
\frac{1}{R}\intavg_E \ip{\mu^m_z}{\ballfunc}\ d\xt,
\]
so \eqref{eq:ballbound} gives
\begin{align*}
1 &\leq \lim_{R\to\infty}\lim_m\intavg_E \ip{\mu^m_\xt}{\theta_R}~d\xt + \lim_{R\to\infty} \sup_m \frac{1}{R}\intavg_E \ip{\mu^m_z}{\ballfunc}\ d\xt \\
&= \lim_{R\to\infty}\intavg_E \ip{\mu_\xt}{\theta_R}~d\xt \\
&\leq \intavg_E \|\mu_\xt\|_{\M(\R^N)}~d\xt \leq 1,
\end{align*}
whence $\intavg_E \|\mu_\xt\|_{\M(\R^N)}\ d\xt = 1$. Since $E\subset D$ is arbitrary, \textit{(iv)} follows.

\end{proof}

\item\label{app:younglpbound}
An important special case of \eqref{eq:ballbound} is when $\ballfunc(\xi) = |\xi|^p$ for $1\leq p < \infty$, which translates to the $L^p$ bound
\[
\sup_n\int_D \ip{\mu^n}{|\xi|^p}\ dz < \infty.
\]
The case $p=\infty$ translates to the support of $\nu^n_z$ lying in a compact set $K\subset \R^N$ for a.e.\ $z$ and all $n$. Part \textit{(iii)} of Theorem \ref{thm:young} then holds for all $g\in C(\R^N)$, and condition \eqref{eq:ballbound} is automatically satisfied for any such $\ballfunc$. The latter is the original form of the theorem given by Tartar \cite{T1}.
\end{enumerate}

\subsection{Random fields and Young measures}\label{app:randyoung}
\begin{enumerate}[label=\textbf{\ref{app:randyoung}.\arabic*},leftmargin=0cm,itemindent=1.1cm,labelsep=0.4cm,align=left]
\item\label{app:younglaw}
If $(\Omega,\Sigmaalg,P)$ is a probability space, $D\subset\R^k$ is a Borel set and $u : \Omega\times D \to \R^N$ is a random field (i.e., a jointly measurable function), then we can define its law by
\begin{subequations}\label{eq:ulaw}
\begin{equation}
\nu_{\xt}(F) := P\left(u(\xt) \in F\right) = P\left(\left\{\omega\ :\ u(\omega,\xt) \in F\right\}\right)
\end{equation}
for Borel subsets $F\subset\R^N$ of phase space, or equivalently,
\begin{equation}
\ip{\nu_\xt}{g} := \int_\Omega g(u(\omega,z))\ dP(\omega)
\end{equation}
\end{subequations}
for $g\in C_0(\R^N)$. This defines a Young measure:
\begin{proposition}\label{prop:lawwelldef}
If $u:\Omega\times D \to \R^N$ is jointly measurable then \eqref{eq:ulaw} defines a Young measure from $D$ to $\R^N$.
\end{proposition}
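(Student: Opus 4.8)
The plan is to verify directly the two clauses in the definition of a Young measure (see \ref{app:youngmeas}.1): that for each $\xt\in D$ the object $\nu_\xt$ is a bona fide probability measure on $\R^N$, and that $\nu$ is \emph{weak* measurable}, i.e.\ $\xt\mapsto\ip{\nu_\xt}{g}$ is Borel measurable for every $g\in C_0(\R^N)$. Both are consequences of standard measure theory, provided the joint measurability of $u$ (with respect to the product $\sigma$-algebra $\Sigmaalg\otimes\Borel(D)$) is used in the right place.

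First I would fix $\xt\in D$ and note that the section $\omega\mapsto u(\omega,\xt)$ is $\Sigmaalg$-measurable, since sections of a jointly measurable map are measurable. Hence the set function $F\mapsto\nu_\xt(F)=P(u(\xt)\in F)$ is exactly the pushforward (image) of $P$ under this section, and being the image of a probability measure under a measurable map it is a Borel probability measure on $\R^N$. The equivalence of the two formulas in \eqref{eq:ulaw} is then the change-of-variables formula for image measures: it holds for indicators $g=\ind_F$ by definition of $\nu_\xt$, extends to simple functions by linearity and to nonnegative measurable $g$ by monotone convergence, and therefore to every $g\in C_0(\R^N)$ since such $g$ are bounded.

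Next I would establish weak* measurability. Fix $g\in C_0(\R^N)$. Since $u$ is jointly measurable and $g$ is continuous, the composition $(\omega,\xt)\mapsto g(u(\omega,\xt))$ is $\Sigmaalg\otimes\Borel(D)$-measurable, and it is bounded because $g$ is. The measurability half of the Fubini--Tonelli theorem, applied on the finite measure space $(\Omega,\Sigmaalg,P)$, then yields that the partial integral $\xt\mapsto\int_\Omega g(u(\omega,\xt))\,dP(\omega)=\ip{\nu_\xt}{g}$ is $\Borel(D)$-measurable. This is precisely the weak* measurability of $\nu$, and together with the previous step it gives $\nu\in\Young(D,\R^N)$.

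There is no real obstacle here: the statement is essentially a bookkeeping exercise. The only points deserving a little care are (i) aligning the ambient notion of ``jointly measurable'' with the product $\sigma$-algebra $\Sigmaalg\otimes\Borel(D)$, so that both the section argument and Fubini apply, and (ii) invoking the \emph{measurability} (rather than the integrability) conclusion of Fubini's theorem for the bounded, jointly measurable integrand $g\circ u$. Both are routine.
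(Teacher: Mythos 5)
Your proof is correct and follows essentially the same route as the paper's: section measurability of $\omega\mapsto u(\omega,\xt)$ to make $\nu_\xt$ the pushforward of $P$, and the measurability clause of Fubini--Tonelli to get weak* measurability of $\xt\mapsto\ip{\nu_\xt}{g}$. The only ingredient of the paper's proof you omit is the verification (via Tonelli) that the law is unchanged when $u$ is replaced by another representative of its $P\times\lambda$-a.e.\ equivalence class; this is needed if one regards random fields as equivalence classes of jointly measurable maps, but it is not demanded by the literal statement you were asked to prove.
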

\begin{proof}
\newcommand{\ua}{\hat{u}}
\newcommand{\ub}{\tilde{u}}
\newcommand{\Ell}{\EuScript{L}}
First of all, for fixed $\xt\in D$ the set $\bigl\{\omega : u(\omega,\xt) \in U\bigr\}$ is $P$-measurable for Borel sets $U$. Indeed, if $w(\omega) := u(\omega,\xt)$ denotes the $\xt$-section of the measurable function $(\omega,y) \mapsto u(\omega,y)$, then $\bigl\{\omega : u(\omega,\xt) \in U\bigr\} = w^{-1}(U)$ is measurable.

We need to show that the definition of $\nu$ is independent of the choice of mapping in the equivalence classes of mappings from $\Omega\times D \to \R^N$. Let $\ua, \ub : \Omega\times D \to \R^N$ be two mappings such that $\ua(\omega,\xt) = \ub(\omega,\xt)$ for $P\times\lambda$-a.e.\ $(\omega,\xt)$. We apply Tonelli's theorem to find that
\begin{gather*}
0 = \int_{\Omega\times D} \ind_{\{\ua \neq \ub\}}(\omega,\xt)\ d(P\times\lambda)(\omega,\xt)
= \int_D P(\{\ua(\xt) \neq \ub(\xt) \})\ d\xt.
\end{gather*}
Hence, $P(\ua(\xt) \neq \ub(\xt)) = 0$ for a.e.\ $\xt\in D$, so for every Borel set $U\subset\R^N$,
\[
P\left(\ua(\xt) \in U\right) = P\left(\ub(\xt) \in U\right)
\]
for a.e.\ $\xt\in D$.

Finally, $\nu$ is weak* measurable since
\[
\ip{\nu_\xt}{g} = \int_{\R^N}g(\xi)\ d\nu_\xt(\xi) = \int_{\Omega}g(u(\omega,\xt))\ dP(\omega),
\]
which is measurable in $\xt$ for any $g\in C_0(\R^N)$.

\let\ua\undefined
\let\ub\undefined
\let\Ell\undefined
\end{proof}

\item It is well known that every measure on $\R^N$ can be realized as the law of a random variable. Here we show  that for  every Young measure $\nu$, there is always a random field with law $\nu$. 

\begin{proposition}\label{prop:lawexists}
For every Young measure $\nu\in \Young(D,\R^N)$ there exists a probability space $(\Omega, \Sigmaalg, P)$ and a Borel measurable function $u : \Omega\times D \to \R^N$ such that $u$ has law $\nu$, i.e. for all Borel sets $E$,
\[
\nu_\xt(E) = P(u(\omega,\xt) \in E).
\]
In particular, we can choose $(\Omega, \Sigmaalg, P)$ to be the Borel $\sigma$-algebra on $\Omega=[0,1)$ with Lebesgue measure.
\end{proposition}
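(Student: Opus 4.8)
The plan is to take $(\Omega,\Sigmaalg,P)=\bigl([0,1),\Borel([0,1)),\lambda\bigr)$ with $\lambda$ the Lebesgue measure, to reduce the problem to scalar-valued Young measures by means of a Borel isomorphism, and then to realise scalar Young measures through their quantile functions. Throughout I will use the standard fact that a map $\nu:D\to\Prob(\R^N)$ is a Young measure (weak$*$ measurable) if and only if $\xt\mapsto\nu_\xt(A)$ is Borel measurable for every Borel set $A\subset\R^N$: one direction is immediate by uniformly approximating $\ind_A$ by simple functions, and the converse follows from the monotone class theorem, the family of "good" sets $A$ being a $\lambda$-system that contains the $\pi$-system of bounded open sets (on each such set $\ind_A$ is an increasing pointwise limit of functions in $C_c(\R^N)\subset C_0(\R^N)$).

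To reduce to $N=1$, observe that $\R^N$ is an uncountable standard Borel space, so the Borel isomorphism theorem furnishes a Borel bijection $\Phi:\R^N\to\R$ whose inverse is also Borel. Push forward the Young measure by setting $\tilde\nu_\xt:=\Phi_{\#}\nu_\xt\in\Prob(\R)$; since $\tilde\nu_\xt(B)=\nu_\xt\bigl(\Phi^{-1}(B)\bigr)$ and $\Phi^{-1}(B)$ is Borel, the map $\xt\mapsto\tilde\nu_\xt(B)$ is measurable for every Borel $B\subset\R$, so $\tilde\nu\in\Young(D,\R)$ by the characterisation above. Granting a jointly Borel measurable random field $\tilde u:[0,1)\times D\to\R$ with law $\tilde\nu$, put $u:=\Phi^{-1}\circ\tilde u$; then $u$ is jointly Borel measurable, and for every Borel $E\subset\R^N$,
\[
P\bigl(u(\xt)\in E\bigr)=P\bigl(\tilde u(\xt)\in\Phi(E)\bigr)=\tilde\nu_\xt\bigl(\Phi(E)\bigr)=\nu_\xt(E),
\]
where $\Phi(E)=(\Phi^{-1})^{-1}(E)$ is Borel and $\Phi^{-1}(\Phi(E))=E$ by injectivity. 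Hence it suffices to construct $\tilde u$, i.e.\ to handle scalar Young measures.

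For $N=1$, let $F_\xt(s):=\nu_\xt\bigl((-\infty,s]\bigr)$, so that $\xt\mapsto F_\xt(s)$ is measurable for each $s\in\R$. Define $u(\omega,\xt):=\inf\{s\in\R:F_\xt(s)\geq\omega\}$ for $\omega\in(0,1)$ and $u(0,\xt):=0$; the fibre $\{0\}\times D$ is $\lambda$-null and plays no role below. Right-continuity of $F_\xt$ gives the equivalence
\[
u(\omega,\xt)\leq s\quad\Longleftrightarrow\quad\omega\leq F_\xt(s)\qquad(\omega\in(0,1),\ s\in\R).
\]
Therefore $\{(\omega,\xt):u(\omega,\xt)\leq s\}$ differs by a subset of the null fibre $\{0\}\times D$ from the sublevel set of the jointly measurable function $(\omega,\xt)\mapsto F_\xt(s)-\omega$; since $\Borel([0,1))\otimes\Borel(D)=\Borel([0,1)\times D)$ (both factors being second countable), $u$ is jointly Borel measurable. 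For fixed $\xt$, the same equivalence yields $P\bigl(u(\xt)\leq s\bigr)=F_\xt(s)$ for all $s$, so $u(\xt)$ has distribution function $F_\xt$ and hence law $\nu_\xt$, as required. This completes the construction, with $(\Omega,\Sigmaalg,P)$ the stated Lebesgue space on $[0,1)$.

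The step I expect to require the most care is the reduction to $N=1$ — specifically, verifying that the pushforward $\Phi_{\#}\nu$ is again a Young measure. Because $\Phi$ is merely Borel and not continuous, $g\circ\Phi$ need not lie in $C_0(\R^N)$ for $g\in C_0(\R)$, so this cannot be read off directly from weak$*$ measurability and genuinely relies on the monotone-class characterisation recorded in the first paragraph; everything else is routine measure theory once that characterisation is in hand.
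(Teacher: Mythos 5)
Your proof is correct, and it rests on the same underlying idea as the paper's — inverse transform sampling on $\bigl([0,1),\Borel([0,1)),\lambda\bigr)$ — but the execution differs in two genuine ways. For $N=1$ the paper does not use the quantile function directly: it partitions $\R$ into dyadic cells $F_n^j$, defines piecewise-constant approximations $u_n(\omega,\xt):=\xi_n^j$ on the $\omega$-slots $[p_n^{j-1}(\xt),p_n^j(\xt))$ determined by the cumulative masses $p_n^j(\xt)=\sum_{l\leq j}\nu_\xt(F_n^l)$, checks measurability of each $u_n$ via an epigraph/hypograph argument, and passes to the pointwise limit, identifying the law by dominated convergence. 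Your direct use of $u(\omega,\xt)=\inf\{s:F_\xt(s)\geq\omega\}$ together with the equivalence $u(\omega,\xt)\leq s\Leftrightarrow\omega\leq F_\xt(s)$ collapses both the measurability check and the identification of the law into one line each, at the modest cost of the monotone-class characterisation you record in your first paragraph (needed since $\ind_{(-\infty,s]}$ is not reachable from $C_0(\R)$ by monotone approximation alone). More significantly, the paper simply declares the case $N>1$ ``straightforward but tedious'' and stops at $N=1$, whereas your reduction via a Borel isomorphism $\Phi:\R^N\to\R$ — with the pushforward verified to be a Young measure precisely through that same $\pi$-$\lambda$ characterisation — actually completes the vector-valued case. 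The trade-off is that your argument imports the Kuratowski Borel isomorphism theorem, a nontrivial piece of descriptive set theory, while the paper's discretisation scheme is elementary and would extend to $N>1$ by choosing a suitable countable partition of $\R^N$ and an ordering of its cells; what you buy is a complete and shorter proof, what the paper buys is self-containedness for the scalar case it actually writes out.
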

\begin{proof}
The method of proof is standard; see e.g.\ \cite[Theorem 5.3]{Bil95}.

We assume that $N=1$. The generalization to $N>1$ is straightforward but tedious. For $n\in\N$ and $j\in\Z$, we set
\[
F_n^j := \begin{cases}
(-\infty, -2^{n}) & \text{if }j = -2^{2n} \\
\bigl[2^{-n}(j-1), 2^{-n}j\bigr) & \text{if } j = -2^{2n}+1, \dots, 2^{2n} \\
[2^n, \infty) & \text{if } j = 2^{2n}+1\\
\emptyset & \text{otherwise.}
\end{cases}
\]
Let $p_n^j(\xt) := \sum_{l\leq j} \nu_\xt(F_n^l)$. Note that $p_n^j : \R \to [0,1]$ is measurable for all $n,j$, and that $0\leq p_n^{-j}\leq \dots \leq p_n^j=1$ for $j$ large enough. Choose any $\xi_n^j \in F_n^j$, and for $\omega\in\Omega :=[0,1)$, define
\[
u_n(\omega,\xt) := \xi_n^j \qquad \text{for $j$ such that } p_n^{j-1}(\xt) \leq \omega < p_n^{j}.
\]
We claim that $u_n$ is measurable on the product $\sigma$-algebra between $\Sigmaalg$ and the Borel $\sigma$-algebra on $D$. Each function $u_n$ takes only finitely many values $\xi_n^j$, so it suffices to show that $u_n^{-1}(\{\xi_n^j\})$ is measurable for every $\xi_n^j$. Indeed,
\begin{align*}
u_n^{-1}(\{\xi_n^j\}) &= \Bigl\{(\omega,\xt)\in\Omega\times D\ :\ p_n^j(\xt) \leq \omega < p_n^{j+1}(\xt) \Bigr\} \\
&= \Bigl(\Omega\times D\Bigr) \cap \Bigl\{(\omega,\xt)\in\R\times D\ :\ p_n^j(\xt) \leq \omega\Bigr\} \cap \Bigl\{(\omega,\xt)\in\R\times D\ :\ \omega < p_n^{j+1}(\xt) \Bigr\},
\end{align*}
the intersection between the epigraph of $p_n^j$ and the hypograph of $p_n^{j+1}$, which are measurable by the measurability of the functions $p_n^j$ and $p_n^{j+1}$. 

Because the partition $\{F_m^j\}_{j\in\Z}$ is a refinement of $\{F_n^j\}_{j\in\Z}$ whenever $m>n$, it follows that $|u_n(\omega,\xt) - u_m(\omega,\xt)| < {\rm diam}(F_n^j) = 2^{-n}$ for any $(\omega,\xt)$ whenever $m,n$ are large enough. Hence, $u_n$ converges pointwise to some function $u : \Omega\times D \to \R$, which is measurable by the measurability of each $u_n$.

Finally, for every $g\in C_0(\R)$ and almost every $\xt\in D$, we have by Lebesgue's dominated convergence theorem
\begin{align*}
\int_\Omega g(u(\omega,\xt))\ dP(\omega) = \lim_n \int_\Omega g(u_n(\omega,\xt))\ dP(\omega) = \lim_n \sum_j \nu_\xt(F_n^j)g(\xi_n^j) = \int_\R g(\xi)\ d\nu_\xt(\xi).
\end{align*}
Hence, $u(\cdot,\xt)$ has law $\nu_\xt$.
\end{proof}
\end{enumerate}

\section{Proof of Theorem \ref{thm:mcconv}}
\label{app:MCproof}
\begin{proof}
For any random field $\zeta: \Omega \to L^1(\R^d \times \R_+) \cap L^{\infty}(\R^d \times \R_+)$ on $(\Omega,\Sigmaalg,P)$, we denote the expectation with respect to the probability measure $P$ as
$$
\E(\zeta) := \int\limits_{\Omega} \zeta(\omega) dP(\omega).
$$
For $1 \leq k \leq M$, denote
\begin{equation}\label{eq:mc5}
\begin{aligned}
G(\omega) &= \int_{\R_+}\int_{\R^d} \psi(x,t) g(u^{\Dx}(\omega;x,t)) dx dt,  \\
G_k(\omega) &= \int_{\R_+}\int_{\R^d} \psi(x,t) g(u^{\Dx,k}(\omega;x,t)) dx dt.
\end{aligned}
\end{equation}
Henceforth we suppress the $\omega$-dependence of $G$ and $G_k$ for notational convenience. The $L^2(P)$ error in the approximation can be written as
\begin{align*}
\E\left(\left(\E(G) - \frac{1}{M}\sum\limits_{k=1}^M G_k \right)^2\right) &= \E \left(\frac{1}{M^2}\left(\sum\limits_{k=1}^M (\E(G) - G_k) \right)^2 \right), \\
&= \E \left( \frac{1}{M^2} \left(\sum\limits_{k=1}^M \bigl(\E(G) - G_k\bigr)^2 + 2\sum_{k=1}^{M}\sum_{l \neq k} \bigl(\E(G) - G_k\bigr)\bigl(\E(G) - G_l\bigr)\right)\right) \\
&= \underbrace{\frac{1}{M^2}\sum\limits_{k=1}^M \E\left(\bigl(\E(G) - G_k\bigr)^2\right)}_{=:\ T_1} + \frac{2}{M^2} \sum_{k=1}^{M}\sum_{l \neq k}\underbrace{\E \Bigl( \bigl(\E(G) - G_k\bigr) \bigl(\E(G) - G_l\bigr)\Bigr)}_{=:\ T^{kl}_2}.
\end{align*}

As $u^{\Dx,1}, \ldots, u^{\Dx,M}$ are independent and identically distributed, it follows from the definition of $G_k$ that $G_1,\dots,G_M$ are independent and identically distributed random variables. Hence, $\E(G_k) = \E(G)$ and $\E(G_kG_l) = \E(G_k)\E(G_l)$ for all $k,l$. Consequently, a simple calculation shows that $T^{kl}_2=0$ for all $1 \leq k,l \leq M$ and $k \neq l$.

The fact that $G_1,\dots,G_M$ are independent and identically distributed yields
$$
T_1 = \frac{1}{M} \left(\E(G^2) - \E(G)^2\right).
$$
Hence,
\begin{align*}
\E\left(\left(\E(G) - \frac{1}{M}\sum\limits_{k=1}^M G_k \right)^2\right) &=  \frac{1}{M} \left(\E(G^2) - (\E(G))^2\right) \\
&\leq \frac{1}{M}\|g(u^{\Dx})\|^2_{L^{\infty}(\Omega \times \R^d \times \R_+)} \|\psi\|^2_{L^1(\R^d \times \R_+)} &&\text{(by definition \eqref{eq:mc5})} \\
&\leq \frac{C}{M} &&\text{(by assumption \eqref{eq:linf})}.
\end{align*}
In conclusion, the sample mean
\[
\frac{1}{M} \sum_{k=1}^{M} \int_{\R_+}\int_{\R^d} \psi(x,t) g(u^{\Dx,k}(x,t))\ dxdt
\]
converges to the corresponding ensemble average
\[
\int_{\R_+}\int_{\R^d} \psi(x,t) \ip{\nu^{\Dx}_{x,t}}{g}\ dxdt
\]
in $L^2(\Omega;P)$ with a convergence rate of $\frac{1}{\sqrt{M}}$. Taking a subsequence $M'\to\infty$, the convergence also holds $P$-almost surely.
\end{proof}

\section{Time continuity of approximations}\label{app:timecont}
From the time integration procedure \eqref{eq:exactint} we can show that the approximate MV solutions are time continuous. Consequently, the initial data is attained in a certain sense, and moreover, it is meaningful to evaluate the MV solution at a specific time $t$.

We state the theorem without proof, since the results are straightforward generalizations of ``deterministic'' counterparts.

\begin{theorem}
Let $\psi \in C_c^1(\R)$ and assume that \eqref{eq:linf} and \eqref{eq:tvbound} are satisfied. Let $\nu^\Dx$ be generated by Algorithm \ref{alg:approxmv}. Then the functions
\[
\Psi^\Dx(t) := \int_\R \psi(x)\ip{\nu^\Dx_{(x,t)}}{\id}\ dx
\]
and
\[
\Psi(t) := \int_\R \psi(x) \ip{\nu_{(x,t)}}{\id}\ dx
\]
are H\"older continuous with exponent $\holder := \frac{r-1}{r}$ and with constant independent of $\Dx$, and $\Psi^\Dx(t) \to \Psi(t)$ as $\Dx\to 0$ for a.e.\ $t\in[0,T]$. Moreover,
\[
\Psi(0) = \lim_{t\to 0} \Psi(t) = \int_\R \psi(x) \ip{\sigma_{x}}{\id}\ dx.
\]
\end{theorem}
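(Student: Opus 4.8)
The plan is to exploit the representation, valid by Algorithm~\ref{alg:approxmv}, of $\nu^\Dx$ as the law of the random field $u^\Dx(\omega)=\Soln^\Dx u_0(\omega)$. Since the $L^\infty$ bound \eqref{eq:linf} confines the supports of all $\nu^\Dx_{(x,t)}$ to a fixed compact $K\subset\R^N$, on which $\id$ is bounded and continuous and hence an admissible test function (see Appendix~\ref{app:younglpbound}), we have $\ip{\nu^\Dx_{(x,t)}}{\id}=\E\big(u^\Dx(\omega;x,t)\big)$ and therefore
\[
\Psi^\Dx(t)=\E\Big(\sum_i u^\Dx_i(\omega,t)\,\avg{\psi}_i\,\Dx\Big),\qquad \avg{\psi}_i:=\frac{1}{\Dx}\int_{\cell_i}\psi(x)\,dx .
\]
The argument then has three steps: (i) show the $\Psi^\Dx$ are uniformly bounded and equi-H\"older continuous with exponent $\gamma=\frac{r-1}{r}$; (ii) combine this with the weak$*$ convergence $\nu^\Dx\wto\nu$ of Theorem~\ref{thm:convmv} and the Arzel\`a--Ascoli theorem to deduce $\Psi^\Dx\to\Psi$ uniformly on $[0,T]$ (after passing to the continuous representative of $\Psi$), so that $\Psi$ inherits the same H\"older bound; (iii) identify the trace $\Psi(0)$ via consistency of the discrete initial data.

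\emph{Step (i).} The claim is vacuous for $r=1$, so assume $r>1$ with conjugate exponent $r'$. Fix $\omega$ and $0\le s<t\le T$. From the exact time integration \eqref{eq:exactint}, $u^\Dx_i(\omega,t)-u^\Dx_i(\omega,s)=-\frac{1}{\Dx}\int_s^t\big(F_\iphf-F_\imhf\big)\,d\tau$; multiplying by $\avg{\psi}_i\Dx$, summing over $i$, and summing by parts (boundary terms vanish as $\psi$ is compactly supported) gives
\[
\sum_i\big(u^\Dx_i(\omega,t)-u^\Dx_i(\omega,s)\big)\avg{\psi}_i\,\Dx=\int_s^t\sum_i F_\iphf(\omega,\tau)\,\big(\avg{\psi}_{i+1}-\avg{\psi}_i\big)\,d\tau .
\]
Split $F_\iphf=f(u_i)+\big(F_\iphf-f(u_i)\big)$. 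The $f(u_i)$-contribution is bounded by $\|f\|_{L^\infty(K)}\sum_i|\avg{\psi}_{i+1}-\avg{\psi}_i|\le C$, which is $\Dx$-independent because $\sum_i|\avg{\psi}_{i+1}-\avg{\psi}_i|\le 2\|\psi'\|_{L^1}$; this yields at most $C(t-s)\le CT^{1-\gamma}(t-s)^\gamma$. For the remainder, use the local Lipschitz consistency $|F_\iphf-f(u_i)|\le C\sum_{|j-i|<p}|u_j-u_i|$, bound $\sum_{|j-i|<p}|u_j-u_i|\lesssim \tilde D_i:=\sum_{|k-i|\le p}|u_{k+1}-u_k|$, and estimate $\sum_i\tilde D_i|\avg{\psi}_{i+1}-\avg{\psi}_i|$ by a discrete H\"older inequality with exponents $(r,r')$, writing $|\avg{\psi}_{i+1}-\avg{\psi}_i|=\Dx\cdot\Dx^{-1}|\avg{\psi}_{i+1}-\avg{\psi}_i|$ and noting the Jensen/Riemann-sum bound $\sum_i\big(\Dx^{-1}|\avg{\psi}_{i+1}-\avg{\psi}_i|\big)^{r'}\Dx\lesssim\|\psi'\|_{L^{r'}}^{r'}$; this gives $\lesssim\big(\sum_k|u_{k+1}-u_k|^r\Dx\big)^{1/r}$. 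A further H\"older inequality in $\tau$ over $[s,t]$, with the same pair $(r,r')$ and using $1/r'=\gamma$, yields
\[
\int_s^t\Big(\sum_k|u_{k+1}(\omega,\tau)-u_k(\omega,\tau)|^r\Dx\Big)^{1/r}d\tau\;\le\;(t-s)^{\gamma}\Big(\int_0^T\sum_k|u_{k+1}-u_k|^r\Dx\,d\tau\Big)^{1/r},
\]
and the last integral is bounded uniformly in $\Dx$ and $\omega$ (indeed it tends to $0$) by the weak-BV hypothesis \eqref{eq:tvbound}. Collecting the two pieces and taking $\E(\cdot)$ gives $|\Psi^\Dx(t)-\Psi^\Dx(s)|\le C(t-s)^\gamma$ with $C$ independent of $\Dx$; uniform boundedness of $\Psi^\Dx$ is immediate from \eqref{eq:linf} and $\psi\in L^1$.

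\emph{Steps (ii)--(iii).} Being uniformly bounded and equi-H\"older on $[0,T]$, the family $\{\Psi^\Dx\}$ has, by Arzel\`a--Ascoli, along any sequence $\Dx_n\to0$ a subsequence converging uniformly to some $\bar\Psi\in C([0,T])$ that is again H\"older-$\gamma$. For every $\theta\in C^1_c\big((0,T)\big)$ the function $\phi(x,t):=\theta(t)\psi(x)$ lies in $L^1(\R\times\R_+)$, so the weak$*$ convergence $\nu^\Dx\wto\nu$ forces $\int_0^T\theta(t)\Psi^\Dx(t)\,dt\to\int_0^T\theta(t)\Psi(t)\,dt$; hence $\bar\Psi=\Psi$ a.e., i.e.\ $\bar\Psi$ is the continuous representative of $\Psi$, and after this identification $\Psi^\Dx\to\Psi$ uniformly on $[0,T]$ (in particular for a.e.\ $t$) and $\Psi$ is H\"older-$\gamma$. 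Finally, $\Psi^\Dx(0)=\int_\R\psi(x)\ip{\sigma^\Dx_x}{\id}\,dx$ with $\sigma^\Dx$ the law of $u_0^\Dx$, and the consistency of the discrete initial data built into Algorithm~\ref{alg:approxmv} (condition \eqref{eq:initialcond}, which holds e.g.\ whenever $u_0(\omega)\in L^1\cap L^\infty$ and $u_0^\Dx(\omega)\to u_0(\omega)$ in $L^1$) gives $\Psi^\Dx(0)\to\int_\R\psi(x)\ip{\sigma_x}{\id}\,dx$. Combined with the uniform convergence this gives $\Psi(0)=\int_\R\psi(x)\ip{\sigma_x}{\id}\,dx$, and $\lim_{t\to0}\Psi(t)=\Psi(0)$ by continuity of $\Psi$ at $0$.

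\emph{Main obstacle.} There is no deep difficulty here — as the authors remark, this parallels the deterministic case — but the point demanding care is the double H\"older argument (first in space with the conjugate pair $(r,r')$, then in time) arranged precisely so as to produce the exponent $\gamma=\frac{r-1}{r}$ and to funnel the discrete gradient sum into the weak-BV quantity \eqref{eq:tvbound}; a secondary, routine point is that weak$*$ convergence only controls space-time averages, so the pointwise-in-$t$ statement must be recovered from the equicontinuity via Arzel\`a--Ascoli. We note in passing that \eqref{eq:linf} alone already forces the numerical fluxes $F_\iphf$ to be uniformly bounded, whence $|\Psi^\Dx(t)-\Psi^\Dx(s)|\le C|t-s|$ — the $\Psi^\Dx$ are in fact uniformly Lipschitz — but the weak-BV route above is the one that survives under the weaker $L^p$ bounds of \cite{FSID5} and matches the treatment in \cite{FJO1}.
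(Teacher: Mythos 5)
The paper does not actually prove this statement: Appendix~\ref{app:timecont} states the theorem ``without proof, since the results are straightforward generalizations of `deterministic' counterparts,'' so there is no in-paper argument to compare against. Your reconstruction is correct and is the expected one: summation by parts against $\avg{\psi}_i$, splitting the flux as $f(u_i)+(F_\iphf-f(u_i))$, and a double H\"older inequality (in $i$ with the pair $(r,r')$, then in $\tau$) that funnels the remainder into the weak-BV quantity \eqref{eq:tvbound} and produces exactly the exponent $\holder=1/r'=(r-1)/r$; the passage from the equi-H\"older family $\{\Psi^\Dx\}$ to pointwise convergence via Arzel\`a--Ascoli plus the weak* identification of the limit, and the trace at $t=0$ via initial-data consistency, are all sound. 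Two small caveats, both of which mirror implicit assumptions the paper itself makes in the proof of Theorem~\ref{thm:convmv}: you need the weak-BV quantity to be bounded uniformly in $\omega$ (as in the paper's use of $\sup_\omega$ there), not merely to vanish for each fixed $\omega$; and the final identity $\Psi(0)=\int_\R\psi\ip{\sigma_x}{\id}\,dx$ genuinely requires the consistency condition \eqref{eq:initialcond}, which is not listed among the hypotheses of the theorem as stated — you are right to flag that it must be read as part of Algorithm~\ref{alg:approxmv}. Your closing observation that \eqref{eq:linf} alone already yields a uniform Lipschitz bound (stronger than the asserted H\"older bound) is also correct, and correctly identifies why the weak-BV route is nevertheless the one worth recording.
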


\end{document}